\newtheorem{thm}{Theorem}[section]
\newtheorem{lem}[thm]{Lemma}
\newtheorem{cor}[thm]{Corollary}
\newtheorem{prop}[thm]{Proposition}
\theoremstyle{remark}
\numberwithin{equation}{section}
\def\ind{{\bf 1}}
\newcommand{\symdif}{\vartriangle}
\numberwithin{equation}{section}
\newenvironment{definition}[1][Definition]{\begin{trivlist}
\item[\hskip \labelsep {\bfseries #1}]}{\end{trivlist}}
\newenvironment{remark}[1][Remark:]{\begin{trivlist}
\item[\hskip \labelsep {\bfseries #1}]}{\end{trivlist}}
\newenvironment{question}[1][Question:]{\begin{trivlist}
\item[\hskip \labelsep {\bfseries #1}]}{\end{trivlist}}
\newcommand{\A}{\mathbb{A}}
\newcommand{\Q}{\mathbb{Q}}
\newcommand{\R}{{\mathbb{R}}}
\newcommand{\T}{{\mathbb{T}}}
\newcommand{\Z}{{\mathbb{Z}}}
\newcommand{\N}{{\mathbb{N}}}
\newcommand{\beq}{\begin{equation}}
\newcommand{\eeq}{\end{equation}}
\def\E{{\mathbb E}}
\newcommand{\supp}{\textnormal{supp}}
\def\F{{\cal F}}                 
\def\1{{\mathbf 1}}              
\def\Eset{{\cal E}}              
\def\Var{{\mathbf {Var}\,}}      
\def\supp{{\text{supp }}}        
\def\B{{\cal B}}                 
\def\P{{\mathbb P}}              
\begin{document}

\title{Multivariable averaging on sparse sets}
\author{P. LaVictoire, A. Parrish, J. Rosenblatt}
\maketitle
\begin{abstract} 
Nonstandard ergodic averages can be defined for a measure-preserving action of a group on a probability space, as a natural extension of classical (nonstandard) ergodic averages. We extend the one-dimensional theory, obtaining $L^1$ pointwise ergodic theorems for several kinds of nonstandard sparse group averages, with a special focus on the group $\Z^d$. Namely, we extend results for sparse block averages and sparse random averages to their analogues on virtually nilpotent groups, and extend Christ's result for sparse deterministic sequences to its analogue on $\Z^d$. The second and third results have two nontrivial variants on $\Z^d$: a ``native'' d-dimensional average and a ``product'' average from the 1-dimensional averages.
\end{abstract}
\tableofcontents
\section{\bf Introduction}
\label{intro}

\subsection{Pointwise ergodic theorems along sparse sets}
Subsequence ergodic theorems have been related to interesting questions since the beginnings of the subject: for instance, the equidistribution of $\{n^2\alpha\}$ for $\alpha\notin\Q$ corresponds to the mean convergence of the averages along the squares,

\begin{eqnarray}
\label{square}
\frac1N\sum_{k=1}^N f\circ T^{k^2}(x).
\end{eqnarray} 

The pointwise theory of nonstandard ergodic averages has an eventful history marked by a combination of ``qualitative'' methods, characterized by an interest in infinitary properties such as convergence,  and ``quantitative'' techniques, which spring from a more hard-analytical background. In the qualitative direction, Bellow and Losert \cite{BL} were the first to construct sparse sequences $\{n_k\}$ for which pointwise ergodic theorems could be proved; these sequences consist of increasingly large blocks of integers separated by increasingly, but not arbitrarily, vast gaps. The pointwise convergence (almost everywhere) of these averages followed from covering lemmas and asymptotic properties of the averaging sets.
\\
\\ A seminal quantitative result was Bourgain's proof \cite{JB2} that the averages along the squares (\ref{square}) converge pointwise (almost everywhere) as $N\to\infty$ for any dynamical system $(X,T)$ and any $f\in L^2(X)$. This result, and its many subsequent variants, used a transference argument and the Fourier-analytic properties of corresponding exponential sums to prove maximal and oscillational inequalities for the averages.
\\
\\ One distinction between the two methods concerns the endpoint space $L^1(X)$. The methods applied in \cite{BL} automatically prove convergence for all $f\in L^1$, but the Fourier transform methods for various sequences could be extended to $L^p$,  $p>1$, only by interpolation. As a result, the $L^1$ case for the sequence of squares remained open- for two decades, the sparsity of block sequences remained the only kind of sparsity for which $L^1$ pointwise ergodic theorems could be proved.
\\
\\In recent years, there has been substantial progress on the $L^1$ pointwise ergodic theory for sparse averages via a technique of Fefferman \cite{MR0257819} and Christ \cite{MC} which arises from the theory of singular integrals. Urban and Zienkiewicz \cite{UZ} used this method to prove an $L^1$ pointwise ergodic theorem for the averages along $\lfloor n^\alpha \rfloor$ (for $\alpha$ slightly greater than 1), and LaVictoire \cite{LaVic1} proved a similar pointwise theorem for random sequences: specifically that averages taken along a sequence of integers chosen randomly, and nearly as sparse as the sequence of square numbers, must converge pointwise almost surely for $f \in L^1$.
\\
\\However, Buczolich and Mauldin \cite{MR2680392} proved that the averages along the squares (\ref{square}) do \emph{not} satisfy an $L^1$ pointwise ergodic theorem; this argument is fundamentally a qualitative proof but uses some quantitative results from number theory. This result, among others, demonstrates that the $L^1$ case is distinct from, and requires different approaches compared to, the $L^p$ case, $p>1$.
\\
\\ Finally, in \cite{ChristPreprint}, Christ has provided new deterministic examples of sparse sequences that allow for pointwise convergence results in $L^1$. These sequences are constructed from the sets $\{(a,a^2,\dots,a^n)\subset \Z_p^n: a\in \Z_p\}$, using Freiman isomorphisms to find sequences of integers which have similar Fourier properties. The Weil bounds on the corresponding exponential sums give optimal Fourier bounds on the averages along such a sequence; these optimal Fourier bounds allow the application of another version of the technique of Fefferman and Christ, which obtains a weak $(1,1)$ maximal inequality.

\subsection{Pointwise ergodic theorems over discrete groups}
Another classically interesting extension of ergodic theory concerns more general averages obtained from a measure-preserving action $\cal T$ of a discrete group $G$ on a probability space $X$. To each $g\in G$ we associate a measure-preserving transformation ${\cal T}(g)$ on $X$, such that ${\cal T}(g)\circ {\cal T}(g')={\cal T}(g g')$. If $G=\Z^d$, in particular, a measure-preserving group action consists of $d$ commuting transformations $(T_1,\dots, T_d)$; for each $\vec n\in \Z^d$, we have the transformation ${\cal T}(\vec n)=T_1^{n_1}\cdots T_d^{n_d}$.
\\
\\Tempelman \cite{T} obtained a fairly general pointwise ergodic theorem (Theorem \ref{TempelmanThm}) for group actions, defined with respect to a sequence of finite subsets $F_N\subset G$. We consider the averages 
\begin{equation*}
A_Nf = \frac 1{\#F_N}\sum\limits_{\vec n \in F_N} f \circ {\cal T}(g)
\end{equation*}
 If the sequence of sets $\left\{F_N\right\}$ satisfies the Tempelman condition (\ref{TempelmanC}), then for any $f\in L^1(X)$, $A_Nf$ will converge pointwise for every measure-preserving action $\cal T$, irrespective of the underlying probability space $X$.\\
\\
There is a useful class of discrete groups which we will consider in particular:

\begin{definition}
A finitely generated group $G$ is \emph{virtually nilpotent} if it contains a nilpotent subgroup of finite index. 
\end{definition}

If we consider a finite set $\A$ of generators of a group $G$, and let $\mathbb{A}^N$ denote the words of length $\leq N$ in that alphabet, we have that these virtually nilpotent groups are precisely those in which $\# \A^N$ grows at a rate polynomial in $N$ (see \cite{Milnor}, \cite{Wolf}, and \cite{MR623534}). This polynomial growth rate allows for interesting applications of both qualitative and quantitative techniques in the cases of block sequences and random sequences, respectively, ultimately resulting in new pointwise convergence results.

\subsection{Sparse averages over discrete groups}
Combining the qualitative and quantitative approaches, we study sparse ergodic averages in the group setting and prove $L^1$ pointwise convergence results in a number of contexts.
\\
\\ By \textit{sparse}, we refer to a quality of a set, taken from a group $G$, analagous to that of being zero density in the integers. In the integers, density is determined by comparing a set to nested sets of closed intervals. When we consider virtually nilpotent groups, however, there are more choices for sets to which we can compare ours than in the case $G=\Z$. The sets defined by word-length in the generating set prove to be a good choice; by a result of Pansu \cite{Pan}, the corresponding notion of sparseness is independent of our choice of generating set.
\\
\\In addition to this basic idea of sparseness, we will also consider other descriptions of the density of a set. The first is an extension of the idea of upper Banach density; the second is an extension of the condition that gaps tend to $\infty$.

\begin{definition}
Let $F$ be a subset of a virtually nilpotent group $G$, and $\A$ a finite generating set. We say $F$ is \emph{sparse} if $$\lim_{N\to\infty}\frac{\#\left(F\cap \A^N\right)}{\#\A^N}=0.$$ We say that $F$ has \emph{Banach density $0$} if $$\lim_{N\to\infty} \sup_{g\in G} \frac{\#\left(gF\cap\A^N\right)}{\#\A^N}=0.$$ Finally, we say that $F$ has \emph{gaps tending to $\infty$} if, for every $N$, there are only finitely many $g\in F$ such that $g\A^N\cap F\neq\{g\}$.
\end{definition}

 We will exclude from our consideration sets that are sparse by virtue of essentially being averages over a subgroup of $G$- for instance the sets $\{1,\dots,N\}\times\{0\}\subset \Z^2$- since these are classical group averages in disguise. That is, we will be looking for sequences of sets $F_N$ such that if one takes any sequence of subsets $D_N \subset F_N$ with $\limsup\limits_{N \to \infty} \frac{\#D_N}{\#F_n} = 0$, then $\bigcup\limits_{N=1}^\infty F_n\backslash D_N$ still generates the entire group $G$.

\subsection{Results}
 We have three main categories of result, each obtained by a different method. Furthermore, in the case $G=\Z^d$, each of these has two variants. Colloquially, we call a construction \textit{plaid} if it consists of a Cartesian product of sparse subsets of $\Z$, and we call it \textit{native} if it is built in a genuinely $d$-dimensional manner. It should be noted that the results in the plaid case are still nontrivial, since the averages do not factor. (The associated Fourier transforms do factor, but this helps only in $L^2$ and not in $L^1$.)
\\

The first category of results, which are discussed in Section \ref{blocks} for $\Z^d$ and Section \ref{groupblock} for virtually nilpotent groups, extends the sparse block sequence result of Bellow and Losert \cite{BL} by showing that appropriate sparse sequences of sets satisfy the Tempelman Condition (\ref{TempelmanC}) and thus Tempelman's Ergodic Theorem (Theorem \ref{TempelmanThm}) applies. These sets are sparse, but do not have Banach density 0.
\\

The second category, developed in Sections \ref{random} for $\Z^d$ and \ref{nomad} for virtually nilpotent groups, extends the sparse random sequence result of LaVictoire \cite{LaVic1}. For simplicity, we will state here the speckled $\Z^d$ version only. Let $\{\xi_{\vec n} : \vec n \in\Z^d\}$ be independent $\{0,1\}$-valued random variables, such that if $2^j\leq |\vec n|< 2^{j+1}$, then $\P(\xi_{\vec n}=1)=2^{-\gamma j}\approx |\vec n|^{-\gamma}$.\\

Then for $\gamma<d/2$, with probability 1, the random set generated by $\{\xi_{\vec n}\}$ satisfies an $L^1$ pointwise ergodic theorem:
\begin{thm}\label{specseq}
For $\gamma<d/2$, the following holds for all $\omega\in\Omega$ except for a set of probability 0:
\\
Let $\{\vec a_k\}_{k\in\N}$ be an enumeration of the set $\{\vec n: \xi_{\vec n}(\omega)=1\}$ with $|\vec a_k|$ increasing. Then for any measure-preserving $\Z^d$-action $\cal T$ and any $f\in L^1(X)$, the averages
\begin{eqnarray*}
A_Nf(x):=\frac1N \sum_{k=1}^N f\circ {\cal T}(\vec a_k)(x)
\end{eqnarray*}
converge almost everywhere in $X$.
\end{thm}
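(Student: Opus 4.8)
The plan is to follow the Calderón transference paradigm combined with the Fefferman–Christ method for obtaining a weak $(1,1)$ maximal inequality, exactly as in LaVictoire's one-dimensional argument \cite{LaVic1}, but now carrying everything out with $\Z^d$ replacing $\Z$. First I would reduce, via transference, to proving a weak-type $(1,1)$ bound for the maximal function $M f(\vec x) = \sup_N |A_N f(\vec x)|$ associated to the random averages acting on $\ell^1(\Z^d)$, together with an $\ell^2$ bound (which is easy from the Fourier side, since for $\gamma < d/2$ the expected Fourier multiplier of $A_N$ is, with high probability, a small perturbation of the smooth multiplier coming from a ball of radius $2^j$). The heart of the matter, as always with the $L^1$ endpoint, is the weak $(1,1)$ estimate, which does not follow by interpolation and must be proved by a Calderón–Zygmund-type decomposition adapted to the sparse random kernels.

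The key steps, in order, would be: (1) a.s.\ control of the cardinalities: with $\gamma < d/2$, Borel–Cantelli-type estimates show that $\#\{\vec n : \xi_{\vec n}(\omega)=1,\ 2^j \le |\vec n| < 2^{j+1}\} \approx 2^{(d-\gamma)j}$ for all large $j$ a.s., so that the $N$-th average $A_N$ is comparable (up to constants) to an average over a ball-annulus at scale $2^{j(N)}$, with $N \approx 2^{(d-\gamma)j}$. (2) A.s.\ Fourier decay: condition on the $\xi$'s and use Bernstein/Hoeffding concentration to show that the random exponential sum $\widehat{\mu_j}(\theta) = \frac{1}{N_j}\sum_{\vec n} \xi_{\vec n} e^{2\pi i \vec n \cdot \theta}$, after subtracting its mean, is $O(2^{-\delta j})$ uniformly in $\theta$ on a fine enough net, hence everywhere by a Bernstein-type argument in $d$ dimensions; this is the place the hypothesis $\gamma < d/2$ is used sharply, since the variance of the sum is $\sim N_j$ and one needs the square-root cancellation to beat the normalization. (3) Decompose each kernel $\mu_j$ into a ``main'' smooth part (the mean, essentially a normalized ball indicator, for which the maximal inequality is classical) plus an ``error'' part with good $\ell^2 \to \ell^2$ decay in $j$; sum the maximal function over the error parts using the Fefferman–Christ argument, i.e.\ a good-$\lambda$/exceptional-set decomposition of an $\ell^1$ function at each dyadic scale, exploiting the $\ell^2$ decay to make the tail summable. (4) Conclude pointwise convergence on $L^1$ by the standard Banach principle, using density of $L^2$ (or of the span of eigenfunctions plus the maximal inequality) for the existence of the limit on a dense class, and the weak $(1,1)$ maximal inequality to pass to all of $L^1$. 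The ``speckled'' versus ``plaid'' distinction does not arise here since the statement as given is the native $\Z^d$ (speckled) version.

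The main obstacle I expect is step (3): adapting the Fefferman–Christ weak $(1,1)$ machinery to the $d$-dimensional geometry. In one dimension the kernels live on intervals and the Calderón–Zygmund cubes are intervals; in $\Z^d$ one must run the exceptional-set/stopping-time argument with $d$-dimensional dyadic cubes and control the interaction between the scale $2^j$ of the averaging kernel and the scale of the Calderón–Zygmund cube in the correct range, keeping track of how the $\ell^2$-decay exponent $\delta = \delta(\gamma,d)$ (which degrades as $\gamma \to d/2$) competes with the polynomial loss coming from the $d$-dimensional geometry. One must check that $\delta$ stays positive and large enough throughout the whole range $\gamma < d/2$, so that the sum over scales of the error maximal functions still converges; this bookkeeping, rather than any single inequality, is the delicate part. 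A secondary technical point is that the randomly-generated set is not exactly a union of annuli of prescribed size, so one must absorb the fluctuation in $N_j$ (from step (1)) into the error term without destroying the decay — routine, but it must be done carefully.
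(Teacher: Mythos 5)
Your overall architecture matches the paper's: Calder\'on transference to convolution operators on $\Z^d$, an easy $L^2$ theory via the Fourier transform for the dense-class/convergence part, and a Calder\'on--Zygmund decomposition plus a $TT^*$-type estimate on the bad part for the weak $(1,1)$ bound (Theorem \ref{SgtPepper}). But there is a genuine gap at the heart of step (3). A uniform Fourier bound on the mean-zero kernels $\nu_j$ cannot be better than $\|\hat\nu_j\|_\infty\approx 2^{-(d-\gamma)j/2}$, since $\int_{\T^d}|\hat\nu_j|^2=\|\nu_j\|_{\ell^2}^2\approx 2^{(\gamma-d)j}$; equivalently, the autocorrelation $\nu_j\ast\tilde\nu_j$ carries an irremovable mass $\approx 2^{(\gamma-d)j}$ at the origin. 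Feeding only this operator-norm decay into the Fefferman--Christ scheme does not close: the bad part $b$ of an $\ell^1$ function has no $\ell^\infty$ bound, so to use $\|B\|_2^2\le\|B\|_\infty\|B\|_1$ you must truncate $b$ at some height $h_j$; the support-counting estimate for the tall part forces $h_j\gtrsim|\supp\mu_j|\approx 2^{(d-\gamma)j}$, while summability over scales of $\|\hat\nu_j\|_\infty^2\,h_j\,\|B^{(j)}\|_1$ would force $h_j\ll 2^{(d-\gamma)j}$. These requirements are incompatible for every $\gamma$, not just near $d/2$, so ``exploiting the $\ell^2$ decay to make the tail summable'' is not merely delicate bookkeeping --- it fails.

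The missing idea, which the paper isolates explicitly, is to work with $\nu_j\ast\tilde\nu_j$ in physical space and split it into the delta mass at $0$ and the rest. Off the origin, a Chernoff/Borel--Cantelli argument (Lemmas \ref{cancels} and \ref{speckled}) gives the much stronger pointwise bound $\|\nu_j\ast\tilde\nu_j\|_{\ell^\infty(\Z^d_\times)}\lesssim 2^{(\gamma-3d/2+\epsilon)j}$, which after pairing $\langle B^{(j)}_s\ast\nu_j\ast\tilde\nu_j,B^{(j)}_t\rangle$ against $\|B^{(j)}_s\|_1\|B^{(j)}_t\|_1$ yields genuine geometric decay $2^{(\gamma-d/2+\epsilon)j}$ for $\gamma<d/2$. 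The origin term contributes only to the diagonal $s=t$ and is balanced \emph{exactly} by truncating $b$ at height $2^{(d-\gamma)j}=|\supp\mu_j|$, so it is summed once per cube rather than once per scale. The paper's remark after the reduction stresses that this height truncation is the one non-standard step, with no analogue in the continuous Calder\'on--Zygmund theory (where the kernel has a singularity, not a delta mass, at the origin). Your proposal omits both the origin/off-origin separation and the calibrated truncation, and without them the weak $(1,1)$ inequality does not follow.
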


Additionally, we prove a plaid version on $\Z^d$ (Corollary \ref{plaidseq}) and a version on general groups (Theorem \ref{L1}). The latter requires us to first prove an $L^2$ ergodic theorem (Theorem \ref{L2}) since the Fourier transform methods which work for $\Z^d$ do not work on nonabelian groups. An argument based in combinatorics and the $TT^*$ method from harmonic analysis suffices instead.
\\
\\ With probability 1, these random sequences are not only sparse, but have Banach density 0. Further, each can be modified to a sequence whose gaps tend to $\infty$ and along which an $L^1$ pointwise ergodic theorem still holds (Section \ref{gaps}).
\\
\\ The third category of result, developed in Section \ref{detsparse}, extends the sparse deterministic result of Christ \cite{ChristPreprint} to $\Z^d$ with both a native version (Theorem \ref{native}) and a product version (Theorem \ref{plaiddet}). We will state the simplest form of the first version. 
\begin{thm}\label{arith}
Let $p_k$ be prime numbers with $2^{k}<p_k<2^{k+\frac12}$. Then if we take the sparse set
\begin{eqnarray*}
S=\bigcup_{k=1}^\infty \bigcup_{j=0}^{p_k-1}(2^{k+1}+[j]_{p_k}, [j^2]_{p_k}, \cdots, [j^d]_{p_k})
\end{eqnarray*}
and order it by first coordinate, the averages along this set converge (almost everywhere) for every measure-preserving $\Z^d$-action $\cal T$ and any $f\in L^1(X)$.
\end{thm}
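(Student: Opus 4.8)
The plan is to realize this as an instance of the general ``native deterministic'' theorem (Theorem \ref{native}), whose proof in turn follows the Fefferman--Christ paradigm: deduce an $L^1$ pointwise ergodic theorem from (i) a uniform $L^2$ maximal inequality with polynomial-in-$k$ growth, together with (ii) sufficiently strong Fourier decay of the individual averaging measures, which upgrades to a weak $(1,1)$ bound for the maximal function restricted to the high-frequency part, while the low-frequency part is handled by comparison with a genuine Følner average. Concretely, for each $k$ let $\mu_k$ be the normalized counting measure on the $k$-th block $B_k = \{(2^{k+1}+[j]_{p_k},[j^2]_{p_k},\dots,[j^d]_{p_k}) : 0\le j<p_k\}$, a set of $p_k$ points, and let $\nu_K = \big(\sum_{k\le K} p_k\big)^{-1}\sum_{k\le K} p_k\,\mu_k$ be the average along the first $\sum_{k\le K}p_k$ terms of $S$ in first-coordinate order. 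Since $p_k\asymp 2^k$, the block $B_k$ lives essentially at scale $2^k$ in each coordinate, the blocks are separated, and $\nu_K$ is (up to a bounded factor) a weighted sum of the $\mu_k$ with geometrically decreasing weights — exactly the structure to which the abstract machinery applies.

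The first step is to establish the Fourier estimate: for $\theta\in\T^d$, $\widehat{\mu_k}(\theta) = p_k^{-1}\sum_{j=0}^{p_k-1} e^{2\pi i(n_1\theta_1+\dots+n_d\theta_d)}$ where the $n_i$ are the chosen integer representatives of $[j^i]_{p_k}$. The key point — and this is precisely Christ's mechanism — is that via a Freiman isomorphism the exponential sum $\sum_j e(\theta_1 [j]_{p_k}/p_k + \dots)$ matches, up to controllable error, a complete exponential sum $\sum_{a\in\Z_{p_k}} e_{p_k}(c_1 a + c_2 a^2 + \dots + c_d a^d)$ over the finite field $\Z_{p_k}$, and the Weil bound gives $|\sum_a e_{p_k}(\sum_i c_i a^i)| \le (d-1)\sqrt{p_k}$ whenever not all $c_i$ vanish mod $p_k$. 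Hence $|\widehat{\mu_k}(\theta)| \lesssim_d p_k^{-1/2} \asymp 2^{-k/2}$ away from the trivial frequency, which is the optimal square-root cancellation; one must check the Freiman-isomorphism reduction respects the first-coordinate shift $2^{k+1}$ (a pure modulation, harmless) and the choice of representatives $[\cdot]_{p_k}$ in $\{0,\dots,p_k-1\}$. This square-root decay at dimension $d$ over a set of size $p_k\asymp 2^k$ is exactly the hypothesis the weak-$(1,1)$ argument needs, with room to spare (it is what makes $\gamma<d/2$ optimal in the random analogue, and here the Weil bound supplies the same exponent deterministically).

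The second step is the $L^2$ side: prove $\|\sup_K |\nu_K * f|\|_2 \lesssim \|f\|_2$ (or with at most polylog loss), using the Fourier decay above to control the martingale-type differences $\nu_{K+1}-\nu_K$ via a square-function / Rademacher--Menshov argument, and comparing $\nu_K$ to the dyadic Følner average at scale $2^K$ for the low frequencies. The third step is the transference and the weak-$(1,1)$ endpoint: split each $\mu_k$ (in the Calderón transference setting on $\Z^d$) into a ``main part'' comparable to a smooth bump at scale $2^k$ and an ``oscillatory part'' $\mu_k - (\text{bump})_k$ whose Fourier transform enjoys the $2^{-k/2}$ bound and suitable smoothness; the main parts assemble into something dominated by the Hardy--Littlewood maximal function on $\Z^d$ (weak $(1,1)$), and the oscillatory parts are summed using the Fefferman--Stein / Christ interpolation trick — decomposing $f = \sum f\cdot\ind_{Q}$ over dyadic cubes, using the $L^2$ bound on the ``good'' part and the rapid Fourier decay plus the geometric separation of scales to beat the $L^1$ mass of the ``bad'' part — to obtain a weak $(1,1)$ bound for $\sup_K |(\text{oscillatory part of }\nu_K) * f|$. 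Combining gives the weak $(1,1)$ maximal inequality for $\sup_K|\nu_K*f|$, and pointwise convergence on a dense class (e.g. $f\in L^2$, via the $L^2$ oscillation bound) together with the maximal inequality yields a.e. convergence for all $f\in L^1$ by the Banach principle.

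The main obstacle is the endpoint weak-$(1,1)$ bookkeeping in step three: one must track the interaction between the scale-$2^k$ localization of each $\mu_k$, the geometric weights $p_k/\sum_{k\le K}p_k$, and the $\sqrt{p_k}$ Fourier gain, ensuring the sum over $k$ of the ``bad'' contributions converges with the right dependence — this is where the sharpness of the Weil bound (full square-root cancellation in all $d$ variables) is essential and where the $d$-dimensional geometry, as opposed to the $1$-dimensional case, requires care. Verifying that $S$ is genuinely sparse and not a disguised subgroup average (so that the theorem is not vacuous) is routine: the blocks are curves $\{(j,j^2,\dots,j^d) \bmod p_k\}$ which, after deleting any zero-density subset, still generate $\Z^d$ because the first coordinates alone already hit a sequence with gaps bounded relative to the scale.
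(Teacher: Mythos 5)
Your overall architecture is the paper's: the Weil bound gives the optimal $p_k^{-1/2}$ Fourier decay of $\mu_k$ minus a box average, the weak $(1,1)$ bound for the maximal function over complete blocks follows from the Fefferman--Christ scheme (the paper invokes Theorem \ref{differencethm} from \cite{ChristPreprint}, whose proof is essentially the Calder\'on--Zygmund argument you sketch), and positivity plus the exponential growth of the blocks reduces the maximal inequality to complete blocks. The step you wave at as ``a pure modulation, harmless'' is where the paper spends most of Section \ref{proofweak} --- one needs a smooth cutoff operator $\Gamma_1$ with $\|\hat\psi\|_{\ell^1}\le Cp^m$, proved by a double summation by parts, and a projection $\Gamma_2$ from $\Z^m$ to $\Z^d$ that does not increase the $\ell^\infty$ norm of the Fourier transform --- but that is detail rather than a conceptual divergence.

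The genuine gap is in your dense-class step. You prove an $L^2$ bound only for the complete-block averages $\nu_K$, whereas the theorem orders $S$ by first coordinate and averages over the first $N$ points for \emph{every} $N$; since $p_{K+1}\asymp\sum_{k\le K}p_k$, the partial block being filled in at time $N$ carries a fraction of the total mass bounded below. Consequently a.e.\ convergence along complete blocks together with the weak $(1,1)$ maximal inequality does not give convergence along all $N$ (sandwiching between consecutive block endpoints only traps $\limsup$ and $\liminf$ within a factor of about $2$), and an $L^2$ maximal inequality alone never produces a dense class of pointwise convergence in any case. The paper addresses exactly this point, which it flags as the new content beyond \cite{ChristPreprint}: it proves an oscillational inequality (Theorem \ref{OscIneq}) for the full sequence of averages, and for the incomplete final block --- an incomplete character sum to which Weil's theorem does not apply --- it invokes Weyl's inequality to beat the trivial bound and obtain $\|\hat\mu_N-\hat\nu_N\|_\infty\le CN^{-\epsilon}$. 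Without a substitute for that ingredient your argument only yields convergence along the subsequence of block endpoints, not the theorem as stated.
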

The a.e. convergence in this result is new even for the original averages in $\Z$; in \cite{ChristPreprint} only the weak $(1,1)$ maximal inequality is proved. The existence of a dense class for which pointwise convergence holds follows from an oscillational inequality (Theorem \ref{OscIneq}) for the averages, as in Section IV.2 of Rosenblatt and Wierdl's monograph, \cite{PETHA}. The pointwise convergence for $L^1$ functions follows. These sequences are sparse as well, and can be modified into a sequence whose gaps tend to $\infty$ without affecting the a.e. convergence of the averages.

\subsection*{Acknowledgments}
The authors thank M. Christ for substantial help on the results in Section 4, and for disseminating the preprint \cite{ChristPreprint}.

\section{\bf Averages Along Blocks}\label{blocks}
A sequence of sets $\{ F_n \}$ taken from a discrete group $G$ is called a F\o lner sequence if each set is finite and 
\begin{equation*}\label{FolnerCond}
 \lim_{n \rightarrow \infty} \frac{\# \left(gF_n \symdif F_n \right)}{\# F_n}  = 0
\end{equation*}
for every $g \in G$.
\\
\\ A F\o lner sequence is said to satisfy the Tempelman Condition if there is a constant $C$ so that 
\begin{equation}\label{TempelmanC}
 \#\left( F^{-1}_n F_n \right) \leq C \#F_n.
\end{equation}

\begin{definition}
Let $(X,\F,m)$ be a probability space and $\{T_g:g\in G\}$ a group of measure-preserving transformations on $X$ with $T_gT_h=T_{gh}$ for all $g,h\in G$.  We say that $\{T_g\}$ is a \emph{measure-preserving group action}.
\end{definition}

\begin{thm}[Tempelman's Ergodic Theorem, \cite{T}]\label{TempelmanThm}
Suppose that $T$ is a measure-preserving action of $G$ on the space $X$ and that $\{F_n \}$ is a nested F\o lner sequence that satisfies the Tempelman Condition. Then the averages
\begin{equation*}
 \frac{1}{\#F_n} \sum_{g \in F_n} f\left(T_gx\right)
\end{equation*}
converge for a.e. $x \in X$. 
\end{thm}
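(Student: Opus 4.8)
The plan is to deduce a.e.\ convergence for every $f\in L^1(X)$ from two ingredients in the usual way: a weak-type $(1,1)$ maximal inequality for $A^{\ast}f:=\sup_n|A_nf|$, together with a.e.\ convergence on a subclass $\cd$ that is dense in $L^1(X)$; the Banach principle then upgrades this to all of $L^1(X)$. For $\cd$ I would take the linear span of the $\{T_g\}$-invariant functions and the bounded coboundaries $g-g\circ T_h$ with $g\in L^\infty(X)$, $h\in G$. On an invariant function $A_nf=f$ for every $n$. On a coboundary, since each $T_h$ is measure preserving and $T_hT_k=T_{hk}$,
\[
A_n(g-g\circ T_h)(x)=\frac{1}{\#F_n}\Big(\sum_{k\in F_n}g(T_kx)-\sum_{k\in hF_n}g(T_kx)\Big),
\]
so $\|A_n(g-g\circ T_h)\|_\infty\le\frac{\#(F_n\symdif hF_n)}{\#F_n}\,\|g\|_\infty\to0$ by the F\o lner property and $A_nf\to0$ uniformly. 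That $\cd$ is dense in $L^1(X)$ follows from the orthogonal decomposition of $L^2(X)$ into the invariant functions and the closed span of the coboundaries---orthogonality being immediate since each $T_h$ is unitary on $L^2(X)$---together with the density of $L^2(X)$ in $L^1(X)$, valid because $m$ is a probability measure.

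The heart of the matter is the maximal inequality $m\{A^{\ast}f>\la\}\le C\la^{-1}\|f\|_{L^1(X)}$, with $C$ the constant in \eqref{TempelmanC}. By a Calder\'on transference argument it suffices to prove the analogous weak-type bound for $G$ acting on itself by left translation, i.e.\ (after truncating to finitely many indices $n$ and to finitely supported $\phi\ge 0$) for the operators $M_n\phi(k)=\frac{1}{\#F_n}\sum_{g\in F_n}\phi(gk)$. For this I would prove a Wiener-type covering lemma. Taking, for each $k$ in the super-level set $\{M^{\ast}\phi>\la\}$, an index $n_k$ with $\sum_{g\in F_{n_k}}\phi(gk)>\la\,\#F_{n_k}$, select greedily a pairwise disjoint subfamily $\{F_{n_{k_i}}k_i\}$ from $\{F_{n_k}k\}$, taking the largest $\#F_{n_{k_i}}$ first. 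Since $\{F_n\}$ is nested, every discarded set $F_{n_k}k$ meets some selected $F_{n_{k_i}}k_i$ with $F_{n_k}\subseteq F_{n_{k_i}}$, which forces $k\in F_{n_{k_i}}^{-1}F_{n_{k_i}}k_i$; hence $\{M^{\ast}\phi>\la\}\subseteq\bigcup_i F_{n_{k_i}}^{-1}F_{n_{k_i}}k_i$, a set whose cardinality \eqref{TempelmanC} bounds by $C\sum_i\#F_{n_{k_i}}$. On the other hand, the disjointness of the selected sets together with the inequalities $\sum_{g\in F_{n_{k_i}}}\phi(gk_i)>\la\,\#F_{n_{k_i}}$ gives $\sum_i\#F_{n_{k_i}}<\la^{-1}\|\phi\|_{\ell^1(G)}$. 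Combining yields the weak-type bound with constant $C$; letting the $n$-truncation tend to infinity (monotone convergence) completes the maximal inequality.

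With the maximal inequality and convergence on $\cd$ in hand, the theorem follows from the standard Banach-principle argument: the oscillation $\Omega f(x):=\limsup_n A_nf(x)-\liminf_n A_nf(x)$ is subadditive and satisfies $\Omega f\le 2A^{\ast}f$, so $m\{\Omega f>\delta\}\le m\{A^{\ast}f>\delta/2\}\le 2C\delta^{-1}\|f\|_{L^1(X)}$. Decomposing $f=d+r$ with $d\in\cd$ and $\|r\|_{L^1(X)}$ as small as we like, and using $\Omega d=0$, we conclude $m\{\Omega f>\delta\}=0$ for every $\delta>0$; hence $\Omega f=0$ a.e., which is exactly the asserted a.e.\ convergence.

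The step I expect to be the main obstacle is the covering lemma and its interface with transference: one has to fix the conventions so that the translates controlled by the greedy selection are precisely the sets $F_{n_{k_i}}^{-1}F_{n_{k_i}}k_i$ governed by \eqref{TempelmanC}, and one must invoke nestedness of $\{F_n\}$ exactly where the greedy ordering needs ``larger index'' to mean ``larger set.'' By contrast, the transference reduction and the Banach-principle packaging are routine, and the dense-class convergence uses nothing beyond the F\o lner property.
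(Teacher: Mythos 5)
Your argument is correct, but note that the paper does not prove this statement at all: Theorem \ref{TempelmanThm} is quoted from Tempelman's work \cite{T}, so there is no in-paper proof to compare against. What you have written is essentially the standard (Tempelman/Emerson) proof: the Wiener-type covering lemma is exactly where nestedness and the condition (\ref{TempelmanC}) enter, and your greedy selection is sound, since ``largest cardinality first'' together with nestedness forces $F_{n_k}\subseteq F_{n_{k_i}}$ for any discarded set meeting a selected one, which places the discarded center in $F_{n_{k_i}}^{-1}F_{n_{k_i}}k_i$; the transference, the von Neumann decomposition into invariants plus (bounded) coboundaries, and the Banach-principle packaging are all correctly deployed. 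The one point worth stating explicitly is that the existence of a F\o lner sequence makes $G$ amenable, which is what licenses the Calder\'on transference step (compare Lemma \ref{transfer}, where the paper carries out the same transference using the F\o lner property (\ref{slippery})).
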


In this section we seek a generalized form of the block sequence example of Bellow and Losert \cite{BL}. We will examine both a plaid and a speckled version: our method, in each case, will be to show that a suitably chosen F\o lner sequence satisfies the Tempelman Condition, \ref{TempelmanC}. 
We note here that while each construction in this section will have zero density, neither are of Banach density zero: much like the original construction, the existence of the block-like structures does not allow for this type of sparseness. Further, in the first construction, we consider only two-variable free actions.  The ideas presented, however, will work in any finite number of variables.

\subsection{A Plaid Construction}
In this construction, our aim is to build a sparse sequence of sets by considering the products of one-dimensional sparse sequences. Among the difficulties in two dimensions is the loss of a natural order in which to take our averages; hence we must not only define our larger set, whence we derive sparseness, but also an ordering of its elements.  This ordering will give us our sequence of sets.

We start with a sequence of rectangles $\{D_k\}$ in $\mathbb Z^2$,
where each $D_k = R_k+(u_k,v_k)$ and $R_k = [1,a_k]\times [1,b_k]$.  We will need to have these rectangles well spaced,
so we will assume that
\begin{equation}\label{wellspaced}
u_{k+1} \ge u_k+a_k \,\,and\,\, v_{k+1} \ge v_k+b_k.
\end{equation}
We also want to arrange for these rectangles to provide
us projections along the axes that give sequences of zero density.  So we will also assume that
\begin{equation}\label{growingblocks}
\frac {\sum\limits_{i=1}^{k-1} a_i}{a_k}
\to 0 \mbox{ and } \frac {\sum\limits_{j=1}^{k-1} b_j}{b_k} \to 0 \mbox{ as } k\to \infty.
\end{equation}
It follows from Equation~\ref{growingblocks} that
$a_k \to \infty$ and $b_k \to \infty$ as $k \to \infty$.

An additional regularity assumption will be necessary in order to to prove almost everywhere convergence results.  It is that
for some constant $C > 0$, we have
\begin{equation}\label{regularity}
a_k \ge Cu_{k-1} \mbox{ and } b_k \ge Cv_{k-1}.
\end{equation}
This assumption is one of the conditions used by Bellow and Losert in \cite{BL}, and by Bellow, Jones, and Rosenblatt in \cite{BJR}.

We want to use this sequence of blocks to form unions of their projections on the coordinate axes,
and then form two variable averaging operators by putting these back together.
We denote the union of the projections of the blocks $\{D_1,\dots,D_k \}$ on the coordinate axes by $A(k) = \bigcup\limits_{i=1}^k [u_i+1,u_i+a_i]$, and
$B(k) =\bigcup\limits_{i=1}^k [v_i+1,v_i+b_i]$.  We want also to work with the intermediate blocks that come from the projections, so for
each $k$ and $r=1,\dots,a_k$, we denote by $A(k,r) = A(k-1) \cup [u_k+1,u_k+r]$, and $B(k,r)= B(k-1) \cup [v_k+1,v_k+r]$. Note that
$A(k,a_k) = A(k)$ and $B(k,b_k) = B(k)$.

The first fact to observe from \cite{BJR}, is that the sequences of sets $\{A(k)\}$ and $\{B(k)\}$ satisfy the Cone Condition: thus the operators
$\mathcal A_kf =\frac 1{a_k} \sum\limits_{i=1}^{a_k} f\circ S^{u_k+i}$ and
$\mathcal B_kf =\frac 1{b_k} \sum\limits_{j=1}^{b_k} f\circ T^{v_k+j}$ have maximal functions that are weak $(1,1)$ and strong $(p,p)$ for
all $p, 1 < p < \infty$.  Since  $a_k \to \infty$ and $b_k\to \infty$, we get pointwise and norm convergence to the projection of the invariant functions for all $L^p, 1\le p < \infty$ in each case.

Also, as implicit in \cite{BL} without proof, the intermediate sets $\{A(k,r): k\ge 1, 1\le r\le a_k \}$
and $\{ B(k,r): k\ge 1, 1\le r\le b_k \}$ are F\o lner sets that satisfy \ref{TempelmanC} (see Proposition~\ref{TempReg}).  In particular,
there is a constant such that
 $\#(A(k,r)-A(k,r)) \le C \#A(k,r)$ and $\#(B(k,r)-B(k,r)) \le C \#B(k,r)$ for all $k$ and $r$.
As a result, the operators
$\mathcal A(k,r)f =\frac 1{\#A(k,r)} \sum\limits_{i \in A(k,r)} f\circ S^i$ and
$\mathcal B(k,r)f =\frac 1{\#B(k,r)} \sum\limits_{j\in B(k,r)} f\circ T^j$ have maximal functions that are weak $(1,1)$ and strong $(p,p)$ for
all $p, 1 < p < \infty$.  (See Tempelman, \cite{T}).
So again we get pointwise and norm convergence to the projection of the invariant functions in each case, for all $L^p, 1\le p < \infty$.  The pointwise convergence result is clear from
 the Cone Condition if one were restricting
 oneself to the ends of the individual block (i.e. using only $A(k) = A(k,a_k)$ and $B(k) = B(k,b_k)$). 
 But this result actually requires the following computation even for
 these sequences, let alone the full sequence of intermediate sets. We give this proof because it does not appear in \cite{BL}
 and there are some not necessarily so obvious details that do need to be considered.

\begin{prop} \label{TempReg} 
The sequences of sets $\{A(k,r)\}$ and $\{B(k,r)\}$ satisfy the Tempelman's regularity conditions.
\end{prop}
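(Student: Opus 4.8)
The plan is to establish precisely what the surrounding text records: that $\{A(k,r)\}$ and $\{B(k,r)\}$ are nested F\o lner sequences satisfying the Tempelman bound $\#(F-F)\le C\#F$ with a single constant $C$. Nestedness is immediate from the construction, and the F\o lner property is easy: each $A(k,r)$ is a disjoint union $\bigcup_{i=1}^{k}I_i$ of intervals, with $I_i=[u_i+1,u_i+a_i]$ for $i<k$ and $I_k=[u_k+1,u_k+r]$, so $\#A(k,r)=s_{k-1}+r$ where $s_m:=\sum_{i\le m}a_i$ and $\ell_i:=\#I_i$ ($\ell_i=a_i$ for $i<k$, $\ell_k=r$). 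Since $a_i\to\infty$ (a consequence of (\ref{growingblocks})) we have $s_{k-1}/k\to\infty$, so a translate of $A(k,r)$ by a fixed $g$ moves at most $2k|g|=o(\#A(k,r))$ points. The substance is the Tempelman bound, which I will prove for $\{A(k,r)\}$; the case $\{B(k,r)\}$ is identical after swapping $(a_i,u_i)$ for $(b_i,v_i)$.

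First I would write $A(k,r)-A(k,r)=\bigcup_{i,j}(I_i-I_j)$ and observe that $\#(A(k,r)-A(k,r))\le\#\bigl(\bigcup_i(I_i-I_i)\bigr)+2\sum_{i=2}^{k}\#\bigl(\bigcup_{j<i}(I_i-I_j)\bigr)$, since the terms with $i<j$ are reflections of those with $i>j$. The ``diagonal'' part is a nested family of intervals around $0$, of total size $2\max_i\ell_i-1\le2\#A(k,r)$. For fixed $i$, the ``fan'' $\bigcup_{j<i}(I_i-I_j)$ is a union of $i-1$ intervals $I_i-I_j=[\,u_i-u_j-a_j+1,\;u_i-u_j+\ell_i-1\,]$, strictly decreasing in $j$; using the spacing hypothesis (\ref{wellspaced}) one checks that this union has a clean ``convex hull minus gaps'' structure — the only uncovered gaps lie between consecutive $I_i-I_j$ and $I_i-I_{j+1}$, of size $(X_j-\ell_i)^+$, where $X_j:=u_{j+1}-u_j-a_j+1\ge1$. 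A short telescoping then yields the exact count
\[
\#\Bigl(\bigcup_{j<i}(I_i-I_j)\Bigr)=s_{i-1}+\ell_i-(i-1)+\sum_{j=1}^{i-2}\min(X_j,\ell_i).
\]

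Now the remaining two hypotheses come into play. The regularity condition (\ref{regularity}) gives $X_j\le u_{j+1}\le a_{j+2}/C$, so $\sum_{j=1}^{i-2}\min(X_j,\ell_i)\le C^{-1}\sum_{m=3}^{i}a_m\le C^{-1}s_i$ for $i<k$, while for $i=k$ one peels off the single term $j=k-2$ (using $\min(X_{k-2},r)\le r$) to get $\le C^{-1}s_{k-1}+r$. Hence $\#\bigl(\bigcup_{j<i}(I_i-I_j)\bigr)\le(1+C^{-1})s_i$ for $i<k$, and $\le(1+C^{-1})s_{k-1}+2r$ for $i=k$. Summing over $i$ is then safe because (\ref{growingblocks}) forces $s_{i-1}\le\tfrac13 s_i$ for all large $i$, so $\sum_{i=1}^{k-1}s_i\le C_3 s_{k-1}$ for an absolute constant $C_3$; assembling the pieces gives $\#(A(k,r)-A(k,r))\le C(s_{k-1}+r)=C\#A(k,r)$ uniformly in $k$ and $r$, as required.

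I expect the fan estimate to be the crux. A priori $\bigcup_{j<i}(I_i-I_j)$ could be a union of $i-1$ essentially disjoint intervals of length $\sim\ell_i$, which would be hopelessly large for intermediate $r$; the bound only closes because the uncovered gaps have total length $\sum_j(X_j-\ell_i)^+$, after which (\ref{regularity}) bounds $X_j\lesssim a_{j+2}$ and (\ref{growingblocks}) makes the ensuing sum a geometric series in the $s_i$. Verifying the clean gap structure — that no third interval $I_i-I_{j'}$ reaches into the gap between $I_i-I_j$ and $I_i-I_{j+1}$ — is a routine but slightly delicate consequence of (\ref{wellspaced}) that I would check with care.
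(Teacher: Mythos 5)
Your proof is correct, and it reaches the Tempelman bound by a genuinely more granular route than the paper. The paper splits $A(k,r)-A(k,r)$ into only three pieces --- (old complete blocks)$-$(old complete blocks), the cross term against the partial block $[u_k+1,u_k+r]$, and the partial block against itself --- and it disposes of the first piece by citing Lemma 3.2 of \cite{BL} and of the cross term by the crude containment $\bigcup_{i\le k-1}A_i\subset[0,u_{k-1}+a_{k-1}]$, so that (\ref{regularity}) and (\ref{growingblocks}) enter only through $u_{k-2}+a_{k-2}\le M_3a_{k-1}$ and $\sum_{i<k}a_i\le M_2' a_{k-1}$; it then closes with a dichotomy on $r\le a_{k-1}$ versus $r>a_{k-1}$. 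You instead decompose into all pairwise differences $I_i-I_j$, prove the clean ``convex hull minus gaps'' structure of each fan from (\ref{wellspaced}) (your ordering check that no $I_i-I_{j'}$ intrudes into the gap between consecutive terms is the right verification and does go through), and obtain an exact count whose error term $\sum_j\min(X_j,\ell_i)$ is controlled by exactly the same two hypotheses. What this buys you is self-containment --- your $i<k$ fans reprove the complete-block case that the paper outsources to \cite{BL} --- and a bound that lands directly on $C(s_{k-1}+r)=C\#A(k,r)$, so no final case analysis is needed; the one place where care is genuinely required, peeling off the single term $\min(X_{k-2},r)\le r$ so that the possibly enormous $a_k$ never appears, is the exact analogue of the paper's decision to bound the cross term by $(M_3+1)a_{k-1}+2r$ rather than by anything involving $a_k$. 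The cost is that your argument is longer and the gap-structure verification, while routine, is the kind of step the paper's containment argument sidesteps entirely.
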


\begin{proof}  We will consider only $\{A(k,r)\}$ since the argument for $\{B(k,r)\}$ is similar.
Our sets are all finite.  Also, any $A(k,r_1) \subset A(k+1,r_2)$ if $1\le r_1\le k$ and $1 \le r_2 \le k+1$,
 and $A(k,r_1)\subset A(k,r_2)$ if $1 \le r_1 \le r_2 \le k$.  Because these sets are unions of blocks $[u_i+1,u_i+a_i]$,
 and an intermediate block, with the lengths $a_i \to \infty$ as $i\to \infty$, it is clear that the sequence
 $(A(k,r))$ satisfies the F\o{}lner condition.  So the only condition remaining is
the fact that there is a constant M so that $\#( A(k,r) - A(k,r))  \le M \# A(k,r)$ for all $k$ and $r$.

Here we have
\begin{align*}
 A(k,r) - A(k,r) &=  \bigcup_{i=1}^{k-1} A_i \cup [u_k+1,u_k + r]-
               \bigcup_{i=1}^{k-1} A_i \cup [u_k+1,u_k + r] \\
 &= \left( \bigcup_{i=1}^{k-1} A_i - \bigcup_{i=1}^{k-1} A_i \right) \cup \left( \bigcup_{i=1}^{k-1} A_i - [u_k+1,u_k + r] \right)\\
 &\cup \left([u_k+1,u_k + r] -\bigcup_{i=1}^{k-1} A_i \right) \cup [u_k+1,u_k + r] - [u_k+1,u_k + r].
\end{align*}

So, as for the size of the set of differences, we are left with
\begin{align*}
 \# (A(k,r) - A(k,r)) &\leq  \#\left (\bigcup_{i=1}^{k-1} A_i - \bigcup_{i=1}^{k-1} A_I\right) +
 2 \# \left( \bigcup_{i=1}^{k-1} A_i - [u_k+1,u_k + r]\right) + 2r - 1.\\
\end{align*}

By Lemma 3.2 of \cite{BL}, we know that $\bigcup\limits_{i=1}^{k-1} A_i$ satisfies all of the requirements of Tempelman's Theorem.
In particular, this means there is then a constant $M_1$ so that
\begin{equation}\label{blockdifference}
 \#\left( \bigcup_{i=1}^{k-1} A_i - \bigcup_{i=1}^{k-1} A_i \right) \le M_1 \#\left( \bigcup_{i=1}^{k-1} A_i \right) = M_1 \sum_{i=1}^{k-1} a_i.
\end{equation}\\

Combining the assumptions above with our estimate, we then find that there is a constant $M_2$ so that
\begin{equation}\label{lk-1bound}
 \#\left ( \bigcup_{i=1}^{k-1} A_i - \bigcup_{i=1}^{k-1} A_i \right) \le M_2a_{k-1}.
\end{equation}\\

We now will turn our attention to $\#\left( \bigcup\limits_{i=1}^{k-1} A_i - [u_k+1,u_k + r] \right)$.\\

\begin{align*}
 &\bigcup_{i=1}^{k-1} A_i - [u_k+1,u_k + r]\\
 &=\left(\bigcup\limits_{i=1}^{k-2} A_i - [u_k+1, u_k + r] \right) \cup \left(A_{k-1} -[u_k+1,u_k + r] \right)\\
 &\subseteq [-u_k-r,\,u_{k-2} + a_{k-2}-u_k-1] \cup [u_{k-1}-u_k -r,\,u_{k-1} + a_{k-1} - u_k].\\
\end{align*}

Looking at the lengths of these intervals, we find that
\begin{equation*}
 \#\left( \bigcup_{i=1}^{k-1} A_i - [u_k+1,u_k + r] \right) \leq u_{k-2} + a_{k-2} + a_{k-1} + 2r +1.\\
\end{equation*}\\

But from our requirement relating the growth of $(u_k)$ and $(a_k )$, we know that there is a nonzero constant $M_3$ so that
\begin{equation*}
  u_{k-2} + a_{k-2} \leq M_3a_{k-1}.
\end{equation*}\\

So we have that
\begin{equation}\label{middle}
  \#\left( \bigcup_{i=1}^{k-1} A_i - [u_k,u_k + r] \right)
  \leq (M_3+1)a_{k-1} + 2r +1.
\end{equation}\\

Combining the inequalities \ref{lk-1bound} and \ref{middle}, we find that there are constants $M_4$ and $M_5$ so that
\begin{equation*}
\# \left( A(r,k) - A(r,k) \right) \leq M_4a_{k-1} + M_5r.
\end{equation*}\\

Now, suppose $r \leq a_{k-1}$. Then we have that
\begin{equation*}
\# \left( A(k,r) - A(k,r) \right) \leq \left( M_4+M_5 \right)a_{k-1}.
\end{equation*}

But $\#A(k,r)$ must be more than $a_{k-1}$ since $A(k,r)$ contains the ${k-1}$ block.  So
\[\# \left( A(k,r) - A(k,r) \right) \leq (M_4+M_5)\#A(k,r).\]

If, on the other hand, we have $r > a_{k-1}$, then
\begin{equation*}
 \#\left( A(k,r) - A(k,r) \right) < \left( M_4 + M_5 \right) r.
\end{equation*}

But $\# A(k,r) $ is more than $r$  because $A(k,r)$ contains the set
$[u_k+1,u_k + r]$.  So again
\[\# \left( A(k,r) - A(k,r) \right) \leq (M_4+M_5)\#A(k,r).\]

\end{proof}
\bigskip

We now want to put the sequences $\{A(k,r)\}$ and $\{B(k,r)\}$ back together.  Suppose we write $\bigcup\limits_{(k,r)}A(k,r) = \{s_m\}$
and $\bigcup\limits_{(k,r)} B(k,r) = \{t_n\}$ where $\mathbf s = \{s_m\}$ and $\mathbf t =\{t_n\}$ are increasing sequences.  Then take commuting maps $S$ and $T$
and consider the two variable averages
\[\mathcal A_{(M,N)}f = \frac 1{MN}\sum\limits_{m=1}^M\sum\limits_{n = 1}^Nf\circ S^{s_m}\circ T^{t_n}.\]

\begin{prop} The sequence $\left\{ \mathcal A_{(N,N)} \right\}$ has a maximal function that is weak $(1,1)$ and strong $(p,p)$.  Hence, for
an ergodic $\mathbb Z^2$ action, for all $f \in L^1(X)$,  we have $\lim\limits_{N \to \infty} \mathcal A_{(N,N)}f(x) = \int_X f\, dm$. 
\end{prop}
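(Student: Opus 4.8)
\emph{Proof sketch.} The plan is to recognize the diagonal averages $\mathcal A_{(N,N)}$ as averages over a single nested F\o lner sequence of subsets of $\Z^2$ — namely the Cartesian products of the one–dimensional intermediate sets — and then to invoke Tempelman's Ergodic Theorem (Theorem~\ref{TempelmanThm}) together with the maximal inequalities that accompany it. The first task is to identify the partial sums $\{s_1,\dots,s_N\}$ and $\{t_1,\dots,t_N\}$. Since $A(k,r)\subseteq A(k,r')$ for $r\le r'$ and $A(k,a_k)=A(k)\subseteq A(k+1)$, we have $\bigcup_{(k,r)}A(k,r)=\bigcup_{i}[u_i+1,u_i+a_i]$, and the well--spacedness hypothesis~\ref{wellspaced} makes these blocks pairwise disjoint and listed in increasing order. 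Hence the increasing enumeration $\{s_m\}$ runs through block $1$, then block $2$, and so on, and $\{s_1,\dots,s_N\}=A(k,r)$ exactly when $N=\#A(k,r)=\big(\sum_{i<k}a_i\big)+r$; as $(k,r)$ ranges over all admissible pairs this value of $N$ runs over all of $\N$. The identical statement holds for $\{t_1,\dots,t_N\}$ and the sets $B(\ell,q)$. Writing $A_N:=\{s_1,\dots,s_N\}$, $B_N:=\{t_1,\dots,t_N\}$ and $F_N:=A_N\times B_N$, we thus have $\#F_N=N^2$, the $A_N$ and $B_N$ are intermediate sets of the kind handled in Proposition~\ref{TempReg}, and
\[
\mathcal A_{(N,N)}f=\frac1{\#F_N}\sum_{\vec n\in F_N}f\circ\mathcal T(\vec n),\qquad \mathcal T(n_1,n_2)=S^{n_1}T^{n_2}.
\]

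Next I would check that $\{F_N\}$ satisfies the hypotheses of Theorem~\ref{TempelmanThm}. It is nested because each factor is: $A_N\subseteq A_{N+1}$ and $B_N\subseteq B_{N+1}$. It is a F\o lner sequence because a Cartesian product of F\o lner sequences is again F\o lner, and by Proposition~\ref{TempReg} both $\{A_N\}$ and $\{B_N\}$ are F\o lner. It satisfies the Tempelman Condition~\ref{TempelmanC} with constant $C^2$: in $\Z^2$ one has $F_N-F_N=(A_N-A_N)\times(B_N-B_N)$, so Proposition~\ref{TempReg} gives
\[
\#(F_N-F_N)=\#(A_N-A_N)\cdot\#(B_N-B_N)\le C^2\,\#A_N\,\#B_N=C^2\,\#F_N.
\]
Therefore Tempelman's theorem applies: $\mathcal A_{(N,N)}f$ converges a.e.\ for every $f\in L^1(X)$, and (again by \cite{T}) the maximal operator $\sup_N|\mathcal A_{(N,N)}f|$ is of weak type $(1,1)$ and of strong type $(p,p)$ for $1<p<\infty$. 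For an ergodic $\Z^2$–action the only invariant functions are the constants, so the a.e.\ limit is $\int_X f\,dm$.

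The step that needs care — and the reason the proposition is not vacuous — is the reduction above. One should resist bounding $\sup_N|\mathcal A_{(N,N)}f|$ by the composition of the two one--dimensional maximal operators: that composition dominates it pointwise and yields the strong $(p,p)$ bound immediately, but a composition of two weak $(1,1)$ operators need not be weak $(1,1)$ (this is the familiar failure of the strong maximal function at $L^1$), so it cannot give the endpoint. The point is instead that the diagonal constraint $M=N$ produces not a two--parameter family but a single honest nested F\o lner sequence $\{F_N\}$ in $\Z^2$ whose Tempelman constant is $C^2$, after which the result is an immediate application of Theorem~\ref{TempelmanThm}. (The hypotheses \ref{wellspaced}, \ref{growingblocks}, \ref{regularity} are exactly what make this go through: \ref{wellspaced} yields the clean block--by--block enumeration used above, while \ref{growingblocks} and \ref{regularity} were what Proposition~\ref{TempReg} needed to produce the uniform Tempelman constant $C$ for the one--dimensional factors.)
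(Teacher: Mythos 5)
Your argument is correct and is essentially the paper's own proof: both identify each diagonal average $\mathcal A_{(N,N)}$ with an average over a product set $A(k_1,r_1)\times B(k_2,r_2)$, observe that the diagonal restriction makes these nested F\o lner sets, and verify the Tempelman Condition via $\#\bigl((A\times B)-(A\times B)\bigr)=\#(A-A)\,\#(B-B)\le C^2\,\#(A\times B)$ using Proposition~\ref{TempReg}, before invoking Theorem~\ref{TempelmanThm}. Your additional remarks (the explicit enumeration matching $N$ to the intermediate sets, and the warning against composing the two one-dimensional weak $(1,1)$ maximal operators) are correct elaborations of points the paper leaves implicit, but do not change the method.
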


\begin{proof}  
Each $A_{(N,N)}$ corresponds to an average over a set of the form $A(k_1,r_1)\times B(k_2,r_2)$.
Because we have taken $M=N$ here, these sets are nested.  Also, these sets satisfy the F\o{}lner condition.
So, we obtain our result from Tempelman's Theorem \cite{T} if we have  a constant $C_o$ such that
 \[\#\left (A(k_1,r_1)\times B(k_2,r_2) - A(k_1,r_1)\times B(k_2,r_2)\right) \le C_o \#\left (A(k_1,r_1)\times B(k_2,r_2)\right ).\]
 But

\begin{eqnarray*}&&\#\left(A(k_1,r_1)\times B(k_2,r_2)-A(k_1,r_1)\times B(k_2,r_2)\right)\\
&&\,\,\,\,=\#\left((A(k_1,r_1)- A(k_1,r_1))\times (B(k_2,r_2)-B(k_2,r_2))\right) \\
&&\,\,\,\,=\#(A(k_1,r_1)- A(k_1,r_1))\,\#(B(k_2,r_2)-B(k_2,r_2))\\
&&\,\,\,\,\le C^2 \#A(k_1,r_1)\#B(k_2,r_2)\\
&&\,\,\,\,=C^2\#\left ( A(k_1,r_1)\times B(k_2,r_2)\right).
\end{eqnarray*}

\end{proof}

The result above is gives a two variable ergodic theorem with averaging over sets that have density zero along all horizontal and vertical lines in $\mathbb Z^2$. 

We would like to turn this block method into a sequence method as appears in Theorem~\ref{specseq} and Corollary~\ref{plaidseq}.
But we do not yet know how to amalgate enumerations of the supports of $\mathcal A_{(N,N)}$ to achieve this.

We would like to know more about integrability of the maximal function $\sup\limits_{N \ge 1} |\mathcal A_{(N,N)}f|$.  But first we
have this basic question.
\medskip
 
\begin{question}  
Do the maximal functions of $\mathcal A(k,r)f$ and $\mathcal B(k,r)$ map $L\log L$ to $L^1$?
\end{question}

It seems plausible that the answer to this question is affirmative because of the following fact:

\begin{prop}
The maximal functions for $\mathcal A_kf$ and $\mathcal B_kf$ map $L\log L$ to $L^1$.
\end{prop}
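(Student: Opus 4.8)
The plan is to deduce this from two facts already in hand: the weak $(1,1)$ maximal inequality for the operators $\mathcal A_k$ (valid by the Cone Condition quoted above, with a constant $C$ that does not depend on $k$), and the trivial bound $\|\mathcal A_k g\|_\infty\le\|g\|_\infty$, which holds because each $\mathcal A_k$ is an average and hence passes to the maximal operator $Mf:=\sup_k|\mathcal A_k f|$. These are exactly the hypotheses of Stein's $L\log L$ extrapolation argument on a \emph{finite} measure space, and $(X,\F,m)$ is a probability space, so the argument applies verbatim. I will treat only $\mathcal A_k$, the case of $\mathcal B_k$ being identical, and I may assume $f\ge 0$ since $|\mathcal A_k f|\le\mathcal A_k|f|$.

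First I would write, via the layer-cake formula and $m(X)=1$,
\[
\int_X Mf\,dm=\int_0^\infty m\bigl(\{Mf>\lambda\}\bigr)\,d\lambda\le 1+\int_1^\infty m\bigl(\{Mf>\lambda\}\bigr)\,d\lambda .
\]
For each $\lambda>1$ I would split $f=g_\lambda+b_\lambda$ with $g_\lambda=f\,\1_{\{f\le\lambda/2\}}$ and $b_\lambda=f\,\1_{\{f>\lambda/2\}}$. Since $\|g_\lambda\|_\infty\le\lambda/2$, the sup-norm bound gives $Mg_\lambda\le\lambda/2$ everywhere, so by subadditivity $\{Mf>\lambda\}\subseteq\{Mb_\lambda>\lambda/2\}$, and the weak $(1,1)$ inequality yields $m(\{Mf>\lambda\})\le \tfrac{2C}{\lambda}\int_{\{f>\lambda/2\}}f\,dm$.

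Integrating this in $\lambda$ over $(1,\infty)$ and swapping the order of integration (Fubini), the inner integral $\int_1^\infty\lambda^{-1}\1_{\{f(x)>\lambda/2\}}\,d\lambda$ equals $\log^+(2f(x))$, so that
\[
\int_1^\infty m\bigl(\{Mf>\lambda\}\bigr)\,d\lambda\le 2C\int_X f\,\log^+(2f)\,dm\le 2C(\log 2)\,\|f\|_1+2C\int_X f\,\log^+ f\,dm ,
\]
where I used $\log^+(2t)\le\log 2+\log^+ t$. Combining, $\|Mf\|_1\le 1+2C(\log 2)\|f\|_1+2C\|f\log^+ f\|_1$, which (recalling $L\log L\subset L^1$ on a probability space) is finite precisely when $f\in L\log L$; this is the desired $L\log L\to L^1$ bound.

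I do not anticipate a substantive obstacle: the two inputs (uniform-in-$k$ weak type, and a probability space) are both available, and the remainder is the standard extrapolation bookkeeping, including the elementary $\log^+(2t)\le\log 2+\log^+ t$ step. (Alternatively one could sum $\|\mathcal A_k f\|_p^p$ using the strong $(p,p)$ bounds, but that is not needed.) The genuinely delicate relative — and the reason the preceding Question is open — is the analogue for the maximal operators $\mathcal A(k,r)$ and $\mathcal B(k,r)$, whose averaging sets are unions of many blocks: there the Calder\'on--Zygmund truncation above does not obviously close, since the ``good part'' is no longer controlled by a single average, and one would need a more careful analysis exploiting the block structure and the spacing hypotheses \ref{wellspaced}--\ref{regularity}.
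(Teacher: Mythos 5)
Your proof is correct, but it follows a genuinely different route from the paper's. You run the standard Calder\'on--Zygmund/Stein extrapolation directly on the maximal operator $Mf=\sup_k|\mathcal A_kf|$: the two inputs are the uniform weak $(1,1)$ bound (already quoted in this section as a consequence of the Cone Condition) and the trivial bound $\|Mg\|_\infty\le\|g\|_\infty$, and the height truncation at $\lambda/2$ together with the layer-cake integration on the probability space closes the argument; the bookkeeping, including $\log^+(2t)\le\log 2+\log^+t$, is right. The paper instead proves the stronger statement that $\mathcal A_k^*$ is dominated \emph{in distribution} by the classical two-sided maximal function $f^*$, namely $\#\{j:\mathcal A_k^*\phi(j)>2\lambda\}\le C\,\#\{j:\phi^*(j)>\lambda\}$, by reusing the covering sets $(B_i)$ from the Cone Condition argument of Bellow--Jones--Rosenblatt; after Calder\'on transference it then simply cites the classical $L\log L\to L^1$ bound for $f^*$ and sums the distribution function. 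Your version is more self-contained, since it needs only the weak-type inequality and not the finer comparison with $f^*$; the paper's version buys more, because distributional domination by $f^*$ immediately yields boundedness on any rearrangement-invariant space on which $f^*$ is bounded, which is relevant to the Orlicz-space questions raised just afterward. Two minor remarks: your final estimate carries the additive constant $1$ from the range $\lambda\in(0,1]$ and so is not homogeneous in $f$, whereas the paper states the conclusion in the form $C\int_X|f|(1+\log^+|f|)\,dm$ --- this is cosmetic and does not affect the mapping statement on a probability space; and your closing diagnosis of why the truncation argument does not settle the question for $\mathcal A(k,r)$ is consistent with the paper's leaving that case as an open question.
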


\begin{proof} 
We prove this result just for  $\mathcal A_k^*f$, the maximal function of $\mathcal A_kf$,because the proof for  $\mathcal B_kf$ is identical.
We refer to the notation and argument in the proof of Theorem 1, (a) in \cite{BJR}.
Here $C$ denotes an absolute constant, but not necessarily the same constant throughout.  Let $f^* = \sup\limits_{k\ge \infty}\frac 1{2k+1}\sum\limits_{j=-k}^{k} |f(T^jx)|$, a two sided version of the classical maximal function.
The classical result for $f \in L\log L$ gives the inequality $\|f^*\|_1 \le C\int_X |f|(1+\log^+ |f|)\, dm$.

Given the Cone Condition, there is a constant $C$ such that for all $\phi \in l_1(\mathbb Z)$,
 \[\#\{j: \mathcal A_k^*\phi(j) > 2\lambda\} \le C\,\#\{j: \phi^*(j) > \lambda\}.\]
This can be seen, with a slight
change of notation so as to be consistent with the notation here, by using the
$(B_i)$, defined on p. 45 in \cite{BJR}, and the inequality $\#\{j|M_{\Omega}\phi(j) > 2\lambda \}
\le C\,|\cup B_i|$, on p. 46 in \cite{BJR}.

By Calder\'on's transfer principle, this gives
\[m\{x:  \mathcal A_k^*f(x) > 2\lambda\} \le C\,m\{x: f^*(x) > \lambda\}.\]  

But then we have
\begin{eqnarray*} \|\mathcal A_k^*f\|_1 &\le& C\sum\limits_{n=1}^\infty   m\{x: \mathcal A_k^*f > 2n\}\\
&\le& C\, \sum\limits_{n=1}^\infty   m\{x: f^* > n\}\\
&\le& C\, \|f^*\|_1 \\
&\le& C\int_X |f|(1+\log^+ |f|)\, dm.
\end{eqnarray*}
\end{proof}

\begin{remark} 
More generally, one might ask how the maximal functions of $\mathcal A(k,r)f$ and $\mathcal B(k,r)f$
behave on general Orlicz spaces.  In particular, for which Orlicz functions $\Psi_1$ and $\Psi_2$ do these
maximal functions map $\Psi_1(L)$ to $\Psi_2(L)$?
\end{remark}

\begin{question} 
 Does the maximal function of $(A_{(N,N)}f)$ map $L\log^2 L$ to $L^1$?
This seems the correct choice of the domain for this result because the maximal function over the first variable
should be mapping $L\log^2 L$ to $L\log L$.
\end{question}

To obtain pointwise convergence results on $L^1(X)$, we needed to restrict our
two variable averages $\mathcal A(M,N)$ to just using  $\mathcal A_{(N,N)}f$.  While any nested sequence of rectangles would serve as well, there is a good reason to avoid a sequence of sets in which the side lengths are unrelated. 

\begin{prop} 
Suppose $S$ and $T$ commute and generate a free ergodic action.  Then
there exists a function $f \in L^1(X), f \ge 0$,
such that $\sup\limits_{(M,N)}A_{(M,N)}f = \infty$ a.e.
\end{prop}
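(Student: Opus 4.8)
The plan is to transfer a known $\Z^2$ divergence phenomenon — the ergodic-theoretic analogue of Saks's theorem that the strong maximal operator fails the weak $(1,1)$ inequality — to the sparse family $\{A(k,r)\times B(k',r')\}$, and then amalgamate. The amalgamation step is routine: it suffices to produce, for each $j$, a function $g_j\ge 0$ with $\|g_j\|_1\le 2^{-j}$ and $m\{x:\sup_{M,N}A_{(M,N)}g_j(x)\ge 1\}\ge 1-2^{-j}$. Indeed, since each $A_{(M,N)}$ is a positive linear operator, $f:=\sum_j 2^{j/2}g_j$ lies in $L^1(X)$ and satisfies $\sup_{M,N}A_{(M,N)}f\ge 2^{j/2}\sup_{M,N}A_{(M,N)}g_j$ for every $j$; since $\sum_j 2^{-j}<\infty$, Borel--Cantelli gives that the sets $\{\sup_{M,N}A_{(M,N)}g_j\ge 1\}$ have full-measure $\limsup$, whence $\sup_{M,N}A_{(M,N)}f=\infty$ a.e.

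To build $g_j$ I would use transference via the multidimensional Rokhlin lemma, which applies since the $\Z^2$-action is free (hence aperiodic). For $P,Q$ large fix a Rokhlin tower of shape $[0,P)\times[0,Q)$ whose base has measure at least $(1-2^{-j})/(PQ)$. A nonnegative function $\phi$ supported on $[0,P)\times[0,Q)$, transplanted onto the tower, has the property that $A_{(M,N)}$, evaluated at a tower point lying over a base point with coordinates in $[0,P/2)\times[0,Q/2)$ and for $M,N$ with $s_M<P/2$, $t_N<Q/2$ (so the averaging set does not leave the tower), equals the corresponding discrete translate-average of $\phi$. So it is enough to construct, for $P,Q$ large, a $\phi\ge 0$ on $\Z^2$ with $\|\phi\|_{\ell^1}\le 2^{-j}PQ$ whose discrete maximal average over translates of the sets $A(k,r)\times B(k',r')$ (restricted to scales small relative to $P,Q$) exceeds $1$ off a $2^{-j}$-proportion of $[0,P/2)\times[0,Q/2)$.

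The key structural point turns this into a question about ordinary rectangles. Since $\sum_{l<k}a_l=o(a_k)$ by Equation~\ref{growingblocks} and $a_k\to\infty$, for any fixed $\delta>0$ and $r\in[\delta a_k,a_k]$ one has $\#A(k,r)=(1+o(1))\,r$ as $k\to\infty$, and likewise for $B$; because all summands are nonnegative,
\[
\frac{1}{\#A(k,r)\,\#B(k',r')}\sum_{(i,j)\in A(k,r)\times B(k',r')}\phi(x+(i,j))\ \ge\ \frac{1+o(1)}{r\,r'}\sum_{i=u_k+1}^{u_k+r}\sum_{j=v_{k'}+1}^{v_{k'}+r'}\phi\bigl(x+(i,j)\bigr).
\]
Hence the discrete maximal operator at hand dominates, up to a factor $1+o(1)$, the maximal operator over axis-parallel rectangles of dimensions $r\times r'$ with $r\in\bigcup_k[\delta a_k,a_k]$ and $r'\in\bigcup_{k'}[\delta b_{k'},b_{k'}]$ (the position of the rectangle is immaterial, as $x$ is free). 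These scale sets are unbounded, so the rectangles have unbounded eccentricity in both coordinate directions, and it suffices to exhibit a nonnegative $\phi$ of small normalized $\ell^1$-mass whose maximal average over such rectangles is $\ge 1$ off a small-density set. This is the discrete analogue of Saks's construction: one takes $\phi=\sum_m 2^m\mathbf 1_{E_m}$, where each $E_m$ is a very sparse union of rectangles that are thin — alternately in the first and the second coordinate — so that $\|\phi\|_{\ell^1}$ stays small while a sufficiently eccentric rectangle through a typical lattice point swallows a fixed proportion of the mass of some $E_m$. (In the admissible tightly packed instance $u_{k+1}=u_k+a_k$, $v_{k+1}=v_k+b_k$, the sets $A(k,r)$ and $B(k',r')$ are genuine initial intervals, so one gets \emph{every} rectangle $[1,\ell_1]\times[1,\ell_2]$ exactly and the required input is just the classical failure of the two-parameter rectangular ergodic theorem on $L^1$.)

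I expect the main obstacle to be this last step: carrying out the Saks-type construction when the admissible side lengths are confined to the sparse sets $\bigcup_k[\delta a_k,a_k]$ and $\bigcup_{k'}[\delta b_{k'},b_{k'}]$ rather than a full range of scales. The chaining of eccentric rectangles that forces the maximal function to be infinite must be organized around the available scales $a_k$, $b_{k'}$ directly, and since these may grow arbitrarily fast this needs some care — but unbounded eccentricity in both directions is exactly what the argument requires. The transference and amalgamation steps are, by contrast, standard.
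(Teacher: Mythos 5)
Your overall architecture (transfer to convolution operators on $\Z^2$, produce a quantitative divergence there, then amalgamate) is reasonable, and the transference and amalgamation steps are indeed routine; the paper gets to the same place faster by invoking Sawyer's principle, which converts a.e.\ finiteness of the maximal function on all of $L^1$ into a weak $(1,1)$ inequality that one then only has to contradict. The trouble is that the step you yourself flag as the main obstacle --- a Saks-type construction for rectangles whose side lengths are confined to $\bigcup_k[\delta a_k,a_k]$ and $\bigcup_{k'}[\delta b_{k'},b_{k'}]$ --- is exactly where the content of the proposition lives, and it is not carried out. Worse, your reduction to \emph{geometric} rectangles discards the information that makes the problem easy. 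For super-lacunary scales the natural single-mass test fails: with $\phi=\delta_{(0,0)}$, the level set $\{M\phi>1/n\}$ is the union of admissible rectangles $[0,\ell_1)\times[0,\ell_2)$ with $\ell_1\ell_2\le n$, and when, say, $a_k=b_k=2^{2^k}$, no integer has more than one representation as $2^k+2^{k'}$, so this staircase collapses essentially to its single largest box and has cardinality $O(n)$ --- no contradiction with a weak $(1,1)$ bound. One is then forced into the full rare-basis construction (Stokolos's sharp $L\log L$ lower bound for bases of rectangles with restricted eccentricities), a nontrivial theorem in its own right that you would have to prove or import; in addition, your amalgamation needs the quantitative form ``mass $\le 2^{-j}$, level set of measure $\ge 1-2^{-j}$,'' which requires a further Sawyer-type argument beyond bare failure of weak $(1,1)$.

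The paper avoids all of this by never passing to rectangles. After transferring, it tests the putative weak $(1,1)$ inequality on $\phi=\delta_{(0,0)}$ and observes that $A_{(r,s)}\phi$ picks up the unit mass at the witness point determined by $(s_r,t_s)$ with weight at least $\frac1{rs}$; hence the level set $\{\sup_{(M,N)}|A_{(M,N)}\phi|>\frac1n\}$ contains $\#\{(r,s):rs\le n\}\ge cn\log n$ distinct points, contradicting the bound $Cn\|\phi\|_1$. The count here is over the \emph{index} pairs $(M,N)$, which always yields the divisor-function growth $n\log n$ no matter how sparse or irregular the sequences $\{s_m\}$ and $\{t_n\}$ are; your passage to physical rectangles replaces this index count by the area of a union of rectangles, which is precisely the quantity that degenerates when the scales are sparse. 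To salvage your route you must either supply the Stokolos-type construction for rare bases, or --- much more simply --- count index pairs against a single delta mass as the paper does.
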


\begin{proof}
Otherwise, by Sawyer's principle (see Sawyer, \cite{Sawyer}),
there is a weak inequality
\[m\{x\in X:\sup\limits_{(M,N)} |A_{(M,N)}f| >
\lambda\} \le \frac C{\lambda}\|f\|_1\]
for all $f \in L^1(X)$.
Now for $\phi \in \ell^1(\mathbb Z^2)$, let
\[A_{(M,N)}\phi(i,j)
=\frac 1{MN}\sum\limits_{m=1}^M\sum\limits_{n=1}^N\phi(s_m+i,t_n+j).\]
Because the action is a free ergodic action, our weak inequality transfers
to a weak inequality of the form
\[\#\{(i,j)\in \mathbb Z^2: \sup\limits_{(M,N)}|A_{(M,N)}\phi(i,j)| > \lambda\}
\le \frac C{\lambda}\|\phi\|_1\]
for all $\phi \in \ell^1(\mathbb Z^2)$.
Now take the function $\phi =\delta_{(0,0)}$.  We then
consider the set $E_n = \{(s_r,t_s) :rs\le n\}$.  Given
any $(i,j) \in E_n$, we have
$\sup\limits_{(M,N)} A_{(M,N)}\phi(i,j) \ge \frac 1{rs} \ge \frac 1n$.
Hence, with $\lambda = \frac 1n$, we have
\begin{eqnarray*}
CL &=&\frac C{1/n}\|\phi\|_1 \\
&\ge&  \#\{(i,j)\in \mathbb Z^2: \sup\limits_{M<N} A_{(M,N)}\phi(i,j) > \frac 1n\}\\
&\ge& \#E_n \\
&=&\#\{(s_r,t_s): rs \le n\}\\
&\ge& cn\log n.
\end{eqnarray*}
Letting $n$ tend to infinity gives a contradiction.
\end{proof}

\begin{remark} 
The divergence of the maximal function precludes there being a pointwise a.e. convergence
result on $L^1(X)$.  Also, notice here that it is not necessary to transfer the inequality,
just cleaner to state the idea. One could just work with large square
Rokhlin towers of height and width $n$ constructed within $X$ and use
$f = 1_B$, where $B$ is the base of the Rokhlin tower.   A similar
argument to the last string of inequalities will lead to a
contradiction as $n \to \infty$.
\end{remark}

Suppose now instead that we have an Orlicz space $\Psi(L)$ with
$\Psi$ some regular Orlicz function.
Both Stein \cite{Stein} and Sawyer \cite{Sawyer} give
useful results concerning maximal inequalities for Orlicz spaces.
We will need to have a regular
Orlicz function i.e. one such that $\|\Psi(|f|)\|_1$
and the norm $\|f\|_{\Psi(L)}$ are proportional to one
another.   Besides the usual properties of Orlicz functions,
regularity means that for some constant $K$, we have
$\Psi(2x) \le K\Psi(x)$ for all $x > 0$.

We can use the results of \cite{Sawyer} to prove the following.

\begin{prop} \label{allornone}
 Suppose $S$ and $T$ generate a free ergodic commuting
action.  If
\[\sup\limits_{(M,N)} |A_{(M,N)}f| < \infty\]
for all
$f \in \Psi(L)$, then there exists a constant $C$ such that
for all $f \in \Psi(L)$, we have for all $\lambda > 0$,
\[ m\{x: \sup\limits_{(M,N)} |A_{(M,N)}f| \ge \lambda\}
\le C \|\Psi(\frac 1{\lambda}|f|)\|_1.\]
Otherwise, for a residual set of functions $f \in \Psi(L)$, we have
$\sup\limits_{(M,N)} |A_{(M,N)}f| = \infty$ a.e.
\end{prop}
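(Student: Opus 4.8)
The plan is to recognize this as the Orlicz-space avatar of Sawyer's zero--one dichotomy for maximal operators (\cite{Sawyer}; cf.\ \cite{Stein}): check that the family $\{A_{(M,N)}\}$ meets its hypotheses, then use the ergodicity of the action to turn ``divergence somewhere'' into ``divergence almost everywhere.'' Write $A^*f=\sup_{(M,N)}|A_{(M,N)}f|$. Two structural facts are needed. Since $A_{(M,N)}f=\frac1{MN}\sum_{m=1}^M\sum_{n=1}^N f\circ S^{s_m}\circ T^{t_n}$ is a convex average of the $\Psi(L)$-isometries $h\mapsto h\circ S^{s_m}\circ T^{t_n}$, each $A_{(M,N)}$ is a contraction of $\Psi(L)$; thus $\{A_{(M,N)}:(M,N)\in\N^2\}$ is a countable family of continuous linear operators from $\Psi(L)$ into $L^0(X)$. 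Moreover, as $S$ and $T$ commute, each $A_{(M,N)}$ commutes with composition by any element $g$ of the group they generate, so $A^*(f\circ g)=(A^*f)\circ g$; hence $\{x:A^*f(x)=\infty\}$ is invariant under the (ergodic) action and so has measure $0$ or $1$. In particular, for every $f\in\Psi(L)$ either $A^*f<\infty$ a.e.\ or $A^*f=\infty$ a.e.

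Suppose first that $A^*f<\infty$ a.e.\ for every $f\in\Psi(L)$. By the Banach / Nikishin--Stein maximal principle, $f\mapsto A^*f$ is then continuous from $\Psi(L)$ into $L^0(X)$ with the topology of convergence in measure. Commutation with the ergodic action now permits Sawyer's argument --- averaging the resulting weak bound along long orbit segments and invoking the pointwise ergodic theorem --- which promotes this bare continuity to a genuine weak-type inequality. Since $\Psi$ is regular ($\Psi(2x)\le K\Psi(x)$), the modular $\|\Psi(|f|)\|_1$ is comparable to the norm $\|f\|_{\Psi(L)}$, and substituting $\lambda^{-1}f$ for $f$ in that inequality produces precisely the stated bound $m\{A^*f\ge\lambda\}\le C\|\Psi(\lambda^{-1}|f|)\|_1$. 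This is the first alternative.

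Otherwise there is an $f_0\in\Psi(L)$ with $A^*f_0=\infty$ on a set of positive measure, hence, by the zero--one law, a.e.; we must show $\mathcal G:=\{f\in\Psi(L):A^*f<\infty\text{ a.e.}\}$ is meager. For $f\in\mathcal G$ one has $m\{A^*f>n\}\to0$, so $\mathcal G=\bigcup_{j\ge1}E_j$ with $E_j=\{f:m\{A^*f>j\}\le\tfrac14\}$. Each $E_j$ is closed: if $f_k\to f$ in $\Psi(L)$, pass (by diagonalization, since each $A_{(M,N)}$ is continuous into $\Psi(L)\subset L^0$) to a subsequence along which $A_{(M,N)}f_k\to A_{(M,N)}f$ a.e.\ for every $(M,N)$; then $A^*f\le\liminf_k A^*f_k$ a.e., and Fatou for sets gives $m\{A^*f>j\}\le\liminf_k m\{A^*f_k>j\}\le\tfrac14$. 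Were $\mathcal G$ non-meager, some $E_j$ would contain a ball $B(g,r)$; using $A^*(-h)=A^*h$ and $A^*(h_1+h_2)\le A^*h_1+A^*h_2$ to recenter the ball at the origin yields $m\{A^*h>2j\}\le\tfrac12$ whenever $\|h\|_{\Psi(L)}$ is small enough, and homogeneity together with $\Delta_2$ then bootstraps this into the weak-type inequality of the first case for all $f$. Applied to $f_0$ this gives $m\{A^*f_0=\infty\}=\lim_{\lambda\to\infty}m\{A^*f_0\ge\lambda\}\le\lim_{\lambda\to\infty}C\|\Psi(\lambda^{-1}|f_0|)\|_1=0$ by dominated convergence, contradicting $A^*f_0=\infty$ a.e. Hence $\mathcal G$ is meager, and for every $f$ in its residual complement $A^*f=\infty$ on a positive-measure set, thus a.e.\ by the zero--one law.

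The genuinely delicate point is the step in the first alternative from ``$f\mapsto A^*f$ is continuous into $L^0$'' to the \emph{precise} weak-type inequality with majorant $\|\Psi(\lambda^{-1}|f|)\|_1$: the Nikishin--Stein principle by itself yields only continuity (equivalently, boundedness into some Lorentz-type space with an unspecified gauge), and it is exactly the commutation with the ergodic action --- exploited as in \cite{Sawyer}, now with the regular Orlicz function $\Psi$ in place of a power --- that forces $\Psi$ to be the correct majorant. The remaining ingredients (contractivity of the $A_{(M,N)}$ on $\Psi(L)$, commutation with the action, and the Baire-category bookkeeping) are routine.
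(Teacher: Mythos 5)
Your proposal is correct and follows exactly the route the paper takes: the paper's entire proof is a citation to Theorems 3 and 4 of \cite{Sawyer}, and what you have written is a faithful reconstruction of that argument (zero--one law from ergodicity, the Nikishin--Stein/Banach principle, the Baire-category dichotomy, and Sawyer's orbit-averaging step to obtain the precise $\Psi(\lambda^{-1}|f|)$ majorant). You defer the one genuinely delicate quantitative step to Sawyer's argument, which is precisely what the authors do.
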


\begin{proof} 
See Theorems 3 and 4 in \cite{Sawyer}.
\end{proof}

This gives the following

\begin{prop} Suppose $S$ and $T$ generate a free ergodic commuting
action.  Suppose that $\Psi$ is a regular Orlicz function such that
$\Psi(x) = o(x\log^+ x)$ as $x \to \infty$.  Then there exists $f \in \Psi(L)$
such that $\sup\limits_{(M,N)} |A_{(M,N)}f| = \infty$ a.e.
\end{prop}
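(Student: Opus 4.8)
The plan is to argue by contradiction through the dichotomy of Proposition~\ref{allornone}: I will show that no Orlicz weak-type maximal inequality can hold for the family $\{A_{(M,N)}\}$ once $\Psi(x)=o(x\log^+x)$, which rules out the first alternative of that proposition and hence produces the desired $f$ (in fact a residual set of them). So suppose, for contradiction, that $\sup_{(M,N)}|A_{(M,N)}f|<\infty$ a.e.\ for every $f\in\Psi(L)$. Proposition~\ref{allornone} then supplies a constant $C$ with
\[
m\{x:\ \textstyle\sup_{(M,N)}|A_{(M,N)}f|\ge\lambda\}\ \le\ C\,\big\|\Psi\big(\tfrac1\lambda|f|\big)\big\|_1
\]
for all $f\in\Psi(L)$ and all $\lambda>0$, and it suffices to contradict this for one good choice of $f$ and $\lambda$.

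The test functions will be indicators of bases of large square Rokhlin towers, exactly as indicated in the remark following the $L^1$ version. Fix a large integer $n$, put $\lambda=1/n$, and choose $L>\max(s_n,t_n)$. Since $S$ and $T$ generate a free ergodic $\Z^2$-action, the multidimensional Rokhlin lemma gives, for any $\varepsilon>0$, a set $B\subset X$ whose translates $\{S^aT^bB:|a|,|b|<L\}$ are pairwise disjoint and cover $X$ up to a set of measure $<\varepsilon$; in particular $m(B)\le(2L-1)^{-2}$. Take $f=\1_B\in\Psi(L)$. Because $\Psi(0)=0$, the right-hand side above equals $C\,\Psi(n)\,m(B)$.

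For the left-hand side I would proceed as follows. Fix a pair $(r,s)$ of positive integers with $rs\le n$, fix $y\in B$, and set $x=S^{-s_r}T^{-t_s}y$. For $m\le r$ and $n'\le s$ the exponents $s_m-s_r$ and $t_{n'}-t_s$ lie in $(-L,0]$, so by disjointness of the tower floors the point $S^{s_m}T^{t_{n'}}x=S^{s_m-s_r}T^{t_{n'}-t_s}y$ belongs to $B$ only when $(m,n')=(r,s)$; hence $A_{(r,s)}f(x)=\tfrac1{rs}\ge\tfrac1n$, so $x$ lies in the $\lambda$-level set. Moreover the translates $S^{-s_r}T^{-t_s}B$ for distinct pairs $(r,s)$ with $rs\le n$ are pairwise disjoint, so the level set has measure at least $\#\{(r,s)\in\N^2:rs\le n\}\cdot m(B)$, and the elementary lattice-point bound $\#\{(r,s)\in\N^2:rs\le n\}\ge c\,n\log n$ (the same estimate already used earlier in this section) yields $c\,n\log n\cdot m(B)\le C\,\Psi(n)\,m(B)$. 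Cancelling $m(B)$, letting $\varepsilon\to0$, and then letting $n\to\infty$ contradicts $\Psi(x)=o(x\log^+x)$. Hence no such $C$ can exist; since Proposition~\ref{allornone} asserts that such a $C$ would exist if $\sup_{(M,N)}|A_{(M,N)}f|$ were a.e.\ finite for all $f\in\Psi(L)$, we conclude instead that a residual set of $f\in\Psi(L)$ — in particular at least one — has $\sup_{(M,N)}|A_{(M,N)}f|=\infty$ a.e.

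The step I expect to need the most care is the transference bookkeeping in the second and third paragraphs: the tower must be taken large enough in each coordinate direction that every translation $S^{s_m}T^{t_{n'}}$ occurring for $m\le r\le n$, $n'\le s\le n$ stays inside it — note that $s_r$ and $t_s$ can be enormous compared with $r$ and $s$, so ``height and width $n$'' in the $L^1$ remark is only a heuristic. Since the tower size $L$ cancels from the two sides of the resulting inequality, this is harmless, and no genuine obstacle arises.
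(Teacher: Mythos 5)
Your proposal is correct and follows essentially the same route as the paper: invoke Proposition~\ref{allornone} to reduce to refuting the weak Orlicz inequality, test it on the indicator of the base of a square Rokhlin tower with $\lambda=1/n$, and use the lattice-point count $\#\{(r,s):rs\le n\}\ge cn\log n$ to contradict $\Psi(n)=o(n\log n)$. Your more careful bookkeeping (taking the tower of size $L>\max(s_n,t_n)$ rather than ``height and width $n$'') is a legitimate tightening of the paper's terse statement, but the argument is the same.
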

\begin{proof}  By Proposition~\ref{allornone}, if such an
$f$ did not exist, then we must have the weak inequality in
the proposition.  So take
the function $f = N1_B$ where $B$ is the base of a square Rokhlin tower
of height and width $n$ given by the free action determined by
$S$ and $T$.  We let $\lambda = \frac 1n$.  Then we have
\begin{eqnarray*} 
C\Psi(n) m(B) &=&C\|\Psi(L1_B)\|_1\\
&\ge& m\{x: \sup\limits_{(M,N)} A_{(M,N)}1_B \ge \frac 1n\}\\
&\ge& c n\log n m(B).
\end{eqnarray*}
Letting $n \to \infty$ gives a contradiction.
\end{proof}

\begin{remark}  
There is perhaps of generalization of the above to other
types of averaging besides Ces\`aro averaging.  Indeed, suppose $(\mu_M)$
and $(\nu_N)$ are two uniformly dissipative averaging methods on $\mathbb Z$.
Consider the two variable average $A_{(M,N)}f = \sum\limits_{m\in \mathbb Z}
\sum\limits_{n \in \mathbb Z} \mu_M(m)\nu_N(n) f\circ S^m\circ T^n$.   We
conjecture that for a free ergodic commuting action, there would exist a
function $f \ge 0$ such that $\sup\limits_{(M,N)} A_{(M,N)}f = \infty$.
a.e.  As for positive results in this generality, we do not have results
at this time.
\end{remark}

But in any case we do know that on $L\log L$ the averages $\mathcal A_{(M,N)}$ are 
well behaved.

\begin{prop}  On $L\log L$, and hence on any $L^p(X), 1 < p < \infty$, the averages
$\mathcal A_{(M,N)}f$ converge a.e. for all $f$.
\end{prop}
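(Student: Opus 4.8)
The plan is to combine a (composed) maximal estimate on $L\log L$ with a.e.\ convergence on the dense subclass $L^\infty(X)$, and then conclude by the Banach principle. Since $S$ and $T$ commute we may factor
\[
\mathcal A_{(M,N)}f \;=\; \widetilde{\mathcal A}_M\!\big(\widetilde{\mathcal B}_N f\big),
\qquad
\widetilde{\mathcal A}_M g=\tfrac1M\sum_{m=1}^M g\circ S^{s_m},\quad
\widetilde{\mathcal B}_N h=\tfrac1N\sum_{n=1}^N h\circ T^{t_n}.
\]
Because $\bigcup_k A(k)$, listed in increasing order, is filled up one element at a time exactly through the sets $A(k,r)$ (and likewise for $B(k,r)$), the $M$ smallest elements of $\{s_m\}$ always form some $A(k,r)$, and conversely; hence $\{\widetilde{\mathcal A}_M\}_{M\ge1}=\{\mathcal A(k,r)\}$ and $\{\widetilde{\mathcal B}_N\}_{N\ge1}=\{\mathcal B(k,r)\}$ as families of operators, built from nested F\o lner sequences satisfying the Tempelman Condition (Proposition~\ref{TempReg}). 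By the discussion preceding Proposition~\ref{TempReg} and \cite{T}, the maximal operators $\mathcal A^{\ast}g:=\sup_M\widetilde{\mathcal A}_M|g|$ and $\mathcal B^{\ast}h:=\sup_N\widetilde{\mathcal B}_N|h|$ are weak $(1,1)$ and strong $(p,p)$ for $1<p<\infty$, and $\widetilde{\mathcal A}_Mg\to\E[g|\mathcal I_S]$, $\widetilde{\mathcal B}_Nh\to\E[h|\mathcal I_T]$ a.e.\ for every $g,h\in L^1(X)$, where $\mathcal I_S,\mathcal I_T$ are the $\sigma$-algebras of $S$- and $T$-invariant sets.

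For the maximal estimate: by positivity of the averaging operators, $\sup_{(M,N)}|\mathcal A_{(M,N)}f|\le \mathcal A^{\ast}(\mathcal B^{\ast}f)$. Since $\mathcal B^{\ast}$ is weak $(1,1)$ and a contraction on $L^\infty(X)$, splitting $f$ at height $\lambda/2$ and integrating the distribution function shows $\mathcal B^{\ast}$ maps $L\log L$ boundedly into $L^1(X)$; the weak $(1,1)$ bound for $\mathcal A^{\ast}$ then gives
\[
m\Big\{\sup_{(M,N)}|\mathcal A_{(M,N)}f|>\lambda\Big\}\ \le\ \frac{C}{\lambda}\,\|\mathcal B^{\ast}f\|_{1}\ \le\ \frac{C'}{\lambda}\,\|f\|_{L\log L},
\]
so in particular $\sup_{(M,N)}|\mathcal A_{(M,N)}f|<\infty$ a.e.\ for $f\in L\log L$.

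For a.e.\ convergence on $L^\infty(X)$: fix $f\in L^\infty$ and put $h_N:=\widetilde{\mathcal B}_Nf-\E[f|\mathcal I_T]$, so $h_N\to0$ a.e.\ with $\|h_N\|_\infty\le2\|f\|_\infty$. Decompose
\[
\mathcal A_{(M,N)}f-\E\!\big[\E[f|\mathcal I_T]\,\big|\,\mathcal I_S\big]
\;=\;\widetilde{\mathcal A}_Mh_N\;+\;\Big(\widetilde{\mathcal A}_M\,\E[f|\mathcal I_T]-\E\!\big[\E[f|\mathcal I_T]\,\big|\,\mathcal I_S\big]\Big).
\]
The second term tends to $0$ a.e.\ as $M\to\infty$ by Tempelman's Theorem for $\widetilde{\mathcal A}_M$ applied to the fixed $L^1$ function $\E[f|\mathcal I_T]$. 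For the first term, fix $\varepsilon>0$; since $|h_N|\le\varepsilon+|h_N|\mathbf 1_{\{|h_N|>\varepsilon\}}$ and, for $N\ge N_0$, $|h_N|\mathbf 1_{\{|h_N|>\varepsilon\}}\le\Phi_{N_0}:=\sup_{N\ge N_0}|h_N|\mathbf 1_{\{|h_N|>\varepsilon\}}$, positivity gives $|\widetilde{\mathcal A}_Mh_N|\le\varepsilon+\mathcal A^{\ast}\Phi_{N_0}$ for \emph{all} $M$ and all $N\ge N_0$. Now $h_N\to0$ a.e.\ forces $\Phi_{N_0}\downarrow0$ a.e., and $\|\Phi_{N_0}\|_\infty\le2\|f\|_\infty$, so $\|\Phi_{N_0}\|_1\to0$; hence $m\{\mathcal A^{\ast}\Phi_{N_0}>\varepsilon\}\le\tfrac C\varepsilon\|\Phi_{N_0}\|_1\to0$, and since $\mathcal A^\ast\Phi_{N_0}$ decreases in $N_0$ this yields $\inf_{N_0}\mathcal A^{\ast}\Phi_{N_0}\le\varepsilon$ a.e. Therefore $\limsup_{(M,N)\to\infty}|\widetilde{\mathcal A}_Mh_N|\le2\varepsilon$ a.e.; letting $\varepsilon=1/j\downarrow0$ shows $\widetilde{\mathcal A}_Mh_N\to0$ a.e., so $\mathcal A_{(M,N)}f$ converges a.e.\ (to $\E[\E[f|\mathcal I_T]|\mathcal I_S]$) for every $f\in L^\infty$.

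Finally, given $f\in L\log L$ and $\varepsilon>0$, choose $g\in L^\infty$ with $\|f-g\|_{L\log L}<\varepsilon$. As $\mathcal A_{(M,N)}g$ converges a.e., the oscillation $\limsup_{(M,N)}\mathcal A_{(M,N)}f-\liminf_{(M,N)}\mathcal A_{(M,N)}f$ is bounded a.e.\ by $2\sup_{(M,N)}|\mathcal A_{(M,N)}(f-g)|$, and the maximal estimate bounds $m\{\,\cdot>2\lambda\}$ by $\tfrac{2C'}{\lambda}\|f-g\|_{L\log L}<\tfrac{2C'\varepsilon}{\lambda}$; letting $\varepsilon\downarrow0$ and then $\lambda\downarrow0$ shows the oscillation vanishes a.e., i.e.\ $\mathcal A_{(M,N)}f$ converges a.e. The assertion for $L^p(X)$, $1<p<\infty$, follows since $L^p(X)\subset L\log L$ on a probability space. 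The one genuinely delicate point is the $L^\infty$ step: one cannot simply push $\widetilde{\mathcal A}_M$ through the convergence $\widetilde{\mathcal B}_Nf\to\E[f|\mathcal I_T]$, since $\mathcal A^{\ast}$ is not a.e.-continuous at $0$; the monotone truncation $\Phi_{N_0}$ — which replaces the a.e.-null sequence $h_N$ by a single dominating sequence tending to $0$ in $L^1$ — is exactly what decouples the two indices.
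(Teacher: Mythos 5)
The paper states this proposition without proof, so there is nothing to compare against line by line; your argument is correct, and it is the natural one --- the Dunford--Zygmund scheme adapted to these block averages. The two load-bearing inputs are both available from the paper: the identification of $\{\widetilde{\mathcal A}_M\}$ with $\{\mathcal A(k,r)\}$ (valid because the spacing condition (\ref{wellspaced}) forces the increasing enumeration of $\bigcup_k A(k)$ to fill the sets $A(k,r)$ one element at a time), and the uniform weak $(1,1)$ bounds for both one-parameter maximal operators coming from Proposition~\ref{TempReg} and Tempelman's theorem. Your interpolation step is sound: weak $(1,1)$ plus the trivial $L^\infty$ contraction yields the modular bound $\|\mathcal B^{\ast}h\|_1\le C(1+\int|h|\log^+|h|\,dm)$ by the standard Yano/Zygmund extrapolation on a probability space, and positive homogeneity of $\mathcal B^{\ast}$ upgrades this to the norm bound you use; note that this same observation already gives an affirmative answer to the Question posed earlier in this section about whether the maximal functions of $\mathcal A(k,r)$ and $\mathcal B(k,r)$ map $L\log L$ to $L^1$. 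The $L^\infty$ dense-class step is indeed the only delicate point, and your truncation $\Phi_{N_0}$ handles it correctly: the resulting bound $|\widetilde{\mathcal A}_M h_N|\le\varepsilon+\mathcal A^{\ast}\Phi_{N_0}$ is uniform in $M$, which is precisely what unrestricted convergence (as $\min(M,N)\to\infty$) requires, and the passage from $\|\Phi_{N_0}\|_1\to0$ to $\inf_{N_0}\mathcal A^{\ast}\Phi_{N_0}\le\varepsilon$ a.e.\ correctly uses the monotonicity in $N_0$. The only cosmetic gaps are that the final approximation argument tacitly uses that $L^\infty$ is dense in $L\log L$ and that norm convergence and modular convergence coincide there; both hold because $x(1+\log^+x)$ satisfies the $\Delta_2$ condition (``regular'' in the paper's terminology), but a sentence to that effect should be added.
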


It is clear that the positive results above can be naturally extended to actions of $\mathbb Z^d$ with $d \ge 3$ too.
There will be analogous issues though on which Orlicz spaces are best to use when considering unrestricted mutlivariable
averages.

\subsection{A Divergent Construction}\label{speckledblock}

In the following section $B_r$ will denote the ball of radius $r$ in $\mathbb{Z}^d$, and $B^{+}_r$ will denote that part of the ball consisting of elements all of whose coordinate entries are positive.  Similarly,

\begin{equation*}
   \mathbb{Z}^{d}_+ = \left\{ (a_1, a_2, ..., a_d) \in \mathbb{Z}^d| \,\, a_i \geq 0 \mbox{ for all } i \geq 1 \right\}.
\end{equation*}

Let $I_k$ be a sequence of rectangular prisms in $\mathbb{Z}^{d}_+$ with one corner at the origin, each having diameter $\ell_k$.  We also require that the dimensions $b_1, b_2, ..., b_k$ of each prism satisfy 
\begin{equation*}
   c \leq \frac{b_i}{b_j} \leq C
\end{equation*}
where $c$ and $C$ are positive, absolute constants.  \\

Suppose these diameters satisfy
\begin{equation*}
   \frac{\sum_{i=1}^k \ell_i}{\ell_{k+1}} \rightarrow 0
\end{equation*}
as $k \rightarrow \infty$. \\

Further, suppose we have a sequence of vectors, $\vec{a}_k \in \mathbb{Z}^{d}_+$, so that 
\begin{align*}
   &\left| \vec{a}_{k+1} \right| > \left| \vec{a}_k \right| + \ell_k \mbox{, and}\\
   &\ell_k \geq C \left| \vec{a}_{k-1} \right|,
\end{align*}
where $C$ is some constant independent of $k$. \\

Let $S = \cup_{k} \left(\vec{a}_k + I_k \right)$.\\

We might like to add points to our average in an order depending only on their distance from the origin, i.e. use the sets $\left\{S \cap B_r : r \geq 1 \right\}$. However, since the ball of radius $r$ is very ``flat'' in the directions of the coordinate axes, we will either have to use a different ordering or place restrictions on the locations of blocks.\\

Take, for example, the set in $\mathbb{Z}^2$, $S = \cup_{k} \left(\vec{a}_k + I_k \right)$, where, in addition to the requirements above, we have that 
\begin{enumerate}
 \item $I_k$ are squares with diameters $\ell_k$, \label{1}
 \item $\ell^{2}_k \leq |\vec{a}_k|$, \label{2}
 \item $\frac{\sqrt{2}}{2} \ell_{k+1} >  \left( \sum_{i=1}^k \ell^{2}_i \right)^2$, and \label{3}
 \item $ \vec{a}_k =  (a_k, 0).$ \label{4}
\end{enumerate}

Define the subsequence $\left\{ S \cap B_{r_k} : r_k = \sqrt{|\vec{a}_k|^2 + \frac{1}{2} \ell^{2}_k} \right\}$. This sequence consists of sets comprised of all blocks before the $k$th, and the left face of the $k$th block.\\

Define $f_k(x,y): \mathbb{Z}^2 \rightarrow \mathbb{R}$ by
\begin{equation*}
   f_k(x,y) = \begin{cases}
                 1 & \text{ for } x=a_k, \,\, 0 \leq y \leq \frac{\sqrt2}{2} \ell_k\\	
		 0 & \text{ otherwise.}
              \end{cases}
\end{equation*}
and let $f(x,y) = \sum_{k} f_k(x,y)$.

We then have that the average over $\left\{ S \cap B_{r_k} \right\}$,
\begin{equation*}
   \frac{1}{\# S \cap B_{r_k}} \sum_{(x,y) \in S \cap B_{r_k}} f(x,y) >  \frac{\frac{\sqrt2}{2}\ell_k}{\left(\frac{1}{2} \sum_{i=1}^{k-1} \ell^{2}_i \right) + \frac{\sqrt2}{2}\ell_k}
\end{equation*}
while the average taken at the ends of each block
\begin{equation*}
      \frac{1}{\# S \cap B_{r_s}} \sum_{(x,y) \in S \cap B_{r_s}} f(x,y) =\frac{\sqrt2 \sum_{i=1}^k \ell_i}{ \sum_{i=1}^k \ell^{2}_i}.
\end{equation*}

The first average is larger than $1/2$ while the second tends to $0$; each may be transferred to our measure preserving system. This answers in the affirmative the question of whether there is a sequence of sets for which a subsequence converges while the larger sequence diverges.\\

This example illustrates the interaction between the metric of the group and the measure of the various blocks; it works becuase the additional requirements above allow us to slice off a single face of the block at a time, and the growth requirement allows for these faces to outweigh the measure of the previous blocks. \\

Removing either of these conditions results in a pointwise theorem.  For example, suppose we required our $\vec{a}_k$ to not lie along an axis.  Suppose that our set $S$ is constructed in accord with the earlier requirements alone; that is without conditions \ref{1} through \ref{4}.  We will require instead that
\begin{equation}\label{cornersfirst}
   \vec{a}_k = (a_k, a_k, ..., a_k).
\end{equation}

\begin{prop} 
With requirement (\ref{cornersfirst}), $\left\{S \cap B_r: r \geq 1 \right\}$ is a pointwise $L^1$-good sequence of sets for any aperiodic $\mathbb{Z}^d$ action.
\end{prop}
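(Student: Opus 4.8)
The plan is to establish (a)~a weak $(1,1)$ maximal inequality for the averages
\begin{equation*}
A_r f(x)=\frac1{\#(S\cap B_r)}\sum_{\vec n\in S\cap B_r}f\circ\mathcal T(\vec n)(x),
\end{equation*}
and (b)~a.e.\ convergence of $A_r f$ for $f$ in a dense subclass of $L^1(X)$; the usual density argument then yields a.e.\ convergence for every $f\in L^1(X)$. Constants below may depend on $d$ and on the aspect-ratio constants $c,C$ of the prisms $I_k$, but on nothing else.

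The first ingredient is a description of the averaging sets. Since $\vec a_k\in\Z^d_+$ and $I_k\subset\Z^d_+$ with $\mathrm{diam}\,I_k=\ell_k$, every point of the block $\vec a_k+I_k$ has Euclidean norm in $[\,|\vec a_k|,|\vec a_k|+\ell_k\,]$, and $|\vec a_{k+1}|>|\vec a_k|+\ell_k$ makes these spherical shells pairwise disjoint; hence, for $r$ in the shell of the $k$-th block,
\begin{equation*}
S\cap B_r=U_{k-1}\cup G_k(r),\qquad U_{k-1}:=\bigcup_{i<k}(\vec a_i+I_i),\qquad G_k(r):=(\vec a_k+I_k)\cap B_r,
\end{equation*}
a disjoint union of the finished blocks with the partially swept $k$-th block. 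The whole of the hypothesis $\vec a_k=(a_k,\dots,a_k)$ goes into the claim that each $G_k(r)$ is \emph{fat}: because $\vec a_k$ lies on the main diagonal, the outward radial direction at the block is equally inclined to every coordinate axis, so $G_k(r)$ -- the intersection of the axis-parallel box $\vec a_k+I_k$ (whose corner $\vec a_k$ is the point of the block nearest the origin) with a ball centred at the origin -- is a convex set containing a Euclidean ball of radius $\ge c_1\,\mathrm{diam}(G_k(r))$. In particular $\#G_k(r)$ is comparable to $\mathrm{diam}(G_k(r))^d$, $\#\bigl(G_k(r)-G_k(r)\bigr)\le C_2\#G_k(r)$, and $\#\bigl((G_k(r)+\vec h)\symdif G_k(r)\bigr)\le C_3|\vec h|\,\#G_k(r)/\mathrm{diam}(G_k(r))$ for $\vec h\in\Z^d$. (With $\vec a_k$ on a coordinate axis the radial direction is instead parallel to an edge of $I_k$ and $G_k(r)$ degenerates into a thin union of faces of the block -- exactly the mechanism of the divergent construction above.)

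For the maximal inequality I would transfer to the shift on $\Z^d$ (Calder\'on's transference principle is legitimate for aperiodic actions) and, using $\#U_{k-1},\#G_k(r)\le\#(S\cap B_r)$, bound $\sup_r A_r\phi$, for $\phi\ge0$, by
\begin{equation*}
\sup_k\Bigl(\frac1{\#U_k}\sum_{\vec n\in U_k}\phi(\vec m+\vec n)\Bigr)\ +\ \sup_{k}\ \sup_{r}\ \frac1{\#U_{k-1}+\#G_k(r)}\sum_{\vec n\in G_k(r)}\phi(\vec m+\vec n).
\end{equation*}
The first term is weak type $(1,1)$ by Theorem~\ref{TempelmanThm} applied to the nested F\o lner sequence $\{U_k\}$: grouping the pairs in $U_k-U_k$ by $\max(i,j)=m$, the translates $(\vec a_i+I_i)-(\vec a_m+I_m)$ and their transposes (with the other index varying) have centres $\vec a_i-\vec a_m$ confined to a diagonal segment of length $<|\vec a_{m-1}|\le\ell_m/C$ -- this is where $\ell_m\ge C|\vec a_{m-1}|$ enters -- while each has diameter $O(\ell_m)$, so they lie in one ball of radius $O(\ell_m)$, giving $\#(U_k-U_k)\le C_4\ell_k^d\le C_5\#U_k$. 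The second term is the crux. Block by block it is fine -- for fixed $k$ the sets $\{G_k(r)\}_r$ are nested with uniformly bounded Tempelman constants, and $\#U_{k-1}$ in the denominator only helps -- but the blocks run off to infinity, so one cannot sum the block estimates. I would instead run a Calder\'on--Zygmund/covering argument in the spirit of Bellow--Losert and Bellow--Jones--Rosenblatt: split $\phi=g+\sum_Q b_Q$ at height $\lambda$ with $\|g\|_\infty\le C_6\lambda$ and $\sum_Q\#Q\le C_7\|\phi\|_1/\lambda$; dispatch $g$ by the $\ell^\infty$-contractivity of the $A_r$; and for $\sum_Q b_Q$ use the vanishing mean of each $b_Q$, together with the facts that as $r$ ranges over the shell of block $k$ the only moving part of $S\cap B_r$ is the spherical cap $\partial B_r\cap(\vec a_k+I_k)$ -- whose total boundary over all $(k,r)$ has size $O\bigl(\sum_k\ell_k^{d-1}\bigr)$ -- and that the weight $\le1/\#U_{k-1}$ in front of $\sum_{G_k(r)}$ damps the small caps; fatness of the $G_k(r)$ keeps the resulting geometry, hence the summation over $k,r,Q$, under control.

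For the dense class I would take $f$ invariant under the whole action (so $A_rf=f$) or a coboundary $f=g-g\circ\mathcal T(\vec e_i)$ with $g\in L^\infty$; finite sums of these are dense in $L^1(X)$. For a coboundary, split $A_rf$ as above: the $U_{k-1}$-part is at most $\bigl|\frac1{\#U_{k-1}}\sum_{\vec n\in U_{k-1}}f\circ\mathcal T(\vec n)\bigr|\to0$ a.e.\ by Theorem~\ref{TempelmanThm} for $\{U_k\}$, while the $G_k(r)$-part is at most $\min\bigl(2\|g\|_\infty\#G_k(r)/\#U_{k-1},\ C_3\|g\|_\infty/\mathrm{diam}(G_k(r))\bigr)$ -- the first bound from $|f|\le2\|g\|_\infty$ and the weight, the second from the isoperimetric estimate; if $\mathrm{diam}(G_k(r))$ is large the second factor is small, and otherwise $\#G_k(r)$ is bounded while $\#U_{k-1}\to\infty$, so the first factor is small -- hence the $G_k(r)$-part tends to $0$ uniformly over $r$ in the $k$-th shell, as $k\to\infty$. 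Thus $A_rf\to0$ a.e., and with the maximal inequality we get a.e.\ convergence for all $f\in L^1(X)$. The main obstacle is the partially swept term of the maximal estimate: a single small cap $G_k(r)$ can already outweigh all of $U_{k-1}$ in cardinality, so one cannot reduce to the Tempelman sequence $\{U_k\}$, and because the blocks escape to infinity the uniform block-by-block bounds do not sum; the resolution is the geometric fact, special to the diagonal placement, that each $G_k(r)$ is a fat convex piece confined to one block, together with the damping role of the weight $1/\#(S\cap B_r)$ -- and establishing that fatness uniformly in $r$, including the regime where $|\vec a_k|$ is only comparable to $\ell_k$ and the bounding sphere is genuinely curved across the block, is the technical heart.
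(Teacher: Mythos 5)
You have correctly isolated the decomposition $S\cap B_r=U_{k-1}\cup G_k(r)$ and the key geometric point --- that the diagonal placement $\vec a_k=(a_k,\dots,a_k)$ forces the partially swept piece $G_k(r)$ to be properly $d$-dimensional (``fat''), in contrast with the axis-aligned counterexample. But your argument for the maximal inequality is not complete: the Calder\'on--Zygmund treatment of the $G_k(r)$-term is a plan rather than a proof, and you yourself flag the summation over $k,r,Q$ and the uniform fatness estimate as ``the technical heart'' without carrying it out. That is a genuine gap as written. Moreover, the machinery is unnecessary. The fatness you established is exactly what is needed to verify the Tempelman condition (\ref{TempelmanC}) for the \emph{entire} nested family $\{S\cap B_r\}$ with one uniform constant: writing $R=G_k(r)$ with diameter $s$, fatness gives $\#R\geq Cs^d$, hence $\#(R-R)\leq\#(B_s-B_s)\leq Cs^d\leq c\,\#R$, while $\#(R-U_{k-1})\leq\#\bigl(B_s-B_{c\ell_{k-1}+|\vec a_{k-1}|}\bigr)\leq C\max\bigl(s^d,\ell_k^d\bigr)\leq C\,\#(S\cap B_r)$ using $\ell_k\geq C|\vec a_{k-1}|$, and $\#(U_{k-1}-U_{k-1})\leq C\,\#U_{k-1}$ as in your diagonal-segment argument. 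Summing the four terms of $\#\bigl((S\cap B_r)-(S\cap B_r)\bigr)$ gives the Tempelman bound, and Theorem~\ref{TempelmanThm} then delivers the weak $(1,1)$ maximal inequality \emph{and} a.e.\ convergence for all $f\in L^1$ in one stroke --- no transference, no Calder\'on--Zygmund decomposition, and no separate dense class of coboundaries is needed. Your worry that ``the blocks run off to infinity, so one cannot sum the block estimates'' is misplaced: nothing needs to be summed over blocks, only a single uniform difference constant is required, and that is precisely what fatness supplies.
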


\begin{proof}
As in Bellow and Losert's proof for the block sequence, we will rely on the Tempelman Ergodic Theorem to complete the proof.  
The first three requirements are filled- the sets are all of finite volume, each is nested within the next, and the F\o lner property is satisfied through the lacunary growth of the diameters $\ell_k$.  All that remains is the difference requirement. \\

Here we divide our set $S \cap B_r$ into two: a $S_m = \cup_{k=1}^{m} \left(\vec{a}_k + I_k \right)$, including everything up to the last complete prism in $S$, and a remainder $R$, consisting of everything else. The crucial difference between this and the earlier example is that the remainder $R$ must be either trivial or properly $d$-dimensional. \\

Since
\begin{equation*}
   S \cap B_r = S_m \cup R,
\end{equation*}
we have
\begin{align}
   \# \left( S \cap B_r - S \cap B_r \right) &\leq \# \left( S_m - S_m \right) + \# \left( R - S_m \right) + \# \left( S_m -R\right)+ \# \left( R- R\right) \notag \\
&= \# \left( S_m - S_m \right) + 2\# \left( R - S_m \right) + \# \left( R- R\right). \label{difference}
\end{align}

We will deal with the first term first, noting that this would complete the proof of our claim were we only interested in the subsequence consisting of sets with only complete prisms. \\

In a similar fashion to the decomposition in Section \ref{groupblock}, we may write 
\begin{equation*}
   S_m = \left(\vec{a}_m + I_m\right) \cup S_{m-1}.
\end{equation*}

We then note that 
\begin{align*}
   \# \left( S_m - S_m  \right) \leq \# \left\{\left(\vec{a}_m + I_m \right) -\left(\vec{a}_m + I_m \right) \right\} + 2\# \left\{ \left(\vec{a}_m + I_m \right) - S_{m-1} \right\} + \# \left\{ S_{m-1} - S_{m-1} \right\}.
\end{align*}

For the first term, we have 
\begin{align*}
   \# \left\{\left(\vec{a}_m + I_m \right) -\left(\vec{a}_m + I_m \right) \right\} &= \# \left(I_m - I_m \right) 
  &\leq \# \left( B_{\ell_m} - B_{\ell_m} \right)
  &\leq \# B_{\ell_m} \leq C \# I_m,
\end{align*}

For the third term, we have that $S_{m-1} \subset B_{\left|\vec{a}_{m-1}\right| + \ell_{m-1}}$, thus the difference 
\begin{equation*}
  S_{m-1} - S_{m-1} \subset B_{\left|\vec{a}_{m-1}\right| + \ell_{m-1}} - B_{\left|\vec{a}_{m-1}\right| + \ell_{m-1}},
\end{equation*}
which, in turn, is contained in $B_{\left|\vec{a}_{m-1}\right| + \ell_{m-1}}$.  The volume of this ball is a constant multiple of $\left( \left|\vec{a}_{m-1}\right| + \ell_{m-1} \right)^d$.  But by our second condition on the spacing sequence above, 
\begin{equation*}
   \left( \left|\vec{a}_{m-1}\right| + \ell_{m-1} \right)^d \leq \left( \frac{1}{C} \ell_m + \ell_{m-1} \right)^d,
\end{equation*}
whence we have that this difference is less than $C^d \ell^{d}_m$ for some constant $C$. 

For the second difference, we note that shifting a set in $\mathbb{Z}^d$ does not change its measure.  So,
\begin{align*}
   \# \left\{ \left(\vec{a}_m + I_m \right) - S_{m-1} \right\} &= \# \left\{ \vec{a}_m + I_m - S_{m-1} \right\}\\
&= \# \left\{ -\vec{a}_m + \vec{a}_m + I_m  - S_{m-1} \right\}
&< \# \left\{ B_{\ell_m} -  B_{\left|\vec{a}_{m-1}\right| + \ell_{m-1}} \right\}
&< \# B_{\left|\vec{a}_{m-1}\right| + \ell_{m-1}},
\end{align*}
which, as before, has a volume less than  $C^d \ell^{d}_m$ for some constant $C$.

So we have that the difference $\# \left( S_m - S_m  \right) \leq C \# S_m$. \\

The size of the second and third terms in \ref{difference} are dependent upon the diameter of $R$.  Defining $s$ to be the diameter of $R$, we note that $s \leq \ell_{m+1}$ and the measure of $\# R \geq Cs^d$, by our orignial conditions on the block $I_{m+1}$ and requirement (\ref{cornersfirst}).\\

For the third term, we note that
\begin{equation*}
   R - R \subseteq \left( \vec{a}_{m+1} + B_{s} \right) -\left( \vec{a}_{m+1} + B_{s} \right).
\end{equation*}
Thus we have that 
\begin{align*}
  \# \left(  R - R \right) &\leq \# \left( B_s - B_s \right) \\
&= \# B_s\\
&\leq Cs^d \leq c \# R,
\end{align*}
where $C$ and $c$ are different constants. 

This leaves us with only the second term.  Here we have 
\begin{align*}
   \# \left( R - S_m \right) &\leq \# \left(B_s - B_{c \ell_m} \right)\\
&\leq C \max( s^d, (c\ell_m)^d ) \leq C \# \left( S \cap B_r \right)
\end{align*}
   for constants $c$ and $C$. 

\end{proof}

\begin{remark}
   Taking another approach, we might instead place an additional requirement on the diameters, $\ell_k$, relative to the spacing vectors, $\vec{a}_k$, in order to insure that the first set which might outweigh the previous blocks would necessarily be $d$-dimensional.
\end{remark}

\section{Multivariable Random Averages}\label{random}

There are two natural ways that we might extend the results of \cite{LaVic1} to $\Z^d$ actions, depending on the choice of random variables corresponding to $\vec{n}$. The first, which hews to the product structure of $\Z^d$, is to write $\xi_{\vec n}=\prod_{i=1}^d \xi_{i,n_i}$, where $\{\xi_{i,n}:1\leq i\leq d; n\in\N\}$ are independent random variables. The other is to simply take independent random variables $\{\xi_{\vec n}: \vec n \in\Z^d\}$.
\\
\\ These different approaches can be characterized as the ``plaid'' and ``speckled'' approaches, respectively, according to the patterns of points that they select in $\Z^2$. The relevant distinction, for us, will concern the difference set: if we let $S_N^\omega$ denote the set of all $\vec n$ with $\xi_{\vec n}=1$ and $|\vec n|\leq N$, then in the speckled case, for any $\vec k\neq0$ there will be (with overwhelming probability) not too many ways to write $\vec k$ as the difference of two elements of $S_N^\omega$ (and this number of representations will in fact be very close to its probabilistic mean); but in the plaid case, there will be significantly more representations whenever any of the components of $\vec k$ are zero. For this reason, we will first prove the result in the simpler speckled case.

\begin{remark}
We may also consider the ``plaid diagonal'' pattern obtained by taking $\xi_{i,n}=\xi_{1,n}$ for all $i$; however, this does not exhibit any behavior different from the first plaid case, and is more difficult to calculate.
\end{remark}

\begin{remark}
One might hope that the product theory might help us to prove the maximal inequality for the plaid case directly from the one-dimensional result; however, this is not the case in $L^1$, any more than it is for the Bellow-Losert construction.
\end{remark}

\begin{remark}
Our reliance on the ``uniformity'' of the ways to represent points as elements of $S_N^\omega-S_N^\omega$ gives us a natural bound on how sparse a random set we could expect the technique to work for: namely, the average number of representations of a point $\vec k$ with $|\vec k|\lesssim N$ should tend to infinity as $N\to\infty$, and thus we will need $\#S_N^\omega\gg\sqrt{\# B(0,N)}\approx N^{d/2}$. Indeed, we will have a result precisely when our random sets have $\#S_N^\omega\approx N^\alpha$ with $\alpha>d/2$.
\end{remark}

\subsection{Speckled Random Averages on $\Z^d$}
Using the Fourier transform, it is an immediate extension of the one-dimensional theory that both the plaid and the speckled sequences are universally $L^2$-good.\\
\\
Thus we need only prove a weak maximal inequality on $L^1$, and (by the positivity of the averaging operators) this only on dyadically increasing sets. Finally, by the Calder\'{o}n transference principle (Lemma \ref{transfer}), we can instead prove this maximal theorem for the corresponding convolution operators on $\Z^d$. That is, we let $\{\xi_{\vec n} : \vec n \in\Z^d\}$ be independent $\{0,1\}$-valued random variables, such that if $2^j\leq |\vec n|< 2^{j+1}$, then $\P(\xi_{\vec n}=1)=2^{-\gamma j}\approx |\vec n|^{-\gamma}$. Then define
\begin{eqnarray}
\mu_j^{(\omega)}(\vec n)&:=&\left\{\begin{array}{ll} 2^{(\gamma-d)j}\xi_{\vec n}(\omega), & 2^j\leq |\vec n|< 2^{j+1} \\ 0 & \text{otherwise;}\end{array}\right.\\
\nu_j^{(\omega)}(\vec n)&:=&\left\{\begin{array}{ll} \mu_j^{(\omega)}(\vec n)-2^{-d j}, & 2^j\leq |\vec n|< 2^{j+1} \\ 0 & \text{otherwise.}\end{array}\right.
\end{eqnarray}
Then the random convolution operators $f\to f\ast\mu_j^{(\omega)}$ correspond to our random ergodic averages, and the $\nu_j^{(\omega)}$ are mean 0 variants of the same. The role of the difference set, alluded to previously, is expressed in terms of the convolution of the mean 0 measure $\nu_j^{(\omega)}$ with its reflection $\tilde\nu_j^{(\omega)}(\vec v)=\nu_j^{(\omega)}(-\vec v)$, which we will use later in the argument. Clearly $\nu_j^{(\omega)}\ast\tilde\nu_j^{(\omega)}(0)=\|\nu_j^{(\omega)}\|_{\ell^2(\Z^d)}^2$, but at all nonzero points the convolution should be small:
\begin{lem}
\label{speckled}
Let $\epsilon>0$. With probability 1 in $\Omega$, there exists $C_{\omega,\epsilon}<\infty$ such that
\begin{eqnarray}
\label{Snon0}
\|\nu_j^{(\omega)}\ast\tilde\nu_j^{(\omega)}\|_{\ell^\infty(\Z^d_\times)}\leq C_{\omega,\epsilon} 2^{(\gamma-\frac{3d}2 +\epsilon)j},
\end{eqnarray}
where $\Z^d_\times:=\Z^d\setminus\{0\}$. Furthermore,
\begin{eqnarray}
\label{S0}
|\nu_j^{(\omega)}\ast\tilde\nu_j^{(\omega)}(0)|\leq C_{\omega,\epsilon} 2^{(\gamma-d)j}.
\end{eqnarray}
\end{lem}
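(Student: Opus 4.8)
The plan is to rewrite $\nu_j^{(\omega)}\ast\tilde\nu_j^{(\omega)}$ at each point as an (almost) sum of independent bounded random variables, apply Bernstein's inequality scale by scale, and close with a Borel--Cantelli argument over the dyadic scales $j$. Fix $\epsilon>0$; one may assume $\epsilon<d/2-\gamma$, the asserted bound being weaker for larger $\epsilon$. First I would set $A_j:=\{\vec n\in\Z^d:2^j\le|\vec n|<2^{j+1}\}$ (so $\#A_j\asymp 2^{dj}$) and, for $\vec n\in A_j$, $Y_{\vec n}:=\xi_{\vec n}(\omega)-2^{-\gamma j}$ (with $Y_{\vec n}:=0$ otherwise): the $\{Y_{\vec n}\}_{\vec n\in A_j}$ are then independent, mean zero, bounded by $1$, of variance $\le 2^{-\gamma j}$. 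Since $\nu_j^{(\omega)}(\vec n)=2^{(\gamma-d)j}Y_{\vec n}$,
\[
(\nu_j^{(\omega)}\ast\tilde\nu_j^{(\omega)})(\vec k)=2^{2(\gamma-d)j}\sum_{\vec v}Y_{\vec v}Y_{\vec v-\vec k}=:2^{2(\gamma-d)j}Z^{(j)}_{\vec k},
\]
so it suffices to establish, with a random constant and for all $j$, that $|Z^{(j)}_0|\lesssim 2^{(d-\gamma)j}$ (giving \eqref{S0}) and $\sup_{\vec k\neq 0}|Z^{(j)}_{\vec k}|\lesssim 2^{(d/2-\gamma+\epsilon)j}$ (giving \eqref{Snon0}).

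For \eqref{S0} I would note $Z^{(j)}_0=\sum_{\vec v\in A_j}Y_{\vec v}^2\le\sum_{\vec v\in A_j}\xi_{\vec v}+\#A_j\cdot 2^{-2\gamma j}$, where $\sum_{\vec v\in A_j}\xi_{\vec v}$ is a sum of $\asymp 2^{dj}$ independent $\mathrm{Bernoulli}(2^{-\gamma j})$ variables with mean $\asymp 2^{(d-\gamma)j}\to\infty$; a Chernoff bound gives $\P(\sum_{\vec v\in A_j}\xi_{\vec v}>2\,\E\sum_{\vec v\in A_j}\xi_{\vec v})\le e^{-c2^{(d-\gamma)j}}$, which (since $\gamma<d$) is summable in $j$, so Borel--Cantelli yields the desired bound on $Z^{(j)}_0$.

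For \eqref{Snon0} I would fix $\vec k\neq 0$ (only $\lesssim 2^{dj}$ vectors matter, as $Z^{(j)}_{\vec k}=0$ unless $A_j\cap(A_j+\vec k)\neq\emptyset$, forcing $|\vec k|<2^{j+2}$). Because $\vec k\neq 0$, every product $Y_{\vec v}Y_{\vec v-\vec k}$ pairs two independent centered variables, so $\E Z^{(j)}_{\vec k}=0$; two distinct products overlap in at most one index, and their covariance then factors through $\E Y_{\vec w}=0$ on the unshared index, so $\mathrm{Var}(Z^{(j)}_{\vec k})\le\#A_j\cdot 2^{-2\gamma j}\asymp 2^{(d-2\gamma)j}$. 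The products are not independent, so to get a genuine exponential tail I would observe that the terms of $Z^{(j)}_{\vec k}$ are the edges of the graph on $A_j$ with adjacency $\vec v\leftrightarrow\vec v\pm\vec k$; this graph has no cycles (as $m\vec k=0$ is impossible for $m\ge 1$ in $\Z^d$), hence is a disjoint union of paths, and its edges split into two matchings inside each of which the products are independent. Applying Bernstein's inequality to the two resulting sums with threshold $t\asymp 2^{(d/2-\gamma+\epsilon)j}$---which beats the standard deviation $\asymp 2^{(d/2-\gamma)j}$ by the factor $2^{\epsilon j}$, and for which the variance term dominates the Bernstein denominator since $\epsilon<d/2-\gamma$---gives $\P(|Z^{(j)}_{\vec k}|>t)\le C\exp(-c2^{2\epsilon j})$. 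This survives the union over the $\lesssim 2^{dj}$ admissible $\vec k$ and is summable in $j$, so Borel--Cantelli finishes \eqref{Snon0} after multiplying through by $2^{2(\gamma-d)j}$.

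The main obstacle is precisely the dependence among the products $Y_{\vec v}Y_{\vec v-\vec k}$: a direct Chernoff bound is unavailable for $Z^{(j)}_{\vec k}$, and one must either decompose the difference graph into two matchings as above, or else run a (more computational) even-moment estimate for this quadratic form, or cite a Hanson--Wright-type inequality. A secondary, quantitative point is that the concentration must be stretched-exponential in $2^j$ so as to simultaneously beat the $2^{dj}$ union bound over difference vectors and remain summable across the scales $j$---which is exactly where the hypothesis $\gamma<d/2$ (equivalently, the strict positivity of $d/2-\gamma+\epsilon$) enters. To make $C_{\omega,\epsilon}$ independent of $\epsilon$ one would finally intersect the almost-sure events over a sequence $\epsilon_m\downarrow 0$.
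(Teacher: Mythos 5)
Your proof is correct and follows essentially the same route as the paper: the paper derives this lemma from its general Lemma \ref{cancels}, whose proof likewise decomposes the products $Y_{\vec v}Y_{\vec v-\vec k}$ at each fixed $\vec k\neq 0$ into a bounded number of independent sub-collections via the path/cycle structure of the difference graph (three colors in a general group, your two matchings in $\Z^d$), applies a Chernoff/Bernstein bound, and concludes with a union bound and Borel--Cantelli; the bound at the origin is handled exactly as in your argument (cf.\ Corollary \ref{bound}). The only cosmetic differences are your use of Bernstein in place of Chernoff and the (unneeded) final intersection over $\epsilon_m\downarrow 0$, since the constant is already permitted to depend on $\epsilon$.
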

This follows from the more general Lemma \ref{cancels} and the Borel-Cantelli Lemma.\\
\\
We must now prove the following:
\begin{thm}
\label{SgtPepper}
For all $\gamma<d/2$, there is a set of probability 1 in $\Omega$ such that for each $\omega$ in this set, there exists $C_\omega<\infty$ such that for every $\lambda>0$,
\begin{eqnarray}
\#\{\vec n: \sup_j | f\ast\mu_{j}^{(\omega)}(\vec n)|>\lambda\}\leq \frac{C_\omega}{\lambda}\|f\|_1.
\end{eqnarray}
\end{thm}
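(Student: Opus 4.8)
The plan is to prove the weak $(1,1)$ maximal inequality for $\sup_j |f * \mu_j^{(\omega)}|$ using the Fefferman--Christ technique, in the form it takes in LaVictoire's one-dimensional argument \cite{LaVic1}, now adapted to $\Z^d$. The overall structure is a Calder\'on--Zygmund decomposition: given $f \in \ell^1(\Z^d)$ and $\lambda > 0$, write $f = g + b$ at height $\lambda$, where $g$ is the ``good'' part with $\|g\|_\infty \lesssim \lambda$ and $\|g\|_1 \lesssim \|f\|_1$, and $b = \sum_Q b_Q$ is the ``bad'' part, supported on a disjoint union of dyadic cubes $Q$ with $\sum_Q |Q| \lesssim \lambda^{-1}\|f\|_1$, each $b_Q$ supported on $Q$ with mean zero and $\|b_Q\|_1 \lesssim \lambda |Q|$. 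The good part is handled by an $L^2$ estimate: since $\sum_j \mu_j^{(\omega)}$ (suitably truncated) is a bounded operator on $\ell^2$ — indeed the $\ell^2$ theory is immediate from the Fourier transform as noted in the excerpt, and one gets a square-function bound via the $\nu_j^{(\omega)}$ having small $\ell^2$ norms $\|\nu_j^{(\omega)}\|_2^2 \approx 2^{(\gamma-d)j}$ from \eqref{S0} — Chebyshev on $\|\sup_j |g * \mu_j^{(\omega)}|\|_2^2 \lesssim \|g\|_2^2 \lesssim \lambda \|f\|_1$ gives the desired bound for the good part.

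The bad part is where the real work lies. One discards the cube-neighborhoods $\bigcup_Q \tilde Q$ (with $\tilde Q$ a fixed dilate of $Q$), whose total measure is $\lesssim \lambda^{-1}\|f\|_1$, and must show $\sum_j |b * \mu_j^{(\omega)}|$ is controlled off this set. Following Christ, one splits each $\mu_j^{(\omega)}$ against each bad cube $Q$ according to whether the scale $2^j$ is smaller or larger than the sidelength $\ell(Q)$. For $2^j \gtrsim \ell(Q)$, one uses the cancellation of $b_Q$ together with a ``regularity'' or smoothing estimate on $\mu_j^{(\omega)}$ — but $\mu_j^{(\omega)}$ is a rough random measure, so the smoothing comes not from pointwise smoothness but from averaging: one estimates the $\ell^1$ or $\ell^2$ norm of $b_Q * \mu_j^{(\omega)}$ using that $b_Q$ has mean zero and that $\mu_j^{(\omega)} - (\text{translate of }\mu_j^{(\omega)})$ is small in an averaged sense, which is precisely the content of the difference-set estimate \eqref{Snon0}. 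For $2^j \lesssim \ell(Q)$, the contribution is handled by an $L^2$/almost-orthogonality estimate using $\|\nu_j^{(\omega)}\|_2$, summing the geometric series in $j$ and exploiting that the relevant translates land mostly inside the discarded set $\tilde Q$.

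The key mechanism — and the reason one needs $\gamma < d/2$ — is the interaction between the two regimes, assembled via a ``bootstrapping'' or iteration argument. One does not get the weak $(1,1)$ bound in one shot; instead one proves a restricted-type estimate with a loss, or an $\ell^p \to \ell^{p,\infty}$ bound for $p$ slightly above $1$ with operator norm depending on the structure, and then iterates/interpolates against the $\ell^2$ bound to push down to $p = 1$. The crucial quantitative input is that in \eqref{Snon0} the exponent $\gamma - \tfrac{3d}{2} + \epsilon$ is \emph{negative} and in fact beats the trivial bound by a power of $2^{-dj/2}$ worth of room; combined with $\|\nu_j\|_2^2 \approx 2^{(\gamma-d)j}$ this is exactly enough, when $\gamma < d/2$, to make the bad-part sum over $j$ and over $Q$ converge with the right dependence on $\lambda$ and $\|f\|_1$. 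I would organize this as: (i) set up the CZ decomposition; (ii) dispatch the good part by $\ell^2$; (iii) state the two cube-scale regimes and the per-cube, per-scale estimates, citing \eqref{Snon0} and \eqref{S0} as the substitute for kernel regularity; (iv) sum over $j$ in each regime; (v) run the iteration that converts the resulting ``$\ell^1 \to$ weak-$\ell^1$ with a bound that is summable in the cube collection'' into the clean maximal inequality.

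\emph{The main obstacle} I expect is step (iii)--(iv): making the averaged-smoothness estimate for $b_Q * \mu_j^{(\omega)}$ genuinely quantitative and uniform in $\omega$ (on the probability-one set), since $\mu_j^{(\omega)}$ has no deterministic regularity and one must route everything through the second-moment control of the difference set; getting the powers of $2^j$ and $\ell(Q)$ to line up so that the double sum converges — this is exactly where the hypothesis $\gamma < d/2$ is consumed, and where the $d$-dimensional geometry (volume of balls, counting lattice points in annuli, the shape of dyadic cubes versus Euclidean balls) must be handled with care rather than borrowed verbatim from the $d=1$ case.
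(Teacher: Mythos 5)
Your skeleton --- Calder\'on--Zygmund decomposition, discarding the dilated cubes, replacing $\mu_j^{(\omega)}$ by $\nu_j^{(\omega)}$, and using the difference-set estimate (\ref{Snon0}) via the inner product $\langle B\ast\nu_j\ast\tilde\nu_j, B\rangle$ as the surrogate for kernel regularity --- matches the paper's. But you are missing the one genuinely new idea, and you have a structural misconception. The paper does \emph{not} use the mean-zero property of the $b_{s,k}$ (it says so explicitly); instead it truncates each $b_s$ \emph{by height} at the threshold $2^{(d-\gamma)j}\approx|\supp\mu_j|$, writing $b_s=b_s^{(j)}+B_s^{(j)}$ with $\|B_s^{(j)}\|_\infty\le 2^{(d-\gamma)j}$. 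The tall piece $b^{(j)}$ is disposed of by a pure support-counting argument (summing $|\supp\mu_j|\cdot\#\{|b|>2^{(d-\gamma)j}\}$ over $j$ produces a lower sum for $\|b\|_1$). The height bound on $B_s^{(j)}$ is exactly what controls the diagonal term in the $TT^*$ expansion: $\nu_j\ast\tilde\nu_j$ has a delta mass at $0$ of size $\approx 2^{(\gamma-d)j}$ by (\ref{S0}), with \emph{no} cancellation to exploit, and one needs $2^{(\gamma-d)j}\|B_s^{(j)}\|_\infty\|B_s^{(j)}\|_1\lesssim\|B_s^{(j)}\|_1$. Without the truncation you are left with $\|b_s\|_2^2$, which an $L^1$ function does not control; your appeal to the cancellation of $b_Q$ cannot substitute for this (this is precisely the point of the paper's remark that the analogous continuous problems require $H^1$ rather than $L^1$).

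The structural misconception: there is no iteration, bootstrapping, or interpolation down from $p>1$. Once the tall parts, the exceptional set (cubes with $s>j$, whose contributions are supported in $\bigcup Q^*_{s,k}$), and the $\mu_j-\nu_j$ comparison (handled by the standard maximal function) are removed, a single application of Chebyshev in $\ell^2$ to $\sum_{s\le j}B_s^{(j)}\ast\nu_j$, expanded as $\sum\langle B_s^{(j)}\ast\nu_j\ast\tilde\nu_j,B_t^{(j)}\rangle$ and estimated with (\ref{Snon0}) off the origin and (\ref{S0}) at the origin, closes the argument in one shot; the hypothesis $\gamma<d/2$ enters only to make $j\,2^{(\gamma-d/2+\epsilon)j}$ summable. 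Also, the good part is handled by the trivial $\ell^\infty$ bound $\|g\ast\mu_j\|_\infty\le C_\omega\|g\|_\infty$ after normalizing $\lambda\approx1$, not by an $\ell^2$ maximal inequality (though your route would also work).
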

This immediately implies Theorem \ref{specseq}, which gives multivariable averaging methods over sets of Banach density zero.\\
\\
(We will suppress the superscripts $(\omega)$ in the rest of this section, and presume that $\nu_j$ are measures which satisfy the bounds in Lemma \ref{speckled} for a constant $C$.)
\begin{proof}
The method of this proof begins with Calder\'{o}n-Zygmund theory. We decompose our function $f$ in the standard fashion, and imitate the technique of Christ \cite{MC} for operators on $\R^d$ whose kernels are not properly differentiable. For the crucial parts, $B$, of our decomposition of $f$, we consider the inner products $\langle \nu_j\ast B,\nu_j\ast B\rangle=\langle \tilde\nu_j\ast\nu_j\ast B,B\rangle$ and use the extra cancellation of that convolution to obtain an especially strong $L^2$ bound. This is necessary in order to compensate for the waste of using an $L^2$ bound to prove an $L^1$ bound. This technique was first applied to pointwise ergodic theorems in $L^1$ by Urban and Zienkiewicz \cite{UZ}.
\\
\\ By scaling, we may assume that $\lambda\approx1$. We will choose $\lambda$ to be a bit larger than 1, but our choice will depend only on $d$ and $\omega$, not on $f$ or on $j$. We apply the standard discrete Calder\'{o}n-Zygmund decomposition at height 1, giving us $f=g+b=g+\sum_{s,k}b_{s,k}$, where $\|g\|_\infty\leq1$, each $b_{s,k}$ is supported on a discrete dyadic cube $Q_{s,k}$ with side length $2^s$, $\|b_{s,k}\|_1\leq 2^{d s}$, and $\sum_{s,k} |Q_{s,k}|\leq 2^d\|f\|_1$. We can also require each $b_{s,k}$ to have mean 0, but we shall not need this.
\\
\\ Now we will use to our advantage several key properties of $\mu_j$ and $\nu_j$ to reduce this problem to its final form. First, we have normalized $\mu_j$ so that with probability 1 in $\Omega$, there is a $C_\omega$ such that $\|\mu_j\|_1\leq C_\omega$ for all $j$, and thus $\|g\ast\mu_j\|_\infty\leq C_\omega$ for all $j$. Therefore, if $\sup_j | f\ast\mu_{j}(\vec n)|>\lambda$, then $\sup_j |\sum_{s,k} b_{s,k}\ast\mu_{j}(\vec n)|>\lambda- C_\omega$.
\\
\\We next split $b_s:=\sum_k b_{s,k}$ into two pieces depending on its size. Define
\begin{eqnarray}
\label{Stall}
b^{(j)}_s(\vec n)&:=&\left\{\begin{array}{ll} b_s(\vec n), & |b_s(\vec n)|>2^{(d-\gamma)j} \\ 0 & \text{otherwise;}\end{array}\right.\\
\label{Sshort}
B^{(j)}_s&:=& b_s-b^{(j)}_s.
\end{eqnarray}
Note that the threshold $2^{(d-\gamma)j}$ is proportional to $|\supp \mu_j|$. This allows us to prove a bound on the support of $\sup_j |\sum_s b^{(j)}_s\ast\mu_{j}(\vec n)|$, since $b_{s,k}$ cannot be large on a large set. In particular, if we let $b^{(j)}:=\sum_s b^{(j)}_s$,
\begin{eqnarray*}
\#\{\vec n: \sup_j |\sum_s b^{(j)}_s\ast\mu_{j}(\vec n)|>0\}&\leq&\sum_j\#\{\vec n: |b^{(j)}\ast\mu_{j}(\vec n)|>0\}\\
&\leq& \sum_j |\supp \mu_j|\cdot\#\{\vec n: |b(\vec n)|>2^{(d-\gamma)j}\}\\
&\leq& \sum_j C_\omega 2^{(d-\gamma)j}\sum_{i\geq j} \#\{\vec n: 2^{(d-\gamma)i}<|b(\vec n)|\leq 2^{(d-\gamma)(i+1)}\}\\
&=& C_\omega \sum_i \#\{\vec n: 2^{(d-\gamma)i}<|b(\vec n)|\leq 2^{(d-\gamma)(i+1)}\} \sum_{j\leq i} 2^{(d-\gamma)j}\\
&\leq& C_\omega \sum_i \#\{\vec n: 2^{(d-\gamma)i}<|b(\vec n)|\leq 2^{(d-\gamma)(i+1)}\} \cdot C 2^{(d-\gamma)i}\\
&\leq& C'_\omega \|b\|_1\leq C''_\omega \|f\|_1
\end{eqnarray*}
because the second-to-last line is a lower sum for $b$. This is an acceptable bound, so we need only consider the contribution of the $B^{(j)}_s$, which have the property $\|B^{(j)}_s\|_\infty\leq 2^{(d-\gamma)j}$. This will be precisely what we need to control the contribution of $\nu_j\ast\tilde\nu_j(0)$.
\\
\\Since $\mu_j-\nu_j$ is simply an appropriately normalized average over all points of magnitude between $2^j$ and $2^{j+1}$, the weak (1,1) bound on the standard maximal function tells us that
\begin{eqnarray*}
\#\{\vec n: \sup_j |\sum_s B^{(j)}_s\ast(\mu_{j}-\nu_j)(\vec n)|>1\}&\leq& \#\{\vec n: \sup_j |b|\ast|\mu_{j}-\nu_j|(\vec n)>1\}\\
&\leq& C\|b\|_1\leq C'\|f\|_1
\end{eqnarray*}
and we may indeed replace $\mu_j$ with $\nu_j$.
\\
\\ Finally, if we let $Q^*_{s,k}$ denote the points that are within distance $2^s$ of the cube $Q_{s,k}$, then $b_{s,k}\ast\nu_j$ is supported in $Q^*_{s,k}$ if $j<s$. We take the exceptional set $E$ to be the union of all $Q^*_{s,k}$, and note that $|E|\leq C_d\|f\|_1$. Since $E$ is of acceptable size and $\supp B^{(j)}_s\ast\nu_j\subset E$ for all $s>j$, we only need to bound the set $$\#\{\vec n: \sup_j |\sum_{s\leq j} B^{(j)}_s\ast\nu_{j}(\vec n)|>1\}.$$ This we will do using an $L^2$ bound.
\begin{remark}
Most of the above reductions are standard, with the exception of splitting $b_{s,k}=b^{(j)}_{s,k}+B^{(j)}_{s,k}$ by height. This is done because $\nu_j\ast\tilde\nu_j(0)$ has no cancellation, and thus we must treat separately the contribution of a single delta mass in our inner product. The condition $\|B^{(j)}_{s,k}\|_\infty\leq 2^{(d-\gamma)j}$ precisely suffices to balance the trivial bound (\ref{S0}) in these terms. Note that there is no equivalent for this in the original problems on $\R^d$, since there the convolutions have a singularity rather than a delta mass at the origin; because of this, the analogous endpoint theorems require $f$ in the Hardy space $H^1$ rather than $L^1$.
\end{remark}
By Chebyshev's Inequality,
\begin{eqnarray*}
\#\{\vec n: \sup_j |\sum_{s\leq j}B^{(j)}_{s}\ast\nu_{j}(\vec n)|>1\}&\leq& \left\|\sup_j\left|\sum_{s\leq j} B^{(j)}_{s}\ast\nu_j\right| \right\|_2^2\\
&\leq&\sum_j\left\|\sum_{s\leq j} B^{(j)}_{s}\ast\nu_j\right\|_2^2\\
&\leq&2\sum_j\sum_{\substack{s,t: \\ s\leq t\leq j}}\left\langle B^{(j)}_{s}\ast\nu_j, B^{(j)}_{t}\ast\nu_j\right\rangle\\
&=&2\sum_j\sum_{\substack{s,t: \\ s\leq t\leq j}}\left\langle B^{(j)}_{s}\ast\nu_j\ast\tilde\nu_j, B^{(j)}_{t}\right\rangle.
\end{eqnarray*}
First we assume that $\supp B^{(j)}_s\subset Q_{j,k_1}$ and $\supp B^{(j)}_t\subset Q_{j,k_2}$, where each of these is a single dyadic cube of size $2^j$. Then it is easy to see that $\|B^{(j)}_{s}\|_1\leq 2^{dj}$, and thus by Lemma \ref{speckled},
\begin{eqnarray*}
\left |\left\langle B^{(j)}_{s}\ast\nu_j\ast\tilde\nu_j, B^{(j)}_{t}\right\rangle\right|&\leq& \|\nu_j\ast\tilde\nu_j\|_{\ell^\infty(\Z^d_\times)}\|B^{(j)}_{s}\|_1\|B^{(j)}_{t}\|_1+\nu_j\ast\tilde\nu_j(0)\left\langle B^{(j)}_{s},B^{(j)}_{t}\right\rangle\\
&\leq& C_{\omega,\epsilon} 2^{(\gamma-3d/2 +\epsilon)j}\|B^{(j)}_{s}\|_1\|B^{(j)}_{t}\|_1+ C_{\omega,\epsilon} 2^{(\gamma-d)j}\delta_{s=t}\|B^{(j)}_{s}\|_\infty\|B^{(j)}_{t}\|_1\\
&\leq& C_{\omega,\epsilon} 2^{(\gamma-d/2 +\epsilon)j}\|B^{(j)}_{t}\|_1+ C_{\omega,\epsilon} \delta_{s=t}\|B^{(j)}_{t}\|_1.
\end{eqnarray*}
We can remove the restriction on the supports since the inner product is 0 whenever the support of $B^{(j)}_s$ and the support of $B^{(j)}_t$ are separated by at least $2^{j+1}$; thus the double sum over all $Q_{j,k_1}$ and $Q_{j,k_2}$ is, up to a fixed constant, a single sum. Therefore we have
\begin{eqnarray*}
\#\{\vec n: \sup_j |\sum_{s\leq j}B^{(j)}_{s}\ast\nu_{j}(\vec n)|>1\}&\lesssim& \sum_j\sum_{\substack{s,t: \\ s\leq t\leq j}}2^{(\gamma-d/2 +\epsilon)j}\|B^{(j)}_{t}\|_1+ \delta_{s=t}\|B^{(j)}_{t}\|_1\\
&\leq &\sum_j\sum_t (j2^{(\gamma-d/2 +\epsilon)j}+1)\|B^{(j)}_{t}\|_1
\end{eqnarray*}
and clearly, this is $\leq\|b\|_1\leq C\|f\|_1$ so long as $\gamma<d/2 -\epsilon$. Since $\epsilon>0$ is arbitrary, we have proved Theorem \ref{SgtPepper}.
\end{proof}

\subsection{Plaid Random Averages on $\Z^d$}
In this section, we take independent random variables $\{\xi_{i,n}:1\leq i\leq d; n\in\N\}$ with $\P(\xi_{i,n}=1)=n^{-\alpha}$. We define
\begin{eqnarray}
\mu_j^{(\omega)}(\vec n)&:=&\left\{\begin{array}{ll} 2^{d(\alpha-1)j}\prod_{i=1}^d\xi_{i,n_i}(\omega), & 2^j\leq |\vec n|< 2^{j+1},\; n_i>0 \; \, \text {for all}\, i \\ 0 & \text{otherwise;}\end{array}\right.\\
\nu_j^{(\omega)}(\vec n)&:=&\left\{\begin{array}{ll} \mu_j^{(\omega)}(\vec n)-\prod_{i=1}^d n_i^{-\alpha}, & 2^j\leq |\vec n|< 2^{j+1},\; n_i>0 \; \, \text {for all}\, i \\ 0 & \text{otherwise.}\end{array}\right.
\end{eqnarray}
As mentioned before, our decomposition of the random measure $\nu_j^{(\omega)}\ast\tilde\nu_j^{(\omega)}$ will be more complicated because the additional structure removes part of the cancellation at points where some of the coordinates are zero.
\begin{lem}
\label{plaid}
Let $\epsilon>0$. With probability 1 in $\Omega$, there exists $C_{\omega,\epsilon}<\infty$ such that
\begin{eqnarray*}
\nu_j^{(\omega)}\ast\tilde\nu_j^{(\omega)}=\sum_{I\subset\{1,\dots,d\}}\chi_{j,I}^{(\omega)}
\end{eqnarray*}
where each $\chi_{j,I}^{(\omega)}$ is supported on $\{\vec n: n_i\neq0\; \, \mbox{ for all } i\in I\}$, and for all $I\neq\emptyset$,
\begin{eqnarray}
\label{P}
\|\chi_{j,I}^{(\omega)}\|_\infty\leq C_{\omega,\epsilon} 2^{(-d-\frac{|I|}2+d\alpha+\epsilon)j}.
\end{eqnarray}
Furthermore,
\begin{eqnarray}
\label{P0}
|\nu_j^{(\omega)}\ast\tilde\nu_j^{(\omega)}(0)|\leq C_{\omega,\epsilon} 2^{d(\alpha-1)j}.
\end{eqnarray}
\end{lem}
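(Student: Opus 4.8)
The plan is to mimic the proof of Lemma~\ref{speckled}, but to organize the computation of $\nu_j^{(\omega)}\ast\tilde\nu_j^{(\omega)}(\vec k)$ according to which coordinates of $\vec k$ vanish. Fix a nonzero $\vec k\in\Z^d$ and write $I=I(\vec k)=\{i: k_i\neq0\}$. Expanding the convolution, $\nu_j\ast\tilde\nu_j(\vec k)=\sum_{\vec n}\nu_j(\vec n)\nu_j(\vec n-\vec k)$, and each factor $\nu_j(\vec n)=\mu_j(\vec n)-\prod_i n_i^{-\alpha}$ (restricted to the shell $2^j\le|\vec n|<2^{j+1}$ with all $n_i>0$) is itself a centered random variable in the $\xi_{i,n_i}$'s, up to the deterministic normalization $2^{d(\alpha-1)j}$. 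The point of the plaid structure is that for the coordinates $i\notin I$ we have $n_i=(n-k)_i$, so the random variable $\xi_{i,n_i}$ appears in \emph{both} factors and does not contribute cancellation (its square is itself, not its mean); only the coordinates $i\in I$ force genuinely independent random variables in the two factors. I would therefore define $\chi_{j,I}^{(\omega)}$ to be the restriction of $\nu_j\ast\tilde\nu_j$ to $\{\vec k: I(\vec k)=I\}$ — so that the claimed support property $\chi_{j,I}$ supported on $\{n_i\ne 0\ \forall i\in I\}$ holds trivially — and then prove the $\ell^\infty$ bound \eqref{P} on each such piece.

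To estimate $\|\chi_{j,I}^{(\omega)}\|_\infty$, fix $\vec k$ with $I(\vec k)=I$ and split the sum over $\vec n$ into its mean and its fluctuation. The expectation $\E[\nu_j\ast\tilde\nu_j(\vec k)]$ is a sum, over the $\approx 2^{dj}$ admissible $\vec n$, of $2^{2d(\alpha-1)j}$ times the product over $i\in I$ of $\E[(\xi_{i,n_i}-n_i^{-\alpha})(\xi_{i,(n-k)_i}-(n-k)_i^{-\alpha})]=0$ — so in fact the mean of the \emph{off-diagonal} (in the $I$-coordinates) part vanishes, and one is left controlling a sum of mean-zero independent-ish contributions. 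The standard route, exactly as in Christ's and LaVictoire's arguments, is a high-moment (Marcinkiewicz--Zygmund / hypercontractivity) estimate: bound $\E|\,\chi_{j,I}(\vec k)|^{2p}$ by the appropriate power of the $\ell^2$ mass, which is $\approx 2^{4d(\alpha-1)j}\cdot 2^{dj}\cdot(\text{per-term second moment})$; tracking the normalization, the $i\in I$ coordinates each save a factor $2^{-j}$ in the variance (because $\sum_{n_i\sim 2^j}(n_i^{-\alpha})^2\approx 2^{(1-2\alpha)j}$ against the square of the count $2^{-2\alpha j}\cdot 2^j$, giving the $2^{-j/2}$ per-coordinate gain in the square root), yielding a typical size $\lesssim 2^{(-d-|I|/2+d\alpha)j}$. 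Choosing $p=p(\epsilon,d)$ large enough, Chebyshev plus a union bound over the $\lesssim 2^{(d+1)j}$ relevant $\vec k$ and over $j$, followed by Borel--Cantelli, upgrades this to the almost-sure bound \eqref{P} with the extra $2^{\epsilon j}$. The diagonal term (all of $\vec k$'s $I$-coordinates forcing $\vec n=\vec n-\vec k$ in those slots, impossible unless $\vec k=0$) does not arise for $\vec k\ne 0$, and the $\vec k=0$ estimate \eqref{P0} is the trivial bound $\|\nu_j\|_2^2\le\|\mu_j\|_\infty\|\mu_j\|_1+\ldots\lesssim 2^{d(\alpha-1)j}$ coming from $|\supp\mu_j|\approx 2^{dj}$ and $\|\mu_j\|_\infty=2^{d(\alpha-1)j}$ — identical in spirit to \eqref{S0}. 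I would phrase all of this through a single general lemma (the ``Lemma~\ref{cancels}'' referenced after Lemma~\ref{speckled}) that handles sums of products of centered Bernoullis, and simply apply it with the correct exponents here.

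The main obstacle is the bookkeeping of exponents in the moment bound: one must correctly account for (i) the deterministic normalization $2^{d(\alpha-1)j}$ appearing squared, (ii) the count $\approx 2^{dj}$ of lattice points in a dyadic shell, (iii) the per-coordinate second moments, which behave differently for $i\in I$ (genuine product of two independent centered variables, contributing variance $\approx(2^{-\alpha j})^2$ summed over $\approx 2^j$ values, i.e.\ $\approx 2^{(1-2\alpha)j}$, but normalized against the \emph{square} of the corresponding one-coordinate $\ell^1$-type count) versus $i\notin I$ (a single centered-squared variable, contributing its nonzero mean $\approx 2^{-\alpha j}$ and thus behaving like part of a deterministic kernel), and (iv) the interaction with higher moments when $p>1$, where hypercontractivity for Rademacher-type chaos or a direct Rosenthal inequality is needed to avoid losing polynomial-in-$j$ factors that would swamp the $2^{\epsilon j}$ margin. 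Once the clean statement of the auxiliary lemma is in place, the rest is a union bound and Borel--Cantelli, as in Lemma~\ref{speckled}; the subtlety is purely in extracting the sharp exponent $-d-|I|/2+d\alpha$ and verifying it degrades gracefully to the $I=\{1,\dots,d\}$ ``fully speckled-like'' case, where it should reduce to the analogue of \eqref{Snon0}.
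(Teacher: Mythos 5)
There is a genuine gap, and it sits exactly where you placed the weight of the argument: in the definition of $\chi_{j,I}^{(\omega)}$ and in the claim that ``the mean of the off-diagonal part vanishes.'' You define $\chi_{j,I}$ as the restriction of $\nu_j\ast\tilde\nu_j$ to $\{\vec k: I(\vec k)=I\}$, and you justify the mean-zero claim by writing $\E[\nu_j(\vec n)\nu_j(\vec n-\vec k)]$ as a product over $i\in I$ of $\E[(\xi_{i,n_i}-n_i^{-\alpha})(\xi_{i,(n-k)_i}-(n-k)_i^{-\alpha})]$. But $\nu_j(\vec n)$ is a \emph{centered product} $2^{d(\alpha-1)j}\bigl(\prod_i\xi_{i,n_i}-\prod_i n_i^{-\alpha}\bigr)$, not a product of centered factors, and the two do not coincide. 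A direct computation gives
\begin{equation*}
\E\bigl[\nu_j(\vec n)\,\nu_j(\vec n-\vec k)\bigr]
=2^{2d(\alpha-1)j}\prod_{i=1}^d n_i^{-\alpha}(n_i-k_i)^{-\alpha}\Bigl(\prod_{i:\,k_i=0}n_i^{\alpha}-1\Bigr),
\end{equation*}
which is strictly positive whenever some $k_i=0$. Concretely, for $d=2$ and $\vec k=(k_1,0)$ with $k_1\neq0$, summing over the $\approx 2^{2j}$ admissible $\vec n$ yields
$\E[\nu_j\ast\tilde\nu_j(\vec k)]\approx 2^{4(\alpha-1)j}\cdot 2^{(1-2\alpha)j}\cdot 2^{(1-\alpha)j}=2^{(\alpha-2)j}$,
whereas the bound \eqref{P} for $I=\{1\}$ demands $2^{(2\alpha-\frac52+\epsilon)j}$. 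Since $\alpha-2\le 2\alpha-\frac52+\epsilon$ forces $\alpha\ge\frac12-\epsilon$, your version of \eqref{P} is false throughout the regime $\alpha<\frac12$ that Theorem~\ref{MMT} actually needs. Concentration around this nonzero mean means the restriction of $\nu_j\ast\tilde\nu_j$ to the coordinate hyperplanes genuinely is as large as $2^{(d\alpha-d-\alpha|I|)j}$, so no amount of moment bookkeeping rescues the restriction-based definition.

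The decomposition has to be algebraic rather than a partition of the support. Writing $\eta_{i,n}=\xi_{i,n}-n^{-\alpha}$ and expanding each factor as $\prod_i\xi_{i,n_i}-\prod_in_i^{-\alpha}=\sum_{\emptyset\neq J}\prod_{i\in J}\eta_{i,n_i}\prod_{i\notin J}n_i^{-\alpha}$, one sorts the resulting terms of the convolution by the set $I$ of coordinates in which a pair of \emph{distinct} independent centered variables $\eta_{i,n_i}\eta_{i,n_i-k_i}$ (so necessarily $k_i\neq0$) appears; only those coordinates produce the $2^{-j/2}$-per-coordinate gain via the Chernoff/Borel--Cantelli machinery of Lemma~\ref{cancels}. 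The diagonal factors $\eta_{i,n_i}^2$ arising when $k_i=0$, and the purely deterministic factors, get pushed into $\chi_{j,I'}$ with $I'\subsetneq I(\vec k)$ --- ultimately into $\chi_{j,\emptyset}$, which is why the lemma's support condition only requires $n_i\neq0$ for $i\in I$ (rather than equality of $I$ with the set of nonzero coordinates) and why no bound of the form \eqref{P} is asserted for $I=\emptyset$ away from the origin. Your heuristics for the exponent $-d-\frac{|I|}2+d\alpha$ and for \eqref{P0} are correct once attached to the right objects, but as written the proof establishes a false statement about the wrong decomposition.
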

\begin{remark}
We could aggregate all of the $\chi_{j,I}^{(\omega)}$ with $I\neq\emptyset$ into a single function, and for $\alpha<\frac1{2d}$ the overall $\ell^\infty$ bound would be good enough to prove the weak maximal inequality just as we proved Theorem \ref{SgtPepper}. However, with a more targeted Calder\'{o}n-Zygmund decomposition, we can do better, and obtain the weak maximal inequality for $\alpha<1/2$. Thus we can prove an $L^1$ pointwise ergodic theorem for plaid sets which are just as sparse as the speckled sets we have proved it for.
\end{remark}
\begin{thm}
\label{MMT}
For all $\alpha<1/2$, there is a set of probability 1 in $\Omega$ such that for each $\omega$ in this set, there exists $C_\omega<\infty$ such that for every $\lambda>0$,
\begin{eqnarray}
\#\{\vec n: \sup_j | f\ast\mu_{j}^{(\omega)}(\vec n)|>\lambda\}\leq \frac{C_\omega}{\lambda}\|f\|_1.
\end{eqnarray}
\end{thm}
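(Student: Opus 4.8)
The plan is to follow the same Calder\'{o}n--Zygmund scheme used to prove Theorem~\ref{SgtPepper}, but with a decomposition that is \emph{adapted to the anisotropic structure} of $\nu_j\ast\tilde\nu_j$ revealed by Lemma~\ref{plaid}. As before, by the Calder\'{o}n transference principle it suffices to prove the stated weak $(1,1)$ bound for the convolution operators on $\Z^d$; by scaling we take $\lambda\approx 1$. After peeling off the good part $g$ (using $\|\mu_j\|_1\le C_\omega$) and replacing $\mu_j$ by $\nu_j$ at the cost of the standard maximal function, and after splitting each $b_{s,k}=b^{(j)}_{s,k}+B^{(j)}_{s,k}$ by the height threshold $2^{d(1-\alpha)j}\sim|\supp\mu_j|$ exactly as in Section~2.1 (this controls the contribution of $b^{(j)}$ directly and gives $\|B^{(j)}_s\|_\infty\le 2^{d(1-\alpha)j}$, which balances the no-cancellation term (\ref{P0})), and after discarding the exceptional set $E=\bigcup Q^*_{s,k}$, we reduce to estimating
\begin{equation*}
\#\Big\{\vec n:\sup_j\Big|\sum_{s\le j}B^{(j)}_s\ast\nu_j(\vec n)\Big|>1\Big\}\le 2\sum_j\sum_{s\le t\le j}\big\langle B^{(j)}_s\ast\nu_j\ast\tilde\nu_j,\,B^{(j)}_t\big\rangle
\end{equation*}
via Chebyshev and expansion of the square, just as before.

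The new ingredient is how one handles the expansion $\nu_j\ast\tilde\nu_j=\sum_I\chi_{j,I}+\nu_j\ast\tilde\nu_j(0)\,\delta_0$. The term $I=\emptyset$ is the trouble spot flagged in the remark: $\chi_{j,\emptyset}$ is supported where all coordinates can vanish, i.e. essentially on a union of lower-dimensional coordinate subspaces, so it cannot be bounded in $\ell^\infty$ well enough. The fix is to refine the Calder\'{o}n--Zygmund decomposition: instead of cubes, stop along each coordinate axis separately, so that each ``bad'' piece $B^{(j)}_{s}$ is supported on a product box $Q=\prod_i [c_i,c_i+2^{s_i})$ whose side-lengths $2^{s_i}$ need not be equal, with $\|B^{(j)}_s\|_1\lesssim\prod_i 2^{s_i}$ and $\sum(\text{boxes})\lesssim\|f\|_1$. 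Then, pairing $\chi_{j,I}$ against a product box, the convolution in the directions $i\in I$ is governed by the genuine decay (\ref{P}), while in the directions $i\notin I$ one only has the trivial bound—but a trivial bound in \emph{one direction at a scale $\le 2^j$} costs only a factor $2^j$, not $2^{dj}$. Bookkeeping all $2^{|I|}$ cases and summing the geometric series in $j$, one checks that the total is $\lesssim\|b\|_1\lesssim\|f\|_1$ precisely when $\alpha<1/2$ (the threshold from $|I|=1$, where one loses the most from a single bad direction), with an arbitrarily small loss $2^{\epsilon j}$ absorbed since $\epsilon>0$ is free. The $I=\emptyset$ term, having no surviving good direction, must be handled by noting $\chi_{j,\emptyset}$ is a sum of at most $d$ pieces each supported on a coordinate hyperplane and each carrying the one-dimensional cancellation already established in \cite{LaVic1}; alternatively, absorbing $\chi_{j,\emptyset}$ into the delta-type term via an inductive reduction to dimension $d-1$.

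As for ordering: I would (i) state the transference reduction and the scaling normalization; (ii) carry out the $g$/$b$ split and the $\mu_j\to\nu_j$ replacement verbatim from Section~2.1; (iii) introduce the height split $b^{(j)}_s+B^{(j)}_s$ and dispose of $b^{(j)}$ with the same lower-sum computation; (iv) introduce the anisotropic Calder\'{o}n--Zygmund decomposition and its basic properties; (v) remove the exceptional set; (vi) expand the $L^2$ norm and apply Lemma~\ref{plaid}, splitting the estimate of $\langle B^{(j)}_s\ast\chi_{j,I},B^{(j)}_t\rangle$ according to which coordinate directions lie in $I$; (vii) sum over $s\le t\le j$ and over $j$, checking convergence for $\alpha<1/2$; (viii) handle the residual $I=\emptyset$ and the $\delta_0$ term using $\|B^{(j)}_s\|_\infty\le 2^{d(1-\alpha)j}$ and, if needed, the one-dimensional input. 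The main obstacle is step (vi)--(vii): making the anisotropic box decomposition interact cleanly with the mixed good/trivial bounds so that every one of the $2^d$ families of cross terms sums to something $\lesssim\|f\|_1$ under the single hypothesis $\alpha<1/2$, and in particular confirming that the restriction genuinely comes from $|I|=1$ and not from some more delicate combination; the $I=\emptyset$ term is a secondary nuisance that the same box structure, together with the already-known one-dimensional theory, should dispatch.
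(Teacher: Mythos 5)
Your overall architecture is the paper's: the proof does rerun the Calder\'on--Zygmund scheme of Theorem~\ref{SgtPepper} verbatim through the $g/b$ split, the $\mu_j\to\nu_j$ replacement, the exceptional set, and the Chebyshev--$TT^*$ expansion, with the only new work being a splitting of the bad part adapted to the coordinate structure revealed by Lemma~\ref{plaid}. But the two places where you locate the difficulty are misidentified, and the mechanism you propose does not supply the estimate that is actually needed. First, $\chi_{j,\emptyset}$ is not the piece living on a union of coordinate subspaces: in the decomposition of Lemma~\ref{plaid} it is the degenerate stratum (essentially the delta mass at the origin governed by (\ref{P0})), and it is handled exactly as in the speckled case by the pointwise height threshold $2^{(d-d\alpha)j}$ --- no one-dimensional input from \cite{LaVic1} and no induction on dimension. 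The genuinely new terms are the $\chi_{j,I}$ with $1\le\#I<d$, which are supported on proper coordinate subspaces but come \emph{with} the $\ell^\infty$ bound (\ref{P}); the issue is not a missing bound on $\chi_{j,I}$ but the quantity it must be paired against, namely the local mixed norm $\sup_{\vec n_I^\perp}\sum_{\vec n_I}|B^{(j)}_s(\vec n_I,\vec n_I^\perp)|$ (the $\ell^1$ norm of $B^{(j)}_s$ along $I$-slices of the dyadic cube $Q_{j,\vec n}$), which is what survives in $\langle B^{(j)}_s\ast\chi_{j,I},B^{(j)}_t\rangle$.

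Second, your heuristic that ``a trivial bound in one direction costs only a factor $2^j$'' does not close the estimate. From the height cut $\|B^{(j)}_s\|_\infty\le 2^{d(1-\alpha)j}$ alone, the $I$-slice $\ell^1$ norm is only bounded by $2^{\#I\,j}\cdot 2^{d(1-\alpha)j}$, which exceeds the threshold $2^{(d-\alpha(d-\#I)+\epsilon)j}$ actually required (so that its product with $\|\chi_{j,I}\|_\infty$ is $\lesssim 2^{(-(\frac12-\alpha)\#I+2\epsilon)j}$) by a factor $2^{(1-\alpha)\#I\,j}$. So one must perform, for each proper nonempty $I$, an \emph{additional} truncation of $b_s$ keyed to the size of these local mixed norms --- this is the paper's definition of $b^{j,I}_s$ --- and then bound the discarded pieces by a pure support count. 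That count is itself nontrivial and uses the plaid structure of the random set: $\supp\mu_j$ is covered by $\lesssim 2^{(d-\#I)(1-\alpha)j}$ translates of the slab $V_{j,I}=\{(\vec v_I,0):|\vec v_I|<2^{j+1}\}$, $\supp b^{j,I}_s$ by $\lesssim 2^{-(d-\alpha(d-\#I)+\epsilon)j}\|b_s\|_1$ such translates, and $|V_{j,I}+V_{j,I}|\lesssim 2^{\#I\,j}$, so the relevant sumset has size $\lesssim 2^{-\epsilon j}\|b_s\|_1$. Your anisotropic-box decomposition neither controls all the intermediate mixed norms simultaneously (a CZ stopping rule controls averages over the boxes, not slice-wise $\ell^1$ norms) nor provides a substitute for this covering argument, so as written the proof does not go through. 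The threshold $\alpha<1/2$ you anticipate does emerge correctly, from the exponent $-(\tfrac12-\alpha)\#I+2\epsilon$ once the right truncations are in place.
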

\begin{cor}\label{plaidseq}
For all $\alpha<1/2$, there is a set of probability 1 in $\Omega$ such that every $\omega$ in this set has the following property: Let $\{\vec a_k :k\in\N\}$ be an enumeration of the set $\{\vec n: \xi_{i,n_i}(\omega)=1 \,\, \text{for all}\,\, i\}$ with $|\vec a_k|$ increasing. Then for any measure-preserving $\Z^d$-action $\cal T$ and any $f\in L^1(X)$, the averages
\begin{eqnarray*}
A_Nf(x):=\frac1N \sum_{k=1}^N f\circ {\cal T}(\vec a_k)(x)
\end{eqnarray*}
converge a.e. in $X$.
\end{cor}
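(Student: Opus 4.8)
The plan is to deduce Corollary~\ref{plaidseq} from the weak maximal inequality of Theorem~\ref{MMT}, in exactly the way Theorem~\ref{specseq} follows from Theorem~\ref{SgtPepper}: transfer the dyadic maximal inequality to an arbitrary measure-preserving $\Z^d$-action, dominate the Ces\`aro maximal function $\sup_N|A_Nf|$ by the dyadic-annulus maximal function, and then combine the resulting weak $(1,1)$ bound with a.e.\ convergence on a dense subclass of $L^1$ via the Banach principle.

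First I would fix $\omega$ in the probability-one set furnished by Theorem~\ref{MMT} and intersect it with the probability-one set on which $M_j:=\#(S^\omega\cap B(0,2^j))\asymp 2^{d(1-\alpha)j}$ for all large $j$, where $S^\omega:=\{\vec n:\xi_{i,n_i}(\omega)=1\text{ for all }i\}$ and $\Delta_j:=\{\vec n:2^j\le|\vec n|<2^{j+1}\}$. That $M_j\asymp 2^{d(1-\alpha)j}$ a.s.\ is a routine second-moment argument: $\E M_j=\bigl(\sum_{0<n<2^{j+1}}n^{-\alpha}\bigr)^d\asymp 2^{d(1-\alpha)j}$, two products $\prod_i\xi_{i,n_i}$ are independent unless they share a coordinate value, so the variance of $M_j$ is easily controlled, and Borel--Cantelli finishes. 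Applying the Calder\'on transference principle (Lemma~\ref{transfer}) to Theorem~\ref{MMT} then gives, for every measure-preserving $\Z^d$-action $\cal T$,
\[
m\bigl\{x:\sup_j|A_{\mu_j^{(\omega)}}f(x)|>\lambda\bigr\}\le\frac{C_\omega}{\lambda}\|f\|_1,\qquad A_{\mu_j^{(\omega)}}f(x):=\sum_{\vec n}\mu_j^{(\omega)}(\vec n)\,f(\cal T(\vec n)x) ;
\]
since $\mu_j^{(\omega)}\ge 0$ the same holds with $f$ replaced by $|f|$, so $\sup_j A_{\mu_j^{(\omega)}}|f|$ is weak $(1,1)$, and $\sum_{\vec n\in S^\omega\cap\Delta_j}|f\circ\cal T(\vec n)|=2^{d(1-\alpha)j}A_{\mu_j^{(\omega)}}|f|$.

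Next I would dominate the Ces\`aro maximal function. If $2^j\le|\vec a_N|<2^{j+1}$ then $S^\omega\cap B(0,2^j)\subset\{\vec a_1,\dots,\vec a_N\}\subset(S^\omega\cap B(0,2^j))\cup(S^\omega\cap\Delta_j)$ and $N\ge M_j$, so
\[
|A_Nf|\le\frac1{M_j}\sum_{\vec n\in S^\omega\cap B(0,2^j)}|f\circ\cal T(\vec n)|+\frac1{M_j}\sum_{\vec n\in S^\omega\cap\Delta_j}|f\circ\cal T(\vec n)|\le\frac{\sum_{i\le j}2^{d(1-\alpha)i}}{M_j}\sup_i A_{\mu_i^{(\omega)}}|f|+\frac{2^{d(1-\alpha)j}}{M_j}A_{\mu_j^{(\omega)}}|f| .
\]
Since $1-\alpha>0$ the geometric sum is $\asymp 2^{d(1-\alpha)j}$ and $M_j\gtrsim 2^{d(1-\alpha)j}$, so the right-hand side is $\le C_\omega\sup_i A_{\mu_i^{(\omega)}}|f|$; taking $\sup_N$ (the finitely many small $j$ being harmless) gives $\sup_N|A_Nf|\le C_\omega\sup_j A_{\mu_j^{(\omega)}}|f|$ pointwise, hence $m\{\sup_N|A_Nf|>\lambda\}\le C_\omega\lambda^{-1}\|f\|_1$.

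Finally, the plaid sequence is universally $L^2$-good by the Fourier-transform argument noted just before Lemma~\ref{plaid}, so $A_Nf$ converges a.e.\ for every $f\in L^2(X)$, and $L^2(X)$ is dense in $L^1(X)$ since $X$ is a probability space. Given $f\in L^1(X)$ and $\vep>0$, picking $g\in L^2(X)$ with $\|f-g\|_1<\vep$ gives $\limsup_N A_Nf-\liminf_N A_Nf\le 2\sup_N|A_N(f-g)|$ a.e., whence $m\{\limsup_N A_Nf-\liminf_N A_Nf>\lambda\}\le 2C_\omega\vep/\lambda$ for each $\lambda>0$; letting $\vep\to0$ (and $\lambda\to0$ through a sequence) shows $A_Nf$ converges a.e. Carrying this out on the countable intersection of the probability-one events above produces the claimed full-measure set of $\omega$. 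I do not expect a serious obstacle here: all the genuine analytic work is contained in Theorem~\ref{MMT}, and the only additional input is the elementary concentration estimate $M_j\asymp 2^{d(1-\alpha)j}$ together with the bookkeeping that converts the annular maximal inequality into the Ces\`aro one.
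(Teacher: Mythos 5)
Your proposal is correct and follows exactly the route the paper intends: the corollary is deduced from the weak $(1,1)$ dyadic maximal inequality of Theorem~\ref{MMT} via Calder\'on transference and positivity, combined with the $L^2$ dense class from the Fourier-transform argument, precisely as Theorem~\ref{specseq} is deduced from Theorem~\ref{SgtPepper}. The paper leaves these reduction steps implicit, whereas you have written out the concentration estimate $M_j\asymp 2^{d(1-\alpha)j}$ and the annulus-to-Ces\`aro domination explicitly; both are accurate.
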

\begin{proof}
The proof of Theorem \ref{MMT} follows the same lines as that of Theorem \ref{SgtPepper}, with one substitution: we will have to do more when we split $b_{s,k}=b^{(j)}_{s,k}+B^{(j)}_{s,k}$, so that we can get better bounds on the pieces of the inner product which correspond to the various $\chi_{j,I}$.
\\
\\Let us jump ahead to that inner product to determine the bounds we will need; again we will assume that each of $B^{(j)}_s$ and $B^{(j)}_t$ is supported on a dyadic cube of sidelength $2^j$. Let $\vec n_I\in\Z^{\# I}$ denote the projection of $\vec n$ onto the coordinates indexed by $I$, and $\vec n_I^\perp\in\Z^{d-\#I}$ the projection onto the other $d-\#I$ coordinates. Then
\begin{eqnarray*}
\left |\left\langle B^{(j)}_{s}\ast\chi_{j,I}, B^{(j)}_{t}\right\rangle\right |&=&\left| \sum_{\vec n} \sum_{\vec m} \chi_{j,I}(\vec n - \vec m)B^{(j)}_{s}(\vec m)\bar B^{(j)}_{t}(\vec n)\right |\\
&=&\left |\sum_{\vec n} \bar B^{(j)}_{t}(\vec n)\sum_{\vec m_I} \chi_{j,I}(n_I-m_I,0)B^{(j)}_{s}(\vec m_I, \vec n_I^\perp)\right |\\
&\leq& \sum_{\vec n} |\bar B^{(j)}_{t}(\vec n)|\cdot \|\chi_I\|_\infty \sum_{\vec m_I} |B^{(j)}_{s}(\vec m_I,\vec n_I^\perp)|\\
&\leq& \|B^{(j)}_{t}\|_1\|\chi_{j,I}\|_\infty \left(\sup_{\vec n_I^\perp}\sum_{\vec n_I} |B^{(j)}_{s}(\vec n_I,\vec n_I^\perp)|\right).
\end{eqnarray*}
Just as the delta mass at 0 for the speckled averages made it useful to define $B^{(j)}_{s}$ so that its $\ell^\infty$ norm was suitably bounded, in the plaid case we will want to define $B^{(j)}_{s}$ so that we have bounds on the collection of $\ell^\infty(\ell^1)$ mixed norms that appear in the last line above. As in the speckled case, the bounds we can obtain from properties of the support will suffice.
\\
\\ That is, instead of (\ref{Sshort}), we let $Q_{j,\vec n}$ denote the dyadic cube with side length $2^j$ containing $\vec n$, and define for $1\leq \#I<d$
\begin{eqnarray}
b^{j,I}_s(\vec n)&:=&\left\{\begin{array}{ll} b_s(\vec n), & \displaystyle\sum_{\{\vec m_I:(\vec m_I,\vec n_I^\perp)\in Q_{j,\vec n}\}}|b_s(\vec m_I,\vec n_I^\perp)|>2^{(d-\alpha(d-\# I)+\epsilon)j}, \\ 0 & \text{otherwise;}\end{array}\right.
\end{eqnarray}
For $I=\emptyset$, define as before
\begin{eqnarray}
b^{j,\emptyset}_s(\vec n)&:=&\left\{\begin{array}{ll} b_s(\vec n), & |b_s(\vec n)|>2^{(d-d\alpha)j} \\ 0 & \text{otherwise;}\end{array}\right.
\end{eqnarray}
Note that this bound lacks the factor of $2^{\epsilon j}$; this exact bound is crucial, since for this term we will have no extra cancellation in $\chi_{j,I}$ for $\alpha$ small. \\

Although we would have problems with double-counting if we simply subtracted these pieces from the original $b_s$, it is clear that we can define $B^{(j)}_s$ such that
\begin{eqnarray}
\sum_{\vec m_I:(\vec m_I,\vec n_I^\perp)\in Q_{j,\vec n}}|B^{(j)}_s(\vec m_I,\vec n_I^\perp)|\leq2^{(d-\alpha(d-\# I)+\epsilon)j}
\end{eqnarray}
and
\begin{eqnarray}
|B^{(j)}_s -b_s|\leq \sum_{I\subsetneq\{1,\dots,d\}}|b^{j,I}_s|.
\end{eqnarray}
Thus we have split up $b_s$ with regard to the size of the relevant mixed norm on the cubes $Q_{j,\vec n}$. The contribution of the terms $b^{j,\emptyset}_s$ is bounded as before. For the others, we will use the geometry of the supports and the fact that we can spare an extra $2^{\epsilon j}$.
\\
\\If we let $V_{j,I}:=\{(\vec v_I,0):|\vec v_I|<2^{j+1}\}$, then we note that $\supp \mu_j$ can be covered by $\leq C_\omega 2^{(d-\# I)(1-\alpha)j}$ copies of $V_{j,I}$, that $\supp b^{j,I}_s$ can be covered by $\leq 2^{-(d-\alpha(d-\# I)+\epsilon)j}\|b_s\|_1$ copies of $V_{j,I}$, and that $|V_{j,I}+V_{j,I}|\leq 2^{\# I(j+2)}$. Therefore
\begin{eqnarray*}
\#\{\vec n: \sup_j |\sum_s b^{j,I}_s\ast\mu_j(\vec n)|>0\}&\leq&\sum_j\sum_s |(\supp b^{j,I}_s)+(\supp \mu_j)|\\
&\lesssim& \sum_j\sum_s 2^{\# I(j+2)}2^{-(d-\alpha(d-\# I)+\epsilon)j}\|b_s\|_12^{(d-\# I)(1-\alpha)j}\\
&\leq& \|b\|_1\sum_j 2^{-\epsilon j}\lesssim \|f\|_1.
\end{eqnarray*}
Therefore we may reduce as before to considering the contribution of the $B^{(j)}_s$. The argument is again identical to the proof of Theorem \ref{SgtPepper} until the point where the inner product is calculated. For $I\neq\emptyset$ we have (again under the assumption, easily removed, that $B^{(j)}_s$ and $B^{(j)}_t$ are each supported on single dyadic cubes of size $2^j$) that
\begin{eqnarray*}
\left |\left\langle B^{(j)}_{s}\ast\nu_j\ast\tilde\nu_j, B^{(j)}_{t}\right\rangle\right |&\leq&\sum_{I\subset \{1,\dots,d\}} \left|\left\langle B^{(j)}_{s}\ast\chi_{j,I}, B^{(j)}_{t}\right\rangle\right |\\
&\leq& \|B^{(j)}_{t}\|_1\|\chi_{j,I}\|_\infty \left(\sup_{\vec n_I^\perp}\sum_{\vec n_I} |B^{(j)}_{s}(\vec n_I,\vec n_I^\perp)|\right)\\
&\lesssim& \|B^{(j)}_t\|_1 2^{(-d-\frac{\# I}2+d\alpha+\epsilon)j}2^{(d-\alpha(d-\# I)+\epsilon)j}\\
&\leq& \|B^{(j)}_t\|_1 2^{((\frac12-\alpha)\# I+2\epsilon)j}
\end{eqnarray*}
and so the terms are bounded for $\epsilon$ sufficiently small, so long as $\alpha<\frac12$.
\end{proof}

\section{\bf Sparse Arithmetic Sets in $\Z^d$}
\label{detsparse}
 In the following two sections, we seek to adapt a recent construction by M. Christ \cite{ChristPreprint} to a higher-dimensional setting. That construction takes polynomial subsets of finite groups $\Z_p^m$ and transfers them to $\Z^d$ via Freiman isomorphisms; Weil's inequality on complete character sums implies that the uniform probability measure on such a set will be optimally pseudo-random (in the sense of its Fourier transform), which suffices to prove a weak (1,1) bound on these averages.
\\
\\ As in Section \ref{random}, we have our choice between extending this one-dimensional result into a product set (plaid) construction or a natively $d$-dimensional (speckled) version. (One of the means by which we can vary the sparse sequence's rate of growth, in fact, is to project a $m$-dimensional set down to $\Z^d$, for $m$ a multiple of $d$.)  The proofs will follow closely those in \cite{ChristPreprint}; in the speckled case, we will take an isomorphism from $\Z_p^m$ to $\Z^d$ rather than $\Z$, while in the plaid case we observe that the relevant Fourier estimates factorize into a product of the estimates for the original sequence.
\\
\\ In addition, we prove  an oscillational inequality for these averages, which implies a pointwise ergodic theorem; the corresponding result was not included in \cite{ChristPreprint}.

\subsection{A Speckled Construction}
\label{detspeckled}

Let $c$ and $C$ be constants larger than $1$ and suppose that $\{ p_k \}$ is a sequence of primes satisfying
\begin{equation*}
 cp_k \leq p_{k+1} < Cp_k
\end{equation*}
for all $k$. (Later, we will want $p_k\approx 2^{\gamma k}$ for some $\gamma>0$, but this can be achieved just be repeating terms as necessary.)
\\
\\As mentioned before, we will construct $m$-dimensional sparse sets, where $m=qd$ for some positive integer $q$; the case $q=1$ gives us a ``native'' $d$-dimensional set, while higher values of $q$ give us sparser sets. So we fix a positive integer $q$ and choose a sequence of vectors $\{ \vec {a}_k \} \subset \mathbb{Z}^d$ such that
\begin{eqnarray*}
 |\vec a_{k-1}|+p_{k-1}^q < |\vec a_k| \leq Cp_{k}^{q}.
\end{eqnarray*}

Let $[n]$ denote $n\mod p_k$, and define the sequence
\begin{eqnarray}
x_{k,j} = \left(\sum_{i=1}^{q} p^{i-1} [j^i]_{p_k}, \sum_{i=1}^{q} p^{i-1} [j^{q+i}]_{p_k}, ..., \sum_{i=1}^{q} p^{i-1} [j^{(d-1)q+i}]_{p_k}  \right), 0 \leq j < p_k
\end{eqnarray}
and the set
\begin{equation}
 S_k = \left\{ \vec {a}_k + x_{k,j}: 0 \leq j < p_k\right\}.
\end{equation}
This is the image of the set $\{(j,j^2,\dots,j^m):0\leq j <p_k\}\subset \Z_{p_k}^m$ under a suitable Freiman isomorphism. The main reason we have chosen such a set is that it enjoys near-optimal Fourier bounds thanks to Weil's theorem on charcter sums:
\begin{thm}(Weil, \cite{MR0027151} ) \label{WeilsThm}
 Suppose $f \in \mathbb{Z}_p[x]$ is a polynomial and $p$ does not divide the degree, $m$, of $f$.
 Then
 \begin{equation*}
  \left| \sum_{n \in \mathbb{Z}_p}  e\left( f(n)/p \right) \right| \leq (m-1) p^{1/2}.
 \end{equation*}
\end{thm}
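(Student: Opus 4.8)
The plan is to deduce the estimate from the Riemann hypothesis for curves over finite fields, as Weil originally did. Write $\mathbb{Z}_p=\mathbb{F}_p$, let $\psi(t)=e(t/p)$ be the standard additive character, and for $n\ge 1$ and $t\in\mathbb{F}_p^\times$ put $S_n^{(t)}=\sum_{x\in\mathbb{F}_{p^n}}\psi\big(\mathrm{Tr}_{\mathbb{F}_{p^n}/\mathbb{F}_p}(tf(x))\big)$, so the target quantity is $S_1^{(1)}$. Consider the Artin--Schreier curve $C:y^p-y=f(x)$ over $\mathbb{F}_p$. Since the image of the additive surjection $y\mapsto y^p-y$ on $\mathbb{F}_{p^n}$ is exactly $\ker(\mathrm{Tr}_{\mathbb{F}_{p^n}/\mathbb{F}_p})$, the fibre of $C\to\mathbb{A}^1_x$ over a point $x$ has $p$ elements if $\mathrm{Tr}(f(x))=0$ and is empty otherwise; orthogonality of characters then gives
\[
\#C(\mathbb{F}_{p^n}) = 1 + p\cdot\#\{x\in\mathbb{F}_{p^n}:\mathrm{Tr}(f(x))=0\} = 1 + p^n + \sum_{t\in\mathbb{F}_p^\times} S_n^{(t)},
\]
the leading $1$ being the single, rational, totally ramified point at infinity (single because $p\nmid m$).

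Next I would compute the genus. Because $p\nmid m$, the degree-$p$ cover $C\to\mathbb{P}^1_x$ is ramified only above $x=\infty$, where there is a single point with different exponent $(p-1)(m+1)$; Riemann--Hurwitz then yields $2g=(p-1)(m-1)$. The Riemann hypothesis for curves gives $\#C(\mathbb{F}_{p^n})=p^n+1-\sum_{i=1}^{2g}\alpha_i^{\,n}$ with every $|\alpha_i|=p^{1/2}$, so comparison with the display above gives $\sum_{t\in\mathbb{F}_p^\times}S_n^{(t)}=-\sum_i\alpha_i^{\,n}$ for all $n$.

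The remaining, and most delicate, step is to pass from this sum over all $p-1$ nontrivial characters to a bound on the single sum $S_1^{(1)}$. For this I would decompose $H^1$ of $C$ into its isotypic components under $\mathbb{F}_p=\mathrm{Gal}(C/\mathbb{P}^1)$: the $\psi(t\cdot)$-component has dimension $m-1$ (the same for every $t\neq0$, by the symmetry of the construction together with the local computation at $\infty$), its Frobenius eigenvalues all have absolute value $p^{1/2}$, and $S_n^{(t)}$ is the negative of their $n$-th power sum. Hence $|S_n^{(t)}|\le(m-1)p^{n/2}$ for every $t$ and $n$, and the case $n=1$, $t=1$ is precisely the claim. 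Equivalently, one may organize the whole argument via the rank-one Artin--Schreier sheaf $\mathcal{L}_\psi(f)$ on $\mathbb{A}^1$: the Grothendieck--Lefschetz formula gives $S_n=-\mathrm{Tr}(\mathrm{Frob}^n\mid H^1_c)$ (since $H^0_c=H^2_c=0$, the latter because $f$ is nonconstant), the Euler--Poincar\'e formula gives $\dim H^1_c=\mathrm{Sw}_\infty-1=m-1$, and purity gives the bound $p^{1/2}$ on the eigenvalues.

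The one genuinely deep ingredient — and hence the main obstacle — is the Riemann hypothesis for curves over finite fields (equivalently, the purity of $H^1_c$ of the Artin--Schreier sheaf); the rest is bookkeeping: the character orthogonality, the Riemann--Hurwitz genus count, and the isotypic refinement. A self-contained alternative that avoids algebraic geometry is Stepanov's polynomial method, which bounds $\#C(\mathbb{F}_p)$ from above by exhibiting a nonzero auxiliary polynomial of controlled degree that vanishes to high order at every rational point, and obtains the complementary bound by applying the same device over $\mathbb{F}_{p^2}$; this route is elementary but considerably longer, so for our purposes we simply invoke Weil's theorem as stated.
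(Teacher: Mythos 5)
The paper does not prove this statement: Theorem \ref{WeilsThm} is imported verbatim from Weil's 1948 paper \cite{MR0027151} and used as a black box, so there is no internal argument to compare yours against. Your outline is the standard (and essentially Weil's original) proof, and it is correct as a sketch. The point count on the Artin--Schreier curve $y^p-y=f(x)$, the single totally ramified point at infinity under the hypothesis $p\nmid m$, the different exponent $(p-1)(m+1)$ and the resulting genus $g=\tfrac12(p-1)(m-1)$, and the identity $\sum_{t\neq 0}S_n^{(t)}=-\sum_i\alpha_i^n$ are all right. The one step you state a little breezily is the uniformity in $t$ of the isotypic dimensions: it is worth noting that this is immediate because $tf$ is again a polynomial of degree $m$ prime to $p$, so each factor $L(T,\psi_t\circ f)$ of the zeta numerator is a polynomial of degree exactly $m-1$, and the total $(p-1)(m-1)=2g$ confirms that no component is larger; without this refinement the curve count alone only yields the averaged bound $\bigl|\sum_{t\neq0}S_1^{(t)}\bigr|\le(p-1)(m-1)p^{1/2}$, which is not enough. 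As you say, the genuinely deep input is the Riemann hypothesis for curves (equivalently, purity of $H^1_c$ of the Artin--Schreier sheaf), which you do not prove and which one should not be expected to reprove here; both your route and the Stepanov alternative are legitimate ways to discharge it, and the paper's choice to cite the result rather than prove it is the appropriate one. One small contextual remark: in the paper's application $m=dq$ is fixed while $p=p_k\to\infty$, so the hypothesis $p\nmid m$ holds for all large $k$ and the bound $(m-1)p^{1/2}$ is genuinely nontrivial (it would be vacuous if $m\ge p^{1/2}+1$).
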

The size requirements on $\{ \vec {a}_k \}$ ensure that the collection $\left\{ S_k \right\}$ is disjoint; in fact, all the elements of $S_k$ lie in a shell of elements whose lengths are larger than $(C+1)p_{k}^q$ but less than $Cp_{k+1}^q$.  We also have that $\# S_k= p_k$.
\\
\\Let $S = \cup_{k=1}^\infty S_k= \{x_{k,j}:k\in\N, 0\leq j<p_k\}$ .  As a consequence of the sparseness of the individual sets $S_k$ and the lacunary nature of the sequence $\{ p_k \}$, the sparsity of the set $S$ is similar to the sequence $(n^q,\cdots,n^q)$. We wish to prove the following $L^1$ pointwise ergodic theorem:

\begin{thm}
Let $\{\vec n_i\}_{i\in\N}$ be an enumeration of $\{a_k + x_{k,j}\}$ in the dictionary ordering. Then for any measure-preserving $\Z^d$-action $\cal T$ and any $f\in L^1(X)$, the averages
\begin{eqnarray*}
A_Nf(x):=\frac1N \sum_{i=1}^N f\circ {\cal T}(\vec n_i)(x)
\end{eqnarray*}
converge almost everywhere in $X$.
\end{thm}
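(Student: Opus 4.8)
The plan is to follow the now-standard three-step pattern for $L^1$ pointwise ergodic theorems along sparse sets, exactly as in the random case of Section \ref{random} and in Christ \cite{ChristPreprint}: (i) an $L^2$ ergodic theorem, proved via the Fourier transform; (ii) a weak $(1,1)$ maximal inequality for the dyadic pieces, proved by a Calder\'on--Zygmund decomposition combined with the Fefferman--Christ $TT^*$ trick; and (iii) an oscillation inequality (Theorem \ref{OscIneq}) that upgrades the maximal inequality plus $L^2$ convergence to a.e.\ convergence for all $f\in L^1$. By the Calder\'on transference principle (Lemma \ref{transfer}) everything reduces to convolution operators on $\Z^d$. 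Group the averaging set by the index $k$: let $\sigma_k$ be the uniform probability measure on $S_k-\vec a_k=\{x_{k,j}:0\le j<p_k\}$, translated back to near the origin, and let $\mu_k=\sigma_k(\cdot-\vec a_k)$ be the actual averaging measure; since the shells are lacunary ($|\vec a_k|\approx p_k^q$ and $cp_k\le p_{k+1}<Cp_k$), the full Ces\`aro averages $A_Nf$ along the dictionary ordering are, up to harmless boundary blocks controlled by the standard maximal function, comparable to $\sup_k|f*\mu_k|$, so it suffices to treat the latter.

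For step (i), the Freiman isomorphism was chosen precisely so that $\widehat{\sigma_k}(\xi)$ agrees (on a suitable range of frequencies) with a complete character sum over $\Z_{p_k}^m$; Weil's theorem (Theorem \ref{WeilsThm}) then gives the near-optimal bound $|\widehat{\sigma_k}(\xi)|\lesssim p_k^{-1/2}$ away from the trivial frequencies, with the usual constant depending only on $m$. Summing a geometric series in $k$ (using $p_k\approx 2^{\gamma k}$) yields an $L^2$ maximal inequality and, via a standard argument comparing $\sigma_k$ to the corresponding smooth average, norm and a.e.\ convergence on a dense class in $L^2$, hence on all of $L^2$. For step (ii), I would run the discrete Calder\'on--Zygmund decomposition $f=g+\sum_{s,k}b_{s,k}$ at height $1$ exactly as in the proof of Theorem \ref{SgtPepper}: the good part $g$ is handled by $\|\mu_k\|_1=1$; the part of the bad function living on cubes larger than the scale of $\supp\mu_k$ (roughly $p_k^q$) is thrown into an exceptional set of controlled measure; and the remaining ``tall but small-support'' pieces $B^{(k)}_s$ are estimated in $L^2$ through the inner product $\langle B^{(k)}_s*\sigma_k*\tilde\sigma_k, B^{(k)}_t\rangle$, where the key input is that $\sigma_k*\tilde\sigma_k$ is small ($\lesssim p_k^{-1/2}$, or $p_k^{-1}$ in the $\ell^1$-normalization) away from the origin — which is again just the Weil bound, since $\|\sigma_k*\tilde\sigma_k\|_\infty$ off the diagonal is controlled by $\sup_{\xi\ne0}|\widehat{\sigma_k}(\xi)|^2$ together with the fact that the number of representations $x_{k,i}-x_{k,j}=\vec v$ is uniformly bounded (a polynomial of degree $\le m$ over $\Z_p$ has $\le m$ roots). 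The delta mass at $0$ contributes $\sigma_k*\tilde\sigma_k(0)=p_k^{-1}\approx|\supp\mu_k|^{-1}$, which is balanced by the $\ell^\infty$ bound $\|B^{(k)}_s\|_\infty\lesssim p_k$ built into the splitting, precisely as in Section \ref{random}. Summing over $k$ and $s,t$ (the $s,t$ sum costs only a factor $k$, absorbed by the geometric decay) gives the weak $(1,1)$ bound.

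Step (iii): rather than deriving convergence from an oscillation inequality as a black box, I would prove Theorem \ref{OscIneq} by a square-function estimate for $\sum_i\|f*(\mu_{k_{i+1}}-\mu_{k_i})\|_2^2$ — again using the Weil/Fourier bounds to get summability — and then combine it with the weak $(1,1)$ maximal inequality by a Banach-principle/interpolation argument in the style of Section IV.2 of \cite{PETHA}, obtaining a dense class of $f\in L^1$ for which $A_Nf$ converges a.e.; the maximal inequality then extends convergence to all of $L^1$. The main obstacle, and the only genuinely new point beyond bookkeeping, is matching the Freiman isomorphism correctly: one must choose the moduli in the definition of $x_{k,j}$ (the powers $p^{i-1}$ packing the coordinates $[j^i]_{p_k},\dots$) so that the map $\Z_{p_k}^m\to\Z^d$ is a Freiman isomorphism of order $2$ (so that differences, hence $\widehat{\sigma_k}$ and $\sigma_k*\tilde\sigma_k$, are faithfully represented) on the whole set $S_k$, and to verify that the lacunary spacing conditions on $\{\vec a_k\}$ and $\{p_k\}$ make the shells disjoint with the boundary-block error in the Ces\`aro-to-$\mu_k$ reduction negligible. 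Everything else is a routine adaptation of the one-dimensional argument of \cite{ChristPreprint} and of the template already executed in this paper for the random case.
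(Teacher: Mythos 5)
Your overall architecture (transference, $L^2$ convergence via an oscillation inequality, weak $(1,1)$ for the dyadic block averages, dense-class argument) matches the paper's, and your step (iii) is essentially what Section \ref{osciproof} does. The problem is in step (ii), where you import the wrong mechanism. You propose to close the Calder\'on--Zygmund argument exactly as in Theorem \ref{SgtPepper}, by bounding $\langle B^{(k)}_s\ast\sigma_k\ast\tilde\sigma_k, B^{(k)}_t\rangle$ with $\|\sigma_k\ast\tilde\sigma_k\|_{\ell^\infty(\Z^d\setminus\{0\})}$. But the deterministic set does not have the physical-space uniformity of the random set. Since $\sigma_k$ is the uniform probability measure on $p_k$ points, $\sigma_k\ast\tilde\sigma_k(\vec v)=p_k^{-2}\cdot\#\{(i,j):x_{k,i}-x_{k,j}=\vec v\}$, so the off-origin sup is of order $p_k^{-2}$ (the bounded-representation count you invoke only prevents it from being larger); there is no cancellation because you have not subtracted anything, and even after subtracting the uniform measure on the cube the convolution square remains of size $p_k^{-2}$ on its support, since the representation function is $0$ at most points and $\geq 1$ at about $p_k^2$ points rather than concentrating near its mean $p_k^{2-m}$. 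The relevant CZ cubes have side comparable to the diameter $p_k^{q}$ of $\supp\mu_k$, hence $\|B^{(k)}_s\|_1\lesssim p_k^{m}$ with $m=qd$, and your main term becomes $p_k^{-2}\cdot p_k^{m}\cdot\|B^{(k)}_t\|_1=p_k^{m-2}\|B^{(k)}_t\|_1$, which is not summable for any $m\geq 2$ --- i.e., it fails already in the native case $d=2$, $q=1$, which is the whole point of the theorem.

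The correct input is Fourier-side, not physical-space: the Weil bound (Theorem \ref{WeilsThm}) gives $\|\widehat{\mu'-\nu'}\|_{\ell^\infty}\leq(m-1)p^{-1/2}$, hence the $L^2$ operator bound $\|f\ast(\mu_k-\nu_k)\|_2\lesssim p_k^{-1/2}\|f\|_2$, and one feeds this into Christ's Theorem \ref{differencethm} (whose proof applies the operator bound directly to $B^{(k)}_s$, using $\|B^{(k)}_s\|_2^2\leq\|B^{(k)}_s\|_\infty\|B^{(k)}_s\|_1\lesssim p_k\|B^{(k)}_s\|_1$, rather than passing through an off-origin $\ell^\infty$ bound on the convolution square). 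This is what the paper does in Section \ref{proofweak}; the real technical content there is not the representation counting you identify as the main obstacle, but the transfer of the Weil bound from $\Z_{p}^m$ to $\Z^d$ --- the smooth cutoff $\Gamma_1$ with the summation-by-parts estimate (\ref{cpest}) showing $\|\hat\psi\|_{\ell^1}\leq Cp^m$, followed by the projection $\Gamma_2$ along the Freiman map --- which lets one avoid ever needing the map to be a Freiman isomorphism of order $2$ in physical space. As written, your step (ii) does not prove Theorem \ref{native}, and this is the load-bearing step of the whole argument.
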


As usual, this result will follow from an $L^2$ pointwise result and a weak (1,1) maximal inequality. Using the standard transference arguments, it suffices to prove an oscillational inequality and a weak maximal inequality for the corresponding convolution operators on $\Z^d$.
\\
\\For the $L^2$ result, we will consider the averages on $\Z^d$,
\begin{eqnarray}
A_Nf(\vec v):= \frac1N \sum_{i=1}^N f(v+n_i).
\end{eqnarray}
We must prove the following oscillational inequality, for any given lacunary $I\subset\N$:
\begin{thm}
\label{OscIneq}
For any sequence $t_1\leq t_2\leq\cdots$ with $t_n \in I$ for all $n$,
\begin{eqnarray}
\label{Osci}
\sum_n\left\| \sup_{t_{n-1}\leq t\leq t_{n}, t\in I} |A_t f-A_{t_n}f|\right\|_2^2\leq C\|f\|_2^2.
\end{eqnarray}
\end{thm}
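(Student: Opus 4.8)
The plan is to follow the template of Section~IV.2 of \cite{PETHA}: decompose each averaging operator into a ``smooth'' principal part, for which an $L^2$ oscillation inequality is essentially classical, together with a ``Weil error'' part whose $L^2$ operator norm decays geometrically in the block index, and then combine the two using the lacunarity of $I$. By the transference already invoked, it suffices to treat the convolution operators $A_N f = f\ast\sigma_N$ on $\Z^d$, where $\sigma_N$ is the normalized counting measure on the first $N$ points of $S$ in the dictionary order (the ordering being chosen so as to enumerate the blocks $S_1,S_2,\dots$ consecutively). Writing $N = N_{k-1}+r$ with $N_{k-1} = \sum_{i<k} p_i$ and $1\le r\le p_k$, decompose
\[
\sigma_N \;=\; \frac{N_{k-1}}{N}\,\tau_{k-1} \;+\; \frac{r}{N}\,\mu_{k,r},
\]
where $\tau_{k-1}$ is the normalized counting measure on the completed blocks $S_1\cup\dots\cup S_{k-1}$ and $\mu_{k,r}$ is the average over the initial segment (in that order) of $S_k$ consisting of $r$ points. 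Since $p_{k-1}\asymp p_k$, the sparseness hypotheses give $N\asymp p_k$, hence $r/N\asymp r/p_k$.

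The analytic core is a Fourier estimate for $\mu_{k,r}$. The Freiman-isomorphism argument of \cite{ChristPreprint} combined with Weil's theorem (Theorem~\ref{WeilsThm}) gives near-optimal Fourier bounds for the full block average $\mu_k$; applying the standard completion technique to pass from the full range of the curve parameter to an initial segment, one obtains that $\widehat{\mu_{k,r}}(\vec\xi)$ agrees, up to $O_\epsilon(r^{-1}p_k^{1/2+\epsilon})$, with $e(\vec\xi\cdot\vec a_k)$ times an essentially smooth model multiplier concentrated at the block frequency scale $p_k^{-q}$ (refined, in the native case $q=1$, by a one-dimensional Ces\`aro factor of length $r$ in the first coordinate, since there the curve parameter \emph{is} the first coordinate), and that $\widehat{\mu_{k,r}}(\vec\xi)$ is itself $O_\epsilon(r^{-1}p_k^{1/2+\epsilon})$ once $\vec\xi$ leaves an $O(p_k^{-q})$-neighbourhood of the origin. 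Accordingly write $\mu_{k,r}=M_{k,r}+E_{k,r}$, with $M_{k,r}$ the modulated smooth model and $E_{k,r}$ the remainder, and split $\tau_{k-1}$ in the same fashion into a (weighted-average, martingale-like) smooth model plus an error. Because the weight $r/N\asymp r/p_k$ cancels the factor $r^{-1}$ in the Weil bound, and similarly for the $\tau$-part, one arrives at $\sigma_N = M_N + E_N$ where $M_N$ is a smooth principal operator assembled from modulated slowly-varying (box/Ces\`aro) averages at dyadic scales and $\|f\ast E_N\|_2\le C_\epsilon\,2^{-\delta k}\|f\|_2$ for $N$ in the $k$-th block (using $p_k\asymp 2^{\gamma k}$).

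It remains to treat the two pieces. For the principal family $\{M_t\}$, the oscillation inequality (\ref{Osci}) follows from the classical $L^2$ oscillation/variation theory for box and Ces\`aro averages (as in \cite{PETHA}, ultimately of the type of Jones, Kaufman, Rosenblatt and Wierdl): within a fixed block the modulation $e(\vec\xi\cdot\vec a_k)$ is constant, so only Ces\`aro- and box-type oscillation enters; across blocks the completed-block models $\tau_k$ form a coherent sequence of weighted averages whose frequency-by-frequency oscillation is controlled either by the geometric damping that Weil's bound forces (away from the origin) or by comparison with a smooth average at scale $p_k^q$ (near the origin); and the lacunarity of $I$ passes one from consecutive-scale increments to the full oscillation over $I$. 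For the error part we may assume the $t_n$ strictly increasing (repeats contribute nothing) and, using $\sup_t|g_t|\le(\sum_t|g_t|^2)^{1/2}$,
\[
\Bigl\|\sup_{t_{n-1}\le t\le t_n,\ t\in I}\bigl|f\ast E_t-f\ast E_{t_n}\bigr|\Bigr\|_2^2 \;\le\; C\!\!\sum_{t\in I\cap[t_{n-1},t_n]}\!\!\|f\ast E_t\|_2^2 \;+\; C\,\|f\ast E_{t_n}\|_2^2 .
\]
Summing over $n$ and noting that each $t\in I$ is counted boundedly often, the total is $\lesssim\sum_{t\in I}\|f\ast E_t\|_2^2\lesssim\bigl(\sum_k \#(I\cap\text{$k$-th block})\,2^{-2\delta k}\bigr)\|f\|_2^2$; since $I$ is lacunary and the $k$-th block has length $p_k\asymp 2^{\gamma k}$, we have $\#(I\cap\text{$k$-th block})=O(k)$, and $\sum_k k\,2^{-2\delta k}<\infty$. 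Combining the principal and error contributions by the triangle inequality yields (\ref{Osci}).

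The main obstacle is the Fourier/modelling step. It is not a priori obvious what the \emph{partial}-block averages $\mu_{k,r}$ should be compared against, and the comparison must be made uniformly in $r$ --- including small $r$, where the Weil bound is weak but is exactly offset by the small weight $r/N\asymp r/p_k$. Extracting the model from Weil's theorem requires, for the general $m=qd$ construction, arranging the dictionary order so that its initial segments remain amenable to complete-character-sum estimates (for $q=1$ the segment is simply $\{0,\dots,r-1\}$ in the curve parameter, but for $q\ge 2$ one must complete an additional indicator of a residue class to reduce to a complete Weil sum); and one must then check that the resulting modulated model genuinely obeys the classical oscillation inequality, handling the block-dependent modulations $e(\vec\xi\cdot\vec a_k)$ in the cross-block oscillation. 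Once these points are settled, the rest is the routine bookkeeping of the \cite{PETHA} template.
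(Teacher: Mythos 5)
Your plan matches the paper's own proof in all essentials: the paper likewise compares $\mu_N$ to a weighted average $\nu_N$ of normalized box indicators $[0,p_k^q-1]^d$ over the completed blocks plus the partial block, derives $\|\hat\mu_N-\hat\nu_N\|_{L^\infty(\T^d)}\leq CN^{-\epsilon}$ from Weil's theorem on the complete blocks (using Weyl's inequality for the one incomplete block, where you propose the completion technique instead), sums the resulting square-summable errors over the lacunary set $I$, and then treats the model family by the Rosenblatt--Wierdl comparison with sharp Fourier cutoffs $V_t$ together with the transferred $\ell^2$ maximal inequality. The only differences are cosmetic: your incomplete-sum estimate and your black-boxing of the final oscillation step for the model, which the paper carries out explicitly via Lemma \ref{FMult} and the disjointness of the supports of $\hat V_{t_{n-1}}-\hat V_{t_n}$.
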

We will prove this (in Section \ref{osciproof}) by comparing $A_t$ to a more standard average, such that the Fourier norm of the difference is small. When transferred back to the dynamical system, such an oscillational inequality will directly imply an $L^2$ pointwise ergodic theorem (see for instance Jones, Kaufman, Rosenblatt and Wierdl \cite{JKRW}).
\\
\\Then, since the blocks grow exponentially in size, we need only prove the $L^1$ weak maximal inequality for complete blocks:

\begin{thm}\label{native}
 Let $S_k$ be defined as above.  Then the maximal function
\begin{equation*}
 Mf(\vec {n}) = \sup_{N} \frac{1}{\#\left (\cup_{k=1}^N S_k \right )} \sum_{\vec {m} \in \cup_{k=1}^N S_k} \left| f\left(\vec {n} + \vec {m} \right) \right|
\end{equation*}
satisfies a weak-(1,1) inequality; that is, there is a constant $C$ so that for any $\lambda > 0$ we have
\begin{equation*}
 \# \left\{ \vec {n} : Mf(\vec {n}) > \lambda \right\} < \frac{C}{\lambda} \left\| f \right\|_{\ell^1\left(\mathbb{Z}^d\right)}.
\end{equation*}
\end{thm}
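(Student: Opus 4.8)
The plan is to follow the Fefferman--Christ method for weak $(1,1)$ inequalities with optimally pseudo-random kernels, exactly as in \cite{ChristPreprint}, adapting the bookkeeping to the $d$-dimensional setting. Write $\sigma_k:=\frac{1}{p_k}\sum_{j=0}^{p_k-1}\delta_{\vec a_k+x_{k,j}}$ for the normalized counting measure on $S_k$, and let $\tau_N:=\frac{1}{\#(\cup_{k\le N}S_k)}\sum_{k\le N}p_k\,\sigma_k$ be the kernel of the $N$-th averaging operator, so that $Mf=\sup_N |f|\ast\tau_N$ (dropping absolute values is harmless by positivity). Since $p_k$ grows lacunarily, $\#(\cup_{k\le N}S_k)\approx p_N$, so $\tau_N$ is comparable to a weighted average of the $\sigma_k$ with geometrically decaying weights as $k$ decreases from $N$; in particular it suffices to control $\sup_N |f|\ast\sigma_N$ and a tail term, and standard reductions let us work with $M'f:=\sup_N |f|\ast\sigma_N$ together with the observation that the contribution of $\sigma_k$ for $k<N$ is handled by summing geometric series. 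I would first record the key Fourier estimate: by Weil's theorem (Theorem \ref{WeilsThm}) applied coordinate-by-coordinate to the polynomials $j\mapsto j^{(\ell-1)q+i}$, together with the Freiman isomorphism preserving the relevant exponential sums, one gets
\begin{equation*}
|\widehat{\sigma_k}(\vec\theta)-\widehat{\beta_k}(\vec\theta)|\lesssim p_k^{-1/2}
\end{equation*}
uniformly in $\vec\theta$, where $\beta_k$ is the normalized indicator of a genuine cube (or box) of sidelength $\approx p_k^{q}$ centered appropriately — i.e. $\sigma_k$ agrees with a standard averaging kernel up to an error of Fourier norm $p_k^{-1/2}$. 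This is the quantitative input that substitutes for differentiability of the kernel.

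Next I would run the Calder\'on--Zygmund decomposition of $f$ at height $\lambda$: write $f=g+b=g+\sum_{s,k}b_{s,k}$ with $\|g\|_\infty\lesssim\lambda$, $\|g\|_1\le\|f\|_1$, each $b_{s,k}$ supported on a dyadic cube $Q_{s,k}\subset\Z^d$ of sidelength $2^s$, mean zero, $\|b_{s,k}\|_1\lesssim\lambda 2^{ds}$, and $\sum|Q_{s,k}|\lesssim\lambda^{-1}\|f\|_1$. The good part $g$ is handled trivially by $\|M'g\|_2^2\lesssim\|g\|_\infty\|g\|_1\lesssim\lambda\|f\|_1$ and Chebyshev, once one knows $M'$ is bounded on $L^2$ — and $L^2$-boundedness of the maximal function follows from the oscillational inequality Theorem \ref{OscIneq} (or more simply from square-function/Fourier arguments, since $\sup_k|\widehat{\sigma_k}|\lesssim 1$ and the $\sigma_k$ are adapted to lacunary scales). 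For the bad part, take the exceptional set $E=\bigcup_{s,k}Q_{s,k}^*$ where $Q_{s,k}^*$ is the dilate of $Q_{s,k}$ by a large constant; then $|E|\lesssim\lambda^{-1}\|f\|_1$, and I must bound $\#\{\vec n\notin E:\sup_N|b\ast\sigma_N(\vec n)|>\lambda\}$. Here the argument splits according to whether the scale $2^{sq}$ (roughly the ``effective size'' of $\supp b_{s,k}$ matched against the geometry of $S_k$) is larger or smaller than $p_N^{q}$; equivalently, whether $b_{s,k}$ is ``large-scale'' or ``small-scale'' relative to the $N$-th block.

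The large-scale terms ($2^{s}$ bigger than the block scale) are estimated by replacing $\sigma_N$ with its cube-analogue $\beta_N$: since $|b\ast(\sigma_N-\beta_N)(\vec n)|$ is controlled via Plancherel by $p_N^{-1/2}$ against the $L^2$ mass of $b$ on the relevant cube — and $b\ast\beta_N$ is handled by the classical weak $(1,1)$ inequality for box averages — one sums over $N$ using the $p_N^{-1/2}$ decay, exactly the gain that makes the sum over scales converge. The small-scale terms ($2^{s}$ smaller than the block scale) are the delicate ones: here one cannot afford to replace $\sigma_N$ by $\beta_N$ pointwise, and instead one uses that $b\ast\sigma_N$ is supported, for $\vec n\notin E$, inside a set comparable to $E$ translated by the structure of $S_N$, so it already lives (essentially) in the exceptional set — or else one pays with an $L^2$ bound using $\|b_{s,k}\ast\sigma_N\|_2^2\lesssim \|\widehat{\sigma_N}\|_\infty^2\|b_{s,k}\|_2^2$ and the fact that $\|b_{s,k}\|_2^2\le\|b_{s,k}\|_\infty\|b_{s,k}\|_1\lesssim\lambda\|b_{s,k}\|_1$ when $b_{s,k}$ is bounded (splitting $b_{s,k}$ by height as in the proof of Theorem \ref{SgtPepper} if necessary). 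Summing, all contributions are $\lesssim\lambda^{-1}\|f\|_1$, which completes the proof. The main obstacle — and the place where the $d$-dimensional geometry genuinely enters beyond the one-dimensional case of \cite{ChristPreprint} — is the bookkeeping in the small-scale terms: one must check that a cube of sidelength $2^s$ convolved with the ``polynomial curve'' $S_N$ (which in $\Z^d$ is a one-dimensional object sitting in a $d$-dimensional shell) really does land in a controlled neighborhood of $E$, using the spacing conditions $|\vec a_{k-1}|+p_{k-1}^q<|\vec a_k|\le Cp_k^{q}$ to keep the blocks separated, and that the Weil bound survives the projection from $\Z_{p_k}^{qd}$ down to $\Z^d$ — i.e. that no coordinate polynomial has degree divisible by $p_k$, which holds for $p_k$ large since the degrees are at most $qd$.
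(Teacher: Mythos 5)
Your overall architecture differs from the paper's: the paper does not re-run the Calder\'on--Zygmund argument on $\Z^d$ at all, but instead invokes Christ's transference result (Theorem \ref{differencethm}) as a black box and devotes the entire proof to constructing measures $\mu'''$, $\nu'''$ on $\Z^d$ satisfying its three hypotheses, via the chain $\Z_p^m\to\Z_{3p}^m\to\Z^m\to\Z^d$. That reorganization would be legitimate, but your version has a genuine gap exactly where the paper concentrates its effort: the claim that Weil's theorem gives $|\widehat{\sigma_k}(\vec\theta)-\widehat{\beta_k}(\vec\theta)|\lesssim p_k^{-1/2}$ \emph{uniformly in $\vec\theta\in\T^d$}. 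Weil's bound (Theorem \ref{WeilsThm}) controls the complete character sums only at the $p$-rational frequencies of the finite group $\Z_p^m$. At a generic $\vec\theta=\vec\xi/p+\vec\eta$ the phase picks up the factor $e(\vec\eta\cdot\vec n)$ with $\vec n$ ranging over $[0,p-1]^m$, which is bounded but in no sense negligible; expanding it in characters costs $p^{-m}\|\hat\psi\|_{\ell^1}$, and with a sharp cutoff this is $\approx(\log p)^m$ --- a loss the paper explicitly notes is fatal to the weak $(1,1)$ inequality (it is tolerated only in the $L^2$ oscillation argument of Section \ref{osciproof}). The fix is to majorize $\sigma_k$ by the image of $\mu'$ under a smooth (piecewise-affine) cutoff, prove $\|\hat\psi\|_{\ell^1}\leq Cp^m$ by two summations by parts, and check that the Freiman projection $\Gamma_2$ does not increase $\ell^\infty$ Fourier norms; this is most of the paper's proof, and none of it is present in your sketch. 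Your closing diagnosis that the only issue with ``the Weil bound surviving the projection'' is that no coordinate polynomial has degree divisible by $p_k$ misidentifies the difficulty.

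A secondary, smaller issue: your large-scale/small-scale dichotomy for the bad part is described only impressionistically, and the step where you say the small-scale pieces ``already live (essentially) in the exceptional set -- or else one pays with an $L^2$ bound'' is precisely the place where the height-truncation $b_{s,k}=b^{(j)}_{s,k}+B^{(j)}_{s,k}$ and the support-counting estimate using $\#\supp\mu_k\leq Cp_k$ must be carried out; as written it is a restatement of the goal rather than an argument. If you intend to bypass Theorem \ref{differencethm}, you need to supply this argument in full (it is essentially the proof of Theorem \ref{SgtPepper} with the probabilistic cancellation replaced by the Fourier bound); if you invoke Theorem \ref{differencethm}, the whole Calder\'on--Zygmund discussion collapses to verifying its three hypotheses, and the real work is the Fourier transfer described above.
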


\subsubsection{Proof of Theorem \ref{native}}\label{proofweak}

Again using a transference argument to reduce to a question about convolution operators on $\Z^d$, we seek to apply the following theorem from \cite{ChristPreprint}.

\begin{thm} \label{differencethm}
 Let $G$ be a discrete group and $\gamma > 0$.  Suppose the sequences of functions $\mu_k, \nu_k : G \rightarrow \mathbb{C}$ satisfy the following requirements:
 \begin{enumerate}
  \item the maximal operator $\sup_k |f| \ast | \nu_k |$ is of weak type (1,1) on $G$,\\
  \item for each $\mu_k$, $\#\left ( \supp (\mu_k) \right ) \leq C2^{k\gamma}$ for some constant $C$, and\\
  \item $\left\| f \ast (\mu_k - \nu_k) \right\|_{\ell^2(G)} \leq C2^{-k\gamma/2} \left\| f \right\|_{\ell^2(G)}$ for all $f \in \ell^2(G)$.\\
 \end{enumerate}
Then the maximal operator $\sup_k |f \ast \mu_k|$ is of weak type (1,1).
\end{thm}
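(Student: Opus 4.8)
The plan is to combine the Calder\'on--Zygmund decomposition with the technique of Fefferman \cite{MR0257819} and Christ \cite{MC}, in the same pattern as the proof of Theorem~\ref{SgtPepper}: hypothesis (1) plays the role that the weak $(1,1)$ bound for the comparison averages played there, (2) the role of the bounds on $\#\supp\mu_j$, and (3) the role of the estimates on $\nu_j\ast\tilde\nu_j$ coming from Lemma~\ref{speckled}. By the transference already invoked, it suffices to prove the weak $(1,1)$ bound for the maximal convolution operator $f\mapsto\sup_k|f\ast\mu_k|$ on $\ell^1(G)$ (the decomposition below uses a dyadic ``cube'' structure on $G$, the usual one for $G=\Z^d$ and available for any group of polynomial growth). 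Fix $\lambda>0$; by homogeneity take $\lambda\approx1$. Applying the discrete Calder\'on--Zygmund decomposition at height $\lambda$, write $f=g+b$ with $\|g\|_\infty\lesssim\lambda$, $\|g\|_1\le\|f\|_1$, and $b=\sum_Q b_Q$ over a pairwise disjoint family of dyadic cubes $Q$, each $b_Q$ supported on $Q$ with $\|b_Q\|_1\lesssim|Q|$ and $\sum_Q|Q|\lesssim\lambda^{-1}\|f\|_1$; organize $b=\sum_s b_s$ by the side length $2^s$ of the cube, and write $\mu_k=\nu_k+(\mu_k-\nu_k)$.

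The good part is dispatched by two of the hypotheses. Since $\sup_k|g\ast\nu_k|\le\sup_k|g|\ast|\nu_k|$, hypothesis (1) gives $\#\{x:\sup_k|g\ast\nu_k|(x)>\lambda/2\}\lesssim\lambda^{-1}\|g\|_1\lesssim\lambda^{-1}\|f\|_1$. For the remaining piece, a square-function estimate together with hypothesis (3) gives $\big\|\sup_k|g\ast(\mu_k-\nu_k)|\big\|_2^2\le\sum_k\|g\ast(\mu_k-\nu_k)\|_2^2\lesssim\big(\sum_k2^{-k\gamma}\big)\|g\|_2^2\lesssim\|g\|_\infty\|g\|_1\lesssim\lambda\|f\|_1$, and Chebyshev bounds the corresponding level set by $\lesssim\lambda^{-1}\|f\|_1$.

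For the bad part, split each $b_s$ by height at the threshold $2^{k\gamma}\approx\#\supp\mu_k$ (hypothesis (2)): $b_s=b_s^{(k)}+B_s^{(k)}$ with $b_s^{(k)}=b_s\,\mathbf 1_{\{|b_s|>2^{k\gamma}\}}$, so $\|B_s^{(k)}\|_\infty\le2^{k\gamma}$; with $b^{(k)}:=\sum_sb_s^{(k)}$ and $B^{(k)}:=\sum_sB_s^{(k)}$, we have $b\ast\mu_k=b^{(k)}\ast\mu_k+B^{(k)}\ast\nu_k+B^{(k)}\ast(\mu_k-\nu_k)$. The first term is controlled by a support count: $\#\{x:\sup_k|b^{(k)}\ast\mu_k(x)|>0\}\le\sum_k\#\supp\mu_k\cdot\#\{x:|b(x)|>2^{k\gamma}\}$, and summing the geometric series in $k$ against the value-layers of $b$ bounds this by $\lesssim\|b\|_1\lesssim\|f\|_1$ (this is the displayed computation in the proof of Theorem~\ref{SgtPepper}). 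The second term is controlled by hypothesis (1), since $|B^{(k)}|\le|b|$ pointwise gives $\sup_k|B^{(k)}\ast\nu_k|\le\sup_k|b|\ast|\nu_k|$. For the third term we form the exceptional set $E$, the union of fixed dilates of the cubes $Q$, with $|E|\lesssim\lambda^{-1}\|f\|_1$; since $\mu_k$ and $\nu_k$ are supported at scale $2^k$ (as in all the applications), outside $E$ only the contribution of cubes of side $\lesssim2^k$ survives, and one estimates it in $\ell^2$, pairing the gain $2^{-k\gamma/2}$ of hypothesis (3) against the loss $2^{k\gamma/2}$ in $\|B_s^{(k)}\|_2\le(2^{k\gamma}\|B_s^{(k)}\|_1)^{1/2}$, so that for each fixed $k$ the square of the relevant $\ell^2$ norm is $\lesssim\|b\|_1$.

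The step I expect to be the main obstacle is exactly the bookkeeping that turns these per-$k$ bounds into a weak $(1,1)$ estimate for the full maximal function. One cannot simply dominate $\sup_k$ by $(\sum_k|\cdot|^2)^{1/2}$ and add, since the resulting series in $k$ diverges; the missing geometric decay must be extracted by organizing the double sum over the cube scale $s$ and the operator scale $k$ according to the gap $k-s$, using that for each fixed gap the truncated pieces $B_s^{(k)}$ live on pairwise disjoint cubes, and using that the exact matching between the support size $2^{k\gamma}$ in hypothesis (2) and the $\ell^2$ gain $2^{-k\gamma/2}$ in hypothesis (3) is what makes $\#\supp\mu_k$ the sharp truncation threshold (the borderline diagonal contribution, where there is no cancellation, being precisely what this threshold is designed to absorb). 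This delicate summation is the technical heart of the Fefferman--Christ method; it is carried out in \cite{ChristPreprint} (and, in the concrete situation, inside the proof of Theorem~\ref{SgtPepper} and in \cite{UZ}), so here we have only indicated how hypotheses (1)--(3) feed the scheme.
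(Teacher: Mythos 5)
First, note that the paper does not actually prove Theorem~\ref{differencethm}: it is imported verbatim from \cite{ChristPreprint} and used as a black box. The closest thing to an in-paper proof is Proposition~\ref{shine on}, which is the polynomial-growth-group analogue, and its proof (together with that of Theorem~\ref{SgtPepper}) is the template your sketch is gesturing at. Your overall scheme --- Calder\'on--Zygmund decomposition, disposing of $g$ by hypotheses (1) and (3), support-counting for the tall part $b^{(k)}$ via hypothesis (2), hypothesis (1) again for $B^{(k)}\ast\nu_k$ --- matches that template. But you explicitly leave the decisive step undone, and the mechanism you propose for it is not the right one, so this is a genuine gap rather than a complete proof.

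Concretely: hypothesis (3) is a pure operator-norm bound $\|\varphi\ast(\mu_k-\nu_k)\|_2\le C2^{-k\gamma/2}\|\varphi\|_2$ with decay in $k$ alone; it carries no information about the separation between the operator scale and the cube scale $s$, and the theorem as stated makes no assumption that $\mu_k,\nu_k$ live at scale $2^k$ (so your exceptional set $E$ and your ``gap $k-s$'' organization import hypotheses that are not there). The divergence you correctly identify in $\sum_k\|B^{(k)}\ast(\mu_k-\nu_k)\|_2^2\le\lambda^{-1}\sum_k\|B^{(k)}\|_1$ is not cured by sorting on $k-s$; it is cured by layering $b$ \emph{by height} against the truncation thresholds. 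Writing $\beta_i=b\,\mathbf 1(\lambda 2^{(i-1)\gamma}<|b|\le\lambda 2^{i\gamma})$, so that $B^{(k)}=\sum_{i\le k}\beta_i$ with the $\beta_i$ disjointly supported, one gets $\|B^{(k)}\|_2^2=\sum_{i\le k}\|\beta_i\|_2^2\le\sum_{i\le k}\lambda 2^{i\gamma}\|\beta_i\|_1$, hence $\sum_k 2^{-k\gamma}\|B^{(k)}\|_2^2\le\lambda\sum_i\|\beta_i\|_1\sum_{k\ge i}2^{(i-k)\gamma}\le C\lambda\|b\|_1$; the geometric gain comes from the mismatch between the height-layer index $i$ and the operator index $k$, and summability in $\ell^1$ comes from the disjointness of the layers, not of the cubes. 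This is exactly the computation the paper does carry out in the proof of Proposition~\ref{shine on} (the step $\sum_{j:\lambda r_j\ge|b(g)|}r_j^{-1}|b(g)|^2\le C\lambda|b(g)|$), and it is the piece your proposal is missing. A secondary point: your third-term analysis also borrows the $\nu_j\ast\tilde\nu_j$ pointwise bounds of Lemma~\ref{speckled}, which are not among the hypotheses of this theorem; here one should use (3) directly as an operator bound, which makes the argument shorter than the one in Theorem~\ref{SgtPepper} once the layering is in place.
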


As elsewhere, we let $e(\theta)$ denote $e^{2\pi i \theta}$. Henceforward we will suppress our $k$ subscripts and identify $\Z_p^m$ with its own dual group.
\\
\\If $f$ is a function on $\mathbb{Z}_{p}^m$, then the Fourier transform of $f$ is
\begin{equation*}
 \hat{f}(\vec {\xi}) = \sum_{\vec {n} \in \mathbb{Z}_{p}^m} f(\vec {n}) e\left( \frac{\vec {n} \cdot \vec {\xi}}{p}\right).
\end{equation*}
We then have the inequalities
\begin{align}
 \left\| \widehat{fg} \right\|_{\ell^\infty} &\leq \frac{1}{p^m} \left\| \hat{f} \right\|_{\ell^1} \left\| \hat{g} \right\|_{\ell^\infty} \mbox{, and} \label{MainF} \\
 \left\| f \ast g \right\|_{\ell^2} &\leq \frac{1}{p^{m/2}} \left\| \hat{f} \right\|_{\ell^2} \left\| \hat{g} \right\|_{\ell^\infty} = \left\| f \right\|_{\ell^2} \left\| \hat{g} \right\|_{\ell^\infty}. \label{FirstF}
\end{align}

Finally, we will again make use of Weil's theorem for complete character sums (Theorem \ref{WeilsThm}). Let $m=dq$.  Our goal will be to produce measures $\mu'$ and $\nu'$ on $\mathbb{Z}_{p}^{m}$ that satisfy the analogues of the conditions in Theorem \ref{differencethm}.  We will then apply a linear operator to move from $\mathbb{Z}_{p}^{m}$ to $\mathbb{Z}^{m}$.  A second operator will then transfer these measures to $\mathbb{Z}^{d}$.

\begin{proof}[Proof of Theorem \ref{native}]

We begin by defining a probability measure on $\mathbb{Z}_{p}^m$:
\begin{eqnarray}\label{finfieldmu}
 \mu' = \frac{1}{p} \sum_{j=0}^{p -1} \delta_{(j, j^2, ..., j^m)}.
\end{eqnarray}
Note that
\begin{eqnarray} \label{req1}
 \#\left( \supp (\mu') \right )= p.
\end{eqnarray}
Thus $\mu'$ satisfies the second requirement of Theorem \ref{differencethm} if $p_k\approx 2^{k\gamma}$.

We then define a second measure:
\begin{eqnarray}\label{finfieldnu}
 \nu' = \frac{1}{p^m} \sum_{ \vec {j} \in \mathbb{Z}_{p}^m} \delta_{\vec {j}}.
\end{eqnarray}
This will lead to a measure $\nu$ which satisfies the first requirement.
\\
\\We are left with the third condition.  By our inequality (\ref{FirstF}), we have that
\begin{equation*}
\left\| f \ast (\mu'-\nu') \right\|_{\ell^2} \leq \left\| f \right\|_{\ell^2} \left\| \widehat{\mu'-\nu'} \right\|_{\ell^\infty}.
\end{equation*}
It remains to prove an appropriate bound on $\left| \widehat{\mu'-\nu'} \right|$, using Theorem \ref{WeilsThm}. The Fourier transform of $\mu'$ is
\begin{align*}
\widehat{\mu'}(\vec {\theta}) &= \sum_{\vec {n} \in \mathbb{Z}_{p}^m} \mu'(\vec {n}) e\left( \frac{\vec {n} \cdot \vec {\theta}}{p}\right)\\
                        &= \frac{1}{p} \sum_{j=0}^{p -1} \sum_{\vec {n} \in \mathbb{Z}_{p}^m}  \delta_{(j, j^2, ..., j^m)}(\vec {n}) e\left( \frac{\vec {n} \cdot \vec {\theta}}{p}\right) \\
                        &= \frac{1}{p} \sum_{j=0}^{p -1} e\left( \frac{j\theta_1 +  j^2\theta_2 +... + j^m \theta_m}{p}\right).
\end{align*}

By the theorem, we then must have (for all $\vec\theta\neq0$)
\begin{equation*}
\widehat{\mu'}(\vec {\theta}) \leq \frac{m-1}{p^{1/2}}.
\end{equation*}

And because
\begin{align*}
 \widehat{\nu'}({\vec {\theta}}) &=  \sum_{\vec {n} \in \mathbb{Z}_{p}^m} \frac{1}{p^m} \sum_{\vec {j} \in \mathbb{Z}_{p}^m} \delta_{\vec {j}}({\bf{n}}) e\left(\frac{{\vec {n}} \cdot {\vec {\theta}}}{p}\right)  \\
 & = \left\{ \begin{array}{lr}
              1 \,\,\,&\mbox{ if } {\vec {\theta}}= {\vec {0}}\\
              0 \,\,\,&\mbox{ otherwise,}
             \end{array}
             \right.
\end{align*}

and
\begin{equation*}
 \widehat{\mu'}(\vec {0}) = 1,
\end{equation*}

we have, for $\vec {\theta} \neq \vec {0}$ that
\begin{equation}\label{Weil}
 \left| \widehat{\mu' - \nu'} \right| \leq \frac{m-1}{p^{1/2}},
\end{equation}
while for $\vec {\theta} = \vec {0}$, the difference is $0$.
\\
\\Having found suitable measures $\mu'$ and $\nu'$ on $\Z_p^m$, we now construct measures on $\Z^d$ with the same Fourier properties. We will first create a ``smooth cutoff'' version on $\Z_{3p}^m$, then transfer this to $\Z^d$; since the result majorizes our desired measures $\mu$ and $\nu$, we may thus obtain the weak inequality for them.
\\
\\First we will build a smoothing function, $\phi$.  We identify  $\mathbb{Z}_{3p}^m$ with $[-p, 2p-1]^m$ and $\mathbb{Z}_{3p}$ with $[-p, 2p-1]$.  We define the function $\varphi : \mathbb{Z}_{3p} \rightarrow \mathbb{R}$ so that
\begin{equation*}
 \varphi(n) = \left\{
		    \begin{array} {ll}
                    1  &\mbox{ if } n \in [0, p-1],\\
                    0  &\mbox{ if } n \in \left[-p, \frac{-p-1}{2} \right] \cup \left[\frac{3}{2}(p-1), 2p - 1 \right] \mbox{, and}\\
                    \mbox{affine} &\mbox{ otherwise.}
                   \end{array}
                    \right.
\end{equation*}
We then define $\phi: \mathbb{Z}_{3p}^m \rightarrow \mathbb{R}$ as a product of $\varphi$'s:
\begin{equation*}
 \phi(\vec {n}) = \prod_{i=1}^m \varphi(n_i).
\end{equation*}

We will also require the functions $\tau_p: \mathbb{Z}^m \rightarrow \mathbb{Z}_{p}^m$ and $\tau_{3p}: \mathbb{Z}^m \rightarrow \mathbb{Z}_{3p}^m$, given by
\begin{align*}
\tau_p (\vec {x}) &= \left([x_1]_p, [x_2]_p, ..., [x_m]_p \right) \mbox{, and}\\
\tau_{3p} (\vec {x}) &= \left([x_1]_{3p}, [x_2]_{3p}, ..., [x_m]_{3p} \right)
\end{align*}

We can now transfer from measures on $\mathbb{Z}_{p}^m$ to ``smoothly cut-off'' measures on $\mathbb{Z}^m$. Define $\Gamma_1: \ell^\infty (\mathbb{Z}_{p}^m) \rightarrow  \ell^\infty (\mathbb{Z}^m)$ by
\begin{equation*}
 \Gamma_1(f)(\vec j) = \ind_{[-p, 2p-1]^m}(\vec j) \phi(\tau_{3p}(\vec j)) f(\tau_p(\vec j)) .
\end{equation*}

Let $\vec {\theta} \in \mathbb{T}^m$ and write $\vec {\theta} = \vec {\xi}/3p + \vec {\eta}$ with $\vec {\xi} \in \mathbb{Z}^m$ and $\left | \eta_i \right| \leq C/p$ for all $1 \leq i \leq m$. The Fourier transform of $\Gamma_1(f)$ is
\begin{align*}
 \widehat{\Gamma_1 (f)}(\vec {\theta}) &= \sum_{\vec {j} \in \mathbb{Z}^m} \Gamma_1 (f) (\vec {j}) e\left( \vec {j} \cdot \vec {\theta} \right)\\
				   &= \sum_{\vec {j} \in [-p, 2p-1]^m} \phi(\vec {j}) f(\tau_p(\vec {j})) e\left( \vec {j} \cdot \vec {\eta} \right) e\left( \frac{\vec {j} \cdot \vec {\xi}}{3p} \right).
\end{align*}

Changing perspective, we may consider this as the Fourier transform on the group $\Z_{3p}^m$ of the product of $f$ and $\psi(\vec {x})= \phi(\vec x) e\left(\vec x \cdot \vec {\eta} \right)$. By the inequality (\ref{MainF}), then, we have
\begin{equation*}
 \left\| \widehat{\Gamma_1 (f)}(\vec {\theta}) \right\|_{L^\infty(\T^m)} \leq \frac{1}{p^m} \left\| \hat f \right\|_{\ell^\infty(\Z_{3p}^m)} \left\| \hat{\psi} \right\|_{\ell^1(\Z_{3p}^m)}.
\end{equation*}

We plan to set
\begin{align*}
 \mu'' &= \Gamma_1( \mu') \mbox{ and}\\
 \nu'' &= \Gamma_1( \nu').
\end{align*}
The inequalities (\ref{req1}) and (\ref{Weil}) will insure that the requirements of Theorem \ref{differencethm} are met for $\mu''$ and $\nu''$ on $\Z^m$, so long as $\|\hat\psi\|_1\leq Cp^m$. Now
\begin{align*}
\left\| \hat{\psi} \right\|_{1} &= \sum_{\vec {\xi} \in \mathbb{Z}_{3p}^m } \left| \sum_{\vec {j} \in \Z_p^m} \phi(\vec {j}) e\left(\vec {j} \cdot \vec {\eta} \right) e(\frac{\vec {j} \cdot \vec {\xi}}{3p}) \right| \\
& = \sum_{\vec {\xi} \in \mathbb{Z}_{3p}^m } \left| \sum_{\vec {j} \in [-p, 2p-1]^m} \prod_{i=1}^m \varphi(j_i) e(j_i \eta_i) e\left( \frac{j_i \xi_i}{3p} \right) \right| \\
& = \prod_{i=1}^m \left( \sum_{\xi_i \in \mathbb{Z}_{3p}} \left| \sum_{j_i \in [-p, 2p-1]}  \varphi(j_i) e(j_i \eta_i) e\left( \frac{j_i \xi_i}{3p} \right) \right| \right).
\end{align*}

It therefore suffices to show
\begin{equation}\label{cpest}
  \sum_{\xi \in \mathbb{Z}_{3p}} \left| \sum_{j \in [-p, 2p-1]}  \varphi(j) e(j \eta) e\left( \frac{j \xi}{3p} \right) \right|  \leq Cp.
\end{equation}

For $\xi = 0$, we have the trivial bound
\begin{equation*}
 \left| \sum_{j \in [-p, 2p-1]}  \varphi(j) e(j \eta) \right| \leq  \sum_{j \in [-p, 2p-1]} \left| \varphi(j) \right| \leq 3p.
\end{equation*}

For $\xi \neq 0$, we will pursue the required bound using summation by parts. Letting $\Phi(j) = \varphi(j)e(j \eta)$, extended periodically, and noting that $\Delta \Phi (n) = \Phi (n+1) - \Phi(n)$, we have
\begin{align*}
 \sum_{j \in [-p, 2p-1]}  \Phi(j) e\left( \frac{j \xi}{3p} \right)&= \Phi(2p) \left( \sum_{j=-p}^{2p-1}  e\left( \frac{j \xi}{3p} \right)  \right) - \sum_{j=-p}^{2p-1} \Delta \Phi(j)  \sum_{n=-p}^j e\left( \frac{n \xi}{3p} \right)\\
&=  - \sum_{j=-p}^{2p-1} \Delta \Phi(j)  \frac{e\left( \frac{(j+1) \xi}{3p} \right) - e\left( \frac{\xi}{3}\right)}{e\left(\frac{\xi}{3p}\right) - 1},
\end{align*}
since $\Phi(2p)=0$

Further, we have
\begin{align*}
 - \sum_{j=-p}^{2p-1} &\Delta \Phi(j)  \frac{e\left( \frac{(j+1) \xi}{3p} \right) - e\left( \frac{\xi}{3}\right)}{e\left(\frac{\xi}{3p}\right) - 1}\\
&= -\left( e\left(\frac{\xi}{3p}\right) - 1 \right)^{-1} \left( \sum_{j=-p}^{2p-1} \Delta \Phi(j) e\left( \frac{(j+1) \xi}{3p} \right) - \sum_{j=-p}^{2p-1} \Delta \Phi(j) e\left( \frac{\xi}{3}\right) \right)\\
&= -\left( e\left(\frac{\xi}{3p} \right)- 1 \right)^{-1} \left( \sum_{j=-p}^{2p-1} \Delta \Phi(j) e\left( \frac{(j+1) \xi}{3p} \right) - e\left( \frac{\xi}{3}\right) \sum_{j=-p}^{2p-1} \Phi(j+1) - \Phi(j) \right)\\
&=  -\left( e\left(\frac{\xi}{3p} \right)- 1 \right)^{-1}  e\left( \frac{\xi}{3p} \right)\sum_{j=-p}^{2p-1} \Delta \Phi(j) e\left( \frac{(j) \xi}{3p} \right).
\end{align*}

We now apply summation by parts a second time:
\begin{align*}
 \sum_{j=-p}^{2p-1} &\Delta \Phi(j) e\left( \frac{(j) \xi}{3p} \right)\\
&= \Delta \Phi (2p) \left( \sum_{j=-p}^{2p-1}  e\left( \frac{(j) \xi}{3p} \right)  \right) - \sum_{j=-p}^{2p-1} \Delta^2 \Phi(j)  \sum_{n=-p}^j e\left( \frac{(n) \xi}{3p} \right)\\
&= -\sum_{j=-p}^{2p-1} \Delta^2 \Phi(j) \frac{e\left( \frac{(j) \xi}{3p} \right) - e\left( \frac{\xi}{3}\right)}{e\left(\frac{\xi}{3p}\right) - 1}\\
&= -\left( e\left(\frac{\xi}{3p} \right)- 1 \right)^{-1} \sum_{j=-p}^{2p-1} \Delta^2 \Phi(j) e\left( \frac{(j+1) \xi}{3p} \right)
\end{align*}

We therefore have that
\begin{align*}
 \left|  \sum_{j \in [-p, 2p-1]}  \varphi(j) e(j \eta) e\left( \frac{j \xi}{3p} \right) \right| &= \left| \left( e\left(\frac{\xi}{3p} \right)- 1 \right)^{-2} \sum_{j=-p}^{2p-1} \Delta^2 \Phi(j) e\left( \frac{(j+2) \xi}{3p} \right) \right| \\
&\leq \left| \left( e\left(\frac{\xi}{3p} \right)- 1 \right)^{-2} \right| \sum_{j=-p}^{2p-1} \left| \Delta^2 \Phi(j)\right|.
\end{align*}

As a consequence,
\begin{equation*}
   \sum_{\substack{ \xi \in \mathbb{Z}_{3p} \\ \xi \neq 0}} \left| \sum_{j \in [-p, 2p-1]}  \varphi(j) e(j \eta) e\left( \frac{j \xi}{3p} \right) \right|  \leq \left\|\Delta^2 \Phi(j) \right\|_{\ell^1} \sum_{\substack{ \xi \in \mathbb{Z}_{3p} \\ \xi \neq 0}} \left| \left( e\left(\frac{\xi}{3p} \right)- 1 \right)^{-2} \right|  .
\end{equation*}

We will estimate each factor separately, making use of the inequality $|e(\theta) - 1| < |2\pi \theta|$; a consequence of the fact that a chord is necessarily shorter than the arc it subtends. For the second factor, this gives us
\begin{equation*}
\sum_{\substack{ \xi \in \mathbb{Z}_{3p} \\ \xi \neq 0}} \left| \left( e\left(\frac{\xi}{3p} \right)- 1 \right)^{-2} \right|  < 9p^2 \sum_{\xi = 1}^\infty \frac{1}{(2 \pi \xi)^2} = Cp^2.
\end{equation*}

For the second, we have
\begin{align}
 \sum_{j=-p}^{2p-1} \left| \Delta^2 \Phi(j)\right| &= \sum_{j=-p}^{2p-1} \left| \Phi(j+2) - 2 \Phi(j+1) + \Phi(j) \right| \notag\\
    &= \sum_{j=-p}^{2p-1} \left| \varphi(j+2)e((j+2)\eta) - 2\varphi(j+1)e((j+1)\eta)  + \varphi(j)e(j\eta) \right| \notag\\
    &= \sum_{j=-p}^{2p-1} \left| \varphi(j+2)e(2\eta) - 2\varphi(j+1)e(\eta)  + \varphi(j)\right| .
\end{align}

For $j \in [-p, \frac{-p-1}{2}-3] \cup [\frac{3}{2}(p-1), 2p-1]$ this last sum is $0$.  For $j \in [0, p-3]$, we have $\varphi(j+2) = \varphi(j+1) = \varphi(j) = 1$, hence
\begin{align*}
 \left| \varphi(j+2)e(2\eta) - 2\varphi(j+1)e(\eta)  + \varphi(j)\right| &= \left|e(2\eta) - 2e(\eta) + 1 \right| \\
  &=|e(\eta) - 1|^2 < (2\pi \eta)^2
\end{align*}
Since $\eta < 1/p$, this is less than $40/p^2$.  The sum over this subinterval is therefore less than $\frac{40(p-3)}{p^2} < 40/p$.

For $j \in [\frac{-p-1}{2} - 1, -1]$, we have
\begin{align*}
 \left| \varphi(j+2)e(2\eta) - 2\varphi(j+1)e(\eta)  + \varphi(j)\right| &= \left| (\varphi(j) + \frac{4}{p-1})e(2\eta) - 2(\varphi(j) + \frac{2}{p-1})e(\eta)  + \varphi(j)\right|\\
&< \left|\varphi(j)\right| \frac{40}{p^2} + \left|\frac{4}{p-1}e(2\eta) - \frac{4}{p-1}e(\eta) \right|\\
&= \left|\varphi(j)\right| \frac{40}{p^2} + \frac{4}{p-1} \left|e(\eta) - 1 \right| \\
&< \frac{40+8\pi}{p^2}.
\end{align*}

A similar calculation gives the same bound over the interval $[p-1, \frac{3}{2}(p-1)-2]$.  Thus we also have a bound of $C/p$ on these sums.

This leaves us with only the cases $j = \frac{-p-1}{2} - 2$ and $j=\frac{3}{2}(p-1)-1$.  For, $j=\frac{-p-1}{2} - 2$, though, we have that $\varphi(j) = \varphi(j+1) = 0$; this leaves us with
\begin{equation*}
 \left| \varphi(\frac{-p-1}{2}) e(2\eta) \right| = \frac{2}{p-1}.
\end{equation*}
The same bound holds for the case $j= \frac{3}{2}(p-1)-1$.

We therefore have that
\begin{equation*}
  \sum_{j=-p}^{2p-1} \left| \Delta^2 \Phi(j)\right| < \frac{C}{p}.
\end{equation*}

This completes our estimate (\ref{cpest}).

\begin{remark}
In the case $q=1$ (recall that $m=dq$), we are actually finished, since our desired measure $\mu$ is majorized by $\mu''$. We wish to generalize, however, to show that we can achieve any desired polynomial rate of sparsity. Therefore, we shall introduce a second operator to ``project down'' from $\Z^m$ to $\Z^d$.
\end{remark}

Define the Freiman isomorphism $F: \mathbb{Z}^m \rightarrow \mathbb{Z}^d$ by
\begin{equation}\label{Freiman}
 F(\vec {j}) = \left( \sum_{i=1}^q p^{i-1} j_i, \sum_{i=q+1}^{2q} p^{i-(q+1)} j_i, ..., \sum_{i=q(d-1)+1}^{dq} p^{i-(q(d-1)+1)} j_i \right).
\end{equation}

We note that $F$ maps $[0, p-1]^m$ bijectively to $[0, p^q-1]^d$, and that for any $k \geq 0$ and any $j_i \in [-p, 2p-1]$ we have $\sum_{i=kq+1}^{(k+1)q} p^{i-1} j_i \in [-qp^q, qp^q]$ ; hence, for $\vec {j} \in [-p, 2p-1]^m$, we have
\begin{equation*}
F(\vec {j}) \in \left[ -qp^q, qp^q \right]^d.
\end{equation*}

Define the operator $\Gamma_2$ by
\begin{equation*}
 \Gamma_2 f = \sum_{\left\{\vec {j}: F(\vec {j}) = \vec {n} \right\}} f(\vec {j}).
\end{equation*}

Now consider the operator $\Gamma=\Gamma_2 \Gamma_1:\ell^1(\Z_p^m)\to\ell^1(\Z^d)$.  As before, we define our intermediate measures $\mu'''$ and $\nu'''$,

\begin{align*}
\mu''' &= \Gamma_2(\mu'') \mbox{, and}\\
\nu''' &= \Gamma_2(\nu''),
\end{align*}
and we want to ensure that $\|\widehat{\Gamma f}\|_{L^\infty(\T^d)}\leq C\|\hat f\|_{\ell^\infty(\Z_p^m)}$ so that we may apply Theorem \ref{differencethm}.
\\
\\Now,
\begin{align*}
\widehat{\Gamma f}&(\vec {\theta})  = \sum_{\vec {n} \in \mathbb{Z}^d} \Gamma_2 \Gamma_1 f (\vec {n}) e(\vec {\theta} \cdot \vec {n}) \\
    &= \sum_{\vec {n} \in \mathbb{Z}^d} e(\vec {\theta} \cdot \vec {n}) \sum_{\left\{\vec {j}: F(\vec {j}) = \vec {n} \right\}} \Gamma_1 f(\vec {j}) \\
    &= \sum_{\vec {j} \in [-p, 2p-1]^m} \Gamma_1 f (\vec {j}) e\left( \theta_1 \sum_{i=1}^q p^{i-1} j_i  + \theta_2 \sum_{i=q+1}^{2q} p^{i-(q+1)} j_i + ...+ \theta_d  \sum_{i=q(d-1)+1}^{dq} p^{i-(q(d-1)+1)} j_i \right)\\
    &= \widehat{\Gamma_1 f}\left(\theta_1, \theta_1p, ..., \theta_1p^{q-1}, \theta_2, ..., \theta_q, \theta_q p, ..., \theta_d p^{q-1} \right).
\end{align*}

So, $\left\|\widehat{\Gamma_2 \Gamma_1 f} \right\|_{\ell^\infty} \leq \left\|\widehat{\Gamma_1 f} \right\|_{\ell^\infty}$, and by our previous bound we have that $\mu''' = \Gamma_2 \Gamma_1 \mu'$  and $\nu''' = \Gamma_2 \Gamma_1 \nu'$ obey the difference requirement from Theorem \ref{differencethm}.  Further, we have that
\begin{equation} \label{muineq}
\mu'''(\vec {n}) \geq \frac{1}{p^m} \sum_{\vec {j} \in \bar{S}} \delta_{\vec {j}}(\vec {n}),
\end{equation}
where
\begin{equation*}
\bar{S} = \left\{  \left(\sum_{i=1}^{q} p^{i-1} [j^i]_{p}, \sum_{i=1}^{q} p^{i-1} [j^{q+i}]_{p}, ..., \sum_{i=1}^{q} p^{i-1} [j^{(d-1)q+i}]_{p}  \right): 0 \leq j < p \right\},
\end{equation*}
so that a weak (1,1) inequality for $\mu'''$ implies a weak (1,1) inequality for $\mu$. We now wish to show that $\mu'''$ and $\nu'''$ satisfy the other two requirements of Theorem \ref{differencethm}.
\\
\\We first observe that
\begin{equation*}
\#\left ( \supp\left( \Gamma_1 \mu' \right) \right ) = \#\left ( \supp \left( \ind_{[-p, 2p-1]^m} \phi \circ \tau_{3p} \mu' \circ \tau_p \right) \right ) \leq Cp.
\end{equation*}

Recalling our requirements on our original sequence of primes, and noting that $\#\left ( \supp \left(\Gamma_2f\right) \right ) \leq \#\left ( \supp (f) \right )$, our condition on the support of $\mu'''$ is satisfied.

For the final requirement, we first note that $\#\left ( \supp \Gamma_1 \nu' \right ) \leq (3p)^m$; the support of $\nu'''$, then, likewise has measure less than $Cp^m$.  The supremum of $\nu'$ is $1/p^m$.  Thus we have
\begin{align*}
\left\| \nu''' \right\|_{\ell^1} &\leq Cp^m \left\| \Gamma_2 \Gamma_1 \nu' \right\|_{\ell^\infty} \\
&\leq Cp^m \sup_{\vec {n}} \left| \sum_{\left\{\vec {j}: F(\vec {j}) = \vec {n} \right\}} \ind_{[-p, 2p-1]^m}  \frac{\phi}{p^m} \right| \\
&\leq C 3^m \sum_{\substack{\left\{\vec {j}: F(\vec {j}) = \vec {n} \right\} \\ \vec {j} \in [0,p-1]^m }} 1\\
&\leq C 3^m.
\end{align*}
We then have a weak-(1,1) bound for $\sup_k |f| \ast | \nu_k''' |$.

Restoring the subscripts, define
\begin{align*}
\mu_k(\vec {n})&= \mu_k ''' (\vec {k} - \vec {a}_k) \mbox{, and}\\
\nu_k(\vec {n})&= \nu_k ''' (\vec {k} - \vec {a}_k).
\end{align*}

By Theorem \ref{differencethm}, $\sup_k \left| f \ast \mu_k \right|$ obeys a weak-(1,1) inequality.  Together with the inequality (\ref{muineq}) this completes the proof.

\end{proof}

\subsubsection{Proof of Theorem \ref{OscIneq}}
\label{osciproof}

As in Section \ref{proofweak}, we will use the exceptionally good Fourier bounds of certain measures on $\Z_p^m$, and transfer these to measures on $\Z^d$ using operators $\Gamma_p$. However, in this case we cannot allow ourselves to use a smooth cutoff function, because we need to wind up with the actual averages, not simply weighted averages which majorize them. This introduces a logarithmic factor which would have been fatal to the weak $L^1$ maximal inequality, but which is harmless here.
\\
\\To begin, given the sequence $n_i=a_k+ x_{k,j}$ (where of course $0\leq j<p_k$) in the dictionary ordering (and thus given indices $k(i)$ and $j(i)$ for each $i\in\N$), we define the measures on $\Z^d$
\begin{eqnarray}
\mu_N &=& \frac1N \sum_{i=1}^N \delta_{n_i},\\
\nu_N &=& \frac1N\left(\frac{j(N)}{p_{k(N)}^m}\chi\left([0,p_{k(N)}^q-1]^d\right)+\sum_{k=1}^{k(N)-1}p_k^{1-m}\chi\left([0,p_k^q-1]^d\right)\right).
\end{eqnarray}
$\mu_N$ simply corresponds to an average over our sparse sequence, while $\nu_N$ is a weighted average over the $d$-dimensional blocks which our sequence ``lives on''. We will see that $\mu_N$ and $\nu_N$ are made from the images under the operators $\Gamma_p$ of the measures in (\ref{finfieldmu}) and (\ref{finfieldnu}) on $\Z_{p_k}^m$, and thus we may count on their Fourier transforms to be very close to one another.
\\
\\ Recalling the Freiman isomorphisms $F_p:\Z^m\to\Z^d$ from (\ref{Freiman}), and identifying $Z_p^m$ with $[0,p-1]^m\subset \Z^m$, define $\Gamma_p:\ell^1(\Z_p^m)\to \ell^1(\Z^d)$ by
\begin{eqnarray*}
\Gamma_p(f)(\vec n) = f(F_p^{-1}(\vec n))\chi_{[0,p^q-1]^d}(\vec n).
\end{eqnarray*}
($F_p^{-1}(\vec v)$ here denotes, for $\vec v\in [0,p^q-1]^d$, the unique $\vec w \in \Z^m$ such that $F_p(\vec w)=\vec v$.) This is the same as the final operator from the last section, except that we do not use a smooth cutoff function.
\\
\\By the same argument there, we may conclude that 
\begin{eqnarray*}
\|\widehat{\Gamma_pf}\|_{L^\infty(\T^d)} \leq Cp^{-m}\|\hat\psi\|_{\ell^1(\Z_p^m)}\|\hat f\|_{\ell^\infty(\Z_p^m)},
\end{eqnarray*}
where now $\psi(\vec n)=e(\vec n \cdot \vec \eta)$ for some $\vec\eta\in\T^m$ with $|\eta_i|<1/p$ for $i=1,\dots,m$. But trivially, $\|\hat\psi\|_{\ell^1(\Z_p^m)}\leq (Cp \log p)^m$, so $\|\widehat{\Gamma_pf}\|_{L^\infty(\T^d)} \leq C(\log p)^{m}\|\hat f\|_{\ell^\infty(\Z_p^m)}$.
\\
\\ For all of the pieces of $\mu_N$ and $\nu_N$ with $k<k(N)$, we may of course apply Weil's theorem on complete character sums (Theorem \ref{WeilsThm}). However, the last component corresponds to an incomplete character sum, for which we instead apply Weyl's Inequality \cite{MR1511670} to improve on the trivial bound; we find overall that
\begin{eqnarray*}
\|\hat\mu_N - \hat\nu_N\|_{L^\infty(\T^d)}\leq CN^{-\epsilon}
\end{eqnarray*}
for some $\epsilon>0$. Thus along any lacunary $I\subset \N$, we see that
\begin{eqnarray*}
\sum_{t\in I}\|(\mu_t-\nu_t)\ast f\|_{\ell^2(\Z^d)}^2\leq C\|f\|_{\ell^2(\Z^d)}^2.
\end{eqnarray*}
Therefore an oscillation inequality for convolution with the $\nu_t$ would imply (\ref{Osci}).
\\
\\ As in \cite{PETHA}, we introduce simple Fourier multiplier operators $V_t$ on $\ell^2(\Z^d)$, defined by
\begin{eqnarray*}
\hat V_tf(\vec \alpha)=\left\{\begin{array}{ll}\hat f(\vec \alpha), & |\alpha| \leq p_{k(t)}^{-1},\\ 0 & \text{ otherwise }\end{array}\right.
\end{eqnarray*}
Now we can pass from an oscillational inequality for the $\nu_t$ to that for the $V_t$, because
\begin{lem}
\label{FMult}
\begin{eqnarray*}
\sum_{t\in I}\|\nu_t\ast f-V_tf\|_2^2\leq C\|f\|_2^2.
\end{eqnarray*}
\end{lem}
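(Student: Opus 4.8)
The plan is to pass to the Fourier side and reduce the lemma to a single uniform estimate. Since $f\mapsto\nu_t\ast f$ and $V_t$ are both Fourier multiplier operators on $\ell^2(\Z^d)$, Plancherel's theorem gives
\[
\sum_{t\in I}\|\nu_t\ast f-V_tf\|_2^2=\int_{\T^d}\Big(\sum_{t\in I}\big|\widehat{\nu_t}(\vec\alpha)-\widehat{V_t}(\vec\alpha)\big|^2\Big)|\hat f(\vec\alpha)|^2\,d\vec\alpha ,
\]
where $\widehat{V_t}(\vec\alpha)=\ind\{|\vec\alpha|\le p_{k(t)}^{-q}\}$, the cutoff being the reciprocal of the diameter $p_{k(t)}^{q}$ of the $k(t)$-th block $[0,p_{k(t)}^q-1]^d$. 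Hence it suffices to show that $G(\vec\alpha):=\sum_{t\in I}|\widehat{\nu_t}(\vec\alpha)-\widehat{V_t}(\vec\alpha)|^2\le C$ uniformly in $\vec\alpha\in\T^d$, with $C$ depending only on $d,q$ and the lacunarity constants, and not on $I$.

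Next I would write out $\widehat{\nu_t}$. With $D_L(\beta)=\tfrac1L\sum_{n=0}^{L-1}e(n\beta)$ the normalized Dirichlet kernel and $\Phi_k(\vec\alpha)=\prod_{i=1}^d D_{p_k^q}(\alpha_i)$ the Fourier transform of the normalized indicator of the $k$-th block, the definition of $\nu_t$ becomes the convex combination
\[
\widehat{\nu_t}(\vec\alpha)=\sum_{k=1}^{k(t)}w_k(t)\,\Phi_k(\vec\alpha),\qquad w_k(t)=\frac{\#\{i\le t:n_i\in S_k\}}{t},\quad \sum_{k=1}^{k(t)}w_k(t)=1 .
\]
Two consequences of the lacunarity $cp_k\le p_{k+1}<Cp_k$ drive everything. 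First, $\{t:k(t)=K\}$ is an interval of integers whose endpoints have ratio $O(1)$, so $I$ meets it in at most $O(1)$ points; this lets me reorganize $\sum_{t\in I}$ as essentially a sum over the scales $K$. Second, for $t$ with $k(t)=K$ one has $t\gtrsim p_{K-1}$ and hence $w_k(t)=p_k/t\lesssim c^{-(K-k)}$ for $k<K$ while $w_K(t)\le1$, so $\widehat{\nu_t}$ is controlled by its finest scale $K$ together with geometrically decaying tails from the coarser scales. Alongside these I would use the elementary bounds $|D_L(\beta)|\le\min\!\big(1,\,C(L\|\beta\|)^{-1}\big)$ and $|1-D_L(\beta)|\le C\min(1,L\|\beta\|)$, where $\|\beta\|$ denotes distance to $\Z$, together with $|1-\prod_i x_i|\le\sum_i|1-x_i|$ for $|x_i|\le1$.

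Finally, fix $\vec\alpha\ne0$, and let $k_1=k_1(\vec\alpha)$ be the largest $k$ with $p_k^{-q}\ge|\vec\alpha|$; by lacunarity $p_{k_1}^q|\vec\alpha|\asymp1$, and $\widehat{V_t}(\vec\alpha)=1$ exactly when $k(t)\le k_1$. I split $G(\vec\alpha)$ at this scale. In the coarse range $k(t)\le k_1$ we have $\widehat{V_t}=1$, every block present has $p_k^q|\vec\alpha|\lesssim1$, so $|\Phi_k(\vec\alpha)-1|\lesssim p_k^q|\vec\alpha|$; feeding this through the convex combination and using the weight estimates gives $|\widehat{\nu_t}(\vec\alpha)-1|\lesssim p_{k(t)}^q|\vec\alpha|$, and summing $(p_K^q|\vec\alpha|)^2$ over $K\le k_1$ (only $O(1)$ indices of $I$ per $K$) is a geometric series with sum $\lesssim(p_{k_1}^q|\vec\alpha|)^2\lesssim1$. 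In the fine range $k(t)>k_1$ we have $\widehat{V_t}=0$; the finest block has $p_{k(t)}^q|\vec\alpha|\gg1$ so $|\Phi_{k(t)}(\vec\alpha)|\lesssim(p_{k(t)}^q|\vec\alpha|)^{-1}$, while the coarser blocks contribute at most $\sum_{k<k(t)}w_k(t)\lesssim c^{-(k(t)-k_1)}$; both are geometric in $k(t)-k_1$, and summing over $K>k_1$ again gives $O(1)$. Adding the two ranges yields $G(\vec\alpha)\le C$, which proves the lemma. The one point requiring genuine care — and the main obstacle — is the cluster of transition scales $k(t)\approx k_1(\vec\alpha)$, where $\widehat{V_t}$ drops from $1$ to $0$ and where different coordinates $\alpha_i$ of $\vec\alpha$ may sit on opposite sides of the block scale, so that some Dirichlet factors are near $1$ and others small; one has to check that each such scale still contributes only $O(1)$ to $G(\vec\alpha)$, after which the lacunarity count (only $O(1)$ transition scales, each met by $O(1)$ elements of $I$) makes their total harmless.
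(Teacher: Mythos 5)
Your proof follows essentially the same route as the paper's: a Plancherel reduction to a uniform-in-$\vec\alpha$ bound on the sum over $t\in I$ of the multiplier differences, a split at the critical scale where $\widehat{V_t}$ switches from $1$ to $0$, a Lipschitz bound $|\widehat{\nu_t}(\vec\alpha)-1|\lesssim(\text{block diameter})\cdot|\vec\alpha|$ in the coarse range, Dirichlet-kernel decay combined with the geometric decay of the weights $w_k(t)$ in the fine range, and the lacunarity of $I$ together with the exponential growth of $\{p_k\}$ to sum the resulting geometric series. Two minor remarks: the displayed inequality ``$\sum_{k<k(t)}w_k(t)\lesssim c^{-(k(t)-k_1)}$'' should be restricted to $k\le k_1$ (the full weight sum over $k<k(t)$ equals $1-w_{k(t)}(t)$; the intermediate scales $k_1<k<k(t)$ must be handled by the Dirichlet decay you invoke in your final paragraph), and your cutoff $p_{k(t)}^{-q}$ is the internally consistent choice given that the blocks have side length $p_k^{q}$, whereas the paper writes $p_{k(t)}^{-1}$.
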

\begin{proof}
This follows from the assertion
\begin{eqnarray*}
\sup_{\alpha \in \T^d}\left(\sum_{t\in I}|\hat \nu_t(\alpha)-\hat V_t(\alpha)|\right)\leq C.
\end{eqnarray*}
Fix $\alpha\in\T^d$, and take $K$ such that $|\alpha|\approx p_K^{-1}$. Then for $t$ with $k(t)<K$, a simple calculation shows that 
\begin{eqnarray*}
|\hat \nu_t(\alpha)-\hat V_t(\alpha)|=|\hat \nu_t(\alpha)-1|\leq |\alpha|\|\nabla \hat \nu_t\|_\infty\lesssim \displaystyle\frac{p_{k(t)}}{p_K}.
\end{eqnarray*}
Note that the calculation of $\|\nabla\hat \nu_t\|_\infty$ is the one and only place we use the growth assumption on $\vec a_k$.
\\
\\ For $t$ with $k(t)>K$, 
\begin{eqnarray*}
|\hat \nu_t(\alpha)-\hat V_t(\alpha)|=|\hat \nu_t(\alpha)|\lesssim \frac1t \prod_{i=1}^d\displaystyle\frac{1}{|\alpha_i|+1}\lesssim \displaystyle\frac{p_K}{p_{k(t)}}.
\end{eqnarray*}
 This calculation uses the fact that $\hat \nu_t(\alpha)$ can be expressed as a weighted sum of averages over blocks which grow exponentially in size. Thus 
\begin{eqnarray*}
\left(\sum_{t\in I}|\hat A_t(\alpha)-\hat V_t(\alpha)|\right)&\lesssim& \sum_{t\in I: k(t)<K} \frac{p_{k(t)}}{p_K} + O(1) + \sum_{t\in I: k(t)>K} \frac{p_K}{p_{k(t)}}\leq C
\end{eqnarray*}
since the $p_k$ are an exponentially increasing sequence and $I$ is lacunary (thus the number of $t$ associated to any $k$ is uniformly bounded).
\end{proof}

We now need only to prove that
\begin{eqnarray*}
\sum_n\left\| \sup_{t_{n-1}\leq t\leq t_{n}, t\in I} |V_t f-V_{t_n}f|\right\|_2^2\leq C\|f\|_2^2.
\end{eqnarray*}
Now we note that for $t_{n-1}\leq t\leq t_{n}$, $V_t f-V_{t_n}f=V_{t}(V_{t_{n-1}} f-V_{t_n}f)$. Also, Lemma \ref{FMult} lets us derive a $\ell^2$ maximal theorem for the $V_t$ from the one for $A_t$; this follows from our $\ell^1$ maximal inequality. Therefore

\begin{eqnarray*}
\left\| \sup_{t_{n-1}\leq t\leq t_{n}, t\in I} |V_t f-V_{t_n}f|\right\|_2^2 &=& \left\| \sup_{t_n\leq t\leq t_{n+1}, t\in I} |V_{t}(V_{t_{n-1}} f-V_{t_n}f)|\right\|_2^2\\
&\leq& C \left\|V_{t_{n-1}} f-V_{t_n}f\right\|_2^2.
\end{eqnarray*}

And now we see that
\begin{eqnarray}
\sum_{n}\|V_{t_{n-1}}f-V_{t_n}f\|_2^2\leq \left(\sup_{\alpha\in\T^d} \sum_n|\hat V_{t_{n-1}}(\alpha)-\hat V_{t_n}(\alpha)|\right)\sum_n\|f\|_2^2=\|f\|_2^2
\end{eqnarray}
since the functions $\hat V_{t_{n-1}}-\hat V_{t_n}$ have disjoint supports. This concludes the proof of Theorem \ref{OscIneq}.

\subsection{The Product Construction}
\label{detplaid}

Suppose that $m=qd$ and $S_1, S_2, ..., S_r$ are subsets of $\mathbb{Z}^{d}$, respectively, as constructed above. That is, $S_i = \cup_{k = 1}^\infty S_{i,k}$, where
\begin{equation*}
 S_{i,k} = \left\{ \vec {a}_{i,k} + \left(\sum_{t=1}^{q} p_{i,k}^{t-1} [j_{i}^t]_{p_{i,k}}, \sum_{t=1}^{q} p_{i,k}^{t-1} [j_{i}^{q+t}]_{p_{i,k}}, ..., \sum_{t=1}^{q} p_{i,k}^{t-1} [j_{i}^{(d-1)q+t}]_{p_{i,k}}  \right): 0 \leq j_i < p_{i,k} \right\},
\end{equation*}
and $1 \leq i \leq r$. The sequences $\left\{ \vec {a}_{i,k} \right\}_{k=1}^\infty$ and $\left\{ p_{i,k} \right\}_{k=1}^\infty$ are not necessarily distinct with respect to $i$.

Let $S = \prod_{i=1}^r S_i$, and let $B_t$ denote the ball of radius $t$ in $\mathbb{Z}^{rd}$.

\begin{prop}
\label{plaiddet}
Let $S$ be defined as above.  Then the maximal function
\begin{equation*}
 Mf(\vec {n}) = \sup_{t} \frac{1}{\#\left (S \cap B_t \right )} \sum_{\vec {m} \in S \cap B_t} \left| f\left(\vec {n} + \vec {m} \right) \right|
\end{equation*}
satisfies a weak-(1,1) inequality; that is, there is a constant $C$ so that for any $\lambda > 0$ we have
\begin{equation*}
 \# \left\{ \vec {n} : Mf(\vec {n}) > \lambda \right\} < \frac{C}{\lambda} \left\| f \right\|_{\ell^1\left(\mathbb{Z}^{rd}\right)}.
\end{equation*}
\end{prop}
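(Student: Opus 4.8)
The plan is to generalize the one-variable deterministic argument (the proof of Theorem~\ref{native}), but with the additional bookkeeping of the plaid random case (the proof of Theorem~\ref{MMT}); the feature of the product construction that makes this possible is that all the relevant Fourier estimates factorize, so that Weil's theorem (Theorem~\ref{WeilsThm}) applied separately to each factor $S_i$ assembles into the estimate we need. By the Calder\'on transference principle it is enough to prove the weak $(1,1)$ bound for the maximal convolution operator $\sup_t|f|\ast\sigma_t$ on $\Z^{rd}$, where $\sigma_t$ is the uniform probability measure on $S\cap B_t$. Since each $S_i$ lives on lacunary shells, $\#(S\cap B_{2t})\le C\#(S\cap B_t)$, so by positivity of the averages one may restrict $t$ to dyadic values $2^k$; writing $\mu_k$ for the resulting measures, $\#\supp\mu_k\le\prod_{i=1}^r\#(S_i\cap B_{2^k})\lesssim 2^{k\gamma}$ with $\gamma=r/q$. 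One then builds comparison measures $\nu_k$ by replacing each factor-shell $S_{i,\ell}$ by the smoothly cut-off block average at scale $p_{i,\ell}^{q}$ centered near $\vec a_{i,\ell}$ (the measures obtained from the operators $\Gamma_1,\Gamma_2$ in the proof of Theorem~\ref{native}), taking the product over $i$, superposing over the shell-tuples that fill $B_{2^k}$, and renormalizing. Weil's theorem supplies the per-factor bound $|\widehat{\mu_{i,\ell}}-\widehat{\nu_{i,\ell}}|\lesssim p_{i,\ell}^{-1/2}$, while the thin incomplete blocks sliced off by the sphere $\partial B_{2^k}$ are handled, as in Section~\ref{osciproof}, by Weyl's inequality in place of Weil's. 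The weak $(1,1)$ bound for $\sup_k|f|\ast\nu_k$ should follow from the covering/Tempelman-type methods of Section~\ref{blocks}, once one records that $\nu_k$ is a superposition of block averages whose scales and weights are controlled by the lacunarity of the shells.

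For the main term one mimics the proof of Theorem~\ref{SgtPepper}: apply the discrete Calder\'on--Zygmund decomposition $f=g+b=g+\sum_{s,k}b_{s,k}$ at height $\approx 1$; dispose of $g$ via the uniform $L^\infty$ bound on $\mu_k$; discard the contribution of the large values of $b$, and of the parts of $b_{s,k}\ast\nu_k$ supported off the exceptional set built from the dyadic cubes; replace $\mu_k$ by $\nu_k$ using the weak $(1,1)$ bound for the standard maximal function; and reduce by Chebyshev to controlling $\#\{\vec n:\sup_k|\sum_{s\le k}B^{(k)}_s\ast\nu_k(\vec n)|>1\}$ through the inner products $\langle B^{(k)}_s\ast\nu_k\ast\tilde\nu_k,\,B^{(k)}_t\rangle$. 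The new ingredient is the decomposition $\nu_k\ast\tilde\nu_k=\sum_{I\subset\{1,\dots,r\}}\chi_{k,I}$ exactly as in Lemma~\ref{plaid}: for $I\neq\emptyset$ the $\ell^\infty$ norm of $\chi_{k,I}$ acquires a factor $\prod_{i\in I}p_{i,k_i}^{-1/2}$ from Weil, whereas the $I=\emptyset$ term carries the uncancelled diagonal mass and must be balanced against the sup norm of $B^{(k)}_s$. To make the resulting sums converge one replaces the plain Calder\'on--Zygmund splitting by a \emph{targeted} one, splitting each $b_{s,k}=b^{(j)}_{s,k}+B^{(j)}_{s,k}$ according to the mixed $\ell^\infty(\ell^1)$ norms of its restrictions to sub-cubes of side $2^j$, precisely the refinement carried out in the proof of Theorem~\ref{MMT}; then for each $I$ the product of the Weil gain with the controlled mixed norm is summable, the geometry of $\chi_{k,I}$ (it vanishes once the supports of $B^{(k)}_s$ and $B^{(k)}_t$ are separated by more than a block) collapses the double sum over cubes to a single one, and the whole estimate comes out $\lesssim\|b\|_1\lesssim\|f\|_1$.

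The main obstacle is that the ball $B_t$ does not respect the product structure and the factor-shells of the various $S_i$ advance at different rates, so $S\cap B_t$ is only approximately a union of products $\prod_iS_{i,k_i}$; one must approximate it by partial products and keep control of the incomplete character sums this produces, which is what forces both the appeal to Weyl's inequality on the boundary blocks and the reductions using the shell structure. Closely related --- and the reason one cannot simply quote the difference theorem (Theorem~\ref{differencethm}) with the block model, but must instead follow Theorem~\ref{MMT} --- is that telescoping $\widehat{\mu_k}-\widehat{\nu_k}$ over the $r$ coordinate blocks recovers only one factor of $p^{-1/2}$, while the hypothesis of that theorem would demand the full product $(\#\supp\mu_k)^{-1/2}$; the targeted Calder\'on--Zygmund decomposition is exactly the device that circumvents this gap, and pushing its bookkeeping through the extra shell structure of the factors is where the real work concentrates. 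Everything else is modeled on the proofs of Theorems~\ref{native}, \ref{SgtPepper}, and~\ref{MMT}.
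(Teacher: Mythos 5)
Your proposal reaches the right statement but by a genuinely different, and much heavier, route than the paper. The paper's proof of Proposition \ref{plaiddet} is a near-verbatim rerun of the proof of Theorem \ref{native}: it forms the product measures $\mu_r'$ and $\nu_r'$ on $\prod_{i=1}^r\Z_{p_i}^m$, notes that $\widehat{\mu_r'}$ factorizes into a product of complete Weil sums so that $|\widehat{\mu_r'-\nu_r'}|\leq (m-1)^r\prod_{i=1}^r p_i^{-1/2}$ (inequality (\ref{ProductWeil})), pushes everything through the same smooth-cutoff operator $\Gamma_1$ and Freiman projection $\Gamma_2$, checks the support and $\ell^1$ conditions, and then simply quotes Theorem \ref{differencethm}; there is no Calder\'on--Zygmund decomposition, no $\chi_{k,I}$ decomposition, no targeted splitting of the $b_{s,k}$, and no appeal to Weyl's inequality anywhere in it. Your reason for refusing that shortcut --- that the Fourier gain can degenerate to a single factor of $p^{-1/2}$ --- is in fact a fair criticism of the paper's own display (\ref{ProductWeil}): at a frequency $\vec{\theta}\neq\vec{0}$ whose $i$-th coordinate block $\vec{\theta}_i$ vanishes, the $i$-th Weil factor equals $1$, so the honest bound is $\prod_{i:\vec{\theta}_i\neq 0}(m-1)p_i^{-1/2}$ rather than the full product, and condition (3) of Theorem \ref{differencethm} is then not literally satisfied for $r\geq 2$. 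Importing the $\chi_{k,I}$ decomposition and the targeted mixed-norm Calder\'on--Zygmund splitting from the proof of Theorem \ref{MMT}, as you propose, is the natural repair and yields an argument that actually confronts the frequencies with vanishing blocks; what the paper's shorter route buys, at the cost of that elision, is never having to open up the Calder\'on--Zygmund machinery at all. Two places where you over-engineer: incomplete blocks and Weyl's inequality are not needed for a weak $(1,1)$ bound on a positive maximal operator --- one majorizes a partial block by the complete one and uses lacunarity of the $p_{i,k}$ to control the normalization ratio, exactly as in the speckled case, Weyl's inequality entering only in the oscillation inequality of Section \ref{osciproof}; and the weak $(1,1)$ bound for $\sup_k |f|\ast\nu_k$ does not require Tempelman-type covering arguments, since $\nu_k$ is a bounded multiple of a normalized indicator of a box and is therefore dominated pointwise by the Hardy--Littlewood maximal function, which is how the paper disposes of that condition.
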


As in the previous section, we will require two inequalities relating convolutions, products, and norms of Fourier transforms on these finite groups.
With the Fourier transform on $\prod_{i=1}^r \mathbb{Z}_{p_i}^m$ defined by
\begin{equation*}
\hat{f}(\vec {\xi}) = \sum_{ \vec{n} \in  \prod_{i=1}^r \mathbb{Z}_{p_i}^m} f(\vec {n}) \left(\frac{\vec {n}_1 \cdot \vec {\xi}_1}{p_1} +...+\frac{\vec {n}_r \cdot \vec {\xi}_r}{p_r} \right),
\end{equation*}
where $\vec {n}_i$ and $\vec {\xi}_i$ are elements of $\mathbb{Z}_{p_i}^m$, we have the natural analogues of the inequalities (\ref{MainF}) and (\ref{FirstF}):

\begin{align}
 \left\| \widehat{fg} \right\|_{\ell^\infty} &\leq \frac{1}{\prod_{i=1}^r p_{i}^m} \left\| \hat{f} \right\|_{\ell^1} \left\| \hat{g} \right\|_{\ell^\infty} \mbox{, and} \label{MainF2} \\
 \left\| f \ast g \right\|_{\ell^2} &\leq \left\| f \right\|_{\ell^2} \left\| \hat{g} \right\|_{\ell^\infty}. \label{FirstF2}
\end{align}

\begin{proof}
The proof proceeds in very much the same way as that of Theorem \ref{native}.

Once again suppressing our $k$ subscripts, let $p_1, p_2, ..., p_r$ be odd primes, each larger than $m \geq 1$. Define
\begin{equation*}
\mu_r' = \frac{1}{\prod_{i=1}^r p_i}  \sum_{\substack{(j_1, j_2, ...,j_r ) \\ \in \prod_{i=1}^r [0, p_i-1]}} \delta_{\left(j_{1},j_{1}^2, ...,j_{1}^m; j_{2},j_{2}^2, ..., j_{2}^m; ...; j_{r},j_{r}^2,...,j_{r}^m \right)},
\end{equation*}
noting that
\begin{equation*}
\#\left ( \supp(\mu_r') \right ) = \prod_{i=1}^r p_i.
\end{equation*}

We also define
\begin{equation*}
\nu_r' = \left( \prod_{i=1}^r p_i \right)^{-m} \sum_{\vec {j} \in \prod_{i=1}^r \mathbb{Z}_{p_i}^m} \delta_{\vec {j}}.
\end{equation*}

As before, we will first seek an appropriate bound on $\left| \widehat{\mu_r' - \nu_r'} \right|$.

As our products remain finite abelian groups, we have that the Fourier transform of $\mu_r'$ may be written
\begin{align*}
\hat{\mu_r'}(\vec {\theta}) &= \sum_{\vec {n} \in \prod_{i=1}^r \mathbb{Z}_{p_i}^m} \mu_r'(\vec {n}) e\left(\frac{\vec {n}_1 \cdot \vec {\theta}_1}{p_1} + \frac{\vec {n}_2 \cdot \vec {\theta}_2}{p_2} + ... +\frac{\vec {n}_r \cdot \vec {\theta}_r}{p_r}\right)\\
&=\frac{1}{\prod_{i=1}^r p_i}  \sum_{\substack{(j_1, j_2, ...,j_r )\\ \in \prod_{i=1}^r [0, p_i-1]}} \sum_{\vec {n} \in \prod_{i=1}^r \mathbb{Z}_{p_i}^m} \delta_{\left(j_{1},...,j_{1}^m; ...; j_{r},...,j_{r}^m \right)}(\vec {n}) e\left(\frac{\vec {n}_1 \cdot \vec {\theta}_1}{p_1} \right)e\left( \frac{\vec {n}_2 \cdot \vec {\theta}_2}{p_2} \right)...e\left( \frac{\vec {n}_r \cdot \vec {\theta}_r}{p_r}\right)\\
&=\frac{1}{\prod_{i=1}^r p_i} \sum_{j_1 = 0}^{p_1-1} ...\sum_{j_r = 0}^{p_r-1}  e\left(\frac{j_1\theta_1 + j_{1}^2\theta_2+...+j_{1}^m\theta_m}{p_1} \right)  ...e\left(\frac{j_r\theta_{(r-1)m+1} + j_{r}^2\theta_{(r-1)m+2}+...+j_{r}^m\theta_{rm}}{p_r} \right)\\
&= \prod_{i=1}^r \left( \frac{1}{p_i} \sum_{j_i = 0}^{p_i-1} e\left(\frac{j_i\theta_{(i-1)m+1} + j_{i}^2\theta_{(i-1)m+2}+...+j_{i}^m\theta_{im}}{p_i} \right)\right)
\end{align*}
where $\vec {n_i}$ denotes those entries of $\vec {n}$ drawn from $\mathbb{Z}_{p_i}$ and $\vec {\theta}_i =(\theta_{(i-1)m+1}, \theta_{(i-1)m+2},...,\theta_{im})$.

By Theorem \ref{WeilsThm} we then must have
\begin{equation*}
\left| \hat{\mu_r'} \right| \leq \frac{(m-1)^r}{\prod_{i=1}^r p_{i}^{1/2}}.
\end{equation*}

We have that
\begin{equation*}
\hat{\nu_r'} (\vec {\theta}) = \left\{ \begin{array}{lr}
              1 \,\,\,&\mbox{ if } {\vec {\theta}}= {\vec {0}}\\
              0 \,\,\,&\mbox{ otherwise,}
             \end{array}
             \right.
\end{equation*}
and that $\hat{\mu_r'}(\vec {0}) = 1$. Therefore,

\begin{equation}\label{ProductWeil}
 \left| \widehat{\mu_r' - \nu_r'} \right| \leq \frac{(m-1)^r}{\prod_{i=1}^r p_{i}^{1/2}}.
\end{equation}

We now embark on the construction of $\Gamma_1$ and $\Gamma_2$.

Identifying $\mathbb{Z}_{3p_i}^m$ with $[-p_i, 2p_i-1]^m$ and $\mathbb{Z}_{3p_i}$ with $[-p_i, 2p_i-1]$.  We define the functions $\varphi_i : \mathbb{Z}_{3p_i} \rightarrow \mathbb{R}$ by
\begin{equation*}
 \varphi_i(n) = \left\{
		    \begin{array} {ll}
                    1  &\mbox{ if } n \in [0, p_i-1],\\
                    0  &\mbox{ if } n \in \left[-p_i, \frac{-p_i-1}{2} \right] \cup \left[\frac{3}{2}(p_i-1), 2p_i - 1 \right] \mbox{, and}\\
                    \mbox{affine} &\mbox{ otherwise.}
                   \end{array}
                    \right.
\end{equation*}

With $\vec {n}_i=(n_{i,1}, n_{i,2}, ..., n_{i,m})$, we then define $\phi : \prod_{i=1}^r \mathbb{Z}_{3p_i}^m \rightarrow \mathbb{R}$ by
\begin{equation*}
\phi(\vec {n}) = \prod_{i=1}^r \prod_{k=1}^m \phi_i\left( n_{i,k} \right).
\end{equation*}

Define $\tau_p : \mathbb{Z}^{rm} \rightarrow \prod_{i=1}^r \mathbb{Z}_{p_i}^m$ and $\tau_{3p} :  \mathbb{Z}^{rm} \rightarrow \prod_{i=1}^r \mathbb{Z}_{3p_i}^m$ by
\begin{align*}
\tau_p (\vec {k}) &= \left([k_1]_{p_1},[k_2]_{p_1}, ..., [k_m]_{p_1}, [k_{m+1}]_{p_2}, [k_{m+2}]_{p_2}, ..., [k_{2m}]_{p_2}, ...,[k_{(r-1)m+1}]_{p_r}, ..., [k_{rm}]_{p_r} \right) \mbox{, and}\\
\tau_{3p} (\vec {k}) &=  \left([k_1]_{3p_1},[k_2]_{3p_1}, ..., [k_m]_{3p_1}, [k_{m+1}]_{3p_2}, [k_{m+2}]_{3p_2}, ..., [k_{2m}]_{3p_2}, ...,[k_{(r-1)m+1}]_{3p_r}, ..., [k_{rm}]_{3p_r} \right).
\end{align*}

We now define $\Gamma_1$:
\begin{equation*}
\Gamma_1(f) = \ind_{\prod_{i=1}^r [-p_i, 2p_i - 1]^m} \phi \circ \tau_{3p} f \circ \tau_p.
\end{equation*}

Suppose $\vec {\theta} \in \mathbb{T}^{rm}$ and let $\vec {\xi}_i \in \mathbb{Z}^m$ so that
\begin{equation*}
\vec {\theta}= \left( \vec {\xi}_1/3p_1 + \vec {\eta}_1, \vec {\xi}_2/3p_2 + \vec {\eta}_2, ..., \vec {\xi}_r/3p_r + \vec {\eta}_r \right),
\end{equation*}
with $\left|\eta_{i,k}\right| \leq C/p_i$ for all $k$ and $i$.

If $f: \prod_{i=1}^r \mathbb{Z}_{p_i}^m \rightarrow \mathbb{R}$, then the Fourier transform of $\Gamma_1f$ would be
\begin{align}
&\widehat{\Gamma_1f}(\vec {\theta}) = \sum_{\vec {j} \in \mathbb{Z}^{rm}} \Gamma_1f(\vec {j}) e(\vec {j} \cdot \vec {\theta}) \notag \\
								&= \sum_{\vec {j} \in \prod_{i=1}^r [-p_i, 2p_i - 1]^m} \phi(\vec {j}) f \circ \tau_p(\vec {j})e(\vec {j}_1\cdot \vec {\eta}_1 + ... + \vec {j}_r \cdot \vec {\eta}_r) e\left(\vec {j}_1 \cdot \frac{\vec {\xi}_1}{3p_1} + ...+ \vec {j}_r \cdot \frac{\vec {\xi}_r}{3p_r}\right). \label{subscripthell}
\end{align}

Letting
\begin{equation*}
\psi(\vec {x}) = \phi(\vec {x}) e(\vec {x}_1\cdot \vec {\eta}_1 + ... + \vec {x}_r \cdot \vec {\eta}_r),
\end{equation*}
we find that the expression (\ref{subscripthell}) is the Fourier transform of the product of $f$ and $\psi$ on $\prod_{i=1}^r \mathbb{Z}_{3p_i}^m$.

By (\ref{MainF2}), we have that
\begin{equation*}
 \left\| \widehat{\Gamma_1 (f)}(\vec {\theta}) \right\|_{\ell^\infty} \leq \frac{1}{\prod_{i=1}^r p_{i}^m} \left\| f \right\|_{\ell^\infty} \left\| \hat{\psi} \right\|_{\ell^1}.
\end{equation*}

In this case, we have that
\begin{align*}
\left\| \hat{\psi} \right\|_{\ell^1} &= \sum_{\vec {\xi} \in  \prod_{i=1}^r \mathbb{Z}_{3p_i}^m} \left| \sum_{\vec {j} \in \prod_{i=1}^r [-p_i, 2p_i - 1]^m } \phi(\vec {j}) e(\vec {j}_1 \cdot \vec {\eta}_1 + ...+\vec {j}_r \cdot \vec {\eta}_r) e\left( \frac{\vec {j}_1 \cdot \vec {\xi}_1}{3p_1} + ...+\frac{\vec {j}_r \cdot \vec {\xi}_r}{3p_r}  \right)\right|\\
&= \sum_{\vec {\xi} \in  \prod_{i=1}^r \mathbb{Z}_{3p_i}^m} \left| \sum_{\vec {j} \in \prod_{i=1}^r [-p_i, 2p_i - 1]^m} \left(\prod_{i=1}^r \prod_{k=1}^m \varphi(j_{i,k})\right) e(\vec {j}_1 \cdot \vec {\eta}_1 + ...+\vec {j}_r \cdot \vec {\eta}_r) e\left(  \frac{\vec {j}_1 \cdot \vec {\xi}_1}{3p_1} + ...+\frac{\vec {j}_r \cdot \vec {\xi}_r}{3p_r}   \right) \right|\\
&=  \prod_{i=1}^r \prod_{k=1}^m \sum_{\xi_{i,k} \in \mathbb{Z}_{3p_i}}  \left| \sum_{j_{i,k} \in [-p_i, 2p_i - 1]} \varphi(j_{i,k})  e(j_{i,k} \eta_{i,k}) e\left( \frac{j_{i,k} \xi_{i,k}}{3p_i} \right) \right|.
\end{align*}

If
\begin{equation*}
\sum_{\xi_{i,k} \in \mathbb{Z}_{3p_i}}  \left| \sum_{j_{i,k} \in [-p_i, 2p_i - 1]} \varphi(j_{i,k})  e(j_{i,k} \eta_{i,k}) e\left( \frac{j_{i,k} \xi_{i,k}}{3p_i}  \right)\right| \leq Cp_i,
\end{equation*}
then we are done- but this is precisely the inequality (\ref{cpest}).

Suppose $m=qd$.  Define $F: \mathbb{Z}^{rm} \rightarrow \mathbb{Z}^{rd}$ by
\begin{equation*}
 F(\vec {j}) = \left( \sum_{k=1}^q p_{1}^{k-1} j_{1,k}, ...,\sum_{k=q(d-1)+1}^{dq} p_{1}^{k-(q(d-1)+1)} j_{1,k}, \sum_{k=1}^q p_{2}^{k-1} j_{2,k}, ...,\sum_{k=q(d-1)+1}^{dq} p_{r}^{k-(q(d-1)+1)} j_{r,k} \right).
\end{equation*}

Once again $F$ acts as a bijection on our sets of interest; here we have that $F$ is a bijection from $\prod_{i=1}^r [0, p_i-1]^m$ to $\prod_{i=1}^r [0, p_{i}^m-1]$.  We again define $\Gamma_2$ by
\begin{equation*}
 \Gamma_2 f(\vec {n}) = \sum_{\left\{\vec {j}: F(\vec {j}) = \vec {n} \right\}} f(\vec {j}).
\end{equation*}

As in section \ref{speckled}, we have that
\begin{equation*}
\left\| \widehat{\Gamma_2 \Gamma_1 f} \right\|_{\ell^\infty} \leq \left\| \widehat{\Gamma_1 f} \right\|_{\ell^\infty}.
\end{equation*}

Setting $\mu_r''' = \Gamma_2 \Gamma_1 \mu_r'$ and $\nu_r''' = \Gamma_2 \Gamma_1 \nu_r'$, we then have that $\mu_r'''$ and $\nu_r'''$ meet the difference requirement of Theorem \ref{differencethm}.

We also have that
\begin{equation*}
\#\left ( \supp (\Gamma_2 \Gamma_1 \mu_r')\right ) \leq \#\left ( \supp(\Gamma_1 \mu_r')\right ) \leq C\prod_{i=1}^r p_i,
\end{equation*}
and that

\begin{align*}
\left\| \nu_r''' \right\|_{\ell^1} &\leq \#\left ( \supp(\nu_r''') \right ) \left\| \Gamma_2 \Gamma_1 \nu_r' \right\|_{\ell^\infty}\\
&\leq C\prod_{i=1}^r p_{i}^m \sup_{\vec {n}} \left| \sum_{\{\vec {j}: F(\vec {j}) = \vec {n} \}} \frac{ \ind_{\prod_{i=1}^r [-p_i, 2p_i - 1]^m} \phi \circ \tau_{3p}}{\prod_{i=1}^r p_{i}^m }\right|\\
&\leq C.
\end{align*}

As in the previous section, then, all three requirements will be satisfied; noting that
\begin{equation}\label{icing}
\mu_r'''(\vec {n}) \geq \frac{1}{\prod_{i=1}^r p_i} \sum_{\vec {j} \in \tilde{S}} \delta_{\vec {j}}(\vec {n}),
\end{equation}
where
\begin{equation*}
\tilde{S} = \left\{ \left( \sum_{t=1}^{q} p_{1,k}^{t-1} [j_{1}^t]_{p_{1,k}}, ..., \sum_{t=1}^{q} p_{1,k}^{t-1} [j_{1}^{(d-1)q+t}]_{p_{1,k}},...,\sum_{t=1}^{q} p_{r,k}^{t-1} [j_{r}^{(d-1)q+t}]_{p_{r,k}}   \right) : 0 \leq j_i < p_i, 1\leq i \leq r \right\}
\end{equation*}
it remains only to reintroduce our subscripts and to shift by $\vec {a}_{i,k}$.

Defining $\mu_k$ and $\nu_k$ by
\begin{align*}
\mu_k(\vec {n}) &= \mu_{r,k}'''(\vec {n})\\
\nu_k(\vec {n}) &= \nu_{r,k}'''(\vec {n}),
\end{align*}
we have that $\sup_{k} \left| f \ast \mu_k \right|$ obeys a weak-(1,1) inequality.  With this and the inequality (\ref{icing}), the proof is complete.

\end{proof}

We have that a product of sum sets of the type constructed in Section (\ref{speckled}) remains a good sum set.

\section{Sparse Sequences and Actions of Virtually Nilpotent Groups}

We begin with a few necessary definitions.

\begin{definition}
Let $G$ be an infinite finitely generated group with identity $e$, and $\A=\{e, a_1,\dots,a_n\}\subset G$ be a finite symmetric generating set containing $e$.  Let $\A^N$ denote the elements of $G$ expressible as words of length $N$ in $\A$, and let $\A^0:=\{e\}$. Then $\rho^\A(g,h):=\min\{N:gh^{-1}\in \A^N\}$ defines a metric on $G$.
\end{definition}

Select a symmetric set of generators $\mathbb{A}$.  Then $\mathbb{A}^N$ is the ball of radius $N$ in the word metric on $G$. Classical results by Wolf \cite{Wolf}, Bass \cite{Bass}, Milnor \cite{Milnor} and Gromov \cite{MR623534} amount to the following: $G$ is virtually nilpotent (contains a nilpotent subgroup of finite index) if and only if there exists $d\in\N$ and $0<c<C<\infty$ such that for all $N\in\Z^+$,
\begin{equation}\label{Bass}
 cN^d < \# \mathbb{A}^N \leq CN^d
\end{equation}
(here $C$, but not $d$, depends on the choice of $\mathbb{A}$). Thus we say that $G$ has \emph{polynomial growth of degree $d$}.\\

Pansu \cite{Pan} improved this result further:
\begin{thm}\label{Pansu}
   Let $\mathbb{A}$ be a symmetric set of generators for the virtually nilpotent group $G$.  Then there is an integer $d$ so that the sequence
\begin{equation*}
   \frac{\# \mathbb{A}^N}{N^d}
\end{equation*}
converges.
\end{thm}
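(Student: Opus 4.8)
The plan is to reduce to a torsion-free finitely generated nilpotent group, realize that group as a cocompact lattice in a simply connected nilpotent Lie group, and then exploit the exact homogeneity of the Carnot--Carath\'eodory metric on the associated graded (Carnot) group to pass to the limit. The exponent will be the homogeneous dimension $d=\sum_{i\ge 1}i\cdot\dim(\mathfrak g_i/\mathfrak g_{i+1})$, where $\mathfrak g=\mathfrak g_1\supseteq\mathfrak g_2=[\mathfrak g,\mathfrak g]\supseteq\cdots$ is the lower central series of the Malcev Lie algebra of $G$; by the Bass--Guivarc'h formula this is the same $d$ appearing in (\ref{Bass}).

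\emph{Reduction.} By a theorem of Malcev, every virtually nilpotent $G$ has a finite-index subgroup $H$ (normal, via the normal core) which is torsion-free, finitely generated, and nilpotent. One shows $\#\mathbb{A}^N/N^d$ converges for $G$ as soon as it converges for $H$: decompose $G$ into its $[G:H]$ cosets of $H$; since $H$ has finite index its word metric is bi-Lipschitz to the restriction of that of $G$, so each coset meets the $G$-ball $\mathbb{A}^N$ in a set which, up to a boundary layer of cardinality $O(N^{d-1})=o(N^d)$ (controlled using (\ref{Bass})), is a bi-Lipschitz image of an $H$-ball of comparable radius; summing the corresponding volume asymptotics gives the claim. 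This step is elementary but needs some care; it is not where the real difficulty lies. Henceforth we assume $G$ itself is torsion-free, finitely generated, nilpotent.

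\emph{Geometric model and scaling.} By Malcev, $G$ is a cocompact lattice in a unique simply connected nilpotent Lie group $\mathfrak G$; fix a left-invariant Riemannian metric $d_{\mathrm{Riem}}$ on $\mathfrak G$. By the \v{S}varc--Milnor lemma the inclusion $(G,\rho^{\mathbb{A}})\hookrightarrow(\mathfrak G,d_{\mathrm{Riem}})$ is a quasi-isometry, and using a fixed fundamental domain the counting of $\#\mathbb{A}^N$ reduces, up to an $o(N^d)$ error, to understanding the volume asymptotic $\mathrm{vol}(B_{\mathrm{Riem}}(e,R))/R^d$ as $R\to\infty$. Let $\mathfrak g_\infty=\bigoplus_i\mathfrak g_i/\mathfrak g_{i+1}$ be the associated graded Lie algebra, which is stratified (its degree-$(i+1)$ layer is the bracket of the first layer with the degree-$i$ layer), and let $\mathfrak G_\infty$ be the corresponding Carnot group, equipped with the dilation group $\delta_t$ (scaling the $i$-th layer by $t^i$) and the Carnot--Carath\'eodory metric $d_\infty$ attached to the first layer. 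Pansu's theorem is the assertion that the rescaled pointed spaces $(\mathfrak G,\tfrac1n d_{\mathrm{Riem}},e)$ converge, in the pointed Gromov--Hausdorff sense, to $(\mathfrak G_\infty,d_\infty,0)$. Granting this: $d_\infty(\delta_t x,\delta_t y)=t\,d_\infty(x,y)$, so the unit ball $B_\infty=B_{d_\infty}(0,1)$ has $0<\mathrm{vol}(B_\infty)<\infty$ and $\mathrm{vol}(B_{d_\infty}(0,R))=R^d\,\mathrm{vol}(B_\infty)$; sandwiching the rescaled Riemannian balls between inner and outer CC balls of radii $1\pm o(1)$ (legitimate by the Gromov--Hausdorff convergence) yields $\mathrm{vol}(B_{\mathrm{Riem}}(e,R))/R^d\to\mathrm{vol}(B_\infty)$, and hence $\#\mathbb{A}^N/N^d$ converges. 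The Reduction step then transfers convergence back to the original group.

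\emph{Main obstacle.} Everything above except the Gromov--Hausdorff convergence is bookkeeping; that convergence --- equivalently, that the asymptotic cone of a finitely generated nilpotent group is a Carnot group with a sub-Finsler CC metric --- is the crux, and is the content of Pansu \cite{Pan}. The argument has three ingredients: (i) the rescaled metrics form a precompact family, since by (\ref{Bass}) they are uniformly doubling; (ii) any subsequential limit is a left-invariant, geodesic, $\delta_t$-homogeneous length metric on $\mathfrak G_\infty$, and one must show such a metric is necessarily the CC metric of a left-invariant sub-Finsler structure supported on the first layer; (iii) that sub-Finsler structure is pinned down by the original word metric --- its unit ball is the convex hull of the images of the generators in the abelianization $\mathfrak g_1/\mathfrak g_2$ --- so the limit is unique and equals $d_\infty$. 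Step (ii) is the genuinely hard, analytic part. A more combinatorial route, perhaps more congenial here, is to work in Malcev coordinates of the second kind along a generating set adapted to the lower central series: each word of length $\le N$ has a collected form $a_1^{e_1}\cdots a_r^{e_r}$ in which a generator of weight $w$ satisfies $|e_i|\lesssim N^w$, and the theorem becomes the statement that the region of admissible rescaled exponent vectors $(e_1/N^{w_1},\dots,e_r/N^{w_r})$ converges in measure to a fixed body as $N\to\infty$ --- again the self-similarity at the heart of Pansu's theorem, phrased combinatorially.
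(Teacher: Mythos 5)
First, note that the paper does not prove this statement: Theorem \ref{Pansu} is stated as a known result and the only ``proof'' supplied is the citation \cite{Pan}. Your proposal likewise defers the crux --- the pointed Gromov--Hausdorff convergence of the rescaled word metrics to a Carnot group carrying a sub-Finsler Carnot--Carath\'eodory metric --- to that same reference, so what you have written is an (essentially accurate) outline of the architecture of Pansu's argument rather than a self-contained proof. In that respect it is consistent with the paper's treatment, and your identification of the exponent $d$ via the Bass--Guivarc'h formula and of where the real analytic difficulty lies is correct.

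That said, one intermediate step of your outline is genuinely wrong and would not survive being filled in. In the paragraph ``Geometric model and scaling'' you claim that, because $(G,\rho^{\mathbb{A}})\hookrightarrow(\mathfrak{G},d_{\mathrm{Riem}})$ is a quasi-isometry by \v{S}varc--Milnor, the count $\#\mathbb{A}^N$ reduces up to an $o(N^d)$ error to the Riemannian volume asymptotic $\mathrm{vol}(B_{\mathrm{Riem}}(e,R))/R^d$. A quasi-isometry with multiplicative constant $\lambda>1$ only sandwiches $\mathbb{A}^N$ between the lattice points of Riemannian balls of radii roughly $N/\lambda$ and $\lambda N$, whose volumes differ by a factor bounded away from $1$; so convergence of $\mathrm{vol}(B_{\mathrm{Riem}}(e,R))/R^d$ does not imply convergence of $\#\mathbb{A}^N/N^d$. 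Indeed the two limits cannot agree in general, since the latter depends on the generating set $\mathbb{A}$ --- as your own step (iii) correctly records --- while the former does not. The correct route, which is Pansu's, compares $\mathbb{A}^N$ directly with the $\delta_N$-dilate of the unit ball of the sub-Finsler CC metric determined by $\mathbb{A}$, with no Riemannian intermediary. The same defect affects your ``Reduction'' paragraph: the word metric of $G$ restricted to a finite-index subgroup $H$ is only bi-Lipschitz to the word metric of $H$, so a coset-by-coset comparison controls $\#\mathbb{A}^N$ only up to a bounded multiplicative factor, not up to $o(N^d)$, and the claimed ``boundary layer of cardinality $O(N^{d-1})$'' is actually an annulus of width proportional to $N$. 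Passing from finitely generated nilpotent groups to virtually nilpotent ones is in fact a nontrivial extension of Pansu's theorem (one must show the cosets of $H$ contribute asymptotically proportional counts, e.g.\ by running the asymptotic-cone argument on $G$ itself), not mere bookkeeping controlled by (\ref{Bass}).
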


Note that in particular this implies, for all $g\in G$,
\begin{eqnarray}
\label{slippery}
\lim_{N\to\infty} \frac{\#(\A^N\Delta g\A^N)}{\#\A^N}=0.
\end{eqnarray}
Since the important matters in the proofs that follow do not depend on our choice of $\mathbb{A}$, we will henceforward suppress it in superscripts.

\subsection{Block Averages for Virtually Nilpotent Groups}\label{groupblock}

In the proof below, we will again seek to apply Tempelman's Theorem to a shifted sequence of sets whose volume increases in a lacunary fashion.  Instead of rectangular prisms, however, we will take as our shifted sets elements of the family $\{ \mathbb{A}^N \}$. \\

We immediately note that for any particular radius, the volume of the corresponding ball is finite, and that for any $m > n$, $\mathbb{A}^n \subset \mathbb{A}^m$.  Further, the family of balls of radius $N$, $\{ \mathbb{A}^N \}_{N>0}$, themselves satisfy the F\o lner condition (see \cite{Breuillard}).\\

Choose the sequence $\ell_k$ as in section \ref{speckledblock}, and a sequence of elements $a_k \in G$, with 
\begin{enumerate}
   \item $\rho(a_{k+1},e) > \rho( a_k , e) + \ell_k$, and
   \item $ \ell_k \geq C \rho( a_{k-1}, e)$.
\end{enumerate}

Defining $B_k = \mathbb{A}^{\ell_k}$, we let $S = \cup_{k>0} a_kB_k$.

Suppose 
\begin{equation*}
 S(k,r)= \left( \cup_{i < k+1} a_iB_i  \right) \cup a_{k+1}\mathbb{A}^r, 
\end{equation*}
where $0 \leq r  < \ell_k $. \\

\begin{prop}
 The sequence $S(k,r)$, with  $k \geq 1$ and $r \geq 0$, is a pointwise $L^1$-good sequence of sets for any free $G$-action.
\end{prop}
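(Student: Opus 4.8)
The plan is to show, exactly as in Proposition~\ref{TempReg} and in the Proposition of Section~\ref{speckledblock}, that the family $\{S(k,r)\}$, ordered lexicographically in $(k,r)$, is a nested F\o lner sequence satisfying the Tempelman Condition~(\ref{TempelmanC}), and then quote Tempelman's Ergodic Theorem (Theorem~\ref{TempelmanThm}). The only change of ingredient is that word-balls $\A^N$ play the role of the intervals and prisms, and polynomial growth~(\ref{Bass}) plays the role of the Euclidean volume computations. The first three hypotheses of Theorem~\ref{TempelmanThm} are routine: each $S(k,r)$ is a finite union of translates of finite balls; $S(k,r)\subseteq S(k',r')$ whenever $(k,r)\le(k',r')$, by $\A^{r_1}\subseteq\A^{r_2}$ and the definition; and $S(k,r)$ always contains a complete ball $a_k\A^{\ell_k}$ with $\ell_k\to\infty$, so the F\o lner property follows from (\ref{slippery}) together with the fact that the earlier blocks contribute a vanishing fraction of the mass.

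The real content is the inequality $\#\!\left(S(k,r)^{-1}S(k,r)\right)\le M\,\#S(k,r)$ with $M$ independent of $k,r$. First I would record the doubling of balls: from~(\ref{Bass}), $\#\A^{CN}\le C'\,\#\A^N$ for every fixed constant $C$. Next, condition~(2) gives $\rho(a_i,e)\le\rho(a_{k-1},e)\le\ell_k/C$ for all $i\le k-1$, and since the $\ell_i$ are increasing this confines $V_{k-1}:=\bigcup_{i<k}a_i\A^{\ell_i}$ to a ball $\A^{C_1\ell_k}$ of controlled radius; combined with $r<\ell_k$ this yields $\#S(k,r)\asymp\#\A^{\ell_k}\asymp\ell_k^d$, so it suffices to prove $\#\!\left(S(k,r)^{-1}S(k,r)\right)\lesssim\ell_k^d$.

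I would then decompose $S(k,r)=V_{k-1}\cup(a_k\A^{\ell_k})\cup(a_{k+1}\A^r)$ and expand $S(k,r)^{-1}S(k,r)$ into the nine products of these three pieces. The three ``pure'' products $V_{k-1}^{-1}V_{k-1}\subseteq\A^{2C_1\ell_k}$, $(a_k\A^{\ell_k})^{-1}(a_k\A^{\ell_k})=\A^{2\ell_k}$, and $(a_{k+1}\A^r)^{-1}(a_{k+1}\A^r)=\A^{2r}\subseteq\A^{2\ell_k}$ all have size $\lesssim\ell_k^d$ by doubling. For the six cross-terms — e.g.\ $V_{k-1}^{-1}a_k\A^{\ell_k}$ and $\A^{\ell_k}a_k^{-1}a_{k+1}\A^r$ — I would use that $\rho$ is right-invariant, so a right-translate of a ball is again a ball of the same radius, and argue via the spacing conditions~(1)--(2) and the lacunary growth of $\ell_k$ that, after the appropriate one-sided translation, each cross-term is contained in a single ball whose radius is a bounded multiple of $\ell_k$; doubling then bounds each by $\lesssim\ell_k^d$. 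Summing the nine contributions (absorbing the growing number of blocks into a single block-scale via $\sum_{i\le k}\ell_i^d\le(\sum_{i\le k}\ell_i)^d\le 2\ell_k^d$ for large $k$) gives the Tempelman bound, and Theorem~\ref{TempelmanThm} finishes the proof.

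The hard part will be precisely the cross-term estimate. Over $\Z$ and $\Z^d$ the corresponding step is essentially free, because the difference of two boxes is a box whose side lengths are the sums of the originals, independently of how far apart the boxes are placed. In a non-abelian $G$ this fails: a product $\A^M g\A^N$ need not be a ball, and a priori it can be far larger than a ball of radius $M+N$ when $|g|$ is large, since conjugation by $g$ can spread out the centers of the relevant ball-translates. Thus the cross-term argument genuinely has to exploit that the spacing conditions confine all the anchors and the shifts $a_i^{-1}a_j$ that actually occur in $S(k,r)^{-1}S(k,r)$ to balls comparable to the local block scale, so that no such ``long-range'' product arises; this geometric bookkeeping with word-balls, rather than any new piece of harmonic analysis, is where the difficulty (and the need to use every spacing hypothesis carefully) lies.
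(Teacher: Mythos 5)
Your global strategy (show the $S(k,r)$ form a nested F\o lner sequence satisfying the Tempelman condition and invoke Theorem~\ref{TempelmanThm}) and your use of polynomial growth to get doubling of balls both match the paper. The gap is exactly where you predicted it would be: the cross-term estimate, and your proposed resolution does not work. Because you compute $S(k,r)^{-1}S(k,r)$, your cross-terms have the form $\A^{\ell_i}\,(a_i^{-1}a_j)\,\A^{\ell_j}$ with the shift sandwiched in the \emph{middle}. No one-sided translation can remove a middle factor, and the spacing hypotheses do \emph{not} confine these shifts to the local block scale: $\rho(a_i^{-1}a_j,e)=\rho(a_j,a_i)$ is bounded above only by $\rho(a_j,e)+\rho(a_i,e)$, and condition (2) permits $\rho(a_{k+1},e)$ to be as large as $\ell_{k+2}/C\gg\ell_k$. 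Moreover the danger you flag is real, not just ``a priori'': in the discrete Heisenberg group one checks that $\#(\A^N g\,\A^N)$ can be of order $N^{3}\rho(g,e)$, which dwarfs $\#\A^{N}\approx N^{4}$ once $\rho(g,e)\gg N$, so middle-shift products genuinely blow up and cannot be absorbed by doubling.

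The paper sidesteps this entirely by estimating $S(k,r)\,S(k,r)^{-1}$ instead. There every cross-product between complete blocks has the form $a_iB_iB_j^{-1}a_j^{-1}$: the anchors sit at the two \emph{outer} ends, and since counting measure is invariant under both left and right translation, $\#\bigl(a_iB_iB_j^{-1}a_j^{-1}\bigr)=\#\bigl(B_iB_j^{-1}\bigr)\le\#\A^{\ell_i+\ell_j}\le\#\A^{2\ell_k}\lesssim\#\A^{\ell_k}$; the remaining terms are handled by the containment $S_{k-1}\subseteq\A^{c\ell_k}$ (conditions (1)--(2) place every anchor up to $a_{k-1}$ within $\ell_k/C$ of $e$) together with doubling. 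So you should either verify the $FF^{-1}$ form of the regularity condition --- note that the paper's own proof silently does this even though (\ref{TempelmanC}) as printed has the inverse on the other side, and it is the $FF^{-1}$ form that the construction actually satisfies --- or supply a genuinely new bound for $\A^{M}g\A^{N}$ with $\rho(g,e)\gg M+N$, which is not available in general. The rest of your outline (the volume count $\#S(k,r)\approx\ell_k^{d}$, the pure products, absorbing the earlier blocks into a single controlled ball) is fine.
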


\begin{remark}
   For the proposition as it is written, merely having the upper and lower bound (as in \ref{Bass}) would suffice; however, Theorem \ref{Pansu} gives us more.  Specifically, we have that an average over any increasing sequence of sets of which our F\o lner sequence is a subsequence must also converge. For example, we have that the averages taken element-by-element also converge, so long as we successively fill each set in our constructed sequence.  A similar result holds in the $\mathbb{Z}^d$ case, as an immediate corollary.
\end{remark}

Note that if we consider only $r=0$, we would have an $L^1$-good sequence corresponding to the sequence consisting only of whole blocks in the original block sequence construction. \\

\begin{proof}
 As in Section \ref{blocks}, we need only verify the difference requirement.\\

Letting $S_k = \cup_{i < k+1} a_iB_i $ and $R = a_{k+1}\mathbb{A}^r$, we have
\begin{equation} \label{decomp1}
 \# \left( S(k,r)S^{-1}(k,r) \right)  \leq \#\left(S_kS^{-1}_k \right) + \# \left(S_kR^{-1} \right) + \# \left(R S^{-1}_k\right) + \# \left( RR^{-1}\right).
\end{equation}

We note that for any ball $\mathbb{A}^N$ in $G$, we have 
\begin{equation*}
 \mathbb{A}^N \left(\mathbb{A}^N \right)^{-1} \subseteq \mathbb{A}^{2N}.
\end{equation*}
Hence, due to the polynomial growth of $G$, the size of the difference of any ball with itself is bounded by the size of the original ball:
\begin{equation*}
 \# \left( \mathbb{A}^N \left(\mathbb{A}^N \right)^{-1}  \right) \leq \#  \mathbb{A}^{2N} \leq C\left(2N \right)^d \leq 2^d \, \# \mathbb{A}^{N}.
\end{equation*}
Thus the last term in (\ref{decomp1}) is less than $C \, \# R$ for some constant $C$. \\

For the first term, we again consider a decomposition:
\begin{align*}
 \# \left( S_k S_{k}^{-1} \right) \leq \# \left( a_kB_k B_{k}^{-1} a_{k}^{-1} \right) + \# \left(a_kB_k S_{k-1}^{-1} \right) +\# \left(S_{k-1}  B_{k}^{-1} a_{k}^{-1} \right) + \# \left( S_{k-1}S_{k-1}^{-1}\right).
\end{align*}
Again, we immediately have that the first term is less than $C \# S_k$. But $S_{k-1} \subset B_k$, by our condition on the $\ell_k$.  So each of the three other terms is also less than $C \# S_k$.\\

This leaves only the second and third terms of (\ref{decomp1}).  We note that, by our conditions on $\ell_k$,  $S_k \subseteq \mathbb{A}^{c \ell_k}$ for some constant $c$.  For the second term, then, we have
\begin{equation*}
 \# \left(S_kR^{-1} \right) \leq \# \left(\mathbb{A}^{c \ell_k} R^{-1} \right) \leq C \# \left(S(k,r)S^{-1}(k,r) \right).
\end{equation*}
In a similar way one may show that the third term is less than $C \# \left( S(k,r)S^{-1}(k,r) \right)$.

\end{proof}

\subsection{Random Averages for Measure-Preserving Group Actions}
\label{nomad}

Let $\Omega$ be a probability space, let $0<\alpha<d$, and let $\{\xi_g(\omega): g\in G\}$ be independent $\{0,1\}$-valued random variables on  $\Omega$ with $\P(\xi_g=1)=\rho(g,e)^{-\alpha}$. Note that by Theorem \ref{Pansu} and the Strong Law of Large Numbers, there exists $C$ depending on $G$, $\A$ and $\alpha$ such that $\P\left(N^{\alpha-d}\sum_{g\in\A^N}\xi_g\to C\right)=1$.  We restrict ourselves to this set $\Omega_1$ of probability 1.

\begin{definition} For a measure-preserving group action $(X,\F,m, \{T_g\})$ and$f\in L^1(X)$, define the average $$A_Nf(x):=\frac 1{\#\A^N}\sum_{g\in\A^N}f(T_gx)$$ and the random average $$A_N^{(\omega)}f(x):=N^{\alpha-d}\sum_{g\in\A^N}\xi_g(\omega)f(T_gx).$$
\end{definition}
Krengel proves several theorems about measure-preserving group actions and other additive processes in Section 6.4 of \cite{UK}.  We will apply Theorems 4.1, 4.2, and 4.4 from that section to our particular case:

\begin{thm}
\label{K}
Let $G$ have polynomial growth of degree $d$, and $\A$ be a finite symmetric generating set. Then for every measure-preserving group action $(X,\F,m,\{T_g\})$ and $1\leq p<\infty$, $A_Nf$ converges in $L^p$ and a.e. for every $f\in L^p(X,m)$.
\end{thm}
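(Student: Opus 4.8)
The plan is to verify that the sequence of balls $\{\A^N\}_{N\ge 1}$ is a nested F\o lner sequence satisfying the Tempelman Condition (\ref{TempelmanC}), and then to quote Tempelman's Ergodic Theorem (Theorem \ref{TempelmanThm}) for the a.e.\ statement and the additive-process ergodic theorems of Krengel (Theorems 4.1, 4.2 and 4.4 of Section 6.4 of \cite{UK}) for the $L^p$ statement. So the work is entirely in checking the structural hypotheses.

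First I would record the three facts. Nestedness is immediate: since $e\in\A$ we have $\A^n\subset\A^m$ whenever $n\le m$. The F\o lner property is exactly (\ref{slippery}), which is a consequence of Pansu's Theorem \ref{Pansu}. For the Tempelman Condition, symmetry of $\A$ gives $(\A^N)^{-1}=\A^N$, hence $\A^N(\A^N)^{-1}=\A^N\A^N\subseteq\A^{2N}$; combining this with the two-sided polynomial growth bound (\ref{Bass}) yields
\[
\#\bigl(\A^N(\A^N)^{-1}\bigr)\le\#\A^{2N}\le C(2N)^d=\frac{2^dC}{c}\,cN^d\le\frac{2^dC}{c}\,\#\A^N,
\]
so (\ref{TempelmanC}) holds with the constant $2^dC/c$ uniformly in $N$.

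With these facts in hand, Theorem \ref{TempelmanThm} applies directly and gives a.e.\ convergence of $A_Nf$ for every $f\in L^1(X,m)$; since $m$ is a probability measure, $L^p(X,m)\subset L^1(X,m)$ for $1\le p<\infty$, so a.e.\ convergence holds for all $f\in L^p$. The Tempelman Condition also produces a weak $(1,1)$ maximal inequality for $\sup_N|A_Nf|$, and interpolating against the trivial $L^\infty$ bound gives strong $(p,p)$ maximal inequalities for $1<p<\infty$; together with the a.e.\ convergence just established and dominated convergence, this upgrades the convergence to $L^p$-norm convergence for $1<p<\infty$. For the endpoint $p=1$ one notes that $A_Nf$ converges in $L^1$ for $f$ in the dense subspace $L^\infty(X,m)$ (a.e.\ convergence plus a uniform bound), and then extends to all of $L^1$ using that each $A_N$ is an $L^1$-contraction via a standard $\varepsilon/3$ argument; this is the content of Krengel's Theorem 4.4.

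The verification is essentially bookkeeping, and there is no serious obstacle; the one point requiring a little care is ensuring that the hypotheses of Krengel's additive-process theorems --- a \emph{nested} F\o lner sequence along a \emph{symmetric} generating set, with the two-sided Tempelman bound --- are all met, which is precisely why we arranged from the outset for $\A$ to be symmetric and to contain $e$.
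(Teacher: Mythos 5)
Your proposal is correct and follows essentially the same route as the paper, which simply invokes Krengel's Theorems 4.1, 4.2 and 4.4 (and implicitly Tempelman's theorem) for the balls $\A^N$; you have merely made explicit the hypothesis-checking (nestedness, F\o lner via Pansu, and the Tempelman bound $\#\A^{2N}\le \tfrac{2^dC}{c}\,\#\A^N$) that the paper leaves to the cited reference and uses elsewhere in Section 5.1.
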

\label{K2}
Let $G$ have polynomial growth of degree $d$, and $\A$ be a finite symmetric generating set. Then we have a weak-type maximal inequality on $G$ itself,
\begin{eqnarray}
\label{costello}
\#\{g\in G: \sup_N |\varphi\ast\frac 1{\#\A^N}\1_{\A^N}|>\lambda\}\leq\frac{C}\lambda\|\varphi\|_1\hspace{10pt}\, \text {for all}\, \varphi\in\ell^1(G).
\end{eqnarray}

\noindent We may now state our main results:

\begin{thm}
\label{L2}
Let $G$ be a finitely generated group with polynomial growth of degree $d$, and $\A$ a finite symmetric generating set, and $0<\alpha<d$.  Then there exists $\Omega_2\subset\Omega$ with $\P(\Omega_2)=1$ such that for each $\omega\in\Omega_2$, $A_N^{(\omega)}f$ converges in $L^2$ and a.e. for every measure-preserving group action $(X,\F,m,\{T_g\})$ and every $f\in L^2(X,m)$.
\end{thm}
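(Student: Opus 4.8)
The plan is to follow the template established in the $\Z^d$ case (Sections \ref{random} and the speckled proof of Theorem \ref{SgtPepper}), replacing Fourier analysis — unavailable on a nonabelian group — with the $TT^*$ method. First I would reduce the a.e.\ convergence statement to a combination of (i) $L^2$ norm convergence on a dense subclass and (ii) an $L^2$ maximal inequality, via the standard Banach principle. For norm convergence, note that the random averages $A_N^{(\omega)}f$ are asymptotically comparable to the genuine group averages $A_Nf$ of Theorem \ref{K}: writing $A_N^{(\omega)}f = N^{\alpha-d}\sum_{g\in\A^N}\xi_g(\omega)f(T_gx)$ and using that $N^{\alpha-d}\sum_{g\in\A^N}\xi_g\to C$ a.s.\ (the set $\Omega_1$), one has $A_N^{(\omega)}f\to C\cdot(\text{invariant projection of }f)$ in $L^2$. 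The cleanest route is to show $\|A_N^{(\omega)}f - A_N^{(\omega')}f\|_2\to0$-type Cauchy estimates, or more directly to estimate the difference $A_N^{(\omega)} - (\text{scalar})A_N$ in operator norm on a dense class (say eigenfunctions, or coboundaries plus invariants), using a variance computation: $\E\|A_N^{(\omega)}f - (\text{mean averaged version})\|_2^2$ is small because the $\xi_g$ are independent, so $\sum_N$ of these along a lacunary sequence converges, and Borel--Cantelli plus interpolation along the lacunary scale gives full convergence.

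The heart of the argument is the $L^2$ maximal inequality, uniform in the action, holding for a.e.\ $\omega$. By Calder\'on transference (the analogue of Lemma \ref{transfer} on $G$), it suffices to prove a maximal inequality for the convolution operators $\varphi\mapsto \varphi\ast\sigma_N^{(\omega)}$ on $\ell^2(G)$, where $\sigma_N^{(\omega)}=N^{\alpha-d}\sum_{g\in\A^N}\xi_g(\omega)\delta_g$. As in the speckled $\Z^d$ case, I would split $\sigma_N^{(\omega)} = \tau_N^{(\omega)} + (\sigma_N^{(\omega)} - \tau_N^{(\omega)})$ where $\tau_N^{(\omega)}$ is the expectation-normalized piece $N^{\alpha-d}\sum_{g\in\A^N}\rho(g,e)^{-\alpha}\delta_g$, an honest (non-random) averaging kernel dominated by a constant multiple of $\frac1{\#\A^N}\1_{\A^N}$; its maximal function is therefore weak-$(1,1)$, hence strong $L^2$, by Theorem \ref{K2}. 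For the mean-zero random part $\nu_N^{(\omega)}:=\sigma_N^{(\omega)}-\tau_N^{(\omega)}$, bound the maximal function by the $\ell^2(N)$ square function $\big(\sum_N \|\varphi\ast\nu_N^{(\omega)}\|_2^2\big)^{1/2}$ and apply $TT^*$: $\|\varphi\ast\nu_N^{(\omega)}\|_2^2 = \langle \varphi\ast\nu_N^{(\omega)}\ast\tilde\nu_N^{(\omega)},\varphi\rangle \le \|\nu_N^{(\omega)}\ast\tilde\nu_N^{(\omega)}\|_{\ell^1(G)}\|\varphi\|_2^2$. So everything reduces to showing that, with probability $1$, $\sum_N \|\nu_N^{(\omega)}\ast\tilde\nu_N^{(\omega)}\|_{\ell^1(G)}<\infty$ (or is summable along a lacunary subsequence, which suffices by monotonicity of the relevant operators up to constants). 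This is the nonabelian replacement for Lemma \ref{speckled}: a second-moment computation shows $\E\|\nu_N^{(\omega)}\ast\tilde\nu_N^{(\omega)}\|_{\ell^1}$ is governed by $\#\{(g,h)\in\A^N\times\A^N: g^{-1}h = k\}$ summed against the variance weights $\rho(g,e)^{-\alpha}$; using the polynomial growth bound \eqref{Bass} and $\alpha<d$ one gets $\E\|\nu_N^{(\omega)}\ast\tilde\nu_N^{(\omega)}\|_{\ell^1}\lesssim N^{2(\alpha-d)}\cdot\#\A^N \cdot (\text{diam}) \lesssim N^{\alpha-d+O(1)}$, small enough that a lacunary sum converges, and Borel--Cantelli gives the a.s.\ statement. (Concentration — Bernstein/Azuma for the sum of independent bounded contributions — upgrades the first-moment bound to an almost-sure one with the right power, exactly as in \cite{LaVic1}.)

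The main obstacle I anticipate is the combinatorial/geometric estimate on the representation function $r_N(k):=\#\{(g,h)\in\A^N\times\A^N: g^{-1}h=k\}$ and, more precisely, controlling $\sum_k r_N(k)\,\rho(g,e)^{-\alpha}$-type weighted counts on a group where $\A^N$ need not be a nice convex set and where left/right multiplication interact non-trivially. On $\Z^d$ one has $r_N(k)\le\#\A^N$ trivially and the difference set has controlled size; on a virtually nilpotent group one must use \eqref{Bass} together with the near-invariance \eqref{slippery} from Pansu's theorem (Theorem \ref{Pansu}) to see that $\A^N(\A^N)^{-1}\subset\A^{2N}$ has size comparable to $\#\A^N$, so that the ``average number of representations'' $\frac{(\#\A^N)^2}{\#\A^{2N}}\approx \#\A^N$ still blows up polynomially and the variance weights $N^{2(\alpha-d)}$ beat it precisely when $\alpha<d$. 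Making the probabilistic deviation estimate uniform over all $k$ simultaneously (so that $\|\nu_N^{(\omega)}\ast\tilde\nu_N^{(\omega)}\|_{\ell^1}$, not just a single coordinate, is controlled) is the delicate point, and is handled by a union bound over the $\#\A^{2N}=O(N^d)$ relevant group elements against exponential tail bounds — polynomially many events, exponentially small failure probability, so Borel--Cantelli along a lacunary sequence of $N$ closes the argument.
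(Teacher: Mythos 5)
Your overall architecture matches the paper's: reduce to a slowly growing subsequence, compare $A_N^{(\omega)}$ with its expectation $\sigma_N$ (whose convergence follows from Theorem \ref{K}), transfer to $\ell^2(G)$ by the Calder\'on principle, and control the mean-zero random part by $TT^*$ plus Borel--Cantelli. But there is a genuine gap at the heart of the argument: you use only the \emph{first} convolution power, bounding $\|\varphi\ast\nu_N\|_2^2\le\|\nu_N\ast\tilde\nu_N\|_{\ell^1}\|\varphi\|_2^2$, and this cannot reach the full range $0<\alpha<d$. For $h\ne e$ the typical size of $\nu_N\ast\tilde\nu_N(h)$ is its standard deviation, of order $N^{2(\alpha-d)}\bigl(\sum_{g\in\A^N}\rho(g,e)^{-2\alpha}\bigr)^{1/2}$, and summing over the $\approx N^d$ points of $\A^{2N}$ gives $\|\nu_N\ast\tilde\nu_N\|_{\ell^1}\gtrsim N^{\alpha-d/2}$ when $\alpha<d/2$ and $\gtrsim N^{2\alpha-d}$ when $\alpha\ge d/2$; no concentration inequality can push the $\ell^1$ norm below the sum of these typical sizes, so your claimed bound $N^{\alpha-d+O(1)}$ is not attainable. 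Your scheme therefore proves the theorem only for $\alpha<d/2$ (compare the remark closing the proof of Theorem \ref{L1}, which notes that a single convolution product is only ``uniformly'' small in that range), whereas the statement covers all $\alpha<d$.

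The missing idea is to exploit high powers of $T^{*}T$: since $\|A\|_{op}=\|(A^{*}A)^M\|_{op}^{1/2M}$, one bounds $\|\nu_j\|_{op}\le\|(\tilde\nu_j\ast\nu_j)^M\|_{\ell^1}^{1/2M}$, estimates the $\ell^1$ norm by H\"older as $\|(\tilde\nu_j\ast\nu_j)^M\|_{\ell^2}(\#\A^{2MN_j})^{1/2}$, and controls the second moment by the combinatorial estimate $\E\|(\tilde X\ast X)^M\|_{\ell^2}^2\le C_M\bigl(\sum_{g}\Var X_g\bigr)^{2M}$, obtained by discarding terms in which some $X_g$ appears exactly once and classifying the survivors by set partitions of $\{1,\dots,4M\}$ with all blocks of size at least $2$ (Lemma \ref{patton}). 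This yields $\P\bigl(\|(\tilde\nu_j\ast\nu_j)^M\|_1>\lambda\bigr)\lesssim\lambda^{-2}N_j^{2M\alpha-d(2M-1)}$, and choosing $M$ so large that $2M(d-\alpha)>d$ makes the exponent negative for every $\alpha<d$; Borel--Cantelli along a superpolynomial subsequence then gives $\sum_j\|\nu_j^{(\omega)}\|_{op}^2<\infty$ almost surely. The rest of your outline (the sandwich reduction (\ref{rains}), the transference lemma, the treatment of the expectation piece via Theorem \ref{K}) is consistent with the paper.
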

\begin{thm}
\label{L1}
Let $G$ be a finitely generated group with polynomial growth of degree $d$, and $\A$ a finite symmetric generating set, and $0<\alpha<d/2$.  Then there exists $\Omega_3\subset\Omega$ with $\P(\Omega_3)=1$ such that for each $\omega\in\Omega_3$, $A_N^{(\omega)}f$ converges in $L^1$ and a.e. for every measure-preserving group action $(X,\F,m,\{T_g\})$ and every $f\in L^1(X,m)$.
\end{thm}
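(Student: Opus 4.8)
The plan is to run the proof of Theorem~\ref{SgtPepper} in the group setting, replacing every appeal to the Fourier transform (which is unavailable on a nonabelian $G$) by a direct combinatorial computation, and replacing Euclidean dyadic cubes by a Christ-type dyadic system on $G$. First I would dispose of the reductions. Since $N^{\alpha-d}\sum_{g\in\A^N}\xi_g(\omega)$ converges to a finite constant for $\omega\in\Omega_1$ (Theorem~\ref{Pansu} and the strong law), each $A_N^{(\omega)}$ is positive and bounded on $L^1$ by a constant $C_\omega$; combined with the $L^2$-norm convergence of Theorem~\ref{L2} and the fact that $L^2$-convergence implies $L^1$-convergence on a probability space, a truncation argument yields $L^1$-norm convergence for all $f\in L^1$. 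For the a.e.\ statement the dense class $L^2\cap L^\infty$ is handled by Theorem~\ref{L2}, so by the Banach principle it suffices to prove a weak-$(1,1)$ maximal inequality for $\sup_N A_N^{(\omega)}$. By positivity we may restrict to the scales $N=2^j$; since $A_{2^j}^{(\omega)}$ is, up to the dynamics, a convolution with $\sum_{i<j}2^{(j-i)(\alpha-d)}\mu_i^{(\omega)}$ --- a sum with geometrically decaying weights, as $\alpha<d$ --- where $\mu_i^{(\omega)}(g)=2^{i(\alpha-d)}\xi_g(\omega)$ on the dyadic shell $2^i\le\rho(g,e)<2^{i+1}$, it is enough to bound $\sup_i|\varphi\ast\mu_i^{(\omega)}|$. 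Finally, the Calder\'on transference principle (as in Lemma~\ref{transfer}) reduces this to the weak-$(1,1)$ inequality for convolution on $G$ itself; since $G$ with the word metric and counting measure is doubling by \eqref{Bass}, we may fix a system of dyadic ``cubes'' on $G$ in the sense of Christ, with the balls $\A^{2^i}$ in the role of Euclidean cubes of side $2^i$, and perform a Calder\'on--Zygmund decomposition $\varphi=g+b=g+\sum_{s,k}b_{s,k}$ at height $\asymp 1$.

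The heart of the matter is the group analogue of Lemma~\ref{speckled} (the instance of the general Lemma~\ref{cancels}): writing $\nu_i^{(\omega)}$ for $\mu_i^{(\omega)}$ minus its shell-supported mean, so that $\mu_i^{(\omega)}-\nu_i^{(\omega)}$ is pointwise comparable to a normalized ball indicator, and $\tilde\nu_i^{(\omega)}(g)=\nu_i^{(\omega)}(g^{-1})$, one must show that with probability $1$, for every $\epsilon>0$ there is $C_{\omega,\epsilon}$ with $\|\nu_i^{(\omega)}\ast\tilde\nu_i^{(\omega)}\|_{\ell^\infty(G\setminus\{e\})}\le C_{\omega,\epsilon}2^{(\alpha-\frac32 d+\epsilon)i}$ and $\nu_i^{(\omega)}\ast\tilde\nu_i^{(\omega)}(e)=\|\nu_i^{(\omega)}\|_{\ell^2(G)}^2\le C_\omega 2^{(\alpha-d)i}$. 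The diagonal estimate is the strong law. For the off-diagonal estimate, fix $g\neq e$ and observe that $\nu_i^{(\omega)}\ast\tilde\nu_i^{(\omega)}(g)=\sum_h\nu_i^{(\omega)}(gh)\nu_i^{(\omega)}(h)$ is a sum of independent mean-zero products (independent because $gh\neq h$), with $\lesssim 2^{di}$ non-vanishing terms by \eqref{Bass}, hence variance $\lesssim 2^{di}\bigl(2^{(\alpha-2d)i}\bigr)^2=2^{(2\alpha-3d)i}$; a Bernstein-type tail bound --- equivalently a sufficiently high moment, the order being chosen in terms of $d$ and $\epsilon$ --- together with a union bound over the $\lesssim 2^{di}$ elements $g$ with $\rho(g,e)<2^{i+1}$ and Borel--Cantelli in $i$ produces the stated bound with the loss $2^{\epsilon i}$. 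This is a statement about the number of representations $g=h_1h_2^{-1}$ with $h_1,h_2$ in the random set, and it is precisely what the Weil bound and the Fourier transform supply in the $\Z^d$ case.

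With the lemma in hand the rest follows the template of Theorem~\ref{SgtPepper}: the good part satisfies $\|g\ast\mu_i^{(\omega)}\|_\infty\le\|g\|_\infty\|\mu_i^{(\omega)}\|_1\lesssim 1$; each $b_s=\sum_k b_{s,k}$ is split at the height $2^{(d-\alpha)i}\asymp\#\supp\mu_i^{(\omega)}$ into a large part, which meets only a set of measure $\lesssim\|\varphi\|_1$ by the same support count, and a small part $B_s^{(i)}$ with $\|B_s^{(i)}\|_\infty\le 2^{(d-\alpha)i}$; the classical weak-$(1,1)$ maximal inequality for ball averages on $G$, \eqref{costello}, lets us replace $\mu_i^{(\omega)}$ by $\nu_i^{(\omega)}$; and after discarding the exceptional set $\bigcup_{s,k}Q_{s,k}^{\ast}$ it remains to bound $\#\{g:\sup_i|\sum_{s\le i}B_s^{(i)}\ast\nu_i^{(\omega)}(g)|>1\}$. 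Chebyshev together with the identity $\langle B_s^{(i)}\ast\nu_i^{(\omega)},B_t^{(i)}\ast\nu_i^{(\omega)}\rangle=\langle B_s^{(i)}\ast\nu_i^{(\omega)}\ast\tilde\nu_i^{(\omega)},B_t^{(i)}\rangle$ and the lemma, using $\|B_s^{(i)}\|_1\le 2^{di}$ on a single cube $\A^{2^i}$, gives the same final sum $\lesssim\sum_i\bigl(i\,2^{(\alpha-d/2+\epsilon)i}+1\bigr)\|b\|_1\lesssim\|\varphi\|_1$ as on $\Z^d$, convergent precisely when $\alpha<d/2-\epsilon$; letting $\epsilon\downarrow 0$ finishes the proof.

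The one genuinely new difficulty is the cancellation lemma of the second paragraph. On $\Z^d$ it is immediate from a random exponential-sum estimate, whereas here the off-identity decay of $\nu_i^{(\omega)}\ast\tilde\nu_i^{(\omega)}$ must be produced by concentration of measure applied to that quadratic form directly, and one has to check that the union bound over $g$ --- which ranges over a set of cardinality polynomial in $2^i$ --- can be paid for out of the $2^{\epsilon i}$ slack, which dictates how many moments (or how sharp a tail bound) are needed in terms of $d$ and $\epsilon$. By contrast, building the Christ-style dyadic cubes on $G$ and re-verifying the support-geometry estimates with $\A^{2^i}$ in place of Euclidean cubes is routine given only the doubling bound \eqref{Bass}, and the passage from the maximal inequality to a.e.\ and $L^1$ convergence is standard given Theorem~\ref{L2} and Theorem~\ref{K}.
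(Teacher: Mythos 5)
Your overall architecture is the paper's own: reduce to a weak-$(1,1)$ inequality on $\ell^1(G)$ via Lemma \ref{transfer}, build Christ's dyadic cubes on the doubling space $(G,\rho)$, run the Calder\'on--Zygmund decomposition with the extra splitting of each $b_{s,k}$ at height comparable to $\#\supp\mu_j$, dispose of $\mu_j-\nu_j$ by the ball-average maximal inequality (\ref{costello}), and control the remaining piece through the inner product $\langle B\ast\nu_j\ast\tilde\nu_j,B\rangle$ using a probabilistic bound on the autocorrelation $\nu_j\ast\tilde\nu_j$ away from $e$. (The paper packages the deterministic half of this as Proposition \ref{shine on}, with abstract hypotheses on $r_j=\#\supp\mu_j$ and the support radius $R_j$, and the probabilistic half as Lemma \ref{cancels} and Corollary \ref{bound}; your bounds $O(2^{(\alpha-d)j})$ at $e$ and $O(2^{(\alpha-\frac32 d+\epsilon)j})$ off $e$ agree with Corollary \ref{bound}.)

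The one genuine gap is in your concentration step, and it is exactly the point that needs care. For fixed $g\neq e$ the sum $\nu_j\ast\tilde\nu_j(g)=\sum_h\nu_j(gh)\nu_j(h)$ is \emph{not} a sum of independent random variables: each individual summand $Y_h=X_{gh}X_h$ has mean zero because $gh\neq h$ makes its two factors independent, but the family $\{Y_h\}_h$ is dependent --- $Y_h$ and $Y_{gh}$ share the factor $X_{gh}$, and in general the summands are chained along the orbits of $h\mapsto gh$. So a Bernstein or Chernoff bound cannot be applied to the sum as written. The paper's fix (Lemma \ref{cancels}) is to observe that the directed graph on $E\cap h^{-1}E$ with edges $(h,gh)$ is a disjoint union of paths and cycles, hence $3$-colorable; within each color class the $Y_h$ depend on pairwise disjoint sets of the $X$'s and are genuinely independent, so Chernoff applies to each of the three subsums, and the union bound over the polynomially many $g$ and over $j$ is then absorbed by the exponential tails exactly as you describe. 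Your parenthetical alternative (``a sufficiently high moment'') can also be made rigorous --- that is essentially Lemma \ref{patton}, which the paper uses for the $L^2$ theorem, and which pays for the dependence by enumerating set partitions of the index multiset --- but then the moment computation must explicitly account for coincidences among the indices $gh_i,h_i$, which your variance-then-tail-bound phrasing skips. Everything else (the dyadic cubes on $G$, the support-geometry estimates, and the passage to a.e.\ and norm convergence via the dense class supplied by Theorem \ref{L2}) is as in the paper.
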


\subsubsection{Proof of Theorem \ref{L2}}

\noindent The analogue of Theorem \ref{L2} was proved by Bourgain \cite{JB2} using the theory of exponential sums, and this technique extends to the natural analogues in $\Z^d$. However, on virtually nilpotent groups the Fourier transform is not so easy to work with, and so we will prove the $L^2$ theorem using the $TT^*$ method and a lemma from combinatorics.
\\
\\ It will suffice to prove convergence of the $A_N^{(\omega)}f$ along a suitable subsequence.  Indeed, fix an increasing sequence $\{N_j\}\subset\N$ such that $\frac{N_{j+1}}{N_j}\to1$. Then for any $f\geq0$ and $N_j\leq N\leq N_{j+1}$,
\begin{eqnarray}
\label{rains}
\left(\frac{N_j}{N_{j+1}}\right)^{d-\alpha}A_{N_j}^{(\omega)}f \leq A_N^{(\omega)}f \leq \left(\frac{N_{j+1}}{N_j}\right)^{d-\alpha}A_{N_{j+1}}^{(\omega)}f.
\end{eqnarray}
Then under the assumptions of Theorem \ref{L2}, it suffices to prove that $A_{N_j}^{(\omega)}f$ converges in $L^2$ and a.e. for all $f\in L^2(X)$. We may assume that $\{N_j\}$ is superpolynomial; i.e. $N_j\gg j^C$ for every $C\in\N$.
\\
\\ We will compare these random averages to their expected value, which is a weighted average of the standard ergodic averages. Define
\begin{eqnarray*}
\sigma_Nf(x):=\E_{\omega}A_N^{(\omega)}f(x)=N^{\alpha-d}\sum_{g\in\A^N}\rho(g,e)^{-\alpha}f(T_gx)=\sum_{n=0}^N a_{n,N}A_n f(x),
\end{eqnarray*}
where $a_{n,N}\geq0$, $\displaystyle\sum_{n=0}^Na_{n,N}=1$ for all $N$, and $\displaystyle \lim_{N\to\infty}a_{n,N}=0$ for all $n$.  Since $A_nf$ converges in $L^2$ and a.e. by Theorem K1, clearly $\sigma_Nf$ converges in $L^2$ and a.e. as well.
\\
\\ We will prove Theorem \ref{L2} by showing that there exists a set $\Omega_2\subset\Omega_1$ with $\P(\Omega_2)=1$ such that for every $\omega\in\Omega_2$,
\begin{eqnarray}
\label{hammer}
\|\sup_{j\geq k} |A_{N_j}^{(\omega)}f-\sigma_{N_j}f|\|_2\to0 \text{ as }k\to\infty\;\, \text {for all}\, f\in L^2(X),
\end{eqnarray}
which immediately implies $A_{N_j}^{(\omega)}f-\sigma_{N_j}f\to0$ in $L^2$ and a.e.
\\
\\ As in \cite{APET} and other papers, we hope to transfer the corresponding maximal inequality from the group algebra $\ell^p(G)$.  This Calder\'on transference principle is practically identical to the case $G=\Z$, but it is necessary to prove it in this general setting.
\begin{lem}
\label{transfer}
Let $G$ be a group with polynomial growth, and $(X,\F,m,\{T_g\})$ be a measure-preserving group action; let $\{a_{g,j}\}\subset\mathbb{C}$ such that $\sum_{g\in G} |a_{g,j}|<\infty\;\, \text {for all}\, j$. Set $A_jf=\sum_{g\in G} a_{g,j}T_gf$ and $\mu_j=\sum_{g\in G} a_{g,j}\delta_g$.
\\
\\ For any $1\leq p\leq\infty$, if $\|\sup_j |\psi\ast\mu_j|\|_p\leq C_0\|\psi\|_p\;\, \text {for all}\,\psi\in\ell^p(G)$, then $\|\sup_j |A_jf|\|_p\leq C_0\|f\|_p\;\, \text {for all}\, f\in L^p(X)$;
\\
\\ if instead $\|\sup_j |\psi\ast\mu_j|\|_{p,\infty}\leq C_0\|\psi\|_p\;\, \text {for all}\,\psi\in\ell^p(G)$, then $\|\sup_j |A_jf|\|_{p,\infty}\leq C_0\|f\|_p\;\, \text {for all}\, f\in L^p(X)$.
\end{lem}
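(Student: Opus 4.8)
The plan is to adapt the classical Calder\'on transference argument to the group setting; the two places where non-abelianness intrudes are that one must keep track of left versus right translates, and that the role played by the nested intervals $[1,N]\subset\Z$ is now played by the balls $\A^N$, whose F\o lner property (\ref{slippery}) and the sharper asymptotic $\#\A^{N-R}/\#\A^N\to1$ for fixed $R$ (from Theorem \ref{Pansu}, since $\#\A^N/N^d$ converges to a positive limit) supply exactly what the real line gave for free. The case $p=\infty$ is immediate and needs no F\o lner input: for $f\in L^\infty(X)$ and a.e. $x$, set $\psi_x(g)=f(T_gx)$, so $\psi_x\in\ell^\infty(G)$ with $\|\psi_x\|_\infty\le\|f\|_\infty$; since $\mu_j\in\ell^1(G)$ the convolution converges absolutely, and with the convolution normalized to match the action one has $(\psi_x\ast\mu_j)(e)=A_jf(x)$ for every $j$, whence $\sup_j|A_jf(x)|\le\|\sup_j|\psi_x\ast\mu_j|\|_{\ell^\infty(G)}\le C_0\|f\|_\infty$ (and the analogous pointwise bound on $\lambda\cdot\1\{\sup_j|A_jf(x)|>\lambda\}$ for the weak statement); taking the essential supremum in $x$ closes this case.

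For $1\le p<\infty$ I would proceed in three reductions. \emph{(i)} Since $\sup_j|\cdot|$ is the increasing limit of the finite maxima $\sup_{j\le M}|\cdot|$, monotone convergence reduces both sides to a fixed $M$, and the hypothesis passes to $\sup_{j\le M}$ with the same constant $C_0$. \emph{(ii)} Given $M$ and $\varepsilon>0$, choose $R$ with $\sum_{j\le M}\sum_{g\notin\A^R}|a_{g,j}|<\varepsilon$ and truncate: $\mu_j^R=\sum_{g\in\A^R}a_{g,j}\delta_g$, $A_j^Rf=\sum_{g\in\A^R}a_{g,j}T_gf$. Because $\|\psi\ast\delta_g\|_p=\|\psi\|_p$ and $\|T_gf\|_p=\|f\|_p$, the triangle inequality together with the $\ell^1$ tail bound gives $\|\sup_{j\le M}|\psi\ast\mu_j^R|\|_p\le(C_0+\varepsilon)\|\psi\|_p$ and $\|\sup_{j\le M}|A_jf-A_j^Rf|\|_p\le\varepsilon\|f\|_p$. \emph{(iii)} Now transfer the inequality for the finitely supported $\mu_j^R$: for $f\in L^p(X)$ and $N$ large put $\psi_x^N(g)=\1_{\A^N}(g)f(T_gx)$; for $g\in\A^{N-R}$ one checks that $\supp(\mu_j^R)\,g\subseteq\A^N$, so $(\psi_x^N\ast\mu_j^R)(g)=A_j^Rf(T_gx)$ for all $j\le M$ --- this identity is the single step where the left/right bookkeeping must actually be carried out. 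Consequently $\sum_{g\in\A^{N-R}}\big|\sup_{j\le M}|A_j^Rf(T_gx)|\big|^p\le\|\sup_{j\le M}|\psi_x^N\ast\mu_j^R|\|_{\ell^p(G)}^p\le(C_0+\varepsilon)^p\sum_{g\in\A^N}|f(T_gx)|^p$; integrating in $x$ and using $\int_X|f(T_gx)|^p\,dm=\|f\|_p^p$ for every $g$ yields $\#\A^{N-R}\,\|\sup_{j\le M}|A_j^Rf|\|_p^p\le(C_0+\varepsilon)^p\,\#\A^N\,\|f\|_p^p$, and letting $N\to\infty$ gives $\|\sup_{j\le M}|A_j^Rf|\|_p\le(C_0+\varepsilon)\|f\|_p$. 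Combining with \emph{(ii)} and sending $\varepsilon\to0$ then $M\to\infty$ proves the strong case.

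The weak $(p,\infty)$ statement follows the same scheme, the transference step in \emph{(iii)} now reading $\#\A^{N-R}\,m\{\sup_{j\le M}|A_j^Rf|>\lambda\}\le(C_0+\varepsilon)^p\lambda^{-p}\#\A^N\|f\|_p^p$. The only extra subtlety is that $\|\cdot\|_{p,\infty}$ is not subadditive, so one cannot simply add the error from \emph{(ii)}; instead one argues with distribution functions, writing $\{\sup_{j\le M}|A_jf|>\lambda\}\subseteq\{\sup_{j\le M}|A_j^Rf|>\lambda-\sqrt\varepsilon\}\cup\{\sup_{j\le M}|A_jf-A_j^Rf|>\sqrt\varepsilon\}$, bounding the second set by Chebyshev using the $L^p$ estimate from \emph{(ii)}, and letting $\varepsilon\to0$ only at the very end, which recovers the sharp constant $C_0$. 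The one genuinely non-routine point in the whole argument is the comparability $\#\A^{N-R}/\#\A^N\to1$ used to pass from the finite-$N$ inequality to the limit; this is exactly where polynomial growth enters, through Theorem \ref{Pansu} (or already through the F\o lner property), and no finer information about $G$ is needed.
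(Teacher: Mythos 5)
Your proof is correct and is essentially the paper's own argument: the classical Calder\'on transference, restricting to finitely many $j$, pulling the orbit of $f$ back to a function on a large ball, applying the $\ell^p(G)$ hypothesis, integrating in $x$, and using the volume comparability $\#\A^{N-R}/\#\A^N\to1$ (the paper uses $\#(\A^K+\Eset)/\#\A^K\to1$ via (\ref{slippery})) to recover the constant $C_0$ in the limit. The only difference is that you justify in detail two reductions the paper merely asserts --- the truncation to finitely supported $\mu_j$ and the resulting non-subadditivity issue in the weak-type case --- which is harmless.
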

\begin{proof}
We first consider the strong maximal inequality.  It is enough to show that $\|\sup_{1\leq j\leq J} |A_jf|\|_p\leq C_0\|f\|_p$ for all $f\in L^p(X)$, for each fixed $J\in\N$.  We may further assume that the supports of the $\mu_j$ are finite, and let $\Eset:=\bigcup_{j=1}^J \text{ supp }\mu_j$.  Take a finite symmetric set $\A$ that generates $G$, and the sets $\A^N$ defined in Section \ref{nomad}.  Fix $x\in X$ and a large finite $K\in\N$, and define $\varphi$ on $G$ by $\varphi(g)=\left\{ \begin{array}{ll} f(T_{g^{-1}}x) & \mbox{if} \; g^{-1}\in\A^K+\Eset,\\
0 & \mbox{otherwise.}  \end{array}\right.$
\\ Then $A_jf(T_gx)=\varphi\ast\mu_j(g^{-1})$ for all $g\in \A^K$ and all $j\leq J$.  This completes the proof for $p=\infty$; for $p<\infty,$
\begin{eqnarray*}
\sum_{g\in\A^K} \sup_{1\leq j\leq J} |A_jf(T_gx)|^p &=& \sum_{g\in\A^K} \sup_{1\leq j\leq J}|\varphi\ast\mu_j(g^{-1})|_p
\leq\|\sup_{k\leq j\leq J} |\varphi\ast\mu_j|\|^p_p\\
&\leq& C_0^p\|\varphi\|^p_p\\
&=&C_0^p\sum_{g\in\A^K+\Eset}|f(T_gx)|^p.
\end{eqnarray*}
Integrating over $x\in X$,
\begin{eqnarray*}
\|\sup_{1\leq j\leq J}|A_jf|\|^p_p\leq C_0^p\frac{\#(\A^K+\Eset)}{\#\A^K}\|f\|^p_p;
\end{eqnarray*}
we let $K\to\infty$ and note that (\ref{slippery}) implies (with $C_0$ independent of $J$)
\begin{eqnarray*}
\|\sup_{1\leq j\leq J}|A_jf|\|_p\leq C_0\|f\|_p.
\end{eqnarray*}
For the weak inequality, we similarly derive
\begin{eqnarray*}
\lambda^p\#\{g\in\A^K: \sup_{1\leq j\leq J}|A_jf(T_gx)|>\lambda\}\leq C_0^p\|\varphi\|^p_p
\end{eqnarray*}
and integrate this in the same manner.
\end{proof}
\noindent \textbf{Proof of Theorem \ref{L2} (Continued):} We will transfer this problem to $\ell^2(G)$ using Lemma \ref{transfer}.  Let $\eta_g(\omega)=\xi_g(\omega)-\rho(g,e)^{-\alpha}$; these are independent mean 0 Bernoulli variables.  Define for each $j$ the random measures
\begin{eqnarray}
\nu_j^{(\omega)}(g)&=&\left\{\begin{array}{ll} N_j^{\alpha-d}\eta_{g}(\omega), & g\in\A^{N_j} \\ 0, & g\not\in\A^{N_j}\end{array}\right.
\end{eqnarray}
Then for $\varphi\in \ell^p(G)$, we have the random averages $\varphi\ast\nu_j^{(\omega)}(h)=N_j^{\alpha-d}\sum_{g\in\A^{N_j}}\xi_g(\omega)\varphi(hg^{-1})$, which correspond to the operators $A_{N_j}^{(\omega)}-\sigma_{N_j}$ in the sense above. Theorem \ref{L2} therefore reduces to verifying that with probability 1 in $\Omega$, there is a sequence $C_{k,\omega}\to0$ such that

\begin{eqnarray}
\label{nail}
\|\sup_{j\geq k} |\psi\ast\nu_j^{(\omega)}|\|_2 \leq C_{k,\omega}\|\psi\|_2\; \, \text {for all}\, \psi\in\ell^2(G).
\end{eqnarray}
Since $\|\sup_{j\geq k} |\psi\ast\nu_j^{(\omega)}|\|_2^2\leq\|\sum_{j\geq k} |\psi\ast\nu_j^{(\omega)}|\|_2^2=\sum_{j\geq k} \|\psi\ast\nu_j^{(\omega)}\|_2^2$, it clearly suffices to prove that
\begin{eqnarray*}
\sum_{j=1}^\infty \|\nu_j^{(\omega)}\|^2_{op}\leq\infty,
\end{eqnarray*}
where $\|\cdot\|_{op}$ is the norm of the convolution operator on $\ell^2(G)$.
\\
\\ Since in this context we do not have the Fourier transform to help us, we will use a different Hilbert space technique: the $TT^*$ method from harmonic analysis. 
\\
\\For any operator $A$ on the Hilbert space $\ell^2(G)$, the operator norm $\|A\|=\|A^*A\|^{1/2}=\|(A^*A)^M\|^{1/2M}$; for the convolution operator $Af=\mu\ast f$, the adjoint operator is simply $A^*f=\tilde\mu\ast f$ for $\tilde\mu(g):=\overline{\mu(g^{-1})}$ ($G$ is discrete, thus unimodular).  Thus we have the trivial bound $\|A\|_{op}\leq\|(\tilde\mu\ast\mu)^M\|_{op}^{1/2M}\leq\|(\tilde\mu\ast\mu)^M\|_{\ell^1}^{1/2M}$, and thus any cancellation in the convolution products will make itself known in the original operator norm. (Here and in what follows, we use $\mu^n$ to denote the $n$-fold convolution product $\mu\ast\mu\ast\dots\ast\mu$.)
\\
\\ The cancellation in this convolution product can be described in terms of additive combinatorics on $G$: if we take a random subset $E\subset\A^N$ with size $\gg(\#\A^N)^{1/2M}$, then the number of ways to write any element of $\A^{2MN}$ as a product $g_1g_2^{-1}\dots g_{2M-1}g_{2M}^{-1}$ with all $g_i\in E$ should mostly be quite close to the ``average'' number of ways to do so. The quantitative version of this is as follows:
\begin{lem}
\label{patton}
Let $G$ be a group and $E$ a finite subset.  Let $\{X_g\}_{g\in E}$ be independent random variables with $|X_g|\leq1$ and $\E X_g=0$. Assume that $\sum_{g\in E} \Var X_g \geq1$.  Let $X$ be the random $\ell^1(G)$ function $\sum_{g\in E} X_g\delta_g$.  Then $\E \|(\tilde X\ast X)^M\|_{\ell^2}^2\leq C_M (\sum_{g\in E} \Var X_g)^{2M}$, where $C_M$ depends only on $M$.
\end{lem}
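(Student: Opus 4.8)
The plan is to expand $\E\|(\tilde X\ast X)^M\|_{\ell^2}^2$ completely and apply the second-moment (moment) method. Throughout write $V:=\sum_{g\in E}\Var X_g$, so $V\geq1$ by hypothesis. Unwinding the convolutions on $G$ (recall $\tilde X(g)=\overline{X(g^{-1})}$), one finds
\begin{equation*}
(\tilde X\ast X)^M(h)=\sum_{\substack{(g_1,\dots,g_{2M})\in E^{2M}\\ g_1^{-1}g_2g_3^{-1}g_4\cdots g_{2M-1}^{-1}g_{2M}=h}}\overline{X_{g_1}}X_{g_2}\overline{X_{g_3}}X_{g_4}\cdots\overline{X_{g_{2M-1}}}X_{g_{2M}},
\end{equation*}
and hence, multiplying by the complex conjugate and summing over $h\in G$,
\begin{equation*}
\|(\tilde X\ast X)^M\|_{\ell^2}^2=\sum_{\substack{(\vec g,\vec g')\in E^{2M}\times E^{2M}\\ w(\vec g)=w(\vec g')}}Y(\vec g,\vec g'),
\end{equation*}
where $w$ is the alternating word map $w(g_1,\dots,g_{2M})=g_1^{-1}g_2\cdots g_{2M-1}^{-1}g_{2M}$ and $Y(\vec g,\vec g')$ is a product of exactly $4M$ factors, each equal to $X_g$ or $\overline{X_g}$ for one of the $4M$ coordinates $g$ of the tuple $(\vec g,\vec g')$.

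Next I take expectations term by term. Since the $\{X_g\}_{g\in E}$ are independent with $\E X_g=0$, the expectation $\E\,Y(\vec g,\vec g')$ vanishes unless every element of $E$ occurring among the $4M$ coordinates of $(\vec g,\vec g')$ occurs at least twice; and when it does, independence together with $\E|X_g|^k\leq\E|X_g|^2=\Var X_g$ for $k\geq2$ (valid because $|X_g|\leq1$) gives $|\E\,Y(\vec g,\vec g')|\leq\prod_g\Var X_g$, the product running over the distinct elements of $E$ that occur. Now group the surviving pairs $(\vec g,\vec g')$ according to their \emph{value-partition} $\pi$, namely the partition of the $4M$ coordinate positions into the classes of positions carrying equal elements of $E$. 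Each block of $\pi$ has size $\geq2$, so $\pi$ has $r=r(\pi)\leq 2M$ blocks, and summing over pairs with value-partition $\pi$ amounts to summing over injective labelings of the $r$ blocks of $\pi$ by elements of $E$. Discarding the constraint $w(\vec g)=w(\vec g')$, we bound
\begin{equation*}
\sum_{(\vec g,\vec g')\ \text{with value-partition}\ \pi}\bigl|\E\,Y(\vec g,\vec g')\bigr|\ \leq\ \sum_{h_1,\dots,h_r\in E}\ \prod_{i=1}^r\Var X_{h_i}\ =\ V^{r}\ \leq\ V^{2M},
\end{equation*}
the last inequality because $V\geq1$.

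Finally, the number of partitions of a $4M$-element set into blocks of size $\geq2$ is a finite quantity depending only on $M$; summing the previous display over these partitions yields
\begin{equation*}
\E\|(\tilde X\ast X)^M\|_{\ell^2}^2\ \leq\ C_M\,V^{2M}\ =\ C_M\Bigl(\sum_{g\in E}\Var X_g\Bigr)^{2M},
\end{equation*}
with $C_M$ depending only on $M$, as claimed. The only real care is in the combinatorial expansion — tracking the alternating $\tilde X,X$ pattern and, in the nonabelian setting, the word constraint $w(\vec g)=w(\vec g')$; but since only an upper bound is wanted, that constraint can simply be dropped, and what remains is the standard graph/partition-counting estimate underlying the $TT^*$ method. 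No genuine analytic or group-theoretic difficulty arises.
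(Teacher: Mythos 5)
Your proposal is correct and follows essentially the same route as the paper: expand $\E\|(\tilde X\ast X)^M\|_{\ell^2}^2$ into a sum over $4M$-tuples, kill terms containing a singleton via independence and mean zero, classify survivors by set partitions with blocks of size $\geq2$, bound each class by $V^{r}\leq V^{2M}$ using $\E|X_g|^k\leq\Var X_g$ (from $|X_g|\leq1$) and $V\geq1$, and sum over the $C_M$ partitions. The word constraint is likewise discarded in the paper's estimate, so there is no substantive difference.
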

\begin{proof}
\begin{eqnarray*}
\E(\|(\tilde X\ast X)^M\|_{\ell^2}^2) & = & \E \sum_{g\in G}\left(\sum_{\scriptsize \begin{array}{c} g_1^{\,} h_1^{-1}\dots g_M^{\,}h_M^{-1}=g \\ g_i^{\,},h_i^{\,}\in E \end{array}}  X_{g_1^{\,}} X_{h_1^{\,}}\dots X_{g_M^{\,}} X_{h_M^{\,}} \right)^2\\
& = & \sum_{\scriptsize \begin{array}{c} g_1^{\null}h_1^{-1}\dots g_M^{\,}h_M^{-1}=g_{M+1}^{\null} h_{M+1}^{-1}\dots g_{2M}^{\,}h_{2M}^{-1}\\ g_i^{\,},h_i^{\,}\in E\end{array}}\E( X_{g_1^{\,}} X_{h_1^{\,}}\dots X_{g_{2M}^{\,}} X_{h_{2M}^{\,}})
\end{eqnarray*}
For any of these terms, if some $g\in  E$ appears exactly once among the $g_i$ and $h_j$, the expectation of the term will equal 0 by the independence of the $ X_g$.  Therefore we can sort the remaining terms based on the equalities between various $g_i$ and $h_{j}$; namely, in correspondence with the set partitions of $\{1,\dots,4M\}$ in which each component has size $\geq 2$.  Let there be $C_M$ of these.  For a fixed partition $\Lambda=(\lambda_1,\dots,\lambda_q)$, we can majorize the sum
\begin{eqnarray*}
\sum_{\scriptsize \begin{array}{c} (g_1^{\,},\dots,g_{2M}^{\,},h_1^{\,},\dots,h_{2M}^{\,})\in\Lambda \\ g_i^{\,},h_i^{\,}\in E \end{array}}\E(X_{g_1^{\,}}\dots X_{g_{2M}^{\,}} X_{h_1^{\,}}\dots X_{h_{2M}^{\,}})&\leq& \sum_{g_1^{\,},\dots,g_q^{\,}\in E\text{ distinct}}\E(| X_{g_1^{\,}}|^{|\lambda_1|})\dots\E(| X_{g_q^{\,}}|^{|\lambda_q|})\\
&\leq& \sum_{g_1^{\,},\dots,g_q^{\,}\in E}\E X_{g_1^{\null}}^2\dots\E X^2_{g_q^{\null}}\\
&=&(\sum_{g\in E} \Var X_g)^q\leq(\sum_{g\in E}\Var X_g)^{2M}
\end{eqnarray*}
since $\E |X_g|^p\leq\|X_g\|_\infty^{p-2}\E X_g^2\leq \E X_g^2$ for $p>2$, $\sum_{g\in E} \Var X_g\geq1$ and $q\leq 2M$.
\\ \\
Thus $\E(\|(\tilde X\ast X)^M\|_{\ell^2}^2)\leq C_M(\sum_{g\in E} \Var X_g)^{2M}.$
\end{proof}

\noindent \textbf{Proof of Theorem \ref{L2} (Conclusion):} Now by H\"older's Inequality and the fact that $\tilde\nu_j\ast\nu_j)^M$ is supported on $\A^{2MN_j}$,
\begin{eqnarray*}
\|(\tilde\nu_j\ast\nu_j)^M\|_1 &\leq& \|(\tilde\nu_j\ast\nu_j)^M\|_2(\#\A^{2MN_j})^{1/2}\leq \|(\tilde\nu_j\ast\nu_j)^M\|_2 C(2MN_j)^{d/2}.
\end{eqnarray*}
By Lemma \ref{patton}, since $\Var \eta_g\leq \rho(g,e)^{\alpha}$,
\begin{eqnarray*}
\E(\|(\tilde\nu_j^{(\omega)}\ast\nu_j^{(\omega)})^M\|_{\ell^2}^2)\leq N_j^{4M(\alpha-d)}\cdot C_M (\sum_{g\in \A^{N_j}} \Var \eta_g)^{2M}\leq C_{d,\alpha, M}N_j^{2M(\alpha-d)}
\end{eqnarray*}
and therefore by Chebyshev's Inequality,
\begin{eqnarray*}
\P(\|(\tilde\nu_j^{(\omega)}\ast\nu_j^{(\omega)})^M\|_1>\lambda)& \leq&\P\left( \|(\tilde\nu_j^{(\omega)}\ast\nu_j^{(\omega)})^M\|_2^2 C^2(2MN_j)^{d}>\lambda^2\right)\\ &\leq& C\lambda^{-2}M^dN_j^d\cdot\E(\|(\tilde\nu_j^{(\omega)}\ast\nu_j^{(\omega)})^M\|_{\ell^2}^2)\\
&\leq & C_{d,\alpha,M} \,\lambda^{-2}N_j^{2M\alpha-d(2M-1)}.
\end{eqnarray*}
As $\alpha<d$, take $M,\delta>0$ such that $d(2M-1)>2M\alpha +\delta$.  Take $\lambda=j^{-M(1+\epsilon)}$; since $N_j^\delta$ is superpolynomial, $\sum_j j^{2M(1+\epsilon)}N_j^{-\delta}<\infty$ so by the Borel-Cantelli Lemma, there is a set $\Omega_2\subset\Omega_1$ of probability 1 on which $\|(\tilde\nu_j^{(\omega)}\ast\nu_j^{(\omega)})^M\|_1<C_\omega j^{-M(1+\epsilon)}\,\, \text {for all}\, j$ and thus $\sum_{j=1}^\infty \|\nu_j^{(\omega)}\|_{op}^2\leq C_\omega \sum_{j=1}^\infty j^{-1-\epsilon}<\infty$.  This completes the proof of Theorem \ref{L2}.

\subsubsection{Proof of Theorem \ref{L1}}
By Theorem \ref{L2}, for $\omega\in\Omega_2$ we have a.e. convergence of $A_N^{(\omega)}f$ for $f\in L^2(X)$, which is dense in $L^1(X)$.  We therefore need only a weak type maximal inequality to prove Theorem \ref{L1}.  As usual, it is enough to consider the dyadic subsequence $2^j$.  Now for $f\geq0$, $0\leq A^{(\omega)}_{N}f \lesssim A^{(\omega)}_{2^{j+1}}f$ for $2^j\leq N<2^{j+1}$, so it suffices to prove
\begin{eqnarray}
\|\sup_j |A^{(\omega)}_{2^j}f|\|_{1,\infty}\leq C\|f\|_1 \;\, \text {for all}\, f\in L^1(X).
\end{eqnarray}
Again, we will use Lemma \ref{transfer} to transfer this maximal inequality from $\ell^1(G)$.  Let
\begin{eqnarray*}
\mu_j^{(\omega)}(g)&:=&\left\{\begin{array}{ll} 2^{(\alpha-d)j}\xi_{g}(\omega), & g\in\A^{2^j} \\ 0, & g\not\in\A^{2^j}\end{array}\right.\\
\E\mu_j(g)&:=&\left\{\begin{array}{ll} 2^{(\alpha-d)j}\E\xi_{g}, & g\in\A^{2^j} \\ 0, & g\not\in\A^{2^j}\end{array}\right.\\
\nu_j^{(\omega)}(g)&:=&\mu_j^{(\omega)}(g)-\E\mu_j^{(\omega)}(g);
\end{eqnarray*}
$\mu_j^{(\omega)}$ and $\E\mu_j$ correspond to the operators $A_{2^j}^{(\omega)}$ and $\sigma_{2^j}$, respectively.  Theorem \ref{L1} reduces to proving
\begin{eqnarray}
\| \sup_j | \varphi\ast\mu_{j}^{(\omega)} |\|_{1,\infty}\leq C_\omega\| \varphi\|_1.
\end{eqnarray}

\begin{prop}
\label{shine on}
Let $\mu_j$ and $\nu_j$ be sequences of functions in $\ell^1(G)$, where $G$ has polynomial growth of degree $d$.  Let $r_j:=\#\{g:\mu_j(g)\neq0\}$ and take $R_j:=\inf\{R>0: \nu_j(g)\neq0\implies \rho(g,e)\leq R\}$.  Assume there exists $C_0<\infty$ such that $\sum_{j\leq k} r_j\leq C_0r_k\;\, \text {for all}\, k\in\N$, and that
\begin{eqnarray}
\label{convo}
\nu_j\ast\tilde\nu_j=O(r_j^{-1})\delta_e+O(R_j^{-d-\epsilon})\text{ for some }\epsilon>0.
\end{eqnarray}
If $\, \text {for all}\, \varphi$, $\| \displaystyle\sup_j \varphi\ast |\mu_j-\nu_j|\|_{1,\infty}\leq C\|\varphi\|_1$ and $\| \displaystyle\sup_j|\varphi\ast\mu_j|\|_{p,\infty}\leq C_p\|\varphi\|_p$ for some $1<p\leq\infty,$ then
\begin{eqnarray}
\label{sup}
\|\sup_j|\varphi\ast\mu_j|\|_{1,\infty}\leq C'\|\varphi\|_1 \;\, \text {for all}\,\varphi\in\ell^1(G).
\end{eqnarray}
\end{prop}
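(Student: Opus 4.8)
The plan is to run the Fefferman--Christ / Calder\'on--Zygmund argument directly on $G$, following the template of the proof of Theorem~\ref{SgtPepper} but with dyadic cubes in $\Z^d$ replaced by a Christ-type dyadic decomposition of $G$: a nested family of ``cubes'' $Q$, one scale $2^s$ for each $s\in\N$, with $\#Q\asymp 2^{ds}$ and each $Q$ comparable to a ball $\A^{2^s}$, which exists because $G$ has polynomial growth $d$ by \eqref{Bass}. Fix $\varphi\in\ell^1(G)$; by homogeneity it suffices to prove \eqref{sup} for one fixed threshold $\lambda$. Apply the discrete Calder\'on--Zygmund decomposition at height $\asymp\lambda$ to write $\varphi=g+b$ with $b=\sum_{s,k}b_{s,k}$, where $\|g\|_\infty\lesssim\lambda$, each $b_{s,k}$ is supported on a cube $Q_{s,k}$ of scale $2^s$, the $Q_{s,k}$ are pairwise disjoint over all $(s,k)$, $\|b_{s,k}\|_1\lesssim\#Q_{s,k}$, $\sum_{s,k}\#Q_{s,k}\lesssim\|\varphi\|_1$, and the localized estimate $\sum_{(s,k):\,Q_{s,k}\subseteq Q}\|b_{s,k}\|_1\lesssim\#Q$ holds for every cube $Q$ of scale $\ge 2^s$.

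The good part is disposed of by the $L^p$ hypothesis: since $\|g\|_p\le\|g\|_\infty^{1-1/p}\|g\|_1^{1/p}\lesssim\|\varphi\|_1^{1/p}$, Chebyshev together with $\|\sup_j|g\ast\mu_j|\|_{p,\infty}\le C_p\|g\|_p$ gives $\#\{\sup_j|g\ast\mu_j|>c\lambda\}\lesssim\|\varphi\|_1$ (for $p=\infty$ one instead simply takes $\lambda$ larger than $C_\infty\|g\|_\infty$; this is the sole role of the $L^p$ hypothesis, subsuming the use of $\|\mu_j\|_1\le C$ in the $\Z^d$ case). For the bad part set $b_s:=\sum_k b_{s,k}$ and split it by height at $r_j$: $b_s^{(j)}:=b_s\mathbf{1}_{|b_s|>r_j}$ and $B_s^{(j)}:=b_s-b_s^{(j)}$. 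The contribution of $b^{(j)}:=\sum_s b_s^{(j)}$ is, for each $j$, supported on a set of cardinality at most $\#(\supp b^{(j)})\cdot\#(\supp\mu_j)=r_j\cdot\#\{|b|>r_j\}$; summing in $j$ and reversing the order of summation, $\sum_j r_j\,\#\{|b|>r_j\}=\sum_g\sum_{j:\,r_j<|b(g)|}r_j\le C_0\sum_g|b(g)|=C_0\|b\|_1\lesssim\|\varphi\|_1$, where $\sum_{j:\,r_j<v}r_j\le C_0v$ follows from $\sum_{j\le k}r_j\le C_0r_k$. Next, $\sup_j|\sum_s B_s^{(j)}\ast(\mu_j-\nu_j)|\le\sup_j|b|\ast|\mu_j-\nu_j|$ has the desired weak $(1,1)$ bound by hypothesis, so $\mu_j$ may be replaced by $\nu_j$. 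Finally $b_{s,k}\ast\nu_j$ is supported within distance $R_j$ of $Q_{s,k}$, hence inside a fixed enlargement $Q_{s,k}^{*}$ of $Q_{s,k}$ whenever $2^s\ge R_j$; placing all such pieces into $E=\bigcup_{s,k}Q_{s,k}^{*}$ with $\#E\lesssim\|\varphi\|_1$, it remains to bound $\#\{\sup_j|\sum_{s:\,2^s<R_j}B_s^{(j)}\ast\nu_j|>c\lambda\}$.

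For this last quantity I use an $\ell^2$ bound: by Chebyshev it is $\lesssim\sum_j\|\sum_{s:\,2^s<R_j}B_s^{(j)}\ast\nu_j\|_2^2=\sum_j\sum_{s,s'}\langle B_s^{(j)}\ast\nu_j\ast\tilde\nu_j,\,B_{s'}^{(j)}\rangle$, into which I substitute $\nu_j\ast\tilde\nu_j=O(r_j^{-1})\delta_e+O(R_j^{-d-\epsilon})$ from \eqref{convo}. The diagonal ($\delta_e$) part contributes $\sum_j O(r_j^{-1})\sum_s\|B_s^{(j)}\|_2^2$, the cross terms $s\ne s'$ vanishing since the $b_s$ have pairwise disjoint supports; writing $\|B_s^{(j)}\|_2^2=\sum_{g:\,|b_s(g)|\le r_j}|b_s(g)|^2$ and interchanging the $j$- and $g$-sums, this is $\lesssim\sum_s\sum_g|b_s(g)|^2\sum_{j:\,r_j\ge|b_s(g)|}r_j^{-1}\lesssim\sum_s\|b_s\|_1=\|b\|_1$, using $\sum_{j:\,r_j\ge v}r_j^{-1}\lesssim v^{-1}$ (again from $\sum_{j\le k}r_j\le C_0r_k$, which in particular forces $r_j$ to grow geometrically). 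The off-diagonal part is where the localized $\ell^1$ bound enters: split each $B_s^{(j)}$ according to which cube of scale $\asymp R_j$ it meets; since $\nu_j\ast\tilde\nu_j$ is supported on $\A^{O(R_j)}$ it couples only $O(1)$ such cubes, within each of which the relevant $\ell^1$-mass of $\sum_{s:\,2^s<R_j}B_s^{(j)}$ is $\lesssim R_j^d$, so (using $\sum_s\|B_s^{(j)}\|_1\le\|b\|_1$) the off-diagonal part is at most $\sum_j\|\nu_j\ast\tilde\nu_j\|_{\ell^\infty(G\setminus\{e\})}\cdot R_j^d\cdot\|b\|_1\lesssim\|b\|_1\sum_j R_j^{-\epsilon}\lesssim\|b\|_1$, since $R_j$ is lacunary (in the applications $R_j\asymp 2^j$). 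Collecting the four contributions and rescaling in $\lambda$ gives \eqref{sup}.

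The main obstacle is the $\ell^2$ estimate of the previous paragraph. The naive bound $\|\nu_j\ast\tilde\nu_j\|_\infty(\sum_s\|B_s^{(j)}\|_1)^2$ produces a spurious $\|\varphi\|_1^2$, and both repairs are essential and somewhat delicate: for the diagonal term one must interchange the $j$- and $g$-summations so that $\sum_{j:\,r_j\ge v}r_j^{-1}$ telescopes against the Calder\'on--Zygmund level sets, and for the off-diagonal term one must localize to cubes of radius $\asymp R_j$ and exploit that the relevant mass on each is $\lesssim R_j^d$. A secondary, more routine point is the construction of a dyadic filtration on $G$ with the nesting and localized $\ell^1$ properties used throughout; this is the only place polynomial growth (Theorem~\ref{Pansu}, \eqref{Bass}) is invoked beyond what Lemma~\ref{transfer} already supplies.
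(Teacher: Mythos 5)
Your proposal is correct and follows essentially the same route as the paper's proof: a Christ-type dyadic Calder\'on--Zygmund decomposition on $G$, a height truncation of the bad part at $\lambda r_j$ controlled by $\sum_{j\le k}r_j\le C_0r_k$, replacement of $\mu_j$ by $\nu_j$ via the hypothesized weak $(1,1)$ bound, an exceptional set absorbing the cubes of scale $\ge R_j$, and a $TT^*$-style $\ell^2$ estimate in which the $\delta_e$ term telescopes against the level sets via $\sum_{j:r_j\ge v}r_j^{-1}\lesssim v^{-1}$ and the $O(R_j^{-d-\epsilon})$ term is paid for by the localized mass bound $\lesssim\lambda R_j^d$ on cubes of scale $R_j$. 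The only caveat --- that summability of the off-diagonal contribution tacitly needs $R_j$ to grow geometrically (e.g. $R_j\asymp 2^j$), which is not literally in the hypotheses --- is present in the paper's own proof as well (where $R_j^{-\epsilon}$ is silently replaced by $2^{-\epsilon j}$), and you flag it explicitly.
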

\begin{proof}
This is simply an extension of the proof of Theorem \ref{SgtPepper}; however, we must first establish that the Calder\'on-Zygmund decomposition makes sense on more general groups $G$. Since word-length is a quasimetric on $G$, we can use the $\rho$-dyadic cubes constructed by Christ in \cite{MC2} on spaces of homogeneous type.  Namely, there exist a collection of subsets $\{Q_{s,k}\subset G: s\in \N, k\in\Z\}$, and constants $A>1, a_0>0, C_1<\infty$ such that
\begin{eqnarray}
&\forall s\in\N,\; G=\bigcup_k Q_{s,k} \\
&r\leq s \implies Q_{r,l}\subset Q_{s,k} \text{ or } Q_{r,l}\cap Q_{s,k}= \emptyset\\
&\, \forall\,(r,l), \, \forall s>r,\; \exists! k\in\Z \text{ such that } Q_{r,l}\subset Q_{s,k}\\
\label{diameter}
&\text{Diameter } Q_{s,k}\leq C_1A^s\\
&\text{Each }Q_{s,k}\text{ contains some ball of radius }a_0 A^s.
\end{eqnarray}

Because $G$ has a polynomial rate of growth, $\rho$ is a doubling metric, and thus we can prove the Vitali Covering Lemma and the Hardy-Littlewood Maximal Inequality on $G$.  Using a standard stopping-time argument, we can then define a suitable discrete Calder\'on-Zygmund decomposition on $G$ with the dyadic cubes.
\\
\\ Fix $\lambda>0$.  We take $\varphi ={\mathfrak g}+b$, where $\|{\mathfrak g}\|_\infty\leq\lambda$ and $b=\displaystyle\sum_{(s,k)\in\B} b_{s,k}$ for some index set $\B\subset \N^2$, where $b_{s,k}$ is supported on $Q_{s,k}$, $\{Q_{s,k}:(s,k)\in\B\}$ is a disjoint collection, $\|b_{s,k}\|_1\leq\lambda\#Q_{s,k}$ and $\displaystyle \sum_{(s,k)\in\B}\#Q_{s,k}\leq \frac{C}\lambda\| \varphi\|_1$ ($C$ independent of $\varphi$ and $\lambda$).  Let $b_s=\displaystyle\sum_k b_{s,k}$.
\\
\\We further decompose $b_{s,k}=b^{(j)}_{s,k}+B^{(j)}_{s,k}$, where $b^{(j)}_{s,k}=b_{s,k}\1(|b_{s,k}|>\lambda r_j)$.  Define $b^{(j)}_s, B^{(j)}_s, b^{(j)}, B^{(j)}$ by summing over one or both indices, respectively.
\\
\\ We will divide $B^{(j)}=\sum_s B^{(j)}_s$ into two parts, splitting at the index $s(j):=\min\{s: A^s\geq R_j\}$.
\\
\\Now $\{g: \sup_j | \varphi\ast\mu_j (g)|> 5\lambda\}\subset E_1\cup E_2\cup E_3\cup E_4\cup E_5$, where
\begin{eqnarray*}
E_1&=&\{g: \sup_j |{\mathfrak g} \ast\mu_{j} (g)|> \lambda\}\\
E_2&=&\{g: \sup_j |b^{(j)}\ast\mu_j  (g)|> \lambda\}\\
E_3&=&\{g: \sup_j |B^{(j)}\ast(\mu_{j}-\nu_j) (g)|> \lambda\}\\
E_4&=&\{g: \sup_j |\left(\sum_{s=s(j)}^\infty B_{s}^{(j)} \right)\ast\nu_j(g)|> \lambda\}\\
E_5&=&\{g: \sup_j | \left(\sum_{s=0}^{s(j)-1}B_{s}^{(j)} \right)\ast\nu_j(g)|> \lambda\}
\end{eqnarray*}
By the weak $(p,p)$ inequality (if $p<\infty$), $\#E_1\leq C\lambda^{-p}\|{\mathfrak g}\|^p_p\leq C\lambda^{-p}\|{\mathfrak g}\|_\infty^{p-1}\|{\mathfrak g}\|_1\leq C\lambda^{-1}\|\varphi\|_1$; if $p=\infty$, re-do the decomposition so that $\|{\mathfrak g}\|_\infty< C_\infty^{-1}\lambda$ instead; then $E_1$ will be empty since $\|\sup_j |{\mathfrak g} \ast\mu_{j}|\|_\infty\leq C_\infty\|{\mathfrak g}\|_\infty\leq\lambda$.
\\
\\Next,
\begin{eqnarray*}
\#E_2\leq\sum_j \#\{g:|b^{(j)}\ast\mu_{j} (g)|>0\}&\leq&\sum_j \#(\text{supp }\mu_{j})\cdot\#\{g: |b(g)|>\lambda r_j\}\\
&=&\sum_j r_j \sum_{k\geq j} \#\{g:\lambda r_k<|b(g)|\leq \lambda r_{k+1}\}\\
&=&\sum_{k} \#\{g:\lambda r_k<|b(g)|\leq \lambda r_{k+1}\} \sum_{j\leq k}r_j\\
&\leq&\frac{C_0}\lambda \sum_k \lambda r_k \#\{g:\lambda r_k<|b(g)|\leq \lambda r_{k+1}\};
\end{eqnarray*}
 now note that this sum is a lower sum for $|b|$, and we have $\#E_2\leq C_0\lambda^{-1}\|b\|_1\leq\frac{C}\lambda\|\varphi\|_1$.
\\
\\For $E_3$, $|B^{(j)}\ast(\mu_j-\nu_j) (g)|\leq |B^{(j)} |\ast|\mu_j-\nu_j|(g)\leq|b|\ast|\mu_j-\nu_j|(g)$, so by the weak $(1,1)$ inequality,
\begin{eqnarray*}
\#E_3\leq \#\{g:\sup_j |b|\ast|\mu_j-\nu_j|(g)>\lambda\}\leq \frac{C}\lambda \|b\|_1\leq \frac{C}\lambda\|\varphi\|_1.
\end{eqnarray*}
To bound $E_4$, note that for all $s\geq s(j)$, $B_{s,k}^{(j)}\ast\nu_j $ is supported on  $Q^*_{s,k}:=\{g: \rho(g, Q_{s,k})\leq A^s\}$, so
\begin{eqnarray*}
\# E_4\leq\sum_{(s,k)\in\B} C\#Q_{s,k}\leq\frac{C}\lambda\|\varphi\|_1.
\end{eqnarray*}
We have thus reduced the problem to obtaining a bound on the size of $E_5$.

\begin{lem}
Let $B_s^{(j)}$ be as above, and assume the $\nu_j$ satisfy (\ref{convo}).  For $0\leq s<s(j)$,
\begin{eqnarray*}
\|B_{s}^{(j)}\ast\nu_j \|_{\ell^2(G)}^2\leq C r_j^{-1}\|B_{s}^{(j)}\|_2^2 + C\lambda 2^{-\epsilon j}\|B_{s}^{(j)}\|_1
\end{eqnarray*}
and for $0\leq s_1<s_2<s(j)$,
\begin{eqnarray*}
|\langle B_{s_1}^{(j)}\ast\nu_j ,B_{s_2}^{(j)}\ast\nu_j \rangle_{\ell^2(G)}|\leq C \lambda 2^{-\epsilon j} \|B_{s_2}^{(j)}\|_1.
\end{eqnarray*}
\end{lem}
\begin{proof}
We first restrict the supports of the $B_s$; we assume there is a $Q_{s(j),k_0}$ such that $Q_{s,k}\subset Q_{s(j),k_0}$ for all $(s,k)\in\B$ with $s<s(j)$.  Then $\|B_s^{(j)}\|_1\leq \|b_s\|_1\leq\sum_{(s,k)\in\B}\lambda|Q_{s,k}|\leq\lambda |Q_{s(j),k_0}|\leq C\lambda R_j^{d}$, and thus
\begin{eqnarray*}
|\langle B_{s_1}^{(j)}\ast\nu_j ,B_{s_2}^{(j)}\ast\nu_j \rangle|&=&|\langle B_{s_1}^{(j)}\ast\nu_j\ast \tilde\nu_j, B_{s_2}^{(j)}\rangle|\\
&\leq& Cr_j^{-1}|\langle B_{s_1}^{(j)}, B_{s_2}^{(j)}\rangle|+ CR_j^{-d}2^{-\epsilon j}\|B_{s_1}^{(j)}\|_1 \|B_{s_2}^{(j)}\|_1\\
&\leq& Cr_j^{-1}|\langle B_{s_1}^{(j)}, B_{s_2}^{(j)}\rangle|+ C\lambda 2^{-\epsilon j}\|B_{s_2}^{(j)}\|_1.
\end{eqnarray*}
Now this first term is 0 if $s_1\neq s_2$, and $Cr_j^{-1}\|B_{s_1}^{(j)}\|_2^2$ if $s_1=s_2$.
\\
\\We remove the assumption on the supports by noting that if the distance between the supports of $\varphi_1$ and $\varphi_2$ is greater than $2R_j$, then $\langle \varphi_1\ast\nu_j, \varphi_2\ast\tilde\nu_j\rangle=0$.  Thus if we decompose each $B_s=\sum_k B_s\1(Q_{s(j),k})$ and decompose the inner products accordingly, all but finitely many of the terms (a number independent of $j$) will vanish; and those remaining can be estimated in this way.
\end{proof}
\noindent Now by Chebyshev's Inequality,
\begin{eqnarray}
\lambda^2\#\{g: \sup_j |\sum_{s=0}^{s(j)-1} B_{s}^{(j)}\ast\nu_j (g)|>\lambda\}&\leq&\sum_g\sup_j |\sum_{s=0}^{s(j)-1} B_{s}^{(j)}\ast\nu_j (g)|^2 \notag \\
\label{money}
&\leq&\sum_j \|\sum_{s=0}^{s(j)-1} B_{s}^{(j)}\ast\nu_j\|_2^2\\
&\leq& \sum_j\sum_{\substack{ s_1,s_2:\\ 0\leq s_1,s_2 < s(j) }} |\langle B_{s_1}^{(j)}\ast\nu_j ,B_{s_2}^{(j)}\ast\nu_j \rangle_{\ell^2(G)}| \notag
\end{eqnarray}
and this is
\begin{eqnarray*}
&&\leq \sum_j\sum_{s=0}^{s(j)-1} \left(C r_j^{-1}\|B_{s}^{(j)}\|_2^2 + C\lambda 2^{-\epsilon j}\|B_{s}^{(j)}\|_1\right)+2\sum_j\sum_{\substack{s_1,s_2:\\ 0\leq s_1<s_2 < s(j)}} C \lambda 2^{-\epsilon j} \|B_{s_2}^{(j)}\|_1\\
&&\leq \sum_{s=0}^\infty \sum_{j=1}^\infty C\lambda (1+j)2^{-\epsilon j}\|B_s^{(j)}\|_1+ \sum_j\sum_{s=0}^{s(j)-1} Cr_j^{-1}\|B_{s}^{(j)}\|_2^2\\
&&\leq \sum_{s=0}^\infty C\lambda \|b_s\|_1+\sum_j\sum_{s=0}^{s(j)-1} Cr_j^{-1}\|B_{s}^{(j)}\|_2^2.
\end{eqnarray*}
The first term is $\leq C\lambda\|\varphi\|_1$ as desired.  For the second term, note that
$$\sum_{j\leq k} r_j\leq C_0r_k\;\, \text {for all}\, k\in\N\implies \exists N \text{ s.t. } r_{j+n}\geq 2r_j \, \text {for all}\, j\in\N,n\geq N\implies \sum_{j=k}^\infty r_j^{-1}\leq Cr_k^{-1}.$$
Since the $Q_{s,k}$ are disjoint, for a fixed $g\in Q_{s_0,k_0}$,
\begin{eqnarray*}
\sum_j\sum_{s=0}^{s(j)-1}  r_j^{-1}|B_{s}^{(j)}(g)|^2\leq\sum_{\scriptsize \begin{array}{c} j:\\ \lambda r_j\geq |b_{s_0}(g)| \end{array}}r_j^{-1}|b_{s_0}(g)|^2\leq C\lambda|b_{s_0}(g)|=C\lambda|b(g)|
\end{eqnarray*}
so $\sum_j\sum_{s=0}^{j-1} Cr^{-j}\|B_{s}^{(j)}\|_2^2\leq C\lambda\|b\|_1\leq C\lambda\|\varphi\|_1$ and the proof of (\ref{sup}) is complete.
\end{proof}
\noindent Having established Proposition \ref{shine on}, it remains to show that the random measures $\mu_j^{(\omega)}$ and $\nu_j^{(\omega)}$ satisfy the assumptions with probability 1.  Note first that  $r_j=|\text{supp }\mu_j^{(\omega)}|=\sum_{g\in\A^{2^j}}\xi_g(\omega)\lesssim 2^{(d-\alpha)j}$ on $\Omega_1$, and $\nu_j^{(\omega)}$ is supported on $\A^{2^j}$ with $\rho$-diameter at most $R_j=2^{j+1}$. We must prove the bound (\ref{convo}) on $\nu_j^{(\omega)}\ast \tilde\nu_j^{(\omega)}$.

\begin{lem}
\label{cancels}
Let $G$ be a group and $E$ a finite subset.  Let $\{  X_g\}_{g\in E}$ be independent random variables with $|X_g|\leq1$ and $\E X_g=0$.  Assume that $\sum_{g\in E} (\Var X_g)^2\geq1$.  Let $X$ be the random $\ell^1(G)$ function $\sum_{g\in E} X_g\delta_g$.  Let $G^\times$ denote $G\setminus\{e\}$.  Then for any $\theta>0$,
\begin{eqnarray}
\P\left(\| X\ast\tilde X\|_{\ell^\infty(G^\times)}\geq \theta(\sum_{g\in E} (\Var X_g)^2)^{1/2}\right)\leq 6|{E}|^2\max(e^{-\theta^2/36},e^{-\theta/6}).
\end{eqnarray}
\end{lem}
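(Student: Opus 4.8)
**

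The plan is to prove the large-deviation bound for the off-diagonal values of $X\ast\tilde X$ by expanding the convolution at a fixed nonidentity point as a sum of independent (or nearly independent) bounded random variables, estimating its mean and variance, applying a Bernstein-type concentration inequality, and then taking a union bound over the at most $|E|^2$ relevant points.

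\medskip

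\textbf{Step 1: Expand the convolution at a fixed point.} Fix $h\in G^\times$. Then
\[
X\ast\tilde X(h)=\sum_{g_1,g_2\in E:\ g_1 g_2^{-1}=h} X_{g_1}\overline{X_{g_2}}.
\]
Since $h\ne e$, no term with $g_1=g_2$ appears, so this is a sum over ordered pairs $(g_1,g_2)$ of \emph{distinct} elements of $E$ with $g_1 = h g_2$. Thus the number of terms is at most $|E|$, and the summand $X_{g_1}\overline{X_{g_2}}$ has modulus at most $1$. Because $\E X_g=0$ and the $X_g$ are independent, each term has mean $0$, so $\E\big(X\ast\tilde X(h)\big)=0$.

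\medskip

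\textbf{Step 2: Control the variance and apply a concentration inequality.} The sum in Step 1 is not a sum of independent random variables (a given $X_g$ can appear in several terms), but it is a sum of pairwise products indexed by a graph that is a partial matching-like structure: each $g\in E$ appears as the ``$g_1$'' of at most one pair and as the ``$g_2$'' of at most one pair. One can therefore split the index set into $O(1)$ sub-collections on each of which the summands are genuinely independent, or alternatively treat real and imaginary parts and invoke a martingale/Azuma argument on the filtration generated by the $X_g$. Either way, the relevant variance proxy is
\[
\sum_{g_1 g_2^{-1}=h}\E\big(|X_{g_1}|^2|X_{g_2}|^2\big)\le \sum_{g_1 g_2^{-1}=h}\Var X_{g_1}\Var X_{g_2}\le \Big(\sum_{g\in E}(\Var X_g)^2\Big)^{1/2}\cdot\Big(\sum_{g\in E}(\Var X_g)^2\Big)^{1/2},
\]
using Cauchy--Schwarz and the fact that the map $g_1\mapsto g_2=h^{-1}g_1$ is injective; more carefully one gets the bound $\sum_g (\Var X_g)^2 =: \sigma^2$ for the variance. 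With terms bounded by $1$ and variance proxy $\sigma^2\ge 1$, a Bernstein inequality gives, for each fixed $h$,
\[
\P\big(|X\ast\tilde X(h)|\ge \theta\sigma\big)\le 6\max\!\big(e^{-\theta^2/36},\,e^{-\theta/6}\big),
\]
where the two regimes correspond to $\theta\sigma$ small compared to $\sigma^2$ (sub-Gaussian tail) versus large (sub-exponential tail coming from the bounded increments); the constants $36$ and $6$ are chosen generously to absorb the splitting into real/imaginary parts and into $O(1)$ independent blocks, and to account for $\sigma^2\ge 1$.

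\medskip

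\textbf{Step 3: Union bound.} The function $X\ast\tilde X$ is supported on $E E^{-1}$, which has at most $|E|^2$ elements, so only at most $|E|^2$ points $h\in G^\times$ can contribute. Taking the union bound over these points yields
\[
\P\Big(\|X\ast\tilde X\|_{\ell^\infty(G^\times)}\ge \theta\,\Big(\sum_{g\in E}(\Var X_g)^2\Big)^{1/2}\Big)\le 6|E|^2\max\!\big(e^{-\theta^2/36},\,e^{-\theta/6}\big),
\]
which is the claimed bound.

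\medskip

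\textbf{Main obstacle.} The only genuine subtlety is Step 2: the quadratic form $\sum X_{g_1}\overline{X_{g_2}}$ is not literally a sum of independent variables, so one cannot directly quote a textbook Bernstein inequality. The cleanest fix is to observe that, for fixed $h$, writing $g_2 = h^{-1}g_1$ partitions the contributing $g_1$'s along the orbits of left-multiplication by $h$ (on the finite set $E$); on each orbit the sum telescopes into a sum one can handle, and across orbits the contributions are independent. Alternatively, a bounded-difference / Azuma argument on the natural filtration works with essentially no combinatorics. I would present whichever is shorter; the constants in the final bound are deliberately loose so that the exact bookkeeping in this step does not matter.
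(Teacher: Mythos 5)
Your proposal is correct and follows essentially the same route as the paper: fix $h\neq e$, write $X\ast\tilde X(h)$ as a sum of mean-zero bounded products $X_gX_{gh}$, split the index set into $O(1)$ sub-collections on which the products are genuinely independent (the paper does this by three-coloring the path/cycle graph of $g\mapsto hg$ on $E$), bound the variance by $\sum_g(\Var X_g)^2$ via Cauchy--Schwarz, apply Chernoff/Bernstein, and union-bound over the at most $|E|^2$ points of $EE^{-1}$. The only wording to fix is the claim that the sum "telescopes" within an orbit --- consecutive terms on an orbit share a variable and are not independent; the independence comes precisely from the alternating coloring within each path or cycle, which is your primary Step 2 route anyway.
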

\begin{proof}
For $h\neq e$,
$$X\ast\tilde X(h)=\sum_{g\in E\cap h^{-1} E}X_gX_{gh}=\sum_{g\in E\cap h^{-1} E} Y_g$$
where $\E Y_g=0$ and $|Y_g|\leq1$.  We want to apply Chernoff's Inequality, but the $Y_g$ are not independent.
\\
\\We can, however, partition $E\cap h^{-1}E$ into at most three subsets $E_1,E_2,E_3$, in each of which the $Y_g$ are independent.  To see this, note that we can make a directed graph with vertex set $E$ and edge set $\{(g,hg):g, hg\in E\}$; and that the components of this graph are paths or cycles.  Thus we can three-color this graph; and within each resulting $E_i$, the $Y_g$ depend on distinct independent random variables, so they are independent.
\\
\\ Now $\displaystyle \sum_{g\in E_i} Y_g$ has variance 
\begin{eqnarray*}
\displaystyle \sigma^2=\sum_{g\in E_i}\Var X_g \Var X_{gh}\leq\sum_{g\in E_i}(\Var X_g)^2\leq \sum_{g\in E}(\Var X_g)^2
\end{eqnarray*}
by H\"older's Inequality.  Chernoff's Inequality (Theorem 1.8 in \cite{TV}) gives us 
\begin{eqnarray*}
\P(|\sum_{g\in E_i} Y_g|\geq\lambda\sigma)\leq2\max(e^{-\lambda^2/4},e^{-\lambda\sigma/2}).
\end{eqnarray*}
Take $\lambda=\theta\sigma^{-1}(\sum_{g\in E} (\Var X_g)^2)^{1/2}$; then $\lambda\geq\theta$ and $\lambda\sigma=\theta(\sum_{g\in E} (\Var X_g)^2)^{1/2}\geq\theta$, so
$$\P(|X\ast\tilde X(h)|\geq 3\theta(\sum_{g\in E} (\Var X_g)^2)^{1/2})\leq\sum_{i=1}^3\P(|\sum_{E_i} Y_g|\geq\lambda\sigma)\leq6\max(e^{-\theta^2/4},e^{-\theta/2}).$$
Since this holds for each $h\neq e$ and $|\text{supp }X\ast\tilde X|\leq |E|^2$, the conclusion follows (after replacing $3\theta$ with $\theta$).
\end{proof}

\begin{cor}
\label{bound}
Let $\nu_j^{(\omega)}$ be the random measure defined as before, $0<\alpha<d/2$ and $\kappa>0$.  Then there is a set $\Omega_3\subset\Omega_2$ with $\P(\Omega_3=1)$ such that for each $\omega\in\Omega_3$,
\begin{eqnarray}
\nu_j^{(\omega)}\ast \tilde\nu_j^{(\omega)}= O_\omega(2^{(\alpha-d)j})\delta_e+O_\omega(2^{2(\alpha-d)j}(\sum_{g\in\A^{2^j}}\E\xi_g^2)^{1/2}2^{\kappa j}).
\end{eqnarray}
\end{cor}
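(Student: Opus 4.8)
The plan is to deduce this from Lemma~\ref{cancels} (for the values away from $e$) together with an elementary variance computation (for the value at $e$), glued together by the Borel--Cantelli Lemma. Put $\eta_g(\omega):=\xi_g(\omega)-\E\xi_g$, so that on $\A^{2^j}$ one has $\nu_j^{(\omega)}(g)=2^{(\alpha-d)j}\eta_g(\omega)$; writing $X:=\sum_{g\in\A^{2^j}}\eta_g\delta_g$, a random element of $\ell^1(G)$ with independent, mean-zero entries of modulus $\le 1$, we have $\nu_j^{(\omega)}=2^{(\alpha-d)j}X$ and hence $\nu_j^{(\omega)}\ast\tilde\nu_j^{(\omega)}=2^{2(\alpha-d)j}\,X\ast\tilde X$. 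It therefore suffices to estimate $X\ast\tilde X$ at $e$ and on $G^\times:=G\setminus\{e\}$.

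At $e$ we have $X\ast\tilde X(e)=\|X\|_{\ell^2(G)}^2=\sum_{g\in\A^{2^j}}\eta_g^2$, and since $\xi_g^2=\xi_g$ the identity $\eta_g^2=\xi_g(1-2\E\xi_g)+(\E\xi_g)^2$ yields
\[
\sum_{g\in\A^{2^j}}\eta_g^2\ \le\ \sum_{g\in\A^{2^j}}\xi_g\ +\ \sum_{g\in\A^{2^j}}\E\xi_g .
\]
On $\Omega_1$ the first sum is $O_\omega(2^{(d-\alpha)j})$ by the strong law of large numbers used to define $\Omega_1$, and the second equals $\sum_{g\in\A^{2^j}}\rho(g,e)^{-\alpha}$, which is $\lesssim 2^{(d-\alpha)j}$ by the volume growth~(\ref{Bass}). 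Hence $\nu_j^{(\omega)}\ast\tilde\nu_j^{(\omega)}(e)=2^{2(\alpha-d)j}O_\omega(2^{(d-\alpha)j})=O_\omega(2^{(\alpha-d)j})$, the first term of the claimed decomposition.

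For $G^\times$, apply Lemma~\ref{cancels} to $X$ with $E=\A^{2^j}$ and a threshold $\theta=\theta_j$ to be chosen. Its hypothesis $\sum_{g\in\A^{2^j}}(\Var\eta_g)^2\ge 1$ holds for all large $j$ because, $\alpha$ being $<d/2$, the lower bound $\#\A^N\gtrsim N^d$ in~(\ref{Bass}) forces the divergence of $\sum_{g\neq e}(\Var\eta_g)^2=\sum_{g\neq e}\rho(g,e)^{-2\alpha}(1-\rho(g,e)^{-\alpha})^2$. Using also $\#\A^{2^j}\lesssim 2^{dj}$ from~(\ref{Bass}), the lemma gives
\[
\P\Big(\|X\ast\tilde X\|_{\ell^\infty(G^\times)}\ \ge\ \theta_j\big(\textstyle\sum_{g\in\A^{2^j}}(\Var\eta_g)^2\big)^{1/2}\Big)\ \le\ C\,2^{2dj}\max\!\big(e^{-\theta_j^2/36},\,e^{-\theta_j/6}\big).
\]
Choosing $\theta_j:=cj$ with $c=c(d)$ so large that $c/6>2d\log 2$ makes the right-hand side summable in $j$, so Borel--Cantelli produces $\Omega_3\subset\Omega_2$ with $\P(\Omega_3)=1$ on which $\|X\ast\tilde X\|_{\ell^\infty(G^\times)}<cj\big(\sum_{g\in\A^{2^j}}(\Var\eta_g)^2\big)^{1/2}$ for all large $j$. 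Since $(\Var\eta_g)^2\le\Var\eta_g\le\E\xi_g^2$ (the first inequality as $\Var\eta_g=\Var\xi_g\le 1$, the second as $\Var\xi_g=\E\xi_g^2-(\E\xi_g)^2$), we get $\big(\sum_g(\Var\eta_g)^2\big)^{1/2}\le\big(\sum_g\E\xi_g^2\big)^{1/2}$; and $cj\le 2^{\kappa j}$ for $j$ large because $\kappa>0$ is fixed. Multiplying by $2^{2(\alpha-d)j}$ and folding the finitely many remaining $j$ into the $\omega$-dependent constant yields $\|\nu_j^{(\omega)}\ast\tilde\nu_j^{(\omega)}\|_{\ell^\infty(G^\times)}=O_\omega\big(2^{2(\alpha-d)j}(\sum_{g\in\A^{2^j}}\E\xi_g^2)^{1/2}2^{\kappa j}\big)$, which together with the value at $e$ is the asserted estimate.

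The one delicate point is the choice of $\theta_j$: the Chernoff estimate underlying Lemma~\ref{cancels} contributes an honest exponential factor $e^{-\theta_j/6}$ decaying only at linear rate in $\theta_j$, so no sublinear growth of $\theta_j$ can overcome the union-bound factor $2^{2dj}$; one is forced to take $\theta_j$ of order $j$, and the slack $2^{\kappa j}$ in the statement is present precisely to absorb this (at worst polynomial) loss. Everything else is routine bookkeeping with~(\ref{Bass}).
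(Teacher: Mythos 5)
Your proof is correct and follows essentially the same route as the paper: Lemma~\ref{cancels} plus a union/Borel--Cantelli argument for the off-diagonal values, and a separate elementary bound on $\sum_g\eta_g^2$ at the identity. The only (harmless) deviations are that you control the diagonal term via the normalization already built into $\Omega_1$ where the paper runs a fresh Chernoff--Borel--Cantelli step, and you take $\theta_j\sim j$ and then absorb it into $2^{\kappa j}$ where the paper simply sets $\theta=2^{\kappa j}$ outright.
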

\begin{proof}
For the bound at the identity $e$, we use the fact that
\begin{eqnarray*} 
\nu_j^{(\omega)}\ast\tilde\nu_j^{(\omega)}(e)&=&2^{2(\alpha-d)j}\sum_{g\in\A^{2^j}}\eta_g^2(\omega)\\
&\leq&2^{2(\alpha-d)j}\sum_{g\in\A^{2^j}}(\E\xi_g+\xi_g(\omega))=2^{(\alpha-d)j+1}+2^{2(\alpha-d)j}\sum_{g\in\A^{2^j}}\eta_g(\omega)
\end{eqnarray*}
so that
\begin{eqnarray*}
\P(\nu_j^{(\omega)}\ast\tilde\nu_j^{(\omega)}(e)>3\cdot 2^{(\alpha-d)j})\leq\P(\sum_{g\in\A^{2^j}} \eta_{g}(\omega) >2^{(d-\alpha)j})\leq 2\exp(-\frac12 2^{(d-\alpha)j})
\end{eqnarray*}
 for $j$ sufficiently large, by Chernoff's inequality.  The Borel-Cantelli Lemma then implies that $\nu_j^{(\omega)}\ast\tilde\nu_j^{(\omega)}(e)\leq 3\cdot 2^{(\alpha-d)j}$ for $j$ sufficiently large (depending on $\omega$), so there exists $C_\omega$ with $0\leq\nu_j^{(\omega)}\ast\tilde\nu_j^{(\omega)}(e)\leq C_\omega 2^{(\alpha-d)j}$ for all $j$.
\\
\\ For the other term, we note that $\Var \eta_g\leq \E\xi_g$, so we set $\theta=2^{\kappa j}$ and apply Lemma \ref{cancels}:
\begin{eqnarray*}
\P\left(2^{2(d-\alpha)j}\|\nu_j^{(\omega)}\ast\tilde\nu_j^{(\omega)}\|_{\ell^\infty(G^\times)}\geq 2^{\kappa j}(\sum_{g\in\A^{2^j}}\E\xi_g^2)^{1/2}\right)\leq C2^{2dj}\exp(-2^{\kappa j}/2)
\end{eqnarray*}
which sum over $j$.  The Borel-Cantelli Lemma again proves the bound holds with probability 1.
\end{proof}
\noindent Note that $\displaystyle\sum_{g\in\A^{2^j}}\E\xi_g^2\lesssim 2^{(d-2\alpha)j};$ thus for $\alpha<d/2$, 
\begin{eqnarray*}
2^{2(\alpha-d)j}\left(\sum_{g\in\A^{2^j}}\E\xi_g^2\right)^{1/2}2^{\kappa j}\leq C2^{(-\frac{3d}2+\alpha+\kappa)j}=CR_j^{-d}2^{(-\frac{d}2+\alpha+\kappa)j}
\end{eqnarray*} 
and thus for $\kappa$ chosen small, the measures $\nu_j^{(\omega)}$ satisfy the bound (\ref{convo}) for all $\omega\in\Omega_3$.  Since $\mu_j^{(\omega)}-\nu_j^{(\omega)}=\E\mu_j$ is a weighted average of the nonnegative averages in (\ref{costello}), Theorem K2 implies $\|\sup_j |\varphi\ast\E\mu_j|\|_{1,\infty}\leq C\|\varphi\|_1$; and the $\ell^\infty$ maximal inequality for $\mu_j^{(\omega)}$ is trivial.  Thus Proposition \ref{shine on} applies, and we have proved Theorem \ref{L1}.
\begin{remark}
As in the case of $\Z^d$, this method is inherently limited to exponents $\alpha<d/2$, because otherwise the set is too sparse for a single convolution product to be ``uniformly'' small in any nontrivial sense.
\end{remark}

\subsection{Gaps and Banach Density}\label{gaps}

In \cite{LaVic1} it was noted that, with probability 1, the sparse random sequences in $\N$ have Banach density 0, which distinguishes them from block sequences of the Bellow-Losert type in Section \ref{blocks}. This remains true for the random subset $\{g\in G: \xi_g(\omega)=1\}$ which we have obtained.
\\
\\It is worth noting a second distinction: with probability 1, this random set has a subset with gaps tending to infinity which is of full relative measure, and thus the averages over this subset still converge a.e. for functions in $L^1$. (Note that this is not a necessary consequence of Banach density 0: consider the ``Cantor set'' of natural numbers that can be written as finite sums of distinct powers of 3. This set has Banach density 0, but no set of positive relative measure can have gaps tending to $\infty$.)\\
\\
We will order the random set $\{g_n\}=\{g\in G: \xi_g(\omega)=1\}$ so that $\rho(g_n,e)$ is nondecreasing. The convergence of the ergodic averages $A_{N_j}^{(\omega)}$ in (\ref{rains}) implies that we can add the points one at a time and maintain the pointwise ergodic theorem; that is, the averages
\begin{eqnarray*}
A_N^{(g_n)}f(x):=\frac1N\sum_{n=1}^N f({\cal T}(g_n)x)
\end{eqnarray*}
converge a.e. in $X$ for any measure-preserving group action $(X,{\cal T})$.
\begin{definition}
For $j\geq0$ and $M<\infty$, let
\begin{eqnarray}
\Gamma_{j,M}&:=&\{n\in[2^j,2^{j+1}):\inf_{m<n} \rho(g_n,g_m)<M\}\\
\beta_{t,M}&:=&2^{-j} |\Gamma_{t,M}|.
\end{eqnarray}
\end{definition}
\begin{prop}
Let $\{g_n\}$ be a sequence in a virtually nilpotent discrete group $G$, such that $\rho(g_n,e)$ is nondecreasing and the averages $A_N^{(g_n)}f$ converge a.e. for all $f\in L^1(X)$. If $\displaystyle \sum_{j} \beta_{j,M}<\infty$ for every $M<\infty$, then there exists an increasing sequence $\{n_k\}\in\N$ such that $\{g_{n_k}\}$ has gaps tending to infinity, and such that the averages $A_N^{(g_{n_k})}f$ converge a.e. for all $f\in L^1(X)$.
\end{prop}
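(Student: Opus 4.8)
The approach is to obtain $\{g_{n_k}\}$ by \emph{deleting}, from each dyadic block of indices $[2^j,2^{j+1})$, those $n$ for which $g_n$ lies within distance $M_{i(j)}$ of some earlier term of the sequence, where the thresholds $M_{i(j)}$ are allowed to grow (slowly) with $j$. The summability hypothesis $\sum_j\beta_{j,M}<\infty$ will force the set $D$ of deleted indices to have density zero, which transfers the pointwise ergodic theorem to the thinned sequence; the fact that the thresholds tend to $\infty$ will force the surviving set to have gaps tending to $\infty$.

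In detail: fix any $M_i\uparrow\infty$ (say $M_i=i$). Since $\sum_j\beta_{j,M_i}<\infty$ for each fixed $i$, choose $J_1<J_2<\cdots$ with $J_i\to\infty$ and $\sum_{j\ge J_i}\beta_{j,M_i}<2^{-i}$. For $j\ge J_1$ let $i(j)$ be the index with $J_{i(j)}\le j<J_{i(j)+1}$, and set $D:=[1,2^{J_1})\cup\bigcup_{j\ge J_1}\bigl(\Gamma_{j,M_{i(j)}}\cap[2^j,2^{j+1})\bigr)$. Enumerate $\N\setminus D$ increasingly as $\{n_k\}$; since $\rho(g_n,e)$ is nondecreasing, so is $\rho(g_{n_k},e)$, and $F:=\{g_{n_k}:k\in\N\}$ is the candidate set. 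For the density, $|D\cap[2^j,2^{j+1})|=2^j\beta_{j,M_{i(j)}}$ and $\beta_{j,M_{i(j)}}\le\sum_{j'\ge J_{i(j)}}\beta_{j',M_{i(j)}}<2^{-i(j)}\to 0$ as $j\to\infty$, so $|D\cap[1,N]|=o(N)$, i.e.\ $D$ has density zero.

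For the gap property, fix $M<\infty$ and pick $i_0$ with $M_{i_0}\ge M$. Suppose $n_k\ge 2^{J_{i_0}}$ and write $n_k\in[2^j,2^{j+1})$; then $j\ge J_{i_0}$, so $i(j)\ge i_0$ and $M_{i(j)}\ge M$, and since $n_k$ survived ($n_k\notin\Gamma_{j,M_{i(j)}}$) we get $\rho(g_{n_k},g_m)\ge M_{i(j)}\ge M$ for \emph{every} index $m<n_k$. In particular no earlier surviving term lies within distance $M$ of $g_{n_k}$. A later surviving term $g_{n_\ell}$ with $\ell>k$ cannot lie within $M$ of $g_{n_k}$ either: otherwise $g_{n_k}$ would be a term preceding $g_{n_\ell}$ at distance $<M$, and writing $n_\ell\in[2^{j_\ell},2^{j_\ell+1})$ we have $j_\ell\ge J_{i_0}$ (since $n_\ell>n_k\ge 2^{J_{i_0}}$), hence $M_{i(j_\ell)}\ge M$, so $n_\ell\in\Gamma_{j_\ell,M_{i(j_\ell)}}$, contradicting the survival of $n_\ell$. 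Therefore $g_{n_k}\A^M\cap F=\{g_{n_k}\}$ for all $n_k\ge 2^{J_{i_0}}$, i.e.\ for all but finitely many $k$; so $F$ has gaps tending to $\infty$.

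Finally, for convergence of the thinned averages, write $N=n_M$, so that $M=N-|D\cap[1,N]|$ and $N/M\to1$ by the density bound, and
\[
\frac1M\sum_{k=1}^M f\bigl({\cal T}(g_{n_k})x\bigr)=\frac{N}{M}\left(\frac1N\sum_{n=1}^N f\bigl({\cal T}(g_n)x\bigr)-\frac1N\sum_{\substack{n\le N\\ n\in D}}f\bigl({\cal T}(g_n)x\bigr)\right).
\]
The first inner average converges a.e.\ by hypothesis, so it remains to show $\frac1N\sum_{n\in D,\,n\le N}f({\cal T}(g_n)x)\to0$ a.e.\ for all $f\in L^1(X)$. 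This is immediate for $f\in L^\infty(X)$ since $|D\cap[1,N]|/N\to0$, and it extends to $L^1(X)$ via the domination $\sup_N\frac1N\sum_{n\in D,\,n\le N}|f({\cal T}(g_n)x)|\le\sup_N A_N^{(g_n)}|f|(x)$ together with the weak-$(1,1)$ maximal inequality for the right-hand side (available directly in all our applications, or in general from the hypothesized a.e.\ convergence via Sawyer's principle); alternatively one splits $f=f_1+f_2$ with $f_1\in L^\infty$ and $\|f_2\|_1$ small and controls the tail using $\|A_N^{(g_n)}|f_2|\|_1\le\|f_2\|_1$ and Fatou. Either way $A_N^{(g_{n_k})}f$ converges a.e.\ to the same limit as $A_N^{(g_n)}f$. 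The delicate point is the two-sided bookkeeping in the gap argument (accounting for both earlier and later survivors); the density estimate and the transfer of convergence are routine once $D$ is set up correctly.
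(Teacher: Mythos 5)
Your construction and your verification of the gap property are correct, and in fact more detailed than the paper's own write-up: the paper simply chooses $M_j\to\infty$ with $\sum_j\beta_{j,M_j}<\infty$, deletes $\bigcup_j\Gamma_{j,M_j}$, and never explicitly checks that the surviving set has gaps tending to infinity, whereas your two-sided bookkeeping (no earlier term is within $M$ because $n_k$ survived; no later survivor is within $M$ because it would then have been deleted) is exactly the argument that is needed. The density computation and the decomposition of the thinned average as $\frac{N}{K}$ times (full average minus deleted-part average) also match the paper. Where you genuinely diverge is in controlling the deleted part $\frac1N\sum_{n\le N,\,n\in D}f(\mathcal{T}(g_n)x)$ for general $f\in L^1$. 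The paper bounds the maximal function of this quantity directly, showing
\[
\Bigl\|\sup_K\frac1K\sum_{n<n_K,\,n\in D}|f\circ\mathcal{T}(g_n)|\Bigr\|_1\lesssim\Bigl(\sum_j\beta_{j,M_j}\Bigr)\|f\|_1
\]
by comparing with the dyadic sums $2^{-j}\sum_{n<2^{j},\,n\in D}|f\circ\mathcal{T}(g_n)|$ and summing their $L^1$ norms; this is where the summability hypothesis does its real work, and it keeps the proof self-contained, with no maximal inequality for the full averages $A_N^{(g_n)}$ required. You instead dominate the deleted part by $\sup_N A_N^{(g_n)}|f|$ and import a weak $(1,1)$ inequality for the full maximal operator, which is not among the stated hypotheses; deriving it from a.e.\ convergence via Sawyer's principle needs ergodicity of the action (plus an ergodic-decomposition step in general), so your route is workable but less elementary, and it uses the $\beta$ hypothesis only to get density zero rather than summability.

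One concrete warning: your fallback argument --- splitting $f=f_1+f_2$ with $\|f_2\|_1$ small and ``controlling the tail using $\|A_N^{(g_n)}|f_2|\|_1\le\|f_2\|_1$ and Fatou'' --- does not work. Uniform $L^1$ bounds on the individual operators give no a.e.\ control on $\limsup_N$ of the sequence (Fatou controls the $\liminf$, not the $\limsup$); a maximal inequality is genuinely required at that point, and the paper's direct bound on the deleted-part maximal function is the cleanest way to obtain one from the stated hypotheses.
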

\begin{proof}
We can clearly choose a sequence $M_j$ with $M_j\to\infty$ so that $\sum_j \beta_{j,M_j}<\infty$. Let $\{n_k\}$ be the set $\N\setminus(\bigcup_j \Gamma_{j,M_j})$, taken in increasing order, and note that $\frac{n_k}{k}\to1$.\\
\\
Considering $[1,n_K]$ as the union of terms in $\{n_k\}$ and the complement, we see
\begin{eqnarray*}
\left|A_K^{(g_{n_k})}f(x)-\frac{n_K}{K} A_{n_K}^{(g_n)}f(x)\right|\leq \left|\frac1{K}\sum_{\substack{ n<n_K: \\n\in \bigcup_j \Gamma_{j,M_j}}} f({\cal T}(g_n)x)\right|. 
\end{eqnarray*}
Clearly the $L^1$ norm of the right-hand side is bounded by $\frac{n_K-K}{K}\|f\|_1\to0$ as $K\to\infty$. Furthermore, we have the weak maximal inequality
\begin{eqnarray*}
\left\|\sup_K\left|\frac1{K}\sum_{\substack{ n<n_K: \\n\in \bigcup_j \Gamma_{j,M_j}}} f({\cal T}(g_n)x)\right|\right\|_1&\leq& \left\|\sup_j \left|2^{-j}\sum_{\substack{ n<2^{j}: \\n\in \bigcup_j \Gamma_{j,M_j}}} f({\cal T}(g_n)x)\right| \right\|_1\\
&\lesssim& \sum_j \beta_{j,M_j}<\infty.
\end{eqnarray*} 
Therefore $A_K^{(g_{n_k})}f-\frac{n_K}{K} A_{n_K}^{(g_n)}f\to0$ a.e. and since $\frac{n_K}{K}\to1$, this proves that $A_N^{(g_{n_k})}f$ converge a.e. for all $f\in L^1(X)$.
\end{proof}
It remains to show that our randomly generated sequences indeed have this property.
If $B$ is the ball of radius $2M$ centered at $g$, and $\rho(g,e)\approx 2^l\gg M$, then clearly
\begin{eqnarray}
\label{easy}
\P\left( \sum_{g\in B}\xi_g = t\right)\leq {|B|\choose t}2^{-\gamma tl}.
\end{eqnarray}
If $\B_l$ is a cover of $\left\{ g\in G:\rho(g,e)\approx 2^l \right\}$ by balls of radius $2M$, with multiplicity of intersection controlled uniformly in $l$, then for $j=(d-\gamma)l$,
\begin{eqnarray*}
\beta_{j,M}&\lesssim& 2^{-j}|{\B}_l|\sum_{t=2}^{|B|} (t-1){|B|\choose t}2^{-\gamma tl}\\
&\leq& C_M|{\B}_l|2^{-2\gamma l-j}\\
&\leq& C_M 2^{l(d-2\gamma)-j}\leq C_M 2^{-\gamma l},
\end{eqnarray*}
and these are summable.

\begin{remark}
Let $\{\vec n_i\}$ be one of the sparse deterministic sequences from Section \ref{detsparse} (either speckled or plaid); because of the nature of the Freiman isomorphism $F_p$ in (\ref{Freiman}) and the pseudorandomness of the points $(j,j^2,\dots,j^m)\in\Z_p^m$, it becomes vanishingly rare for two points in the $k$th block to be within $p_k$ of each other, and thus the $\beta_{j,M}$ are summable as well. Therefore these sequences can be modified in a negligible manner so as to have gaps tending to infinity.
\end{remark}

\bibliography{MultivarSparseAvg}
\bibliographystyle{plain}

\bigskip

{\small
\parbox[t]{3in}
{P. LaVictoire \\Department of Mathematics \\
University of Wisconsin\\
Madison, WI 53706\\
E-mail: patlavic@math.wisc.edu\\}
\bigskip

{\small
\parbox[t]{3in}
{A. Parrish \\Department of Mathematics \\
University of Illinois at Urbana-Champaign \\
Urbana, IL 61801\\
E-mail: ajnparrish@gmail.com\\}
\bigskip

{\small
\parbox[t]{3in}
{J. Rosenblatt\\Department of Mathematics \\
University of Illinois at Urbana-Champaign \\
Urbana, IL 61801\\
E-mail: rosnbltt@illinois.edu\\}
\bigskip

\end{document}